\tikzset{
  norm/.style     = {shape=circle, draw},
  blue/.style     = {shape=circle, draw, fill=blue!25},
  high/.style     = {shape=circle, draw, color=red},
  bluehigh/.style = {shape=circle, draw, color=red, fill=blue!25},
  red/.style      = {shape=circle, draw, fill=red!25},
  both/.style     = {shape=circle, draw, fill=violet!35},
  root/.style     = {node, bottom color=red!30},
  env/.style      = {treenode, font=\ttfamily\normalsize},
  dummy/.style    = {circle}
}
\tikzstyle{standard}=[circle, draw=black, fill=white, very thick]
\tikzstyle{standard2}=[circle, draw=black, fill=white, very thick]
\tikzstyle{blue2}=[circle, draw=black, fill=blue!25, very thick, inner sep=2pt]
\tikzstyle{small}=[circle, draw=black, fill=black, very thick, minimum size=4mm]
\tikzstyle{small2}=[circle, draw=black, fill=white, very thick, minimum size=4mm] 
\tikzstyle{special}=[circle, draw=red!60, fill=red!5, very thick, inner sep=2pt]
\newtheorem{theorem}{Theorem}[section]
\newtheorem{lemma}[theorem]{Lemma}
\newtheorem{cor}[theorem]{Corollary}
\newtheorem{prop}[theorem]{Proposition}
\theoremstyle{definition}
\newtheorem{df}[theorem]{Definition}
\newtheorem{rem}[theorem]{Remark}
\newtheorem{ex}[theorem]{Example}
\newtheorem{conj}[theorem]{Conjecture}
\DeclareMathOperator{\F}{\mathcal{F}}
\DeclareMathOperator{\susp}{susp}
\DeclareMathOperator{\cone}{cone}
\DeclareMathOperator{\lk}{lk}
\DeclareMathOperator{\st}{st}
\DeclareMathOperator{\del}{del}
\DeclareMathOperator{\rank}{rank}
\DeclareMathOperator{\sgn}{sgn} 
\newcommand{\bbS}{\mathbb{S}}
\newcommand{\bbZ}{\mathbb{Z}}
\renewcommand{\tilde}{\widetilde}
\renewcommand{\hat}{\widehat}
\title{Topology of Cut Complexes of Graphs}
\author[M. Bayer]{Margaret Bayer}
\address{Margaret Bayer: University of Kansas, Lawrence, Kansas, USA}
\email{bayer@ku.edu}
\author[M. Denker]{Mark Denker}
\address{Mark Denker: University of Kansas, Lawrence, Kansas, USA}
\email{mark.denker@ku.edu}
\author[M. Jeli\'c Milutinovi\'c]{Marija Jeli\'c Milutinovi\'c}
\address{Marija Jeli\'c Milutinovi\'c: University of Belgrade, Serbia}
\email{marija.jelic@matf.bg.ac.rs}
\author[R. Rowlands]{Rowan Rowlands}
\address{Rowan Rowlands: University of Washington, Seattle, Washington, USA}
\email{rowanr@uw.edu}
\author[S. Sundaram]{Sheila Sundaram}
\address{Sheila Sundaram: University of Minnesota, Minneapolis, USA}
\email{shsund@umn.edu}
\author[L. Xue]{Lei Xue}
\address{Lei Xue: University of Michigan, Ann Arbor, Michigan, USA}
\email{leixue@umich.edu}
\begin{document}    
\subjclass{{57M15, 57Q70, 05C69, 05E45, 05E18}}

\keywords{ Chordal graph, graph complex, disconnected set, homology representation, homotopy,   Morse matching, shellability}

\begin{abstract}
 We define the \emph{$k$-cut complex} of a graph $G$ with vertex set $V(G)$ to be the simplicial complex whose facets are the complements of  sets of size $k$ in  $V(G)$ inducing disconnected subgraphs of $G$. This generalizes the Alexander dual of a graph complex studied by Fr\"oberg (1990), and Eagon and Reiner (1998).  We describe the effect of various graph operations on the cut complex, and study its shellability, homotopy type and homology for various families of graphs, including  trees, cycles, complete multipartite graphs, and the prism $K_n \times K_2$, using techniques from algebraic topology, discrete Morse theory and equivariant poset topology.
\end{abstract}
  
\maketitle

\section{Introduction}\label{sec:Intro}

This paper, a companion to \cite{BDJRSX-TOTAL2024},  deals with a class of graph complexes. 
In recent years there has been much interest in the topology of simplicial
complexes associated with graphs.  
A major contribution to this subject is the book \cite{JonssonBook2008} by Jonsson, who considers simplicial complexes defined on edge sets of graphs.  Simplicial complexes defined on vertex sets of graphs include clique complexes (see, e.g., \cite{Ivashchenko}), independence complexes (see, e.g., \cite{Meshulam}), neighborhood complexes (see, e.g., \cite{Lovasz}), and dominance complexes (see, e.g., \cite{Matsushita}). 
In this paper we introduce a new family of  simplicial  complexes associated to the vertex set of a graph, 
which we call \emph{cut complexes}. We consider only simple graphs. Our work is motivated by a famous theorem of Ralf Fr\"oberg \cite{Froberg1990} connecting commutative algebra and graph theory through topology.  We investigate  our new complexes in the spirit of Fr\"oberg's theorem, relating topological properties of the cut complex  to structural properties of the graph. 

For a field $\mathbb{K}$ and a finite simplicial complex $\Delta$ with vertex set $[n]=\{1,2,\ldots,n\}$, the Stanley--Reisner ideal of $\Delta$ is the ideal $I_\Delta$  of the polynomial ring $\mathbb{K}[x_1,\ldots , x_n]$ generated by the monomials $x_{i_1}\cdots x_{i_k}$ running over the inclusion-minimal subsets 
$\{i_1, \ldots , i_k\}$ of $[n]$ that are NOT faces of $\Delta$.  The Stanley--Reisner ring $\mathbb{K}[\Delta]$ is the quotient of the polynomial ring $\mathbb{K}[x_1,\ldots , x_n]$ by the ideal $I_\Delta$. 

For a graph $G$, 
the \emph{clique complex} $\Delta(G)$  is  the simplicial complex whose simplices are subsets of vertices of $G$, in which every pair of vertices is connected by an edge of $G$.  Fr\"oberg \cite{Froberg1990} characterized monomial ideals that have a \emph{$2$-linear resolution}, by first reducing to the case of square-free monomial ideals. The ideal $I_\Delta$ is generated by quadratic square-free monomials precisely when the simplicial complex $\Delta$ is $\Delta(G)$ for some graph $G$ (see \cite[Proposition~8]{EagonReiner1998}).  Hence Fr\"oberg's theorem can be stated as follows:

\begin{theorem}[Fr\"oberg {\cite{Froberg1990}, \cite[p. 274]{EagonReiner1998}}] \label{thm:Fr}
A Stanley--Reisner ideal $I_\Delta$ generated by quadratic square-free monomials has a $2$-linear resolution if and only if $\Delta$ is the clique complex $\Delta(G)$ of a chordal  graph $G$.
\end{theorem}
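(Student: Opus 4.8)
\smallskip
\noindent\emph{A proof proposal.}
The plan is to reduce \cref{thm:Fr} to a purely topological condition via Hochster's formula, and then to verify that condition for chordal graphs by an inductive gluing argument. Write $R=\mathbb{K}[x_1,\dots,x_n]$. By \cite[Proposition~8]{EagonReiner1998} the hypothesis means $\Delta=\Delta(G)$ for a graph $G$ on $[n]$, with $I_\Delta$ generated by the monomials $x_ix_j$ over the non-edges $\{i,j\}$ of $G$; note also that $\Delta(G)|_W=\Delta(G|_W)$ for every $W\subseteq[n]$.

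First I would translate ``linear resolution'' into vanishing of homology. Since $I_\Delta$ is generated in degree $2$, it has a linear resolution precisely when $\beta_{k,j}(R/I_\Delta)=0$ for all $k\ge1$ and all $j\ne k+1$. Hochster's formula gives
\[
\beta_{k,j}(R/I_\Delta)=\sum_{\substack{W\subseteq[n]\\|W|=j}}\dim_{\mathbb{K}}\tilde H_{\,j-k-1}\bigl(\Delta(G|_W);\mathbb{K}\bigr),
\]
with no cancellation, as every summand is the dimension of a vector space. Tracking which homological degrees $j-k-1$ can occur when $k\ge1$ and $j\ne k+1$ — and using that $\tilde H_\ell(\Delta(G|_W);\mathbb{K})$ vanishes automatically for $\ell<0$ and for $\ell\ge|W|-1$ — one finds that $I_\Delta$ has a linear resolution if and only if every induced subcomplex $\Delta(G|_W)$ has reduced homology over $\mathbb{K}$ concentrated in degree $0$, i.e.\ $\tilde H_\ell(\Delta(G|_W);\mathbb{K})=0$ for all $W\subseteq[n]$ and all $\ell\ge1$. (Alternatively one could route through the Eagon--Reiner correspondence between linear resolutions and Cohen--Macaulayness of the Alexander dual, but the homological reformulation above is more elementary and self-contained.)

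The easy implication is that this condition forces $G$ to be chordal: if $G$ were not chordal it would contain an induced cycle $C_m$ with $m\ge4$, and taking $W=V(C_m)$ the induced subgraph has no triangles, so $\Delta(G|_W)=C_m$ is homeomorphic to $S^1$ and $\tilde H_1(\Delta(G|_W);\mathbb{K})\neq0$, a contradiction. The substantive implication is that chordality yields the homological condition. Since induced subgraphs of chordal graphs are chordal, it suffices to show $\tilde H_\ell(\Delta(G);\mathbb{K})=0$ for $\ell\ge1$ when $G$ is chordal; and since $\Delta(G)$ is a disjoint union of the clique complexes of the connected components of $G$, I would reduce to $G$ connected and then prove the stronger claim that $\Delta(G)$ is contractible, by induction on $|V(G)|$. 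By Dirac's theorem a chordal graph has a simplicial vertex $v$, one whose neighbourhood $N_G(v)$ is a clique. One checks that $G-v$ is again connected and chordal: chordality is hereditary, and any vertex of $G-v$ reaches $v$ along a path in $G$ whose penultimate vertex lies in $N_G(v)$, so deleting $v$ leaves $G-v$ connected. Now $\Delta(G)=\st(v)\cup\Delta(G-v)$, where $\st(v)$, the closed star of $v$ in $\Delta(G)$, is a cone (hence contractible), $\Delta(G-v)$ is contractible by induction, and $\st(v)\cap\Delta(G-v)=\Delta(N_G(v))$ is a simplex since $N_G(v)$ is a clique (hence contractible); the gluing lemma for contractible subcomplexes meeting in a contractible subcomplex then makes $\Delta(G)$ contractible, closing the induction.

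The main obstacle is precisely this last induction: it rests on Dirac's characterization of chordal graphs by simplicial vertices, on the small but essential fact that deleting a simplicial vertex preserves connectivity, and on setting up the Mayer--Vietoris/gluing step correctly (together with a van Kampen/Whitehead argument, or simply citing the gluing lemma). By contrast, the passage through Hochster's formula is routine once the index ranges and the automatic vanishing of $\tilde H_\ell(\Delta(G|_W);\mathbb{K})$ outside $0\le\ell\le|W|-2$ are handled with care.
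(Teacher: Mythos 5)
The paper does not prove Theorem~\ref{thm:Fr}: it is quoted as background, with the proof deferred to Fr\"oberg's original article and to Eagon--Reiner, so there is no in-paper argument to compare against. Judged on its own, your proposal is correct and is essentially the standard modern proof (the one in Herzog--Hibi's \emph{Monomial Ideals}, for instance), which differs from Fr\"oberg's original, more ring-theoretic argument: Hochster's formula plus the fact that each summand is a nonnegative dimension correctly converts linearity of the resolution into the vanishing of $\tilde H_\ell(\Delta(G|_W);\mathbb{K})$ for all $W$ and all $\ell\ge 1$ (your bookkeeping of the index ranges, including the automatic vanishing of $\tilde H_{-1}$ because every vertex of $G$ is a vertex of $\Delta(G)$, is right); the induced $C_m$, $m\ge 4$, gives the forward direction; and Dirac's simplicial-vertex theorem drives the reverse induction. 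Two small points deserve one more sentence each in a full write-up. First, your connectivity argument for $G-v$ is slightly underspecified: to join $u,w\in G-v$, take paths in $G$ from $u$ and from $w$ to $v$ (their initial segments avoid $v$), let $a,b\in N_G(v)$ be the penultimate vertices, and use the edge $ab$ of the clique $N_G(v)$ to splice; equivalently, shortcut any occurrence of $a,v,b$ in a $u$--$w$ path to $a,b$. Second, you only need $\mathbb{K}$-acyclicity, not contractibility, so Mayer--Vietoris applied to $\Delta(G)=\st(v)\cup\Delta(G-v)$ with $\st(v)$ a cone and $\st(v)\cap\Delta(G-v)=\Delta(N_G(v))$ a nonempty simplex already closes the induction without invoking van Kampen or Whitehead; the stronger contractibility claim is also true via the gluing lemma (indeed via the paper's own Propositions~\ref{prop:TopFact2ndIsoThm} and~\ref{prop:quotient-by-contractible-homotopy}), but it is not needed.
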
 
Define the combinatorial \emph{Alexander dual} of a simplicial complex $\Delta$ \cite[p.188]{BrunsHerzog1997} on $n$ vertices to be 
\[\Delta^{\vee} \coloneqq \{F\subset [n]: [n]\setminus F\notin \Delta\}.\]  
The $i$th homology of $\Delta$ and the $(n-i-3)$th cohomology of $\Delta^{\vee}$ are isomorphic by Alexander duality in the sphere $\bbS^{n-2}$.

For a graph $G$, write $\Delta_2(G)$ for the  Alexander dual $\Delta(G)^\vee$ of the clique complex $\Delta(G)$.
The facets of  $\Delta_2(G)$  are the complements of independent sets of size 2 in $G$.

Eagon and Reiner's  reformulation \cite[Proposition 8]{EagonReiner1998} of Fr\"oberg's theorem includes the following equivalences.

\begin{theorem}\label{thm:Froberg} 
The graph $G$ is chordal  $\!\iff\!$ $\Delta_2(G)$ is shellable $\!\iff\!$ $\Delta_2(G)$ is vertex decomposable.
\end{theorem}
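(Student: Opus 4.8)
The plan is to prove the cycle of implications
\[
\text{$\Delta_2(G)$ vertex decomposable} \;\Rightarrow\; \text{$\Delta_2(G)$ shellable} \;\Rightarrow\; \text{$G$ chordal} \;\Rightarrow\; \text{$\Delta_2(G)$ vertex decomposable}.
\]
The first implication is the standard fact of Provan and Billera that vertex decomposable complexes are shellable, so nothing is needed there; the real content is in the remaining two.

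For \emph{shellable $\Rightarrow$ chordal} I would argue homologically through the Eagon--Reiner correspondence. A shellable complex is Cohen--Macaulay over any field $\mathbb{K}$, and the Eagon--Reiner theorem says the Alexander dual $\Delta_2(G)=\Delta(G)^\vee$ is Cohen--Macaulay over $\mathbb{K}$ if and only if the Stanley--Reisner ideal $I_{\Delta(G)}$ has a $\mathbb{K}$-linear resolution. The minimal non-faces of the clique complex $\Delta(G)$ are exactly the non-edges of $G$, so $I_{\Delta(G)}$ is generated by quadratic square-free monomials, and Theorem~\ref{thm:Fr} applies: $I_{\Delta(G)}$ has a linear resolution precisely when $\Delta(G)$ is the clique complex of a chordal graph. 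Since the $1$-skeleton of $\Delta(G)$ recovers $G$, this means exactly that $G$ is chordal.

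For \emph{chordal $\Rightarrow$ vertex decomposable} I would induct on $n=|V(G)|$. If $G$ is complete then $\Delta_2(G)$ is void, and if $G$ has a single non-edge then $\Delta_2(G)$ is a simplex; both are vertex decomposable, so assume $G$ has at least two non-edges. Being chordal and non-complete, $G$ has two non-adjacent simplicial vertices by Dirac's theorem, and a short check shows one of them, call it $v$, still has $G-v$ non-complete: if deleting one of the two left a complete graph, then every non-edge of $G$ would be incident to it, and two distinct non-edges through one simplicial vertex cannot both also pass through the other. The computational heart is the pair of identities $\lk_{\Delta^\vee}(v)=\bigl(\del_\Delta(v)\bigr)^\vee$ and $\del_{\Delta^\vee}(v)=\bigl(\lk_\Delta(v)\bigr)^\vee$, with Alexander duals taken in the ground set $V(G)\setminus\{v\}$. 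Applied to $\Delta=\Delta(G)$, together with $\del_{\Delta(G)}(v)=\Delta(G-v)$ and $\lk_{\Delta(G)}(v)=\Delta(G[N(v)])$ (the clique complex of the subgraph induced on the neighbourhood of $v$), these give
\[
\lk_{\Delta_2(G)}(v)=\Delta_2(G-v),\qquad \del_{\Delta_2(G)}(v)=\Delta\bigl(G[N(v)]\bigr)^\vee .
\]
Now $G-v$ is chordal, so the link is vertex decomposable by induction; and since $v$ is simplicial, $G[N(v)]$ is complete, so $\Delta(G[N(v)])$ is a simplex and its Alexander dual is the join of the boundary of a simplex with a full simplex, hence vertex decomposable. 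Finally the shedding condition is automatic: facets of $\lk_{\Delta_2(G)}(v)$ have $n-3$ vertices while facets of $\del_{\Delta_2(G)}(v)$ have $n-2$, so no face of the link is a facet of the deletion. Thus $v$ is a shedding vertex and $\Delta_2(G)$ is vertex decomposable, closing the cycle.

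I expect the main obstacle to be bookkeeping rather than any deep idea: correctly translating link and deletion in $\Delta_2(G)$ into the graph operations $G-v$ and $G[N(v)]$ via the Alexander-dual identities, and ensuring the simplicial vertex we shed is actually a vertex of $\Delta_2(G)$ (equivalently, $G-v$ non-complete) so that the induction has content. Once those points are pinned down, simpliciality of $v$ is exactly what makes the deletion a join of vertex-decomposable pieces, and the dimension count handles the shedding condition for free.
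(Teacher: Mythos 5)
The paper does not prove this statement: it is quoted as Eagon and Reiner's reformulation of Fr\"oberg's theorem and cited to \cite[Proposition~8]{EagonReiner1998}, so there is no in-paper proof to compare against. Your argument is correct, and it is in essence the standard published one. The implication from shellability to chordality correctly passes through Cohen--Macaulayness, the Eagon--Reiner duality between linear resolutions of $I_{\Delta(G)}$ and Cohen--Macaulayness of $\Delta(G)^\vee=\Delta_2(G)$, and Theorem~\ref{thm:Fr}; the implication from chordality to vertex decomposability correctly sheds a simplicial vertex via the identities $\lk_{\Delta^\vee}(v)=\bigl(\del_\Delta(v)\bigr)^\vee$ and $\del_{\Delta^\vee}(v)=\bigl(\lk_\Delta(v)\bigr)^\vee$ over the ground set $V\setminus\{v\}$. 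The two delicate points are handled: choosing, among Dirac's two non-adjacent simplicial vertices, one whose deletion leaves a non-complete graph guarantees both that $v$ is a genuine vertex of $\Delta_2(G)$ and that $W=V\setminus(N(v)\cup\{v\})$ is nonempty, so that $\del_{\Delta_2(G)}(v)=\langle N(v)\rangle * \partial\langle W\rangle$ is vertex decomposable and the cardinality count ($n-2$ for facets of the deletion versus at most $n-3$ for faces of the link) gives the shedding condition for free.
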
 

Fr\"oberg's Theorem deals with ideals generated by monomials of degree two.  Consideration of higher degree monomials leads us to 
the following generalization of the simplicial complex $\Delta_2(G)$.  Let $k\ge 1$.  Define a complex whose facets are complements of sets $F$ of size $k$ in $G$ such that the induced subgraph of $G$ on  the vertex set $F$ is disconnected; we call this the $k$-cut complex of $G$, and denote it by $\Delta_k(G)$. This complex was first introduced in \cite{DenkerMastersReport}. A different generalization, the total $k$-cut complex $\Delta_k^t(G)$,  is treated in \cite{BDJRSX-TOTAL2024}.  The two notions coincide for $k=2$.

The paper is organized as follows.  In Sections~\ref{sec:Definitions} and~\ref{sec:Topology}, we collect basic definitions and background facts about simplicial complexes and posets. We begin Section~\ref{sec:Constructions} with a construction in Theorem~\ref{thm:anycomplexiscutcomplex},  showing that any pure simplicial complex can be realized as the $k$-cut complex of some graph $G$, which is in fact chordal. We examine the topology of the cut complex $\Delta_k(G)$ in Sections~\ref{sec:Constructions} and~\ref{sec:Mark:k=3-cut-complex}, and consider the effect of  properties of the graph $G$, and graph constructions such as join and disjoint union. For example, in analogy with Fr\"oberg's theorem,  Corollary~\ref{cor:chordal-Delta3-shell} asserts that graph chordality implies  shellability of the 3-cut complex $\Delta_3$. 
This result is the best possible: we show that for any $k\ge 4$, there is a chordal graph $G$ whose $k$-cut complex is not shellable, and is minimal with respect to this property. Section~\ref{sec:Euler-char} describes the face lattice of the cut complex, from which we can deduce information about its homology  and compute the Euler characteristic.  We also determine completely the homotopy type of $\Delta_2(G)$ for connected triangle-free graphs $G$; see Theorem~\ref{thm:Delta2nonchordalSMark}. In Section~\ref{sec:Families} we show that for many common families of graphs, the homotopy type of $\Delta_k(G)$ is a wedge of spheres in a single dimension.
Often there is a simplicial group action on the cut complex $\Delta_k(G)$, which in turn acts on the rational homology. This homology representation is particularly interesting in the case of complete multipartite graphs; see Section \ref{sec:new-multipartite-2022April23S}.  

Table~\ref{tab:Summary-table} summarizes our results for various families of graphs.

\begin{table}[htbp]
\caption{$k$-cut complexes for different graphs}
\label{tab:Summary-table}
\begin{center}

\scalebox{0.7}{
\begin{tabular}{|c|c|c|c|} \hline
Graph & Shellable? & Homotopy type and Betti numbers & Equivariant homology \\ \hline \hline
Complete bipartite, $K_{m,n}$ & \makecell{Theorem~\ref{thm:bipartite}: \\ Yes if and only if $m < k$} & Theorem~\ref{thm:BipartiteEquiv:k<=m<=n} & Theorem~\ref{thm:BipartiteEquiv:k<=m<=n} \\ \hline
Complete multipartite, $K_{m_1, \dotsc, m_r}$ & \makecell{Theorem~\ref{thm:MultipartiteShellable}: \\  
Yes if and only if $m_{r-1}<k$}
& Theorems~\ref{thm:MultipartiteCase4} \& \ref{thm:MultipartiteCase6} & Theorems~\ref{thm:MultipartiteCase4} \& \ref{thm:MultipartiteCase6} \\ \hline
Cycle, $C_n$ & \makecell{Theorem~\ref{thm:DaneCycleShellability}: \\ Yes if $k \geq 3$} & \makecell{Proposition~\ref{prop:MarijaDMT-Delta2cycle} ($k = 2$), \\ Proposition~\ref{prop:BettiNumberCycles} ($k \geq 3$)} & Theorem~\ref{thm:Cyclicgpaction-k-cut-complex-n-cycle} \\ \hline
Squared cycle, $W_n$ & \makecell{
Proposition~\ref{prop:wreath-graph-k+4-homotopy}: \\ No if $k = n-4$} & \makecell{Proposition~\ref{prop:MarijaDMT-Delta2-wreath-graph} ($k = 2$), \\ Proposition~\ref{prop:wreath-graph-k+4-homotopy} ($k = n-4$)} & \makecell{Proposition~\ref{prop:cyclic-gp-actionS-Delta_k(W_{k+4})}\\ ($k=2$ and $k=n-4$)}\\ \hline
Prism over clique, $K_n \times K_2$ & 
\makecell{Theorem~\ref{thm:conj-prismClique2022June21S-MarijaDMT}: \\ 
Yes if and only if $k > n$ (void complex)} & 
\makecell{
Theorem~\ref{thm:conj-prismClique2022June21S-MarijaDMT} ($k \le n$)} & \\ \hline
Tree & \makecell{Corollary~\ref{cor:Mark-trees}: \\ Yes for all $k \geq 2$} & Proposition~\ref{prop:cut-complex-trees} & \\ \hline
Threshold graph & \makecell{Corollary~\ref{cor:Mark-threshold-graph}: \\ Yes for all $k \geq 2$} & & \\ \hline
Connected \& triangle-free & \makecell{Fr\"oberg's theorem (Theorem~\ref{thm:Froberg}) \\ No for $k=2$, except trees}  & Theorem~\ref{thm:Delta2nonchordalSMark} ($k=2$)& \\ \hline
\end{tabular}
}
\end{center}
\end{table}

\noindent
\textbf{Acknowledgments.}  We thank the organizers of the 2021 Graduate Research Workshop in Combinatorics, where this work originated. We also thank Natalie Behague, Dane Miyata, and George Nasr for their early contributions to our project. Marija Jeli\'c Milutinovi\'c has been supported by the Project No.\ 7744592 MEGIC ``Integrability and Extremal Problems in Mechanics, Geometry and Combinatorics'' of the Science Fund of Serbia, and by the Faculty of Mathematics University of Belgrade through the grant (No.\ 451-03-47/2023-01/200104) by the Ministry of Education, Science, and Technological Development of the Republic of Serbia. Rowan Rowlands was partially supported by a graduate fellowship from NSF grant DMS-1953815.

We are also very grateful to the anonymous referees for their careful reading of the paper, for the expert advice and for the many valuable suggestions which we have implemented, including  the suggestion to investigate the group action on $\Delta_k(W_n)$ in Section~\ref{sec:Wreath-MarijaDMT-Rowan-Dane-Mark}.

\section{Definitions}\label{sec:Definitions}

General references for simplicial complexes, shellability and related concepts are \cite{BjTopMeth1995}, \cite{Koz2008},  \cite[Chapter II, Chapter III, Section 2]{RPSCCA1996} and \cite{WachsPosetTop2007}, and \cite{WestGraphTheory1996}  for graph theory. All graphs in this paper are simple (no loops and no multiple edges) and finite.

\begin{df}\label{def:simplicial-complex}  A \emph{simplicial complex} $\Delta$  is a collection of subsets such that 
\[\sigma\in \Delta \text{ and } \tau\subseteq \sigma \Rightarrow \tau \in \Delta.\] 
The elements of $\Delta$ are called its \emph{faces} or \emph{simplices}. If the collection of subsets is empty, i.e., $\Delta$ has no faces,  we call $\Delta$ the \emph{void complex}.  Otherwise $\Delta$ always contains the empty set as a face. 

The \emph{dimension of a face} $\sigma$, $\dim(\sigma)$, is one less than its cardinality; thus the dimension of the empty face is $(-1)$, and the 0-dimensional faces are the \emph{vertices} of $\Delta$. A \emph{$d$-face} or \emph{$d$-simplex} is a face of dimension $d$.  The maximal faces of $\Delta$ are called its \emph{facets}, and the maximum dimension of a facet is the \emph{dimension} $\dim(\Delta)$ of the simplicial complex $\Delta$. The dimension of the void complex is defined to be $-\infty$ (see \cite{Koz2008}).  

A (nonvoid) simplicial complex is \emph{pure} if all its facets have the same dimension, which is then the dimension of the complex.
We write $\Delta=\langle\mathcal{F}\rangle$ to denote the simplicial complex $\Delta$ whose set of facets is $\mathcal{F}$. In this paper all simplicial complexes will be \emph{finite}, i.e., the vertex set is finite.
\end{df}

We will be using the following constructions.
\begin{df}[{\cite{Koz2008}}] \label{defn:star-del-linkS}
Let $\Delta$ be a simplicial complex and $\sigma$ a face of $\Delta$.
\begin{itemize}
\item The \emph{link} of $\sigma$ in $\Delta$ is $\lk_{\Delta} \sigma \coloneqq \{\tau \in \Delta : \text{$\sigma\cap \tau = \emptyset$, and $\sigma\cup \tau \in \Delta$}\}$.
\item The (closed) \emph{star} of $\sigma$ in $\Delta$ is $\st_{\Delta} \sigma \coloneqq \{\tau \in \Delta : \sigma \cup \tau \in \Delta\}$.
\item The \emph{deletion} of $\sigma$ in $\Delta$ is $\del_{\Delta} \sigma \coloneqq \{\tau \in \Delta : \sigma \not\subseteq \tau\}$.
\end{itemize}
(Note that in the deletion, we are not removing proper faces of $\sigma$.)

Thus $\lk_\Delta(\emptyset)=\st_\Delta(\emptyset)=\Delta$, and $\del_\Delta(\emptyset)$ is the void complex.

For $v$ a vertex of $\Delta$, we also have the following useful facts (see \cite{Koz2008}):
\begin{equation}\label{eqn:st-del-lk}
\Delta=\st_\Delta(v)\cup \del_\Delta(v) \quad \text{and} \quad \lk_\Delta(v)=\st_\Delta(v)\cap \del_\Delta(v).
\end{equation}
\end{df}

\begin{df}[{\cite[Chapter III, Section 2]{RPSCCA1996}, \cite[Section~11.2]{BjTopMeth1995}}] \label{def:shelling}
An ordering $F_1,F_2,\dots,F_t$ of the facets of a simplicial complex $\Delta$ is a \emph{shelling} if, for every $j$ with $1<j\leq t$,
$$\left( \bigcup_{i=1}^{j-1}\langle F_i\rangle\right)\cap \langle F_j\rangle$$
is a simplicial complex whose facets all have cardinality $|F_j|-1$, where $\langle F_i\rangle$ is the simplex generated by the face $F_i$.

Equivalently, an ordering $F_1,F_2,\dots,F_t$ of the facets of $\Delta$ is a shelling if and only if for all $i,j$ such that $1\leq i<j\leq t$, there exists $k<j$ such that
$$F_i\cap F_j\subset F_k\cap F_j \quad \text{and} \quad |F_k\cap F_j| = |F_j|-1.$$
If the simplicial complex  $\Delta$ has a shelling, it is called \emph{shellable}.
\end{df}

In combinatorial topology, shellability is an important tool for determining the homotopy type of simplicial complexes, thanks to the following theorem of Bj\"orner.

\begin{theorem}[{\cite[Theorem~1.3]{Bj-AIM1984}}] \label{thm:shell-implies-homotopytype}
A pure shellable simplicial complex of dimension $d$ has the homotopy type of a wedge of spheres, all of dimension $d$. The complex is contractible if there are no spheres in the wedge.
\end{theorem}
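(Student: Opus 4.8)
This is Bj\"orner's theorem, and here is the standard argument I would give. The plan is to induct on the number of facets in a shelling $F_1,F_2,\dots,F_t$ of the pure $d$-dimensional complex $\Delta$, tracking the homotopy type of the partial unions $\Delta_j\coloneqq\langle F_1,\dots,F_j\rangle$. The base case $\Delta_1=\langle F_1\rangle$ is a single $d$-simplex, hence contractible, which we read as a wedge of zero $d$-spheres. For the inductive step I would examine how $\Delta_j=\Delta_{j-1}\cup\langle F_j\rangle$ is assembled from $\Delta_{j-1}$; the key object is the intersection $C_j\coloneqq\Delta_{j-1}\cap\langle F_j\rangle$, which by Definition~\ref{def:shelling} is a subcomplex of the boundary $\partial\langle F_j\rangle$ that is pure of dimension $d-1$ (all its facets have cardinality $|F_j|-1=d$), and $\partial\langle F_j\rangle\cong\bbS^{d-1}$.

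Next I would observe that there are only two possibilities for such a $C_j$: since the facets of $\partial\langle F_j\rangle$ are precisely the $d$-element subsets of $F_j$ — there are $d+1$ of them, one obtained by deleting each vertex — either (i) $C_j=\partial\langle F_j\rangle$, or (ii) $C_j$ omits some facet $F_j\setminus\{v\}$, in which case every facet of $C_j$ contains $v$, so $C_j$ is a cone with apex $v$ and is therefore contractible. In case (ii), $\Delta_j$ is obtained by gluing the contractible simplex $\langle F_j\rangle$ to $\Delta_{j-1}$ along the contractible subcomplex $C_j$; since inclusions of subcomplexes are cofibrations, the gluing lemma gives $\Delta_j\simeq\Delta_{j-1}$ (collapsing $C_j$ makes $(A\cup B)/C_j=A/C_j\vee B/C_j$ with $B/C_j$ contractible). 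In case (i), $\Delta_j$ is obtained from $\Delta_{j-1}$ by attaching a single $d$-cell along the inclusion $\bbS^{d-1}\cong\partial\langle F_j\rangle\hookrightarrow\Delta_{j-1}$; here I would argue the attaching map is null-homotopic, so that $\Delta_j\simeq\Delta_{j-1}\vee\bbS^d$.

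Running the induction up to $j=t$ then gives $\Delta\simeq\bigvee\bbS^d$, with one $d$-sphere contributed by each index $j$ in case (i), and $\Delta$ contractible when there is no such index, which is the assertion. The step I expect to need the most care is the null-homotopy claim in case (i). For $d\ge 2$ it is immediate: by the induction hypothesis $\Delta_{j-1}$ is homotopy equivalent to a wedge of $d$-spheres, hence $(d-1)$-connected, so $\pi_{d-1}(\Delta_{j-1})=0$; and attaching a $d$-cell along a null-homotopic map to a (well-pointed) CW complex $X$ yields $X\vee\bbS^d$. The low-dimensional cases $d=1$ (use instead that $\Delta_{j-1}$ is path-connected, so the two endpoints of the edge $F_j$ lie in a common component and attaching it creates a circle) and $d=0$ (a finite discrete point set, with the conventions $\bbS^{-1}=\emptyset$ and $\partial\langle F_j\rangle=\{\emptyset\}$) require separate but routine bookkeeping. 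An alternative that sidesteps homotopy groups entirely is Bj\"orner's original argument, partitioning the face poset into the half-open intervals $[\mathcal R(F_j),F_j]$ prescribed by the shelling; but the inductive gluing argument above is the most self-contained.
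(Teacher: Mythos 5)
The paper offers no proof of this statement: it is quoted as Bj\"orner's theorem with a citation to \cite[Theorem~1.3]{Bj-AIM1984}, so there is nothing in-paper to compare against. Your inductive argument is the standard proof of that cited result and is correct as written: the dichotomy on $C_j=\Delta_{j-1}\cap\langle F_j\rangle$ (either all of $\partial\langle F_j\rangle$, or else every facet of $C_j$ contains the vertex $v$ omitted from a missing facet, making $C_j$ a cone and hence contractible), the homotopy-trivial gluing in the cone case, and the null-homotopy of the attaching map via the $(d-1)$-connectedness of $\Delta_{j-1}$ supplied by the induction hypothesis are exactly the ingredients of Bj\"orner's argument, and you have correctly flagged the only delicate points (the low-dimensional cases $d=0,1$ and the cofibration fact that the homotopy type of the adjunction depends only on the homotopy class of the attaching map).
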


Figure~\ref{fig:Delta2C5} depicts a triangulation of the M\"obius strip, a 2-dimensional surface which is homotopy equivalent to a one-dimensional circle. Thus the triangulation, a pure 2-dimensional complex, is not shellable.

\begin{rem} \label{rem:empty-complex-0-dim-complex}
By convention, the void complex is shellable. The complex whose only face is the empty set is vacuously shellable. The complex with a unique nonempty facet (i.e., a simplex) is shellable, and contractible. 
\end{rem}

\begin{df}\label{def:graph-disc-sep}
Let $G=(V,E)$ be a graph on $|V|=n$ vertices.  If $S$ is a subset of the vertex set $V$, we write $G[S]$ to denote the \emph{induced subgraph} of $G$ whose vertex set is $S$. 

For $k\ge 2$ define $D_k(G)\coloneqq\{S\subseteq V : \text{$G[S]$ is disconnected and $|S|=k$}\}$. We call the elements of $D_k(G)$ \emph{disconnected $k$-sets} of $G$, and the complements of elements of $D_k(G)$ \emph{separating $(n-k)$-sets} of $G$. When $k=1$, we define $D_1(G)$ to be the empty set.  

See Figure~\ref{fig:sepset}.
\end{df}

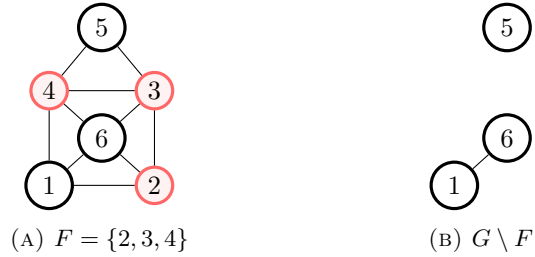
\begin{figure}[htb]
    \centering
    \begin{subfigure}{0.3\textwidth}
    \centering
    \begin{tikzpicture}[scale=0.7]
    \node[standard2] (n1) at (0,0) {1};
    \node[special] (n2) at (2,0) {2};
    \node[special] (n3) at (2,1.8) {3};
    \node[special] (n4) at (0,1.8) {4};
    \node[standard2] (n5) at (1,3) {5};
    \node[standard2] (n6) at (1,.9) {6};
    \draw (n1) -- (n2);
    \draw (n1) -- (n4);
    \draw (n3) -- (n2);
    \draw (n3) -- (n4);
    \draw (n3) -- (n5);
    \draw (n4) -- (n5);
    \draw (n1) -- (n6);
    \draw (n2) -- (n6);
    \draw (n3) -- (n6);
    \draw (n4) -- (n6);
    \end{tikzpicture}
    \caption{$F=\{2,3,4\}$}
    \end{subfigure}
    \begin{subfigure}{0.3\textwidth}
    \centering
    \begin{tikzpicture}[scale=0.7]
    \node[standard2] (n1) at (0,0) {1};
    \node[standard2] (n5) at (1,3) {5};
    \node[standard2] (n6) at (1,.9) {6};
    
    \draw (n1) -- (n6);
    \end{tikzpicture}
    \caption{$G\setminus F$}
    \end{subfigure}
    \caption{Example of a separating set}\label{fig:sepset}
\end{figure}

\begin{df}\label{def:cut-cplx}
Let $G=(V,E)$ be a graph on $|V|=n$ vertices, and let $k\ge 1$. Define the \emph{$k$-cut complex} of the graph $G$  to be the simplicial complex 
\[\Delta_k(G)\coloneqq\langle F\subseteq V\mid V\setminus F\in D_k(G)\rangle.\]
The facets of the cut complex $\Delta_k(G)$ are the separating sets of $G$ of size $(n-k)$. Thus the cut complex $\Delta_k(G)$ (if not void) has dimension $n-k-1$.  Equivalently, $\sigma$ is a face of the cut complex $\Delta_k(G)$ if and only if its complement $V\setminus \sigma$ contains a subset $S$ of size $k$ such that the induced subgraph $G[S]$ is disconnected.  Note the following inclusion, when $k\ge 2$:
\begin{equation} \label{eqn:inclusion}
\Delta_{k+1}(G)\subseteq \Delta_k(G),
\end{equation}
and the fact that the vertices of $\Delta_k(G)$ may be  a proper subset of the vertices of the graph $G$. 
See Figure~\ref{fig:Ex-Delta2} and Figure~\ref{fig:long-kayaks} for contrasting examples.
\end{df}

    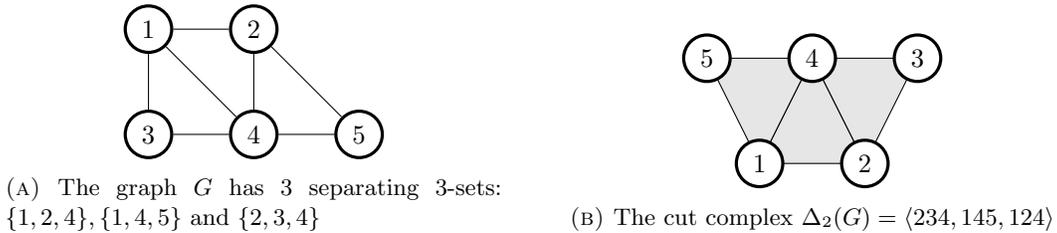
\begin{figure}[htb]
        \centering
        \begin{subfigure}{0.4\textwidth}
        \centering
        \begin{tikzpicture}[scale=0.7]
\node[standard] (node3) at (0,0) {3};
\node[standard] (node1) at (0,2) {1};
\node[standard] (node4) at (2,0) {4};
\node[standard] (node2) at (2,2) {2};
\node[standard] (node5) at (4,0) {5};

\draw (node1) -- (node3);
\draw (node1) -- (node2);
\draw (node1) -- (node4);
\draw (node2) -- (node4);
\draw (node2) -- (node5);
\draw (node3) -- (node4);
\draw (node4) -- (node5);
        \end{tikzpicture}
        \caption{The graph $G$ has 3 separating 3-sets: $\{1,2,4\}, \{1,4,5\}$ and $\{2,3,4\}$}
        \end{subfigure} \qquad
        \begin{subfigure}{0.4\textwidth}
        \centering
        \begin{tikzpicture}[scale=0.7]
\draw[fill=gray!20] (3,0) -- (4,2) -- (2,2) -- cycle;
\draw[fill=gray!20] (1,0) -- (2,2) -- (3,0) -- cycle;
\draw[fill=gray!20] (1,0) -- (2,2) -- (0,2) -- cycle;

\node[standard] (node5) at (0,2) {5};
\node[standard] (node1) at (1,0) {1};
\node[standard] (node4) at (2,2) {4};
\node[standard] (node2) at (3,0) {2};
\node[standard] (node3) at (4,2) {3};
        \end{tikzpicture}
        \caption{The cut complex $\Delta_2 (G)=\langle234,145,124\rangle$}
        \end{subfigure}
        \caption{2-cut complex of graph $G$}\label{fig:Ex-Delta2}
    \end{figure}

\begin{ex}\label{ex:Examples-cut-complex}  Let $G$ be a graph on $n$ vertices. We record some easy facts about cut complexes.  
\begin{enumerate}

\item $\Delta_k(G)$ is void if $k=1$ or $k>n$.

\item $\Delta_n(G)$ is 
$\begin{cases} \text{the void complex,} & \text{if $G$ is connected},\\
\text{the $(-1)$-dimensional complex $\{\emptyset\}$}, 
& \text{otherwise}. \end{cases}$

\item $\Delta_k(G)$ is void for $k\ge n-r+1$ if $G$ is $r$-connected: at least $r$ vertices must be removed to disconnect the graph, so $n-k\le r-1$.

\item  If $G$ is the complete graph $K_n$, then $\Delta_k(G)$ is void for all $k\ge 1$. 

\item If $G$ is the edgeless graph, then for $2\le k\le n-1$, $\Delta_k(G)$ is the $(n-k-1)$-skeleton of an $(n-1)$-dimensional simplex.

\end{enumerate}
\end{ex}

\begin{df}[{\cite{WestGraphTheory1996}}] \label{defn:chordal}
A graph is \emph{chordal} if it contains no induced cycle of size greater than 3.
\end{df}

Recall Eagon--Reiner's reformulation of Fr\"oberg's theorem from the Introduction (Theorem~\ref{thm:Froberg}) that $\Delta_2(G)$ is shellable if and only if $G$ is chordal. This is illustrated in Figures~\ref{fig:Ex-Delta2} and~\ref{fig:Delta2C5}. Figure~\ref{fig:Ex-Delta2} shows a chordal graph and its shellable 2-cut complex, whereas Figure~\ref{fig:Delta2C5} is an example of a non-chordal graph, with  nonshellable 2-cut complex.

\begin{figure}[htb]
\centering
\begin{subfigure}{0.4\textwidth}
\centering
\begin{tikzpicture}
\node[standard] (node1) at (0,3) {1};
\node[standard] (node2) at (1.5,1.7) {2};
\node[standard] (node3) at (1,0) {3};
\node[standard] (node4) at (-1,0) {4};
\node[standard] (node5) at (-1.5,1.7) {5};
%
\draw (node1) -- (node2);
\draw (node2) -- (node3);
\draw (node3) -- (node4);
\draw (node4) -- (node5);
\draw (node5) -- (node1);
\end{tikzpicture}
\caption{The cycle $C_5$}
\end{subfigure}
\begin{subfigure}{0.4\textwidth}
\centering
\begin{tikzpicture}
\draw[fill=gray!20] (0,0) -- (0.7,1.5) -- (1.4,0) -- (0,0);
\draw[fill=gray!20] (2.1,1.5) -- (0.7,1.5) -- (1.4,0) -- (2.1,1.5);
\draw[fill=gray!20] (2.1,1.5) -- (1.4,0) -- (2.8,0) -- (2.1,1.5);
\draw[fill=gray!20] (3.5,1.5) -- (2.8,0) -- (2.1,1.5) -- (3.5,1.5);
\draw[fill=gray!20] (3.5,1.5) -- (4.2,0) -- (2.8,0) -- (3.5,1.5);
\draw[very thick, color=red,->] (0,0) -- (.55*0.7,.55*1.5);
\draw[very thick, color=red] (.55*0.7,.55*1.5) -- (0.7,1.5);
\draw[very thick, color=red,->] (3.5,1.5) -- (3.5+.55*.7,1.5-.55*1.5);
\draw[very thick, color=red] (3.5+.55*.7,1.5-.55*1.5) -- (4.2,0);

\node[standard] (n5) at (0,0) {5};
\node[standard] (n2) at (0.7,1.5) {2};
\node[standard] (n4) at (1.4,0) {4};
\node[standard] (n1) at (2.1,1.5) {1};
\node[standard] (n3) at (2.8,0) {3};
\node[standard] (n5') at (3.5,1.5) {5};
\node[standard] (n2') at (4.2,0) {2};
%
\end{tikzpicture}
\caption{$\Delta_2 (C_5)=\langle{245}{,124}{,134} {,135}{,235}\rangle$}
\end{subfigure}
\caption{The 2-cut complex for $C_5$ is a M\"obius strip}
\label{fig:Delta2C5}
\end{figure}
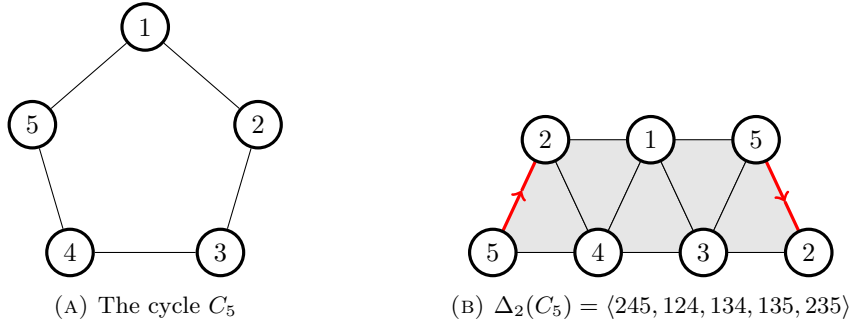

\section{Background from topology}\label{sec:Topology} 

The following facts are taken from \cite{BjTopMeth1995} and \cite{Koz2008}. Note that in this paper we assume a nonvoid simplicial complex always contains the empty set as a face. %
\begin{df}[{\cite[Section 9]{BjTopMeth1995}}] \label{def:join-cone-susp}
The \emph{join} of two simplicial complexes $\Delta_1$ and $\Delta_2$ with disjoint vertex sets is the complex
\[\Delta_1 * \Delta_2 \coloneqq \{\sigma\cup \tau : \sigma\in \Delta_1, \tau\in \Delta_2\}.\]
Thus the join $\Delta_1 * \Delta_2$ contains $\Delta_1$ and $ \Delta_2$ as subcomplexes.

The \emph{cone} over $\Delta$ and the \emph{suspension} of $\Delta$ are the complexes 
\[\cone(\Delta) \coloneqq \Delta * \Gamma_1, \quad \susp(\Delta) \coloneqq \Delta * \Gamma_2 = \Delta*\{u\}\cup\Delta*\{v\}, \]
where $\Gamma_1$ is the 0-dimensional simplicial complex with one vertex, and $\Gamma_2$ is the 0-dimensional complex with two vertices $u,v$. 

Let $\mathbb{S}^{d-1}=\{x\in \mathbb{R}^d: ||x||=1\}$ and $\mathbb{B}^{d}=\{x\in \mathbb{R}^d: ||x||\le 1\}$ denote respectively the $(d-1)$-sphere and the $d$-ball.  Then 
\[\mathbb{S}^{-1}=\emptyset, \quad \mathbb{S}^{0}=\{\text{two points}\}, \quad \mathbb{B}^{0}=\{\text{point}\}.\]
Furthermore, we have the following homeomorphism for spheres under the join operation:
\begin{equation}\label{eqn:join-of-spheres}
\mathbb{S}^{m}*\mathbb{S}^{n}\cong\mathbb{S}^{m+n+1}.
\end{equation}
\end{df}

We will consider reduced simplicial homology   \cite{Hatcher2002} over 
the integers $\mathbb{Z}$, or, for representation-theoretic purposes, over the rationals $\mathbb{Q}$, writing $\tilde{H}_i(\Delta)$ for the $i$th reduced homology of the simplicial complex $\Delta$. We record the following facts: 
$\tilde{H}_i(\emptyset)$ is nonzero if and only if $i=-1,$ in which case it is free of rank one. 
If $\Delta$ is nonvoid, 
$\tilde{H}_i(\Delta)$ is nonzero  only when $0\le i\le \dim(\Delta)$, and the reduced homology $\tilde{H}_0(\Delta)$ is free of rank one less than the number of connected components of $\Delta$. 

\begin{theorem}[{\cite{Hatcher2002}, \cite[Eqn~(9.12)]{BjTopMeth1995}}] \label{thm:Kunneth}
Let $\Delta_1$ and $\Delta_2$ be finite complexes. Assume at least one of $\tilde{H}_p(\Delta_1)$, $\tilde{H}_q(\Delta_2)$ over $\mathbb{Z}$ is torsion-free when $p+q=r-1$. Then the reduced homology of the join $\Delta_1*\Delta_2$ in degree $r$ is given by 
\[\tilde{H}_r(\Delta_1*\Delta_2) \cong \bigoplus_{p+q=r-1} \left(\tilde{H}_p(\Delta_1) \otimes \tilde{H}_q(\Delta_2)\right).\]
\end{theorem}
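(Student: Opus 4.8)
The plan is to deduce the statement from the purely algebraic Künneth theorem for tensor products of chain complexes of free $\mathbb{Z}$-modules, by first identifying the augmented simplicial chain complex of a join with the tensor product of the augmented chain complexes of its factors, up to a single shift in degree.

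First I would set up this chain-level isomorphism. Fix linear orders on the vertex sets of $\Delta_1$ and of $\Delta_2$, and extend them to an order on the vertices of $\Delta_1 * \Delta_2$ in which every vertex of $\Delta_1$ precedes every vertex of $\Delta_2$. By definition, every face of $\Delta_1 * \Delta_2$ is uniquely a disjoint union $\sigma \sqcup \tau$ with $\sigma \in \Delta_1$, $\tau \in \Delta_2$ (either factor possibly the empty face), and $\dim(\sigma \sqcup \tau) = \dim\sigma + \dim\tau + 1$. Working with the augmented (reduced) chain complexes $\tilde{C}_\bullet(\Delta_i)$, in which the empty face is a free generator in degree $-1$ so that the homology of $\tilde{C}_\bullet(\Delta_i)$ is $\tilde{H}_\bullet(\Delta_i)$, I would show that the map sending an oriented simplex $\sigma \otimes \tau$ to $\sigma \sqcup \tau$ (up to a fixed sign depending only on the degrees) is an isomorphism of chain complexes
\[
\bigl(\tilde{C}_\bullet(\Delta_1) \otimes_{\mathbb{Z}} \tilde{C}_\bullet(\Delta_2)\bigr)[1] \ \xrightarrow{\ \cong\ } \ \tilde{C}_\bullet(\Delta_1 * \Delta_2),
\]
where $[1]$ raises all degrees by one. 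The identification of graded groups is immediate from the observation above; the content is the boundary computation $\partial(\sigma \sqcup \tau) = (\partial\sigma) \sqcup \tau + (-1)^{\dim\sigma + 1}\,\sigma \sqcup (\partial\tau)$, which the chosen vertex order produces, and which must be matched against the tensor-product differential after the shift. The augmented convention also makes the degenerate cases transparent: when $\Delta_1 = \{\emptyset\} = \mathbb{S}^{-1}$ we have $\Delta_1 * \Delta_2 = \Delta_2$, and the formula reduces to $\tilde{H}_{-1}(\{\emptyset\}) \otimes \tilde{H}_\bullet(\Delta_2) \cong \tilde{H}_\bullet(\Delta_2)$, consistent with $\mathbb{S}^{-1} * \mathbb{S}^m = \mathbb{S}^m$ from \eqref{eqn:join-of-spheres}.

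Then I would invoke the algebraic Künneth theorem. Since $\tilde{C}_\bullet(\Delta_1)$ and $\tilde{C}_\bullet(\Delta_2)$ are bounded complexes of finitely generated free abelian groups, the algebraic Künneth theorem \cite{Hatcher2002} gives, for every $n$, a short exact sequence
\[
0 \to \bigoplus_{p+q=n} \tilde{H}_p(\Delta_1) \otimes \tilde{H}_q(\Delta_2) \to \tilde{H}_{n+1}(\Delta_1 * \Delta_2) \to \bigoplus_{p+q=n-1} \operatorname{Tor}_1^{\mathbb{Z}}\!\bigl(\tilde{H}_p(\Delta_1), \tilde{H}_q(\Delta_2)\bigr) \to 0,
\]
where the middle term is rewritten using the isomorphism of the previous step (the degree shift turns $H_{n+1}$ of the shifted tensor complex into $H_n$ of the tensor complex, to which the usual Künneth sequence applies). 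For finitely generated abelian groups $\operatorname{Tor}_1^{\mathbb{Z}}(A,B)$ vanishes as soon as one of $A,B$ is torsion-free, so the torsion-freeness hypothesis annihilates every summand on the right; taking $n = r-1$ then yields exactly the asserted isomorphism in degree $r$.

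I expect the only genuine difficulty to be the sign bookkeeping in the chain-level isomorphism — choosing signs so that $\sigma \otimes \tau \mapsto \pm\,\sigma \sqcup \tau$ really is a chain map, and keeping the single degree shift straight so that the indices in the conclusion come out as $p+q = r-1$ — together with the purely formal verification that the involved complexes are finite and free so the algebraic Künneth theorem applies. There is no conceptual obstacle, since all the substance is imported. An alternative route, which trades the sign-chasing for some point-set bookkeeping, is to use the homotopy equivalence between $\Delta_1 * \Delta_2$ and the suspension of the smash product $\Delta_1 \wedge \Delta_2$, together with the reduced Künneth formula for smash products and $\tilde{H}_r(\susp X) \cong \tilde{H}_{r-1}(X)$; but the chain-level argument above is self-contained and handles void or one-point factors cleanly.
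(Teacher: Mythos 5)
The paper does not prove this theorem --- it is quoted as background from Hatcher and from Bj\"orner's survey --- so there is no in-paper argument to compare against. Your route, identifying $\tilde{C}_\bullet(\Delta_1 * \Delta_2)$ with the degree-shifted tensor product of the augmented chain complexes and then applying the algebraic K\"unneth theorem, is the standard proof, and everything up to the last step is sound: the unique decomposition of faces, the dimension count $\dim(\sigma\sqcup\tau)=\dim\sigma+\dim\tau+1$, the boundary formula with the chosen vertex order, and the freeness and finiteness of the augmented chain groups are all as you say, and the sign bookkeeping is routine.

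The one genuine problem is in the final step. With $n=r-1$, the Tor term in your short exact sequence is indexed by $p+q=n-1=r-2$, whereas the torsion-freeness hypothesis (both as stated in the theorem and as you invoke it) concerns pairs with $p+q=r-1$. These are different sets of pairs, so the hypothesis does not annihilate the summands you need to kill. This is not a cosmetic slip: take $\Delta_1=\Delta_2$ to be a triangulation of $\mathbb{RP}^2$ and $r=4$. Every pair with $p+q=3$ involves a vanishing (hence torsion-free) reduced homology group, so the hypothesis holds, yet $\tilde{H}_4(\Delta_1*\Delta_2)\cong \operatorname{Tor}(\mathbb{Z}/2,\mathbb{Z}/2)\cong\mathbb{Z}/2$ while the asserted right-hand side is $0$. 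The discrepancy traces back to the theorem statement itself, whose hypothesis is off by one; the correct sufficient condition for the isomorphism in degree $r$ is that for each pair with $p+q=r-2$ at least one of $\tilde{H}_p(\Delta_1)$, $\tilde{H}_q(\Delta_2)$ be torsion-free (or, as in Bj\"orner's (9.12) and in every application made in the paper, the blanket assumption that one of the two complexes has torsion-free homology in all degrees). You should either correct the index in the last sentence of your argument or prove the theorem under the repaired hypothesis; as written, the step ``the torsion-freeness hypothesis annihilates every summand on the right'' is false.
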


In particular, when the appropriate homology groups are torsion-free, the K\"unneth Theorem confirms the well-known group isomorphism 
\begin{equation}\label{hom:susp}
\tilde{H}_{r}(\susp(\Delta)) \cong  \tilde{H}_{r-1}(\Delta).
\end{equation}
Also note that if $\Delta_1$ has the homotopy type of a wedge of $\beta_p$ spheres of dimension $p$, and $\Delta_2$ has the homotopy type of a wedge of $\beta_q$ spheres of dimension $q$, then the join $\Delta_1 * \Delta_2$ has the homotopy type of a wedge of $(\beta_p\beta_q)$ spheres of dimension  $p+q+1$.

\begin{df}[{\cite{RPSCCA1996, RPSEC11997}}] \label{def:Betti-EulerChar}
Let $\Delta$ be a finite simplicial complex of dimension $d$, and let
\[\beta_i \coloneqq \rank\tilde{H}_i(\Delta, \mathbb{Z})=\dim_{\mathbb{Q}}\tilde{H}_i(\Delta, \mathbb{Q}), \ i\ge -1.\]
The $\beta_i$ are the \emph{(reduced) Betti numbers} of $\Delta$. Let $f_i$ be the number of $i$-dimensional faces of the $d$-dimensional complex $\Delta.$ Then $(f_0, f_1,\ldots f_d)$ is the \emph{$f$-vector} of $\Delta$. The \emph{Euler characteristic} of $\Delta$ is defined to be $\sum_{i\ge 0} (-1)^i f_i$.  The \emph{reduced Euler characteristic} $\mu(\Delta)$ of $\Delta$ is defined to be one less than the Euler characteristic: 
\[\mu(\Delta) = \bigg( \sum_{i\ge 0} (-1)^i f_i \bigg) - 1.\]
\end{df}

Letting $f_{-1}=1$ for the empty face,  we have the reduced Euler-Poincar\'e formula 
\[\mu(\Delta)=\sum_{i\ge -1} (-1)^i f_i =\sum_{i\ge -1} (-1)^i \beta_i.\]
\subsection{Poset Topology}\label{sec:posets}
In order to  determine the homotopy type of a simplicial complex $\Delta$, and in particular the group representation on the rational homology,  it is often helpful to work with the face lattice $\mathcal{L}(\Delta)$ of $\Delta$.  We recall some notions about posets and poset topology. See \cite{BjTopMeth1995}, \cite{WachsPosetTop2007} for more details. 

A poset $Q$ is bounded if it has a unique maximal element $\hat 1$ and a unique minimal element $\hat 0$. Let $\bar{Q}$ denote the proper part $Q\setminus\{\hat 0, \hat 1\}$ of a bounded poset $Q$. By the order complex of $Q$, we will always mean the simplicial complex, denoted $\Delta(\bar Q)$, of chains in the \emph{proper part} $\bar{Q}$ of $Q$. Let $Q_1$ and $Q_2$ be bounded posets.  It is well known (\cite{BjTopMeth1995}, \cite{WachsPosetTop2007}) that one has the homotopy equivalence 
\begin{equation}\label{eqn:prod-two-posets}
\Delta(\overline{Q_1\times Q_2}) \simeq\susp \left( \Delta(\bar{Q}_1) * \Delta(\bar{Q}_2) \right) \simeq\bbS^0*\Delta(\bar{Q}_1) * \Delta(\bar{Q}_2).
\end{equation}
In particular, if the order complex of $Q_i$ has the homotopy type of a wedge of $q_i$ spheres of dimension $d_i$, $i=1,2$, then the order complex of $Q_1\times Q_2$ is a wedge of $q_1 q_2$ spheres of dimension $d_1+d_2+2$.

Equation~\eqref{eqn:prod-two-posets} generalizes inductively to an $r$-fold product of bounded posets $Q_i$, $i=1, \ldots, r$, $r\ge 2$, and we record this homotopy equivalence for later use:
\begin{equation}\label{eqn:prod-r-posets}
\Delta(\overline{Q_1\times \dotsb \times Q_r}) \simeq\bbS^{r-2} * \left( \Delta(\bar{Q}_1) * \dotsb * \Delta(\bar{Q}_r) \right).
\end{equation}

Recall that the face lattice of a simplicial complex $\Delta$ is the poset of faces ordered by inclusion, with the empty face as the unique bottom element, and an artificially appended top element.  This makes the face lattice $\mathcal{L}(\Delta)$ of a finite simplicial complex $\Delta$ into a bounded poset. Its proper part is the poset consisting of the  nonempty faces of $\Delta$. The order complex of the proper part of the face lattice $\mathcal{L}(\Delta)$ is the barycentric subdivision of $\Delta$, and hence is homeomorphic to $\Delta$, and therefore has the same homotopy type. See, e.g.,\ \cite[Section 9.3]{BjTopMeth1995}. When $G$ is a finite group with a simplicial action on $\Delta$, the representation on the rational homology of $\Delta$ coincides with the representation on the homology of the face lattice. 

Let $B_p$ denote the Boolean lattice of subsets of a set with $p$ elements, and let $P(p,k)=B_p^{\le p-k}\cup \{\hat 1\}$ denote the truncated Boolean lattice, i.e., the 
subposet of $B_p$ consisting of subsets with at most $p-k$ elements, with an artificially appended top element $\hat 1$.  (This makes $P(p,k)$ a bounded poset with unique top and bottom elements.)  Note that $B_p$ is the face lattice of the boundary of a $(p-1)$-simplex, and $P(p,k)$, $0\le k\le p-1$, is the face lattice of the $(p-k-1)$-skeleton of a $(p-1)$-simplex. The poset $P(p,k)$ is an example of a rank-selected subposet, and there is a large literature on the topic of rank-selection in posets and group actions \cite{RPSGaP1982},  \cite{WachsPosetTop2007}.

It is a well-known fact that the poset $P(p,k)$ is lexicographically shellable (\cite{BjTAMS1980}, \cite{BjWachsTAMS1983}), with M\"obius number $\binom{n-1}{k-1}$, and hence its order complex is Cohen--Macaulay, and is homotopy equivalent to a wedge of $\binom{p-1}{k-1}$ spheres of dimension $p-k-1$. The homotopy type follows from the fact that the order complex of $P(p,k)$ is the barycentric subdivision of the $(p-k-1)$-skeleton of a $(p-1)$-simplex, and the latter is shellable by \cite[Theorem 2.9]{BjWachsI1996}.  See also \cite[Corollary 4.4, Theorem 8.1]{BjWachsTAMS1983}.

Our main tools from poset topology for determining homotopy type are as follows:

\begin{theorem}[Quillen fiber lemma {\cite{QFibLemma1978}, \cite[Theorem 15.28]{Koz2008}, \cite[Theorem 5.2.1]{WachsPosetTop2007}}] \label{thm:QuillenfiberLemma}
Let $P$ and $Q$ be bounded posets and $f:\bar{P}\rightarrow \bar{Q}$ a poset map.  If for all $q\in \bar{Q},$ the order complex of the fiber $f^{-1}_{\le}(q)\coloneqq\{p\in \bar{P}:f(p)\le q\}$ is contractible, then the map $f$ induces a homotopy equivalence of order complexes $\Delta(\bar{P})\simeq\Delta(\bar{Q})$. Furthermore, if $G$ is a finite group of automorphisms of $P$ and $Q$, and the poset map $f$ commutes with the action of $G$, then the homotopy equivalence is group equivariant and hence induces a $G$-module isomorphism in rational homology.
\end{theorem}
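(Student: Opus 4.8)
\emph{Proposed approach.} The plan is to realize both $\Delta(\bar P)$ and $\Delta(\bar Q)$ as the homotopy colimit of one and the same diagram of spaces indexed by the poset $\bar Q$, with the fiber hypothesis feeding in at exactly one point. The down-fibers $\Delta(f^{-1}_{\le}(q))$ ($q\in\bar Q$) are subcomplexes of $\Delta(\bar P)$ that cover it, since a chain $p_0<\dots<p_k$ of $\bar P$ lies in $\Delta(f^{-1}_{\le}(f(p_k)))$; and since $q\le q'$ forces $f^{-1}_{\le}(q)\subseteq f^{-1}_{\le}(q')$, they assemble into a functor $\bar Q\to\{\text{spaces}\}$. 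Crucially the diagram is ``cover-like'': $\Delta(f^{-1}_{\le}(q))\cap\Delta(f^{-1}_{\le}(q'))=\bigcup\{\Delta(f^{-1}_{\le}(q'')):q''\le q,\ q''\le q'\}$, because the top vertex of a chain lying in both fibers maps under $f$ to a common lower bound of $q$ and $q'$ in $\bar Q$.

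First I would invoke the Projection Lemma (equivalently, the Nerve Theorem of \cite{BjTopMeth1995} in homotopy-colimit form, or the Patchwork Theorem of \cite{Koz2008}), which because of the cover-like property yields $\Delta(\bar P)\simeq\operatorname*{hocolim}_{q\in\bar Q}\Delta(f^{-1}_{\le}(q))$. Second, the hypothesis that each $\Delta(f^{-1}_{\le}(q))$ is contractible makes the collapse of every term of the diagram to a point a levelwise homotopy equivalence of diagrams over $\bar Q$, hence a homotopy equivalence of homotopy colimits; and the homotopy colimit of the constant point-diagram over $\bar Q$ is by definition $\Delta(\bar Q)$. Thus $\Delta(\bar P)\simeq\Delta(\bar Q)$, and tracing the maps shows this equivalence is (homotopic to) the simplicial map induced by $f$ itself.

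The main obstacle is the Projection Lemma: it is precisely the place where the purely combinatorial statement that the fibers cover $\Delta(\bar P)$ compatibly with intersections is upgraded to a topological homotopy-colimit decomposition, via a cofinality argument collapsing the bar construction onto the union $\bigcup_q\Delta(f^{-1}_{\le}(q))=\Delta(\bar P)$. If one wishes to stay within elementary poset topology, the classical substitute is the non-Hausdorff mapping cylinder $M$ of $f$ (underlying set $\bar P\sqcup\bar Q$, with $p<_M q$ iff $f(p)\le q$ in $\bar Q$): the order-preserving retraction $M\to\bar Q$ that fixes $\bar Q$ and equals $f$ on $\bar P$ dominates the identity and is idempotent, so $\Delta(\bar Q)$ is a deformation retract of $\Delta(M)$ using nothing but the order-homotopy principle (two comparable poset maps are homotopic on order complexes); the reverse statement, that $\Delta(\bar P)$ is a deformation retract of $\Delta(M)$, requires an induction on $|\bar Q|$ in which adjoining one more element of $\bar Q$ to $M$ changes nothing because the attaching subcomplex is the order complex of a down-set of $M$ which, by the same retraction applied to a smaller mapping cylinder, collapses onto a single fiber. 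That induction is the technical heart of the hands-on route, and where I would expect to spend the most effort.

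For the equivariant statement, note that $f$, being $G$-equivariant, induces a $G$-map $\Delta(\bar P)\to\Delta(\bar Q)$, and the argument above identifies this very map (up to homotopy) as the homotopy equivalence. A $G$-map that induces an isomorphism on reduced homology does so $G$-equivariantly, since for each $g\in G$ the induced isomorphism commutes with $g_*$ by naturality of homology; hence $f$ induces an isomorphism of $G$-modules $\tilde H_*(\Delta(\bar P);\mathbb{Q})\cong\tilde H_*(\Delta(\bar Q);\mathbb{Q})$, as claimed.
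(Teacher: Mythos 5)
This theorem is stated in the paper as a cited background result (Quillen \cite{QFibLemma1978}, \cite[Theorem 15.28]{Koz2008}, \cite[Theorem 5.2.1]{WachsPosetTop2007}); the paper supplies no proof of its own, so there is nothing internal to compare against. Your argument is, however, a correct and standard proof. The first route is exactly the homotopy-colimit derivation of Quillen's Theorem A for posets: the down-fibers cover $\Delta(\bar P)$, the intersection identity you state makes the colimit of the fiber diagram equal to the union $\Delta(\bar P)$, the Projection Lemma upgrades this to a homotopy-colimit decomposition, and the Homotopy Lemma plus contractibility of the fibers collapses the diagram to the constant point diagram, whose homotopy colimit is the nerve $\Delta(\bar Q)$. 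Your fallback via the non-Hausdorff mapping cylinder and the order-homotopy principle is the classical elementary proof and is also sound; you correctly flag the induction over $|\bar Q|$ as the technical heart. The only point deserving a caveat is the equivariant clause: what you actually prove is that the simplicial map $\Delta(f)$ is a $G$-map and a (non-equivariant) homotopy equivalence, whence by naturality $f_*$ is an isomorphism of $G$-modules on rational homology. That is precisely what the paper uses, but it is weaker than asserting a genuine $G$-homotopy equivalence (an equivariant homotopy inverse), which in general requires additional fixed-point hypotheses; your phrasing is appropriately restricted to the homology statement, so no gap results for the applications in the paper.
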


Recall (see \cite{RPSEC11997}) that the reduced Euler characteristic of a simplicial complex coincides with the M\"obius number of its face lattice.  The following result of Baclawski will be useful. 

\begin{theorem}[{\cite[Lemma 4.6]{BacEuJC1982}, 
\cite[Lemma 3.16.4]{RPSEC11997}}] \label{thm:Bac-mu}
If $P$ is a bounded poset and $Q$ is a subposet of $P$ containing $\hat 0, \hat 1$, then 
\[\mu(Q)-\mu(P) = \sum_{\substack{\hat 0<x_1<x_2<\dots<x_k<\hat 1 \\ k\ge 1, x_i\notin Q}} (-1)^k \mu_P(\hat 0, x_1) \mu_P(x_1,x_2) \dots \mu_P( x_k,\hat 1),\]
where the sum runs over all nonempty chains with elements not in $Q$.  Here $\mu_P$ denotes the M\"obius function of the poset $P$.

When $Q$ is a subposet obtained from $P$ by removing an antichain $\mathcal{A},$  this simplifies to 
\begin{equation}\label{eqn:mu-deleted-antichain}
\mu(Q)-\mu(P) = \sum_{\substack{\hat 0<x<\hat 1 \\ x\in \mathcal{A}}} (-1) \mu_P(\hat 0, x)  \mu_P(x,\hat 1).
\end{equation}
\end{theorem}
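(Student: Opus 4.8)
The plan is to reduce everything to Philip Hall's theorem, which expresses the Möbius number of a bounded poset $R$ (with $\hat 0\neq\hat 1$; the opposite case is trivial) as $\mu(R)=\mu_R(\hat 0,\hat 1)=\sum_{\hat 0=z_0<z_1<\dots<z_{\ell}=\hat 1}(-1)^{\ell}$, the sum running over all chains from $\hat 0$ to $\hat 1$ in $R$, a chain of length $\ell$ contributing $(-1)^{\ell}$ (see \cite{RPSEC11997,WachsPosetTop2007}). Since $\hat 0,\hat 1\in Q$ and $Q$ carries the order induced from $P$, the poset $Q$ is bounded with the same $\hat 0$ and $\hat 1$ as $P$, and $P\setminus Q\subseteq(\hat 0,\hat 1)_P$. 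A chain from $\hat 0$ to $\hat 1$ in $P$ lies in $Q$ exactly when it avoids $P\setminus Q$; subtracting the two instances of Hall's theorem gives
\[\mu(Q)-\mu(P)=-\sum_{\substack{C\ \text{a chain }\hat 0\to\hat 1\text{ in }P\\ C\cap(P\setminus Q)\neq\emptyset}}(-1)^{\ell(C)},\]
where $\ell(C)$ denotes the length of $C$.

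It remains to show the right-hand side of the statement equals the same thing. Fix a nonempty chain $T=\{x_1<\dots<x_k\}\subseteq P\setminus Q$ and set $x_0=\hat 0$, $x_{k+1}=\hat 1$. Expanding each factor $\mu_P(x_i,x_{i+1})$ by Hall's theorem and multiplying out, a term of $\prod_{i=0}^{k}\mu_P(x_i,x_{i+1})$ is a choice, for each $i$, of a chain from $x_i$ to $x_{i+1}$ in $P$; concatenation gives a bijection between such tuples and the chains $C$ from $\hat 0$ to $\hat 1$ in $P$ with $T\subseteq C$, under which the sign $(-1)^{\sum_i\ell_i}$ becomes $(-1)^{\ell(C)}$. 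Hence the right-hand side equals $\sum_{T}(-1)^{|T|}\sum_{C\supseteq T}(-1)^{\ell(C)}$, where $T$ ranges over nonempty subsets of $P\setminus Q$ (a non-chain $T$ is contained in no such $C$, so contributes nothing) and $C$ over chains from $\hat 0$ to $\hat 1$ in $P$. Interchanging the two summations — valid since every subset of a chain is a chain — turns this into $\sum_{C}(-1)^{\ell(C)}\sum_{\emptyset\neq T\subseteq C\cap(P\setminus Q)}(-1)^{|T|}$. For each $C$ the inner sum runs over the nonempty subsets of the finite set $C\cap(P\setminus Q)$, hence is $-1$ when that set is nonempty and $0$ otherwise, so the whole expression collapses to $-\sum_{C:\,C\cap(P\setminus Q)\neq\emptyset}(-1)^{\ell(C)}=\mu(Q)-\mu(P)$. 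The antichain case \eqref{eqn:mu-deleted-antichain} is then immediate, because the only nonempty chains inside an antichain $\mathcal{A}$ are the singletons $\{x\}$, whose summand is $(-1)\mu_P(\hat 0,x)\mu_P(x,\hat 1)$.

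I do not expect a genuine obstacle; the argument is a reorganization of chains plus the trivial cancellation $\sum_{\emptyset\neq S\subseteq[r]}(-1)^{|S|}=-1$ for $r\geq1$ (and $0$ for $r=0$). The step needing most care is the sign/length bookkeeping: matching $(-1)^k$ with $(-1)^{|T|}$, checking that the length of a concatenated chain is the sum of the lengths of its pieces, and using $\hat 0,\hat 1\notin P\setminus Q$ to see that the double sum neither repeats nor omits pairs. A more computational alternative is induction on $|P\setminus Q|$: remove a maximal element $x^*$ of $P\setminus Q$, apply the one-element deletion identity $\mu(R\setminus\{x^*\})=\mu(R)-\mu_R(\hat 0,x^*)\mu_R(x^*,\hat 1)$ (a one-line consequence of Hall's theorem, splitting chains at $x^*$), and combine the inductive hypothesis for $Q\cup\{x^*\}\subseteq P$ with that for the interval $[\hat 0,x^*]$; maximality of $x^*$ forces $\mu_{Q\cup\{x^*\}}(x^*,\hat 1)=\mu_P(x^*,\hat 1)$, which is what lets the two contributions fit together into the full sum over chains in $P\setminus Q$.
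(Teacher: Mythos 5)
Your argument is correct. The paper does not prove this statement at all --- it is quoted from Baclawski and from Stanley (EC1, Lemma 3.16.4) --- so there is no in-paper proof to compare against; your reduction to Philip Hall's theorem (expanding each $\mu_P(x_i,x_{i+1})$ as an alternating chain count, concatenating, interchanging the sums over $T$ and $C$, and collapsing the inner sum $\sum_{\emptyset\ne T\subseteq C\cap(P\setminus Q)}(-1)^{|T|}=-1$) is the standard route and all the bookkeeping (lengths adding under concatenation, $\hat0,\hat1\in Q$ so that $P\setminus Q$ lies in the open interval, singleton chains in the antichain case) checks out. This is a complete and correct proof.
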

 
Finally, given a graph $G$ on $n$ vertices, and $k$ such that the cut complex $\Delta_k(G)$ is nonvoid, we have the inclusion of posets 
\begin{equation}\label{eqn:cut-complex-subposet-truncBoolean}
\mathcal{L}(\Delta_k(G))\subseteq P(n,k)=B_n^{\le n-k}\cup \{\hat 1\}.
\end{equation}

We now immediately obtain our first nontrivial homotopy result for a cut complex.  
\begin{prop}\label{prop:Deltak-edgeless-graphS}
Let $G$ be the edgeless graph with $n$ vertices. If $k\ge n$, the cut complex  $\Delta_k(G)$ is void. If $2\le k\le n-1$, then $\Delta_k(G)$ is shellable and 
\[ \Delta_k(G)\simeq\bigvee_{\binom{n-1}{k-1}} \mathbb{S}^{n-k-1}.\]
\end{prop}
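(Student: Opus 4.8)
I would first note that for $k\ge n$ the claim follows immediately from Example~\ref{ex:Examples-cut-complex}, so I may assume $2\le k\le n-1$. The key point is that for the edgeless graph \emph{every} $k$-subset of $V$ is a disconnected $k$-set: if $|S|=k\ge 2$, then $G[S]$ has at least two vertices and no edges, hence is disconnected. Therefore $D_k(G)=\binom{V}{k}$, and a subset $\sigma\subseteq V$ lies in $\Delta_k(G)$ precisely when $V\setminus\sigma$ still contains a $k$-subset, i.e.\ when $|\sigma|\le n-k$. Thus $\Delta_k(G)$ is exactly the $(n-k-1)$-skeleton of the $(n-1)$-simplex on vertex set $V$ --- this is Example~\ref{ex:Examples-cut-complex}(5) --- and in particular it is pure of dimension $n-k-1$.

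The remaining work is then a direct appeal to the standard topology of skeleta of a simplex, which is recalled in the preliminaries. The $(n-k-1)$-skeleton of an $(n-1)$-simplex is shellable by \cite[Theorem~2.9]{BjWachsI1996}; equivalently, its face lattice is the lexicographically shellable truncated Boolean lattice $P(n,k)=B_n^{\le n-k}\cup\{\hat 1\}$ from Equation~\eqref{eqn:cut-complex-subposet-truncBoolean}. Hence $\Delta_k(G)$ is shellable, and by Bj\"orner's Theorem~\ref{thm:shell-implies-homotopytype} it has the homotopy type of a wedge of $(n-k-1)$-spheres. To count them I would use that the Betti number $\beta_{n-k-1}$ equals $|\mu(\Delta_k(G))|$; since $f_i=\binom{n}{i+1}$ for $-1\le i\le n-k-1$, the reduced Euler--Poincar\'e formula gives
\[\mu(\Delta_k(G))=\sum_{i=-1}^{n-k-1}(-1)^i\binom{n}{i+1}=-\sum_{j=0}^{n-k}(-1)^j\binom{n}{j}=(-1)^{n-k-1}\binom{n-1}{n-k},\]
by the identity $\sum_{j=0}^{m}(-1)^j\binom{n}{j}=(-1)^m\binom{n-1}{m}$, so $\beta_{n-k-1}=\binom{n-1}{n-k}=\binom{n-1}{k-1}$ and $\Delta_k(G)\simeq\bigvee_{\binom{n-1}{k-1}}\mathbb{S}^{n-k-1}$. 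Alternatively one can simply quote the fact recorded just before Equation~\eqref{eqn:cut-complex-subposet-truncBoolean} that $P(n,k)$ has M\"obius number $\binom{n-1}{k-1}$ and that its order complex --- the barycentric subdivision of $\Delta_k(G)$ --- is a wedge of $\binom{n-1}{k-1}$ spheres of dimension $n-k-1$, which bypasses the computation.

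I do not anticipate a genuine obstacle: the entire content is the elementary observation that edgelessness makes every $k$-set disconnected, which collapses $\Delta_k(G)$ to a skeleton of a simplex; everything after that is a citation of Bj\"orner's shelling theorem together with the well-documented homotopy type and M\"obius number of the poset $P(n,k)$. The only thing requiring the slightest care is the alternating-binomial bookkeeping for the Betti number, and even that is optional given the statements already available in the paper.
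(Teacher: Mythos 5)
Your proposal is correct and follows essentially the same route as the paper: identify $\Delta_k(G)$ with the $(n-k-1)$-skeleton of the $(n-1)$-simplex (Example~\ref{ex:Examples-cut-complex}(5)), cite \cite[Theorem~2.9]{BjWachsI1996} for shellability, and read off the homotopy type from the known topology of $P(n,k)$. The only cosmetic difference is that you verify the Betti number $\binom{n-1}{k-1}$ by an explicit alternating-binomial computation of $\mu(\Delta_k(G))$, whereas the paper simply quotes the recorded M\"obius number of $P(n,k)$ — a check you correctly note is optional.
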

\begin{proof} If the cut complex $\Delta_k(G)$ is nonvoid, then $2\le k\le n-1$ and  Example~\ref{ex:Examples-cut-complex}, Part (5) tells us that it is the $(n-k-1)$-skeleton of an $(n-1)$-simplex, hence shellable by \cite[Theorem~2.9]{BjWachsI1996}. The homotopy type and Betti number were given in the discussion above Theorem 3.4.
\end{proof}

In many cases, for example, when every vertex of the graph $G$ is a vertex of the cut complex $\Delta_k(G)$, the automorphism group  of $G$ induces a simplicial action (one that sends simplices to simplices) on the cut complex $\Delta_k(G)$, and hence in turn acts on the rational homology.
In subsequent sections, we will use the face lattice of the cut complex to determine this homology representation. 

Finally we will make use of the following.

\begin{prop}[{\cite[Chapter 21, (21.3)]{VINK2008}, \cite[Proposition~2.7]{BDJRSX-TOTAL2024}}] \label{prop:TopFact2ndIsoThm}
Let $X$ be a topological space with subspaces $A,B \subseteq X$ such that $X=A\cup B$, $A\cap B\neq \emptyset$, and $A, B$ are both closed subspaces or both open subspaces. Then the quotient map $A/(A\cap B) \rightarrow X/B$ of the inclusion $A\hookrightarrow X$ is a homeomorphism.
\end{prop}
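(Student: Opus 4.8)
The plan is to make the induced map explicit, verify it is a continuous bijection, and then upgrade this to a homeomorphism by showing the map is closed (in the case $A,B$ closed) or open (in the case $A,B$ open), since a continuous closed, resp.\ open, bijection automatically has continuous inverse.

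First I would fix notation: let $q_A\colon A\to A/(A\cap B)$ and $q_X\colon X\to X/B$ be the quotient maps and $\iota\colon A\hookrightarrow X$ the inclusion. Since $\iota(A\cap B)\subseteq B$, the composite $q_X\circ\iota$ is constant on $A\cap B$, so by the universal property of the quotient it factors uniquely as $q_X\circ\iota=f\circ q_A$ with $f\colon A/(A\cap B)\to X/B$ continuous; this $f$ is the map in the statement. Because $X=A\cup B$, every point of $X\setminus B$ already lies in $A\setminus B=A\setminus(A\cap B)$, and from this description one checks that $f$ is the identity on $X\setminus B$ and sends the collapsed point $[A\cap B]$ to the collapsed point $[B]$; in particular $f$ is a bijection.

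It remains to show $f^{-1}$ is continuous; I will argue that $f$ is a closed map, the open case being word-for-word the same with ``closed'' replaced by ``open''. Given a closed set $C\subseteq A/(A\cap B)$, the set $q_A^{-1}(C)$ is closed in $A$, hence closed in $X$ because $A$ is closed in $X$. To see that $f(C)$ is closed, I would show $q_X^{-1}(f(C))$ is closed in $X$, splitting into two cases. If $[A\cap B]\notin C$, then $q_A^{-1}(C)\subseteq X\setminus B$ and $q_X^{-1}(f(C))=q_A^{-1}(C)$, which is closed. If $[A\cap B]\in C$, then $q_A^{-1}(C)\supseteq A\cap B$ and a short computation gives $q_X^{-1}(f(C))=B\cup q_A^{-1}(C)$, which is closed in $X$ since both $B$ and $q_A^{-1}(C)$ are. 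Hence $f$ is a continuous closed bijection, so a homeomorphism.

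I do not expect a serious obstacle: the content is essentially bookkeeping. The two places where the hypotheses genuinely enter are the two invocations of closedness (resp.\ openness) in the last paragraph---one to push $q_A^{-1}(C)$ from $A$ up to $X$, using that $A$ is closed (resp.\ open) in $X$, and one to conclude that $B\cup q_A^{-1}(C)$ is closed (resp.\ open), using that $B$ is. The only point requiring care is correctly identifying the preimage $q_X^{-1}(f(C))$ in each of the two cases, which hinges on the observation that $X\setminus B\subseteq A$. Since this is a standard fact, one could instead simply cite \cite{VINK2008} or \cite[Proposition~2.7]{BDJRSX-TOTAL2022}, but the direct argument above is short enough to include.
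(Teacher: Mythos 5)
Your proof is correct. Note, however, that the paper does not prove this proposition at all --- it is quoted verbatim from the cited sources ([VINK2008, Ch.~21, (21.3)] and [BDJRSX-TOTAL2022, Prop.~2.7]) --- so there is no in-paper argument to compare against; your direct verification (induce $f$ by the universal property, check bijectivity using $X\setminus B\subseteq A$, then show $f$ is a closed, resp.\ open, map via the two-case computation of $q_X^{-1}(f(C))$) is the standard proof and is complete. The one small point you could make explicit is where the hypothesis $A\cap B\neq\emptyset$ enters: it guarantees that $A/(A\cap B)$ genuinely has a collapsed point $[A\cap B]$ matching the collapsed point $[B]$ of $X/B$, which is what makes your bijectivity claim (and the case split on whether $[A\cap B]\in C$) parse correctly.
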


\begin{prop}[{\cite[Proposition~0.17, Example~0.14]{Hatcher2002}}] \label{prop:quotient-by-contractible-homotopy}
Let $(X,A)$ be a CW pair consisting of a CW complex $X$ and a subcomplex $A$. 
\begin{enumerate}
\item If the subcomplex $A$ is contractible, then the quotient map $X\rightarrow X/A$ is a homotopy equivalence.
\item If $A$ is contractible in the complex $X$, then there is a homotopy equivalence 
\[X/A\simeq X \vee \susp(A).\]
\end{enumerate} 
\end{prop}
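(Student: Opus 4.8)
The plan is to deduce both parts from the homotopy extension property (HEP), which holds for every CW pair $(X,A)$: part (1) is proved directly, and part (2) is obtained from it by way of the mapping cone. For part (1), I would choose a contraction $h\colon A\times I\to A$ with $h_0=\mathrm{id}_A$ and $h_1\equiv a_0$ for some $a_0\in A$, and use the HEP of $(X,A)$ to extend $h$, together with $\mathrm{id}_X$, to a homotopy $H\colon X\times I\to X$ with $H_0=\mathrm{id}_X$ and $H_t|_A=h_t$ for all $t$. Since $H_1(A)=\{a_0\}$, the map $H_1$ is constant on $A$ and hence factors as $H_1=g\circ q$ through the quotient map $q\colon X\to X/A$ for a continuous map $g\colon X/A\to X$; thus $g\circ q=H_1\simeq H_0=\mathrm{id}_X$. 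Conversely, because $H_t(A)\subseteq A$ for every $t$, each composite $q\circ H_t$ is constant on $A$, so it descends to a map $\bar{H}_t\colon X/A\to X/A$ with $\bar{H}_t\circ q=q\circ H_t$; here one uses that $q\times\mathrm{id}_I$ is again a quotient map to conclude that $\bar H$ is a continuous homotopy. Evaluating at the endpoints gives $\bar{H}_0=\mathrm{id}_{X/A}$ and $\bar{H}_1=q\circ g$, so $q\circ g\simeq\mathrm{id}_{X/A}$, and $q$ is a homotopy equivalence with inverse $g$.

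For part (2), I would form the mapping cone $C_\iota=X\cup_\iota\cone(A)$ of the inclusion $\iota\colon A\hookrightarrow X$, with $\cone(A)$ glued to $X$ along its base copy of $A$. Then $\cone(A)$ is a contractible subcomplex of $C_\iota$, and collapsing it gives $C_\iota/\cone(A)=X/A$, so part (1) yields $X/A\simeq C_\iota$. Since the homotopy type of a mapping cone depends only on the homotopy class of the attaching map (again a consequence of the HEP), and $\iota$ is nullhomotopic by hypothesis, say $\iota\simeq c_{x_0}$ with $c_{x_0}$ constant at a point $x_0\in X$, we get $C_\iota\simeq C_{c_{x_0}}$. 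Finally, $C_{c_{x_0}}$ is obtained from $X\sqcup\cone(A)$ by collapsing the base $A$ of the cone to the single point $x_0$; collapsing the base of a cone produces the suspension, $\cone(A)/A=\susp(A)$, attached to $X$ only at the point $x_0$ (one of the two poles of $\susp(A)$), whence $C_{c_{x_0}}=X\vee\susp(A)$. Chaining the equivalences gives $X/A\simeq X\vee\susp(A)$, as in \cite{Hatcher2002}.

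The only place requiring genuine care is the bookkeeping around quotients and homotopies: one must keep the contracting homotopy inside $A$ so that $H_t(A)\subseteq A$, and one must verify that the maps induced on $X/A$ are continuous, which rests on $q\times\mathrm{id}_I$ being a quotient map (valid since $I$ is locally compact, or since $X/A$ is itself a CW complex). The same HEP machinery underlies the homotopy invariance of the mapping cone under its attaching map and the identification $C_\iota/\cone(A)=X/A$. None of these steps is deep, but they are precisely the points one cannot skip; everything else is formal manipulation of quotients and wedges.
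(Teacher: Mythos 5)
Your argument is correct and is essentially the standard proof from the cited reference: the paper itself gives no proof of this proposition, citing Hatcher's Proposition~0.17 and Example~0.14, and your HEP argument for part (1) and mapping-cone argument for part (2) reproduce exactly those. No gaps; the points you flag as needing care (keeping the contraction inside $A$, continuity of the induced homotopy on $X/A$, homotopy invariance of the mapping cone) are indeed the only nontrivial ones, and you handle them correctly.
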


\section{Constructive Theorems}\label{sec:Constructions}

In this section we consider the effect of  some common graph operations on the cut complex.  We begin with the following universality property.

\subsection{Any simplicial complex is a cut complex}\label{sec:Natalie}

\begin{theorem}[Natalie Behague] \label{thm:anycomplexiscutcomplex}
Let $\Delta$ be any pure simplicial complex. There exists some $k$ and some chordal graph $G$ such that $\Delta$ is equal to the cut complex $\Delta_k(G)$. 
\end{theorem}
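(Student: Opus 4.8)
The plan is to reverse-engineer the graph from the complex. Suppose $\Delta$ is a pure simplicial complex of dimension $d$ on vertex set $[m]$, and let $\mathcal{F}$ be its set of facets, each of size $d+1$. I want a graph $G$ on $n$ vertices and an integer $k$ so that the size-$k$ subsets of $V(G)$ inducing disconnected subgraphs are exactly the complements of the facets of $\Delta$. The natural choice is to take $k = n - (d+1)$, so the separating $(n-k)$-sets have the right cardinality. The cleanest construction: let $G$ have vertex set $[m]$ together with one extra ``apex'' vertex $a$, so $n = m+1$, and make $a$ adjacent to everything. Then put edges among $[m]$ so that for a set $S \subseteq [m]$ with $|S| = d+1$, the induced subgraph $G[S]$ is connected if and only if $S$ is a non-facet, and $G[S \cup \{a\}]$ (which always contains the universal vertex $a$) needs to be handled by choosing $k$ so that the relevant separating sets always avoid $a$ — actually, since $a$ is universal, any set containing $a$ induces a connected graph, so complements of facets must not contain $a$, forcing the facets of $\Delta$ all to be subsets of $[m]$, which is automatic. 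So really I need: among subsets $S \subseteq [m]$ of size $d+1$, $G[S]$ disconnected $\iff$ $S \in \mathcal{F}$.

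The key step is building such a $G[_{[m]}]$ and then checking chordality. The simplest idea that makes chordality transparent: start with $G$ restricted to $[m]$ being a \emph{disjoint union of cliques} (a cluster graph), which is chordal; but a cluster graph has too rigid a disconnectedness pattern. A more flexible approach: take $G$ on $[m]$ to be obtained from $K_m$ by deleting a carefully chosen set of edges so that a $(d+1)$-set is disconnected precisely when it is a facet. One can try: for each facet $F$, pick a bipartition $F = A_F \sqcup B_F$ and delete all edges between $A_F$ and $B_F$; but then other $(d{+}1)$-sets might accidentally become disconnected, and chordality is unclear. The construction I expect to work — and which I would pursue first — is to \emph{blow up} $\Delta$: introduce, for each facet, enough new vertices so that removing a facet leaves exactly one isolated vertex plus a connected remainder, or conversely. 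Concretely: let $G$ have the $m$ original vertices each made adjacent to a universal vertex $a$ (guaranteeing connectedness of anything containing $a$), plus for controlling which $(d{+}1)$-subsets of $[m]$ are disconnected, one checks that with $a$ present the \emph{only} way to get a disconnected induced subgraph on a $(d{+}1)$-set avoiding $a$ is dictated by the edges within $[m]$; choose those edges as the complement (within $[m]$) of a graph whose connected $(d{+}1)$-subsets are exactly the non-facets.

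I would organize the write-up as: (1) reduce to the case where $\Delta$ has at least one facet and $d \ge 0$ (trivial/void cases handled by Remark~\ref{rem:empty-complex-0-dim-complex} and Example~\ref{ex:Examples-cut-complex}); (2) define $G$ explicitly — I anticipate the right construction adds, for each facet $F$, a private new vertex $v_F$ adjacent to all vertices \emph{not} in $F$, while keeping $[m]$ as a clique, so that removing the $n-k$ vertices complementary to $F$ isolates exactly $v_F$; (3) verify that the separating $(n-k)$-sets are precisely the complements of facets, using purity of $\Delta$ to rule out accidental separations; (4) verify $G$ is chordal by exhibiting a perfect elimination ordering (eliminate the private vertices $v_F$ first — each has a clique neighborhood since $[m]$ is a clique — then the vertices of $[m]$, which at that stage form a clique). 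The main obstacle I foresee is step (3): ensuring that no \emph{non}-facet $(d{+}1)$-set, and no set of the wrong size among the relevant facets of the cut complex, accidentally induces a disconnected subgraph once all the private vertices $v_F$ are thrown in — this requires a careful count matching $k$ to the number of added vertices, and checking that the only ``fragile'' vertices are the $v_F$'s. Getting the value of $k$ and the adjacency of each $v_F$ exactly right so that purity of $\Delta$ does all the bookkeeping is the crux.
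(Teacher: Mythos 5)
Your high-level strategy is the paper's: a clique on the vertex set of $\Delta$, one private new vertex per facet, and $k$ chosen so that the separating sets of $G$ have size $d+1$. But the construction you commit to in step (2) has the adjacency of the private vertices reversed, and this breaks the argument. If $v_F$ is adjacent to all vertices \emph{not} in $F$, then the set whose removal isolates $v_F$ is its neighborhood $[m]\setminus F$, not $F$; so (after adjusting $k$ to make the sizes work) the facets of the cut complex would be the \emph{complements} of the facets of $\Delta$, not the facets themselves. Worse, with your adjacency the set $V(G)\setminus F$ contains $v_F$ together with its entire neighborhood and the clique $[m]\setminus F$, so it is typically connected and $F$ is not even a face of $\Delta_k(G)$. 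The correct choice, which is what the paper does, is to make $v_F$ adjacent to exactly the vertices \emph{in} $F$: then the separating set is $\Gamma(v_F)=F$ itself, and removing it isolates $v_F$. Your phrase ``removing the $n-k$ vertices complementary to $F$'' is already symptomatic: $F$ has $d+1=n-k$ vertices, so its complement has $k$ vertices, and the two cannot both play the role of the separating set. Relatedly, the universal apex $a$ from your first paragraph must be discarded, not merely noted: if $a$ is adjacent to everything, then every disconnected set avoids $a$, every facet of $\Delta_k(G)$ contains $a$, and the cut complex is a cone — hence contractible — so it cannot equal an arbitrary pure $\Delta$.

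Beyond the reversed adjacency, the step you yourself flag as the crux — showing that \emph{no other} set of size $d+1$ is separating — is left entirely undone, and it is not automatic from purity; the paper's argument is that any two vertices outside a set $S$ not containing some $\Gamma(f_j)$ can be routed through the clique, so $S$ cannot separate. You would also need to handle the degenerate case of a single facet ($t=1$), where the natural formula gives $k=1$ and $\Delta_1(G)$ is void by convention; the paper treats this separately. Your chordality argument via a perfect elimination ordering (eliminate the private vertices first, since each has a clique neighborhood) is sound and carries over to the corrected construction.
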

\begin{proof} The idea of the construction is as follows: start with a clique whose vertices correspond to the vertex set of the complex $\Delta$. For each facet of $\Delta$, add a vertex that is connected to every  vertex of that facet. If $n$ is the number of vertices of $\Delta$, $t$ is the number of facets of $\Delta$ and $d$ is the dimension of $\Delta$, the resulting graph $G$ has $(n+t)$ vertices, and our claim is that $\Delta_{n+t-(d+1)}(G)=\Delta$. Figure~\ref{fig:Mark-SimplicialComplex-to-CutComplex} illustrates the procedure.

\begin{figure}[htb]
    \centering
    \begin{subfigure}{0.4\textwidth}
    \centering
\begin{tikzpicture}[scale=1.1]
    \tikzstyle{facet}=[shape=rectangle, fill=white]
    \draw[draw=red!60, fill=blue!5] (0,0) -- (0,2) -- (1.5,1) -- cycle;
    \draw[draw=red!60, fill=blue!5] (0,0) -- (3,0) -- (1.5,1) -- cycle;
    \draw[draw=red!60, fill=blue!5] (0,2) -- (3,2) -- (1.5,1) -- cycle;
    \draw[draw=red!60, fill=blue!5] (3,0) -- (3,2) -- (1.5,1) -- cycle;
    \node (a)[facet] at (.5,1) {$F_1$};
    \node (b)[facet] at (1.5,.3) {$F_2$};
    \node (c)[facet] at (1.5,1.7) {$F_3$};
    \node (d)[facet] at (2.5,1) {$F_4$};
    \node[special] (node1) at (0,0) {1};
    \node[special] (node2) at (0,2) {2};
    \node[special] (node3) at (3,2) {3};
    \node[special] (node4) at (3,0) {4};
    \node[special] (node5) at (1.5,1) {5};
\end{tikzpicture}
\caption{A pure simplicial complex $\Delta$}
\end{subfigure}
\begin{subfigure}{0.4\textwidth}
\centering
\begin{tikzpicture}[scale=0.9]
        \node[coordinate] (n1) at (0,1.5) {1};
        \node[coordinate] (n2) at (.75,.85) {2};
        \node[coordinate] (n3) at (.5,0) {3};
        \node[coordinate] (n4) at (-.5,0) {4};
        \node[coordinate] (n5) at (-.75,.85) {5};
        \draw (n1) -- (n2);
        \draw (n1) -- (n3);
        \draw (n1) -- (n4);
        \draw (n1) -- (n5);
        \draw (n2) -- (n3);
        \draw (n2) -- (n4);
        \draw (n2) -- (n5);
        \draw (n3) -- (n4);
        \draw (n3) -- (n5);
        \draw (n4) -- (n5);
        \node[coordinate] (F1) at (0,3) {f1};
        \node[coordinate] (F2) at (-2,1.4) {f2};
        \node[coordinate] (F3) at (2,0.2) {f3};
        \node[coordinate] (F4) at (-1.3,-1.2) {f4};
        \draw[] (F1)--(n1);
        \draw[] (F1)--(n2);
        \draw[] (F1)--(n5);
        
        \draw[] (F2)--(n1);
        \draw[] (F2)--(n4);
        \draw[] (F2)--(n5);
        
        \draw[] (F3)--(n3);
        \draw[] (F3)--(n2);
        \draw[] (F3)--(n5);
        
        \draw[] (F4)--(n3);
        \draw[] (F4)--(n4);
        \draw[] (F4)--(n5);
        \node[blue2] (F1) at (0,3) {$f_1$};
        \node[blue2] (F2) at (-2,1.4) {$f_2$};
        \node[blue2] (F3) at (2,0.2) {$f_3$};
        \node[blue2] (F4) at (-1.3,-1.2) {$f_4$};
        
        \node[special] (n1) at (0,1.5) {1};
        \node[special] (n2) at (.75,.85) {2};
        \node[special] (n3) at (.5,0) {3};
        \node[special] (n4) at (-.5,0) {4};
       \node[special] (n5) at (-.75,.85) {5};
\end{tikzpicture}
\caption{A graph $G$ such that $\Delta=\Delta_6(G)$}
\end{subfigure}
\caption{The construction of Theorem~\ref{thm:anycomplexiscutcomplex}}
\label{fig:Mark-SimplicialComplex-to-CutComplex}
\end{figure}
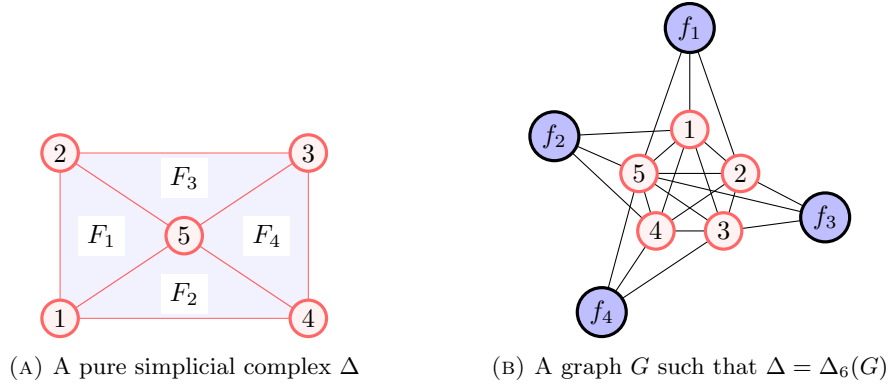

Let the vertices of $\Delta$ be labelled $v_1,v_2,\dots,v_n$ and let $\F=\{F_1,F_2,\dots,F_t\}$ be the set of facets of $\Delta$.  
First assume $t>1$.

Let the vertex set of $G$ consist of $n+t$ vertices labelled $u_1,u_2,\dots,u_n,f_1,f_2,\dots,f_t$. We define the edge set as follows: $u_iu_j$ is an edge for all $1\le i,j \le n$, and for all $1 \le i \le n$, $1 \le j \le t$, we have that $u_if_j$ is an edge if and only if vertex $v_i$ is contained in facet $F_j$ of $\Delta$. 

Let $k = n + t - (d+1)$. Consider the separating sets of $G$ of size $d+1 = n+t - k$. Since each $f_j$ has degree $d+1$, the neighbourhood  $\Gamma(f_j)$ is a separating set of size $d+1$ for each $1 \le j \le t$. 

In fact, these are the only separating sets of size $d+1$. Suppose $S$ is a set not containing $\Gamma(f_j)$ for any $j$. Given any $x,y \not\in S$, either $x = u_i$ for some $i$ or $x$ is adjacent to some $u_i \not\in S$. Similarly, either $y = u_j$ or $y$ is adjacent to some $u_j \not\in S$. Then either $u_i = u_j$ or $u_iu_j$ is an edge, and either way we have a path from $x$ to $y$ using only vertices outside the set $S$. Thus $S$ cannot be separating.

The facets of the cut complex $\Delta_k(G)$ are $\Gamma(f_j) = \{u_i : v_i \in F_j\}$ for $1 \le j \le t$. The vertices $f_j$ are not in any facets and so the complex $\Delta_k(G)$ has vertex set $\{u_1,u_2,\dots,u_n\}$. Identifying $u_i$ with $v_i$ for each $1 \le i \le n$ makes it immediately clear that $\Delta_k(G) = \Delta$.

Note that if $t=1$, then $n+t-(d+1)=1$, and we cannot identify $\Delta$ as $\Delta_1(G)$. 
In this case $n=d+1$.

If $n=1$, then $\Delta$
can be realized as the cut complex $\Delta_m(H)$ of the star graph $K_{1,m}$ 
for any $m\ge 2$. The center of the star constitutes the unique separating set of size 1, provided $m$ is at least 2.   

If  $n=d+1\ge 2$, then $\Delta=\{v_1,\dots,v_n\}$ is a $d$-simplex, and the above construction for $t>1$ is modified by taking the graph $G$ to be the complete graph on $n$ vertices, and then adding $n$ additional vertices $f_j$, each of which is adjacent to each of the vertices of the $K_n$. Then $\Delta$ will be the cut complex $\Delta_n(G)$ (note $G$ has $2n$ vertices),  since 
the $n$ vertices in the complete graph $K_n$ constitute the only separating set of size $n$.

Finally, this construction always produces a \emph{chordal} graph $G$. Since $G$ contains the clique on the vertex set $V$ of $\Delta$  as an induced subgraph, and the set of facets $\F$ of $\Delta$  gives an independent set of $G$, any cycle in $G$ not contained in the clique on $V$ must have at least one vertex $f$ corresponding to a facet $F\in \F$, and the two adjacent vertices must be vertices $u,v\in F$, by construction of $G$. Thus $\{u, f, v\}$ is a 3-cycle, and $G$ is chordal.
\end{proof}

The value of $k$ given by Theorem \ref{thm:anycomplexiscutcomplex}  may be very large. In particular the theorem gives us limited information on which simplicial complexes can arise as cut complexes for a specific fixed value of $k$. 

\begin{rem}\label{rem:torsionS}
This theorem gives us an example of a cut complex with torsion in its homology, in contrast to the other specific cut complexes studied in this paper.  Let $\Delta_{\mathbb{RP}^2}$ be the minimal triangulation of the projective plane $\mathbb{RP}^2$ with 6 vertices, 15 edges and 10 faces. The integral homology is ${H}_0=\bbZ$, ${H}_1=\bbZ_2$, ${H}_i=0$, $i\ge 2$. 
Theorem~\ref{thm:anycomplexiscutcomplex} constructs a graph $G$  on  $n+t=16$ vertices such that $\Delta_{\mathbb{RP}^2}=\Delta_{13}(G),$ since $k=(n+t)-(d+1)=13$. Hence $\Delta_{13}(G)$ is not torsion-free.
\end{rem}

\subsection{Facets of Cut Complexes}\label{sec:facets} 

\begin{df}\label{def:ridge}  A \emph{ridge} of a  pure simplicial complex is a face of dimension  one lower than the dimension of the complex. 
\end{df}

\begin{theorem}\label{theorem:facets}
    Let $k\ge 2$, $G=(V,E)$ a graph. Then the facets of $\Delta_{k+1} (G)$ are precisely  the ridges of $\Delta_k(G)$ that are contained in at least $k$ facets.
\end{theorem}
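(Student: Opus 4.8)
The plan is to compare the two families of subsets of $V$ directly, exploiting the elementary fact that deleting a single vertex can only split connected components into smaller pieces, never merge them; this will force a $(k{+}1)$-set $T$ with $|T|\ge 3$ to be disconnected precisely when all but at most one of its $k$-element subsets induce disconnected subgraphs.

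Write $n=|V|$. I would first dispose of the degenerate cases: if $\Delta_k(G)$ is void then so is $\Delta_{k+1}(G)$ by \eqref{eqn:inclusion} and there is nothing to prove; otherwise $\Delta_k(G)$ is pure of dimension $n-k-1$, so a ridge of $\Delta_k(G)$ is exactly a face of cardinality $n-k-1$, which is also the common cardinality of the facets of $\Delta_{k+1}(G)$. Fix $\sigma\subseteq V$ with $|\sigma|=n-k-1$ and set $T=V\setminus\sigma$, so $|T|=k+1$. Translating Definition~\ref{def:cut-cplx}: (a) $\sigma$ is a facet of $\Delta_{k+1}(G)$ iff $G[T]$ is disconnected; (b) $\sigma$ is a face of $\Delta_k(G)$ iff $G[T\setminus\{v\}]$ is disconnected for at least one $v\in T$; and (c) when $\sigma$ is such a face, the facets of $\Delta_k(G)$ containing $\sigma$ are exactly the sets $\sigma\cup\{v\}$ with $v\in T$ and $G[T\setminus\{v\}]$ disconnected, so that $\sigma$ lies in exactly $\#\{v\in T:\ G[T\setminus\{v\}]\text{ is disconnected}\}$ facets of $\Delta_k(G)$.

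The combinatorial heart of the argument I would isolate as a lemma: for a vertex set $T$ with $|T|\ge 3$, the subgraph $G[T]$ is disconnected if and only if $G[T\setminus\{v\}]$ is disconnected for at least $|T|-1$ vertices $v\in T$. For the forward direction, look at the connected components $C_1,\dots,C_r$ ($r\ge 2$) of $G[T]$: deleting $v\in C_1$ leaves $C_2,\dots,C_r$ intact and can only subdivide $C_1$, so $G[T\setminus\{v\}]$ is connected only when $r=2$ and $C_1=\{v\}$; since $|T|\ge 3$ forces the larger of the two components to have size at least $2$, there is at most one such singleton, giving the bound. For the converse I would invoke the standard fact that a connected graph on at least two vertices has at least two non-cut vertices (for instance, two leaves of a spanning tree), so if $G[T]$ is connected then at least two of the $G[T\setminus\{v\}]$ are connected.

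Finally I would assemble the equivalence, using that here $|T|=k+1\ge 3$. If $\sigma$ is a facet of $\Delta_{k+1}(G)$, then $G[T]$ is disconnected by (a), so by the lemma at least $k$ vertices $v\in T$ have $G[T\setminus\{v\}]$ disconnected; hence $\sigma$ is a ridge of $\Delta_k(G)$ by (b), contained in at least $k$ facets by (c). Conversely, if $\sigma$ is a ridge of $\Delta_k(G)$ contained in at least $k$ facets, then by (c) at least $k=|T|-1$ of the $G[T\setminus\{v\}]$ are disconnected, so $G[T]$ is disconnected by the lemma, and $\sigma$ is a facet of $\Delta_{k+1}(G)$ by (a). The only real obstacle—and it is a mild one—is the bookkeeping inside the lemma, in particular verifying that at most one vertex deletion can reconnect a disconnected $G[T]$; the rest is a direct unwinding of the definitions of cut complex, ridge, and facet.
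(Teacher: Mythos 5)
Your proposal is correct and follows essentially the same route as the paper: the forward direction is the same case analysis on the components of $G[T]$ (showing at most one vertex deletion can reconnect a disconnected induced subgraph on at least three vertices), and the converse is the paper's spanning-tree argument that a connected $G[T]$ has two leaves whose removal leaves it connected, capping the number of facets containing $\sigma$ at $k-1$. Packaging this as a standalone biconditional lemma is a tidy presentational choice but not a different proof.
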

\begin{proof}
Let $K$ be a facet of $\Delta_{k+1} (G)$, so $V\setminus K$ is a disconnected $(k+1)$-set. Let $V\setminus K = \{ v_1, v_2, \dots, v_{k+1} \}$. We write 
\[ K = \bigcap_{i=1}^{k+1} \big(K \cup v_i\big). 
\]
If $G\setminus K$ has at least three components, then for all $v_i$, $G\setminus (K\cup v_i)$ has at least two components.  If $G\setminus K$ has two components, and each of those components has at least two vertices, then for all $v_i$, $G\setminus (K\cup v_i)$ has at least two components.  Finally, if $G\setminus K$ has two components and one of those components has just one vertex (say $v_1$), then the other component has $k\ge 2$ vertices, and for all $i\ge 2$, $G\setminus (K\cup v_i)$ has at least two components.  Therefore there are at least $k$ vertices $v_i$ such that $K\cup v_i$ is a facet of $\Delta_k(G)$.

On the other hand, suppose $K$ is a ridge of $\Delta_k(G)$ that is contained in at least $k$ facets. Assume $K$ is {\bf not} a facet of $\Delta_{k+1}(G)$; then $G\setminus K$ is connected, and so there exists a spanning tree $T$ of $G\setminus K$. Since $k\ge 2$, this tree has at least two leaves, say, $v$ and  $w$. Since removing a leaf of a spanning tree does not disconnect the graph, $G\setminus (K\cup v)$ and $G\setminus (K\cup w)$ are both connected. Since $|V\setminus K| = k+1$, there can only be at most $k-1$ facets of $\Delta_k(G)$ that contain $K$, which contradicts our assumption. 
\end{proof}

The above theorem implies that the faces of the $(k+1)$-cut complex of a graph $G$ are  completely determined by those of  the $k$-cut complex. 

\subsection{Links and Induced Subgraphs}\label{sec:links} 

\begin{lemma}\label{lem:Extendedlinks}
    Let $k\geq 2$, $G=(V,E)$ a graph, and $W\subseteq V$. Then 
    $\Delta_k (G\setminus W) = \lk_{\Delta_k(G)} W$ if $ W$ is a face of $\Delta_k (G)$, and is void otherwise.
\end{lemma}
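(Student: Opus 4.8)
The statement asserts that for $W \subseteq V(G)$, the link $\lk_{\Delta_k(G)} W$ equals $\Delta_k(G \setminus W)$ when $W$ is a face of $\Delta_k(G)$, and is void otherwise. The "otherwise" case is immediate from the definition of link: if $W \notin \Delta_k(G)$ then no $\tau$ can satisfy $W \cup \tau \in \Delta_k(G)$, so the link is void. (One should note that $W \notin \Delta_k(G)$ precisely when $V \setminus W$ contains no disconnected $k$-set, i.e. every $k$-subset of $V\setminus W$ induces a connected subgraph; in particular $|V \setminus W| \geq k$ may still hold.) So the main content is the first case.

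The plan is to prove set-equality of the two simplicial complexes by comparing faces directly. Assume $W$ is a face of $\Delta_k(G)$. First I would unwind both sides. A set $\tau$ with $\tau \cap W = \emptyset$ lies in $\lk_{\Delta_k(G)} W$ iff $W \cup \tau \in \Delta_k(G)$, i.e. iff $V \setminus (W \cup \tau) = (V \setminus W) \setminus \tau$ contains a disconnected $k$-subset of $G$. On the other side, $\tau \subseteq V \setminus W$ lies in $\Delta_k(G \setminus W)$ iff $(V \setminus W) \setminus \tau$ contains a $k$-subset $S$ with $(G \setminus W)[S]$ disconnected. The key observation making these agree is that for any $S \subseteq V \setminus W$, the induced subgraph $(G \setminus W)[S]$ is literally the same graph as $G[S]$, since inducing on $S$ only depends on $S$, not on the ambient vertex set — deleting $W$ first changes nothing once we restrict to a subset disjoint from $W$. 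Hence "$(G\setminus W)[S]$ disconnected" and "$G[S]$ disconnected" are the same condition, and the two face conditions coincide verbatim.

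I would then be careful about two bookkeeping points. First, vertex sets: $\Delta_k(G \setminus W)$ has vertex set contained in $V \setminus W$, and faces of $\lk_{\Delta_k(G)} W$ are by definition disjoint from $W$, so both complexes live on the same ground set $V \setminus W$ and the comparison is legitimate. Second, the hypothesis $W \in \Delta_k(G)$ is needed only to guarantee $\lk_{\Delta_k(G)} W$ is nonvoid (it contains $\emptyset$), matching the fact that $\Delta_k(G\setminus W)$ is a genuine complex; when $|V \setminus W| < k$ the complex $\Delta_k(G \setminus W)$ is void by Example~\ref{ex:Examples-cut-complex}(1), and correspondingly $W$ cannot be a face of $\Delta_k(G)$ (its complement is too small to contain a disconnected $k$-set), so that degenerate case falls under the "otherwise." I do not expect any real obstacle here; the only thing to get right is the clean identification $(G\setminus W)[S] = G[S]$ for $S$ disjoint from $W$, and the convention-handling for void complexes. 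The proof is essentially a definition chase, so I would write it as a short two-way containment argument.
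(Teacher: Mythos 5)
Your proof is correct and follows essentially the same definition-chasing route as the paper: both sides reduce to the condition that $(V\setminus W)\setminus\tau$ contains a disconnected $k$-set, using the fact that $(G\setminus W)[S]=G[S]$ for $S$ disjoint from $W$. The only (harmless) difference is that you compare all faces directly for arbitrary $W$, whereas the paper verifies the facet correspondence for a single vertex and then iterates.
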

\begin{proof} First note that if $W$ is not a face of $\Delta_k$, then $G\setminus W$ contains no disconnected induced subgraph with $k$ vertices, so $\Delta_k(G\setminus W)$ is void.  

Consider the case of a single vertex $v$ of $\Delta_k(G)$.  Let $|V|=n$.  A subset $F\subset V$ is a facet of $\Delta_k (G\setminus v)$ if and only if $v\notin F$, $|F| = n-k-1$, and $(V\setminus v )\setminus F$ is disconnected. Since $(V\setminus v )\setminus F  = V\setminus (v\cup F)$, this is equivalent to $v\notin F$, $|F\cup v| = n-k$ and $v\cup F\in \Delta_k(G)$, i.e., $F\cup v$ is a facet of $\Delta_k(G)$, and thus $F$ is a facet of $\lk_{\Delta_k(G)}\{v\}$. Equivalently, 
$\Delta_k (G\setminus v) = \lk_{\Delta_k(G)} \{v\}$. 

The result for an arbitrary face $W$ now follows by repeated application.
\end{proof}

\begin{prop}\label{prop:induced subgraphs}
      Let $k\geq 2$, $G=(V,E)$ a graph, and $W\subseteq V$. If the $k$-cut complex $\Delta_k(G)$ is shellable, so is $\Delta_k(G\setminus W)$.  Equivalently, 
      if $\Delta_k(G)$ is shellable for a graph $G$, then 
      $\Delta_k(H)$ is shellable for every induced subgraph $H$ of $G$. 
\end{prop}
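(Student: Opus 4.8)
The plan is to use the known fact (Theorem~\ref{thm:shell-implies-homotopytype} is not needed here, but the standard facts about shellability and links are) that shellability is inherited by links of faces in a shellable complex, combined with Lemma~\ref{lem:Extendedlinks}, which identifies $\Delta_k(G\setminus W)$ with the link $\lk_{\Delta_k(G)} W$ whenever $W$ is a face.

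First I would reduce to the case $W=\{v\}$, a single vertex, since the general case follows by iterating: if $W=\{v_1,\dots,v_m\}$ is a face of $\Delta_k(G)$, then $G\setminus\{v_1,\dots,v_j\}$ has $\Delta_k$ equal to a link inside the previous complex, and each step preserves shellability; I also have to handle the degenerate situation where $W$ is \emph{not} a face of $\Delta_k(G)$, in which case $\Delta_k(G\setminus W)$ is void by Lemma~\ref{lem:Extendedlinks}, and the void complex is shellable by Remark~\ref{rem:empty-complex-0-dim-complex}. Also if $v$ is a vertex of $G$ but not a vertex of $\Delta_k(G)$, then $\{v\}$ is not a face and the same observation applies; if $\{v\}$ is a face, I invoke Lemma~\ref{lem:Extendedlinks} to write $\Delta_k(G\setminus v)=\lk_{\Delta_k(G)}\{v\}$.

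So the crux is: if $\Delta$ is a (pure) shellable complex and $v$ is a vertex, then $\lk_\Delta v$ is shellable. This is a standard fact, and the cleanest way to justify it from the definitions given is to take a shelling $F_1,\dots,F_t$ of $\Delta$, restrict to the sublist of facets $F_{i_1},\dots,F_{i_s}$ containing $v$ (in the induced order), and check that $F_{i_1}\setminus v,\dots,F_{i_s}\setminus v$ is a shelling of $\lk_\Delta v$ using the second (combinatorial) characterization of shelling in Definition~\ref{def:shelling}: for facets $F_{i_a}\setminus v$ and $F_{i_b}\setminus v$ with $a<b$, apply the shelling property of $\Delta$ to $F_{i_a}$ and $F_{i_b}$ to get some $F_k$ with $k<i_b$, $F_{i_a}\cap F_{i_b}\subseteq F_k\cap F_{i_b}$ and $|F_k\cap F_{i_b}|=|F_{i_b}|-1$; since $v\in F_{i_a}\cap F_{i_b}\subseteq F_k\cap F_{i_b}$ and $|F_k\cap F_{i_b}|=|F_{i_b}|-1$, we get $v\in F_k$, so $F_k$ appears in the sublist, and intersecting with the complement of $v$ gives the required ridge. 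I should also note the facets of $\lk_\Delta v$ are exactly the $F\setminus v$ for $F$ a facet of $\Delta$ containing $v$ (using purity, so no facet of $\Delta$ containing $v$ is strictly contained in the link of another), which is where purity of $\Delta_k(G)$ — guaranteed by Definition~\ref{def:cut-cplx} — is used.

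The main obstacle, such as it is, is bookkeeping rather than substance: one must be careful that the facets of the link are correctly described (purity is essential here, since in a non-pure complex the link of a vertex need not simply be the ``restriction'' of the facet list), and that the combinatorial shelling condition transfers cleanly under deleting $v$ and passing to a sublist. No genuine difficulty is expected; the statement is essentially a packaging of Lemma~\ref{lem:Extendedlinks} together with the classical ``links of shellable complexes are shellable'' fact. One could alternatively cite this last fact directly (it is standard, e.g.\ in \cite{BjWachsI1996}), shortening the argument to two sentences.
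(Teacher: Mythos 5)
Your proposal is correct and follows essentially the same route as the paper: identify $\Delta_k(G\setminus W)$ with $\lk_{\Delta_k(G)}W$ via Lemma~\ref{lem:Extendedlinks} (or the void complex, which is shellable by convention), and then invoke the standard fact that links of faces in a shellable complex are shellable. The paper simply cites that fact rather than reproving it, as you note one could do at the end of your write-up; your explicit verification of the link-shelling is also correct.
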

\begin{proof} Immediate from Lemma~\ref{lem:Extendedlinks}, since the void complex is shellable, and shellability is preserved by the operation of taking links of faces; see 
 \cite[Proposition~10.14]{BjWachsII1997}, \cite[Theorem~3.1.5]{WachsPosetTop2007}. 
\end{proof}

\subsection{Disjoint Union of Graphs}\label{subsec:Marge-DisjUnion}

\begin{df}\label{defn:Disjunion}
If $G_1$, $G_2$ are graphs, their \emph{disjoint union} is the graph $G_1+G_2$ having vertex set equal to the disjoint union of the vertex sets of $G_1$ and $G_2$, and edge set equal to the disjoint union of the edge sets of $G_1$ and $G_2$.
\end{df}

\begin{theorem}\label{thm:MargeDisjunion}
Let $k\geq2$, and $G_1$, $G_2$ graphs.   Then $\Delta_k(G_1+G_2)$ is shellable if and only if $\Delta_k(G_1)$ and $\Delta_k(G_2)$ are shellable.
\end{theorem}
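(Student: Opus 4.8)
The plan is to understand the facets of $\Delta_k(G_1 + G_2)$ in terms of those of $\Delta_k(G_1)$ and $\Delta_k(G_2)$, and then build (or obstruct) a shelling accordingly. Write $V_i = V(G_i)$, $n_i = |V_i|$, $n = n_1 + n_2$. The key structural observation is: a set $S \subseteq V_1 \cup V_2$ with $|S| = k$ induces a disconnected subgraph of $G_1 + G_2$ if and only if either (a) $S \subseteq V_i$ and $G_i[S]$ is disconnected for some $i$, or (b) $S$ meets both $V_1$ and $V_2$ (since $G_1 + G_2$ has no edges between the parts, any such $S$ is automatically disconnected). Consequently a separating $(n-k)$-set of $G_1 + G_2$, i.e.\ a facet $F = V \setminus S$ of $\Delta_k(G_1+G_2)$, is the complement of such an $S$. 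The "type (b)" facets are exactly the sets $F$ with $|F| = n - k$ that contain $V_1$ entirely or $V_2$ entirely — wait, more precisely, $F$ omits at least one vertex from each $V_i$; so $F \cap V_1 \subsetneq V_1$ and $F \cap V_2 \subsetneq V_2$. The "type (a)" facets with $S \subseteq V_1$ have the form $F = (V_1 \setminus S) \cup V_2$ where $V_1 \setminus S$ is a facet of $\Delta_k(G_1)$; symmetrically for $i = 2$.

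For the forward direction (shellability of $\Delta_k(G_1+G_2)$ implies shellability of each $\Delta_k(G_i)$), I would invoke Proposition~\ref{prop:induced subgraphs}: each $G_i$ is an induced subgraph of $G_1 + G_2$, so shellability of $\Delta_k(G_1+G_2)$ descends to $\Delta_k(G_i)$ immediately. (One must check $\Delta_k(G_i)$ is nonvoid or handle the void case by Remark~\ref{rem:empty-complex-0-dim-complex}; this is routine.) This direction is essentially free.

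For the converse, suppose shellings $A_1, \dots, A_p$ of $\Delta_k(G_1)$ and $B_1, \dots, B_q$ of $\Delta_k(G_2)$ are given. The idea is to exhibit an explicit shelling order of $\Delta_k(G_1+G_2)$ that lists, in blocks: first the facets coming from the shelling of $\Delta_k(G_1)$ (each $A_j$ "fattened" by $V_2$), then the facets from $\Delta_k(G_2)$ (each $B_j$ fattened by $V_1$), and then all the mixed "type (b)" facets in a suitable order — or perhaps it is cleaner to interleave, listing facets $F$ in order of $|F \cap V_1|$ (equivalently $|F \cap V_2|$). A natural candidate is to order facets lexicographically by the pair $(|F \cap V_1|, \text{position in the induced shelling})$, so that facets with $F \cap V_1 = V_1$ (the $\Delta_k(G_2)$-block) come last. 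One then has to verify the shelling condition of Definition~\ref{def:shelling}: for $i < j$ in this order, find $\ell < j$ with $F_i \cap F_j \subseteq F_\ell \cap F_j$ and $|F_\ell \cap F_j| = |F_j| - 1$. When $F_i, F_j$ lie in the same block this reduces, via the fattening, to the shelling property of $\Delta_k(G_i)$ — one checks that the "fattening set" is common to all facets in the block, so intersections and ridge conditions transport verbatim. The genuinely new case is when $F_i$ and $F_j$ lie in different blocks (different values of $|F \cap V_1|$); here $F_i \cap F_j$ is missing at least one vertex of $V_1$ and one of $V_2$, and one must produce an intermediate facet $F_\ell$, earlier than $F_j$, that is a ridge of $F_j$ containing $F_i \cap F_j$. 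The candidate is to take $F_j$ and remove a single carefully chosen vertex (from $V_1$ if $|F_j \cap V_1| < |F_i \cap V_1|$, say), and argue it is still a separating $(n-k-1+1)$... — rather, argue $F_\ell$ has complement of size $k$ inducing a disconnected graph, which it does because removing a vertex of $V_1$ from $F_j$ (with $|F_j \cap V_2| < n_2$ already) keeps the complement meeting both parts, hence type (b); and $F_\ell$ precedes $F_j$ because $|F_\ell \cap V_1| < |F_j \cap V_1|$.

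The main obstacle I anticipate is the bookkeeping in this last case: one must simultaneously ensure $F_\ell$ contains $F_i \cap F_j$, is a legitimate facet (its complement is a disconnected $k$-set — this needs the "both parts hit" observation, but one must be careful when, e.g., $F_j$ already contains all of $V_1$ so no vertex of $V_1$ can be removed, in which case $F_j$ is in the last block and $F_i$ cannot be in a later-or-same block with smaller $|F \cap V_1|$, so this subcase doesn't arise — still, the case analysis on which part $F_j$ fully contains needs care), and precedes $F_j$ in the chosen order. An alternative to the explicit-shelling approach, which might sidestep some of this, is to write $\Delta_k(G_1+G_2)$ as a union of simpler shellable pieces and apply a gluing lemma: note that $\Delta_k(G_1+G_2) = \big(\Delta_k(G_1) * 2^{V_2}\big) \cup \big(2^{V_1} * \Delta_k(G_2)\big) \cup (\text{mixed part})$, and the mixed part is a rank-selected skeleton-type complex on $V_1 \sqcup V_2$, which is shellable; one would then need a lemma that a union of shellable complexes meeting along a shellable subcomplex of the right dimension is shellable. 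I would attempt the explicit shelling first, since the block structure is clean, and fall back on the decomposition only if the cross-block case becomes unwieldy.
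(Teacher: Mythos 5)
Your facet classification, the forward direction via Proposition~\ref{prop:induced subgraphs}, and the idea of shelling in blocks using the shellability of skeletons of joins are all exactly the ingredients of the paper's proof. But your primary proposed block order is wrong, and the error is not mere bookkeeping. You propose to list the facets $A\cup V_2$ (from $\Delta_k(G_1)$) first, then the facets $V_1\cup B$ (from $\Delta_k(G_2)$), then the mixed facets. Consider the first facet $F_j=V_1\cup B$ of the second block and any facet $F_i=A\cup V_2$ of the first block. Their intersection $A\cup B$ has codimension $k\ge 2$ in $F_j$, so the shelling condition demands an earlier facet $F_\ell$ with $A\cup B\subseteq F_\ell\cap F_j$ and $|F_\ell\cap F_j|=|F_j|-1$. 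Any such $F_\ell$ must equal $(V_1\setminus\{x\})\cup B\cup\{y\}$ with $x\in V_1\setminus A$ and $y\in V_2\setminus B$, i.e., it must be a \emph{mixed} facet --- but in your order every mixed facet comes later. So the shelling condition fails whenever both $\Delta_k(G_1)$ and $\Delta_k(G_2)$ are nonvoid. The paper's order is the reverse at the crucial point: all mixed facets come \emph{first}, so that whenever a cross-type intersection needs a mixed intermediary, that intermediary is automatically earlier, with no further analysis of which mixed facet it is.

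Your fallback order (increasing $|F\cap V_1|$) is closer in spirit but still has a genuine hole in the mixed-versus-mixed case across blocks. The move you describe --- delete a vertex of $V_1$ from $F_j$ and insert one of $V_2$, claiming the result is "type (b) because the complement still meets both parts" --- fails exactly when $F_j$ omits only one vertex of $V_2$: the swap then produces a set whose complement lies entirely in $V_1$, and such a set is a facet only if that complement induces a disconnected subgraph of $G_1$, which is not guaranteed. To repair the argument you should adopt the paper's arrangement: put the mixed facets first (they span the $(|V_1|+|V_2|-k-1)$-skeleton of the join of the two simplex boundaries, which is shellable by \cite[Theorem 2.9]{BjWachsI1996} and \cite[Corollary~2.9]{Provan-Billera}), then the two pure blocks in either order, and observe that every cross-type intersection sits inside a mixed ridge-witness, which is then automatically earlier.
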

\begin{proof}
$\Delta_k(G_1+G_2)$ shellable implies $\Delta_k(G_1)$ and $\Delta_k(G_2)$ are shellable as $G_1$ and $G_2$ are induced subgraphs of $G_1+G_2$. 

Now suppose $\Delta_k(G_1)$ and $\Delta_k(G_2)$ are shellable. 
We construct a shelling of $\Delta_k(G_1+G_2)$.  The facets of
$\Delta_k(G_1+G_2)$ are the separating sets of size
$|V_1|+|V_2|-k$ of the graph $G_1+G_2$.  There are three types of facets.
Type 1: $(|V_1|+|V_2|-k)$-sets containing some, but not all vertices of each of
$V_1$ and $V_2$.
Type 2: sets of the form $V_1\cup A$, where $A$ is a $(|V_2|-k)$-subset of $V_2$
that disconnects $G_2$.
Type 3: sets of the form $B\cup V_2$, where $B$ is a $(|V_1|-k)$-subset of $V_1$
that disconnects $G_1$.

We give an ordering of the facets of Type 1, followed by the facets of Type 2,
followed by the facets of Type 3.
First, we know that a shelling order of the facets of Type 1 exists, because
the complex spanned by the facets of Type 1 is the
$(|V_1|+|V_2|-k-1)$-skeleton of the join of the boundary of the simplex with
vertex set $V_1$ with the boundary of the simplex with vertex set $V_2$.
See 
\cite[Theorem 2.9]{BjWachsI1996} for the skeleton result; the join result is due to  
\cite[Corollary~2.9]{Provan-Billera};  see also 
\cite[Remark 10.22]{BjWachsII1997}.
Also, we know that a shelling order of the facets of Type 2 exists, because
the complex spanned by the facets of Type 2 is the
$(|V_1|+|V_2|-k-1)$-skeleton of the join of the 1-facet complex consisting
of the $(|V_1|-1)$-simplex with vertex set $V_1$ with $\Delta_k(G_2)$.
Similarly, a shelling order of the facets of Type 3 exists, because
the complex spanned by the facets of Type 3 is the
$(|V_1|+|V_2|-k-1)$-skeleton of the join of the 1-facet complex consisting
of the $(|V_2|-1)$-simplex with vertex set $V_2$ with $\Delta_k(G_1)$.

It remains to show that the resulting
ordering of the facets of Type 1, followed by the facets of Type 2,
followed by the facets of Type 3, is a shelling order.
Clearly, the intersection of two facets of the same type satisfies the
shelling condition.  Now suppose that $F_i$ is a facet of Type 1 and $F_j$
is a facet of Type 2.  Then $(F_i\cap F_j)\cap V_1\subseteq F_i\cap V_1
\subseteq V_1\setminus \{x\}$ for some $x\in V_1$, and
$(F_i\cap F_j)\cap V_2\subseteq F_j\cap V_2
\subseteq V_2\setminus \{y,z\}$ for some $y,z\in V_2$ (since $k\ge 2$),
so $F_i\cap F_j\subseteq (V_1\setminus\{x\}) \cup (F_j\cap V_2 \cup \{y\})$,
which is a $((|V_1|-1)+(|V_2|-k+1))=(|V_1|+|V_2|-k)$-subset that is a facet
of $\Delta_k(G_1+G_2)$ of Type 1.
The same argument shows that the intersection of a facet of Type 1 with a
facet of Type 3 is contained in a facet of Type 1.
Finally, suppose $F_i$ is a fact of Type 2 and $F_j$ is a facet of Type 3.
Then for some set $A\subset V_1$ of size $|V_1|-k$ and some set
$B\subset V_2$ of size $|V_2|-k$, $F_i\cap F_j=A\cup B$ with
$|A\cup B|=|V_1|+|V_2|-2k$.  Choose, say, $k-1$ elements of $V_1\setminus A$
and 1 element of $V_2\setminus B$ to extend $A\cup B$ to a
$(|V_1|+|V_2|-k)$-element subset of $V_1\cup V_2$, not containing all of
$V_1$ or all of $V_2$.  This is then a facet of Type 1 that contains
$F_i\cap F_j$.

Finally, we can conclude that a shelling order of Type 1 facets, followed by a
shelling order of Type 2 facets, followed by a shelling order of Type 3 facets
is a shelling of $\Delta_k(G_1+G_2)$.
\end{proof}

\subsection{Join of Graphs}\label{subsec:Mark-Joins}
   \begin{df}
    Given graphs $G_1=(V_1,E_1)$ and $G_2=(V_2,E_2)$ on disjoint vertex sets, their \emph{join} $G_1*G_2$ is their disjoint union with the set of all edges between $V_1$ and $V_2$ added as well.
    \end{df}

\begin{theorem}\label{thm:Mark-Joins}
Let $k\geq2$, and $G_1$, $G_2$ graphs. Then $\Delta_k(G_1*G_2)$ is shellable if and only if between $\Delta_k(G_1)$ and $\Delta_k(G_2)$, one is shellable and the other is the void complex.   
\end{theorem}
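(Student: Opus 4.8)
The plan is to describe the facets of $\Delta_k(G_1*G_2)$ explicitly and then argue shellability directly from that description. First I would establish the structural identity $D_k(G_1*G_2)=D_k(G_1)\sqcup D_k(G_2)$: a $k$-set $S\subseteq V_1\cup V_2$ meeting both $V_1$ and $V_2$ induces a \emph{connected} subgraph of $G_1*G_2$, since any two of its vertices are either adjacent in the join or joined through a vertex of $S$ lying in the opposite part; and a $k$-set inside a single $V_i$ spans the same subgraph as it does in $G_i$. Writing $n_i=|V_i|$, $n=n_1+n_2$, and $\Sigma_i=\langle V_i\rangle$ for the full simplex on $V_i$, this shows that the facets of $\Delta_k(G_1*G_2)$ are exactly the sets $(V_1\setminus S)\cup V_2$ with $S\in D_k(G_1)$ (``type~A'', each containing $V_2$), together with the sets $V_1\cup(V_2\setminus S')$ with $S'\in D_k(G_2)$ (``type~B'', each containing $V_1$); equivalently $\Delta_k(G_1*G_2)=\bigl(\Delta_k(G_1)*\Sigma_2\bigr)\cup\bigl(\Sigma_1*\Delta_k(G_2)\bigr)$.

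For the backward implication, suppose without loss of generality that $\Delta_k(G_2)$ is void, so $D_k(G_2)=\emptyset$; then $\Delta_k(G_1*G_2)=\Delta_k(G_1)*\Sigma_2$, there being no type-B facets. If $\Delta_k(G_1)$ is also void this is the void complex, shellable by convention; and if $F_1,\dots,F_s$ is a shelling of $\Delta_k(G_1)$, then $F_1\cup V_2,\dots,F_s\cup V_2$ is a shelling of $\Delta_k(G_1)*\Sigma_2$, since $(F_i\cup V_2)\cap(F_j\cup V_2)=(F_i\cap F_j)\cup V_2$ makes the condition in Definition~\ref{def:shelling} transfer verbatim (this is just the fact that coning preserves shellability, applied $|V_2|$ times).

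The forward implication rests on one claim, which I expect to be the main step: if $D_k(G_1)$ and $D_k(G_2)$ are \emph{both} nonempty then $\Delta_k(G_1*G_2)$ is not shellable. To see this, fix $S\in D_k(G_1)$, $S'\in D_k(G_2)$, and set $F=(V_1\setminus S)\cup V_2$, $F'=V_1\cup(V_2\setminus S')$, and $\rho=F\cap F'=(V_1\setminus S)\cup(V_2\setminus S')$. Then $F\neq F'$ (since $V_2\subseteq F$ but $|F'\cap V_2|=n_2-k<n_2$), both are facets of cardinality $n-k$, and $|\rho|=n-2k$. The crucial point is that $\rho$ is contained in no facet other than $F$ and $F'$: a type-A facet $(V_1\setminus T)\cup V_2$ contains $\rho$ only if $T\subseteq S$, forcing $T=S$ as $|T|=|S|=k$, and symmetrically on the type-B side. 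Now in any purported shelling, list $F,F'$ as $F_i,F_j$ with $i<j$; the shelling criterion applied to this pair produces some $m<j$ with $\rho=F_i\cap F_j\subseteq F_m\cap F_j$ and $|F_m\cap F_j|=|F_j|-1=n-k-1$. But $\rho\subseteq F_m$ forces $F_m\in\{F,F'\}$, and $m<j$ then forces $F_m=F_i$, whence $F_m\cap F_j=\rho$ and $n-2k=|\rho|=n-k-1$, contradicting $k\ge2$. This argument is uniform across the degenerate sub-cases, in particular when some $\Delta_k(G_i)$ equals $\{\emptyset\}$ (where $|D_k(G_i)|=1$).

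Given this claim, the forward implication follows quickly: if $\Delta_k(G_1*G_2)$ is shellable then not both $D_k(G_i)$ are nonempty, so some $\Delta_k(G_i)$ is void, say $\Delta_k(G_2)$. Then $\Delta_k(G_1*G_2)=\Delta_k(G_1)*\Sigma_2$; if $\Delta_k(G_1)$ is void we are done, and otherwise $V_2$ is a face of this complex with $\lk_{\Delta_k(G_1)*\Sigma_2}(V_2)=\Delta_k(G_1)$, so $\Delta_k(G_1)$ is shellable because links of shellable complexes are shellable (\cite[Proposition~10.14]{BjWachsII1997}, \cite[Theorem~3.1.5]{WachsPosetTop2007}). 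Thus one of the two cut complexes is void and the other shellable. The only friction I anticipate is careful handling of the void and $\{\emptyset\}$ conventions in the boundary cases, and checking the ``exactly two facets contain $\rho$'' step; the connectivity lemma for joins and the cone/shelling lift are routine.
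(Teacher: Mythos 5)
Your proposal is correct and follows essentially the same route as the paper: the decomposition $D_k(G_1*G_2)=D_k(G_1)\sqcup D_k(G_2)$, the cone argument when one cut complex is void, and the obstruction that the intersection of a type-A facet with a type-B facet has size $n-2k<n-k-1$ yet lies in no other facet, which defeats any shelling order. The paper phrases the last step via complements ($F^c\subseteq F_1^c\cup F_2^c$ forces $F^c=F_1^c$ or $F_2^c$), but this is the same argument as your containment analysis of $\rho$.
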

\begin{proof}
Let $k\geq2$, $G_1=(V_1,E_1)$, $G_2=(V_2,E_2)$, $\F$ the set of facets of $\Delta_k(G_1*G_2)$, and $\F_1, \F_2$ the facets of $\Delta_k(G_1)$ and $\Delta_k(G_2)$, respectively. Furthermore let $\F_1^+=\{F\cup V_2\mid F\in \F_1\}$ and $\F_2^+=\{F\cup V_1\mid F\in \F_2\}$. We consider $D_k(G_1*G_2)$. If $S$ induces a disconnected subgraph of $G_1*G_2$, then $S\subseteq V_1$ or $S\subseteq V_2$. 
So $D_k(G_1*G_2)=D_k(G_1) \sqcup D_k(G_2)$, and $\F=\F_1^+\sqcup \F_2^+$. Let $F_1\in \F_1^+$ and $F_2\in \F_2^+$; then $F_1^c\in D_k(G_1)$ so $F_1^c\subseteq V_1$, and similarly $F_2^c\subseteq V_2$. Suppose $F_1\cap F_2\subseteq F\in\F$; then $F^c\subseteq (F_1\cap F_2)^c= F_1^c\cup F_2^c$. However $F^c$ has size $k$ and is contained entirely in $V_1$ or $V_2$. So $F^c=F_1^c$ or $F^c=F_2^c$, and thus $F=F_1$ or $F=F_2$; so the only facets to contain $F_1\cap F_2$ are $F_1$ and $F_2$. However, $|F_1\cap F_2|=|V_1\cup V_2|-2k=|F_1|-k<|F_1|-1$. Thus if both $\F_1$ and $\F_2$ are non-empty, then $\Delta_k(G_1*G_2)$ is not shellable. 
Assume without loss of generality that $\F_2=\emptyset$;
then $\F=\F_1^+$ and $\Delta_k(G_1*G_2)$ is shellable if and only if $\Delta_k(G_1)$ is shellable.
\end{proof}
In many ways the join of graphs behaves like the opposite of the disjoint union of graphs; see  Theorem~\ref{thm:MargeDisjunion}. It destroys cut complex shellability except in very specific boundary conditions. However, the following special case 
is worth highlighting.
    \begin{cor}\label{join-one}
    Given a graph $G$, let $G*1$ denote the join of the graph $G$ with the graph consisting of a single vertex. Then $\Delta_k(G*1)$ is shellable if and only if $\Delta_k(G)$ is shellable.
    \end{cor}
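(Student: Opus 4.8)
The plan is to derive this corollary directly from Theorem~\ref{thm:Mark-Joins} by taking $G_1 = G$ and $G_2 = 1$, the graph on a single vertex. The key observation is that $\Delta_k(1)$ is void for every $k \ge 2$: a graph on one vertex has no $k$-subsets at all when $k \ge 2$, so $D_k(1) = \emptyset$ and hence $\Delta_k(1) = \langle \emptyset \rangle$ is the void complex (cf.\ Example~\ref{ex:Examples-cut-complex}, Part~(1), which says $\Delta_k(H)$ is void when $k > |V(H)|$).

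With this in hand, I would apply Theorem~\ref{thm:Mark-Joins} to the pair $(G, 1)$. That theorem states that $\Delta_k(G * 1)$ is shellable if and only if one of $\Delta_k(G)$, $\Delta_k(1)$ is shellable and the other is void. Since $\Delta_k(1)$ is \emph{always} void (and the void complex is shellable by convention, see Remark~\ref{rem:empty-complex-0-dim-complex}), the "other is void" condition is automatically satisfied by the second complex. Thus the criterion collapses to the single requirement that $\Delta_k(G)$ be shellable, which gives the claimed equivalence. One should also note the degenerate possibility that $\Delta_k(G)$ is itself void; in that case both complexes are void, the join complex $\Delta_k(G*1)$ is void as well (its facets are the $\F_1^+$, which is empty), and it is vacuously shellable — consistent with the statement.

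There is essentially no obstacle here: the entire content is packaged in Theorem~\ref{thm:Mark-Joins}, and the corollary is simply the special case where one of the two graphs is forced to have a void cut complex for trivial size reasons. The only point requiring a moment's care is confirming that $\Delta_k(1)$ is void for all $k \ge 2$ (not merely for $k$ large), which follows immediately since a one-vertex graph contains no disconnected induced subgraph on $k \ge 2$ vertices. I would write the proof in two or three sentences invoking Theorem~\ref{thm:Mark-Joins} with $G_2$ the single-vertex graph.
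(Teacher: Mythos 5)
Your proposal is correct and is exactly the argument the paper intends: Corollary~\ref{join-one} is stated as an immediate consequence of Theorem~\ref{thm:Mark-Joins} with $G_2$ the one-vertex graph, whose $k$-cut complex is void for all $k\ge 2$, so the theorem's criterion collapses to the shellability of $\Delta_k(G)$. Your handling of the degenerate case where $\Delta_k(G)$ is itself void (hence shellable by convention) is also the right observation to make the ``if and only if'' airtight.
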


 \begin{df}\label{defn:dominating-vertex} A vertex $v$ of a graph $G$ is a \emph{dominating} vertex if it is connected by an edge to every other vertex of $G$.
\end{df}
Thus the new vertex in $G*1$ is a dominating vertex.
    \begin{df}\label{defn:threshold-graph}
    A \emph{threshold graph} is a graph constructed from a single vertex by adding a sequence of isolated and dominating vertices.
    \end{df}
Since the cut complex of the graph on one vertex is the void complex for all $k \geq 2$, it is trivially shellable. Hence Corollary~\ref{join-one} and Theorem~\ref{thm:MargeDisjunion}
immediately give us the following corollary. 
\begin{cor}\label{cor:Mark-threshold-graph}
If $G$ is a threshold graph, then $\Delta_k(G)$ is shellable for all $k\geq 2$.
\end{cor}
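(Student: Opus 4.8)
The plan is to induct on the number of vertices of the threshold graph $G$, using the recursive structure from Definition~\ref{defn:threshold-graph}. The base case is the single vertex: its $k$-cut complex is void for every $k\ge 2$ (see Example~\ref{ex:Examples-cut-complex}, Part~(1) or Part~(4)), hence shellable by the convention recorded in Remark~\ref{rem:empty-complex-0-dim-complex}. For the inductive step, suppose $G'$ is a threshold graph on $n-1$ vertices with $\Delta_k(G')$ shellable for all $k\ge 2$, and let $G$ be obtained from $G'$ by adjoining one more vertex $v$.

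There are two cases according to how $v$ is added. If $v$ is an isolated vertex, then $G = G' + \{v\}$ is the disjoint union of $G'$ with the single-vertex graph; since $\Delta_k(G')$ is shellable by the inductive hypothesis and $\Delta_k(\{v\})$ is void (hence shellable), Theorem~\ref{thm:MargeDisjunion} gives that $\Delta_k(G)$ is shellable. If instead $v$ is a dominating vertex, then $G = G' * 1$ in the notation of Corollary~\ref{join-one}, and that corollary says $\Delta_k(G)$ is shellable if and only if $\Delta_k(G')$ is shellable, which holds by the inductive hypothesis. In either case $\Delta_k(G)$ is shellable for all $k\ge 2$, completing the induction.

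This argument is essentially immediate once the two previously established theorems are in hand, so there is really no obstacle; the only point requiring a moment's care is to make sure the recursive definition of threshold graphs is being applied correctly, namely that a threshold graph on $n\ge 2$ vertices is always of the form $G' + \{v\}$ or $G' * 1$ for a threshold graph $G'$ on $n-1$ vertices, which is exactly what Definition~\ref{defn:threshold-graph} asserts. One should also note the degenerate possibility that at some stage the graph has become the complete graph or remains edgeless; these are handled uniformly by the same two cases, since adding an isolated vertex to $K_{n-1}$ or a dominating vertex to the edgeless graph are both instances of the operations above, and the conclusion follows regardless.
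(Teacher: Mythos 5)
Your proof is correct and follows essentially the same route as the paper, which likewise deduces the corollary from Theorem~\ref{thm:MargeDisjunion} and Corollary~\ref{join-one} together with the observation that the one-vertex graph has void (hence shellable) $k$-cut complex. You merely make explicit the induction on the number of vertices that the paper leaves implicit.
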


We now give a precise structural description of  the cut complex of the join of two graphs, which will allow us to determine the homotopy type from that of the cut complexes of the individual graphs.  Passing  to the face lattice gives an alternative proof of the homotopy equivalence, and also shows that it is group-equivariant, a fact we will need in Section~\ref{sec:new-multipartite-2022April23S}.    
We begin with a more general proposition.
We write $\langle V\rangle$ for the simplex on the vertex set $V$. 

\begin{prop}\label{prop:general-joinS2022Sept15}  For $i=1,2$ let  $\Delta_i$ be a simplicial complex on a finite vertex set $V_i$, such that $V_1\cap V_2=\emptyset$. %
Define a new simplicial complex $\Delta$ by 
\[\Delta\coloneqq (\Delta_1 * \langle V_2\rangle)\cup (\langle V_1\rangle*\Delta_2).\]
\begin{enumerate}
\item
There is a homotopy equivalence 
$\Delta \simeq \susp(\Delta_1*\Delta_2).$
\item 
Moreover, there is a group-equivariant poset map from the product of face lattices 
\[\mathcal{L}(\Delta_1)\times \mathcal{L}(\Delta_2)\]
to the face lattice of the simplicial complex $\Delta$, thereby 
  inducing a group-equivariant homotopy equivalence of the respective order complexes.
That is, if  $H_i$ is a group acting simplicially on $\Delta_i$, $i=1,2$,  then there is a group equivariant homotopy equivalence 
\[\susp(\Delta_1*\Delta_2)\simeq_{H_1\times H_2} \Delta .\]
\end{enumerate}
\end{prop}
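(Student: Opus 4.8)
The plan is to prove part (1) directly, and then reprove it at the level of face lattices to obtain the equivariance claim of part (2). For part (1), the key observation is that $\Delta = (\Delta_1 * \langle V_2\rangle) \cup (\langle V_1\rangle * \Delta_2)$ is the union of two contractible subcomplexes — each is a cone, since $\langle V_2\rangle$ and $\langle V_1\rangle$ are simplices, hence cones — whose intersection is $(\Delta_1 * \langle V_2\rangle) \cap (\langle V_1\rangle * \Delta_2) = \Delta_1 * \Delta_2$. (This last identity needs a short check: a face of the intersection is $\sigma_1 \cup \tau$ with $\sigma_1 \in \Delta_1$, $\tau \subseteq V_2$, and also equals $\rho \cup \sigma_2$ with $\rho \subseteq V_1$, $\sigma_2 \in \Delta_2$; comparing $V_1$- and $V_2$-parts forces $\tau \in \Delta_2$ and $\sigma_1 \in \Delta_1$, so the face lies in $\Delta_1 * \Delta_2$, and the reverse inclusion is immediate.) Now I would invoke the standard Mayer--Vietoris / homotopy pushout fact: if $X = A \cup B$ with $A, B$ subcomplexes that are both contractible, then $X \simeq \susp(A \cap B)$. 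Concretely, collapsing the contractible subcomplex $A = \Delta_1 * \langle V_2\rangle$ via Proposition~\ref{prop:quotient-by-contractible-homotopy}(1) gives $\Delta \simeq \Delta / A$; by Proposition~\ref{prop:TopFact2ndIsoThm} with the closed subspaces $A, B$, this quotient is homeomorphic to $B / (A \cap B)$; and since $A \cap B = \Delta_1 * \Delta_2$ is contractible in the cone $B = \langle V_1\rangle * \Delta_2$, Proposition~\ref{prop:quotient-by-contractible-homotopy}(2) gives $B / (A \cap B) \simeq B \vee \susp(A \cap B) \simeq \susp(\Delta_1 * \Delta_2)$, using once more that $B$ is contractible. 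Chaining these homotopy equivalences yields $\Delta \simeq \susp(\Delta_1 * \Delta_2)$.

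For part (2), I would recast this poset-theoretically so that the group actions are visibly respected. Consider the product of face lattices $\mathcal{L}(\Delta_1) \times \mathcal{L}(\Delta_2)$, a bounded poset, and recall from \eqref{eqn:prod-two-posets} that its proper part has order complex homotopy equivalent to $\susp(\Delta(\overline{\mathcal{L}(\Delta_1)}) * \Delta(\overline{\mathcal{L}(\Delta_2)})) \simeq \susp(\Delta_1 * \Delta_2)$, since the order complex of the proper part of a face lattice is the barycentric subdivision of the complex. I would then define a poset map $f\colon \overline{\mathcal{L}(\Delta_1) \times \mathcal{L}(\Delta_2)} \to \overline{\mathcal{L}(\Delta)}$ as follows: a proper element of the product is a pair $(\sigma_1, \sigma_2)$ which is neither $(\hat 0, \hat 0)$ nor $(\hat 1, \hat 1)$, where $\sigma_i$ is either a (possibly empty) face of $\Delta_i$ or the artificial top $\hat 1$ of $\mathcal{L}(\Delta_i)$. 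Send $(\hat 1, \sigma_2) \mapsto V_1 \cup \sigma_2 \in \langle V_1\rangle * \Delta_2$, send $(\sigma_1, \hat 1) \mapsto \sigma_1 \cup V_2 \in \Delta_1 * \langle V_2\rangle$, and send $(\sigma_1, \sigma_2)$ with both $\sigma_i \ne \hat 1$ to $\sigma_1 \cup \sigma_2 \in \Delta_1 * \Delta_2 \subseteq \Delta$. One checks this is well-defined (the image is always a nonempty face of $\Delta$, since at least one coordinate is nontrivial), order-preserving, and manifestly commutes with the action of $H_1 \times H_2$. To conclude I would apply the Quillen fiber lemma (Theorem~\ref{thm:QuillenfiberLemma}): for each face $F \in \overline{\mathcal{L}(\Delta)}$, one must show the fiber $f^{-1}_{\le}(F) = \{(\sigma_1,\sigma_2) : f(\sigma_1,\sigma_2) \le F\}$ has contractible order complex. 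Splitting $F$ according to whether $F \supseteq V_1$, $F \supseteq V_2$, or neither (these are the three "types" already appearing in the join analysis), the fiber in each case has a maximum element or a natural cone point — e.g. if $F$ contains neither $V_1$ nor $V_2$, then $f(\sigma_1,\sigma_2) \le F$ forces both $\sigma_i \ne \hat 1$ and $\sigma_i \subseteq F$, so the fiber is $\overline{\mathcal{L}(\Delta_1 \cap F)} \times \overline{\mathcal{L}(\Delta_2 \cap F)}$-like with a top element $(F \cap V_1, F \cap V_2)$, hence conical and contractible; the cases $F \supseteq V_1$ (resp. $V_2$) are handled symmetrically, the fiber acquiring the top element $(\hat 1, F \cap V_2)$ (resp. $(F \cap V_1, \hat 1)$). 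By the equivariant form of the Quillen fiber lemma, $f$ induces an $H_1 \times H_2$-equivariant homotopy equivalence $\Delta(\overline{\mathcal{L}(\Delta_1) \times \mathcal{L}(\Delta_2)}) \simeq \Delta(\overline{\mathcal{L}(\Delta)}) \simeq \Delta$, and combined with \eqref{eqn:prod-two-posets} this gives $\susp(\Delta_1 * \Delta_2) \simeq_{H_1 \times H_2} \Delta$.

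The main obstacle I anticipate is verifying contractibility of the Quillen fibers cleanly and uniformly across the three cases, in particular making sure the map $f$ is well-defined and monotone at the "boundary" elements involving the artificial tops $\hat 1$ of the individual face lattices — the bookkeeping of when a coordinate is $\hat 1$ versus a genuine face is where sign and case errors tend to creep in. A secondary subtlety is the degenerate cases where $\Delta_1$ or $\Delta_2$ is the void complex or the irrelevant complex $\{\emptyset\}$: one should either treat these separately (noting $\susp$ and the join behave correctly, e.g. $\Delta_1 * \Delta_2$ void when either factor is void) or restrict to the nondegenerate situation, which is all that is needed for the applications in Section~\ref{sec:new-multipartite-2022April23S}. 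Everything else — the identification of the intersection of the two cones, and the homeomorphism/homotopy-equivalence chain in part (1) — is routine given Propositions~\ref{prop:TopFact2ndIsoThm} and~\ref{prop:quotient-by-contractible-homotopy}.
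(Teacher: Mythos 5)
Your part (1) is correct and is essentially the paper's argument: the same decomposition $\Delta=A\cup B$ with $A=\Delta_1*\langle V_2\rangle$, $B=\langle V_1\rangle*\Delta_2$, the identification $A\cap B=\Delta_1*\Delta_2$, and the same chain through Propositions~\ref{prop:TopFact2ndIsoThm} and~\ref{prop:quotient-by-contractible-homotopy}.

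Part (2), however, has a genuine gap: you apply the Quillen fiber lemma to the \emph{descending} fibers $f^{-1}_{\le}(F)$, and these need not be contractible. The top element you propose, $(F\cap V_1,F\cap V_2)$, is only an element of $\mathcal{L}(\Delta_1)\times\mathcal{L}(\Delta_2)$ when $F\cap V_i\in\Delta_i$ for both $i$, and a face $F$ of $\Delta$ need not satisfy this: if $\Delta_1$ is nonvoid, then \emph{every} subset of $V_2$ is a face of $\Delta$ (it lies in $\langle V_2\rangle\subseteq\Delta_1*\langle V_2\rangle$), including subsets $F\subseteq V_2$ with $F\notin\Delta_2$. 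Concretely, take $V_1=\{a\}$, $\Delta_1=\{\emptyset,\{a\}\}$, $V_2=\{x,y,z,w\}$ and $\Delta_2=\partial\langle x,y,z\rangle\cup\{\{w\}\}$, and let $F=\{x,y,z\}$. Then $F$ is a proper face of $\Delta$ containing neither $V_1$ nor $V_2$, and $f^{-1}_{\le}(F)$ is forced to have $\sigma_1=\hat 0$ and $\sigma_2$ a nonempty face of $\Delta_2$ contained in $F$; its order complex is the barycentric subdivision of $\partial\langle x,y,z\rangle$, a circle, not contractible. So the hypothesis of Theorem~\ref{thm:QuillenfiberLemma} fails for your map in this direction. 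The fix — and what the paper does — is to use the \emph{ascending} fibers $f^{-1}_{\ge}(\alpha_1\sqcup\alpha_2)$ (the Quillen fiber lemma applies equally after passing to opposite posets): each such fiber has a unique \emph{minimal} element, namely $(\alpha_1,\alpha_2)$ when both $\alpha_i=F\cap V_i$ lie in $\Delta_i$, and $(\hat 1_{\mathcal{L}(\Delta_1)},\alpha_2)$ (resp.\ $(\alpha_1,\hat 1_{\mathcal{L}(\Delta_2)})$) when $\alpha_1\notin\Delta_1$ (resp.\ $\alpha_2\notin\Delta_2$), since $\sigma_i\supseteq\alpha_i$ with $\alpha_i\notin\Delta_i$ forces $\sigma_i=\hat 1_{\mathcal{L}(\Delta_i)}$. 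With that one change of direction the rest of your argument (the map, its equivariance, and the identification via Equation~\eqref{eqn:prod-two-posets}) goes through.
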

\begin{proof} For Part (1): 

For simplicity write $A=\Delta_1 * \langle V_2\rangle$ and $B=\langle V_1\rangle*\Delta_2$.  Since $\Delta=A\cup B$, 
 Proposition~\ref{prop:TopFact2ndIsoThm} gives 
a homotopy equivalence 
\[\Delta/A\simeq B/(A\cap B).\]
Note that $A$ and $B$ are contractible. Hence by Proposition~\ref{prop:quotient-by-contractible-homotopy}, 
the space on the left is homotopy equivalent to $\Delta$, and the space on the right is homotopy equivalent to $\susp (A\cap B)=\susp(\Delta_1*\Delta_2 )$.

For Part (2): 

Let $H_i$ be a group acting simplicially on $\Delta_i$, $i=1,2$.  Recall that 
the face lattice $\mathcal{L}(\Delta_i) $ has an artificially appended top element $\hat 1_{\mathcal{L}(\Delta_i)}.$ 
We claim that there is a group-equivariant poset  map 
\[\phi:   \mathcal{L}(\Delta_1)\times \mathcal{L}(\Delta_2) \rightarrow 
   \mathcal{L}( \Delta)   \]
which induces a $(H_1\times H_2)$-homotopy equivalence of order complexes.  More precisely, 
 for faces $\sigma_i\in \mathcal{L}(\Delta_i)$, define $\phi$ to be the map sending 
 \begin{align*} &(\sigma_1,\sigma_2) \mapsto \sigma_1\sqcup \sigma_2,\\
&(\sigma_1,\hat 0)\mapsto \sigma_1\sqcup \emptyset =\sigma_1,\\
&(\sigma_1, \hat 1_{\mathcal{L}(\Delta_2)})\mapsto \sigma_1\sqcup V_2,\\
&(\hat 0, \sigma_2)\mapsto \emptyset \sqcup \sigma_2=\sigma_2,\\
&(\hat 1_{\mathcal{L}(\Delta_1)}, \sigma_2)\mapsto V_1\sqcup \sigma_2.
\end{align*}
  One checks that $\phi$ is a poset map, which clearly commutes with the action of $H_1\times H_2$.

The facets of the  complex $\Delta$ are of the form $F_1\sqcup V_2$,  $V_1\sqcup F_2$, where $F_i$ is a facet of $\Delta_i$, $i=1,2$. 
Hence $\sigma=\alpha_1\sqcup\alpha_2$ is a face of $\Delta$ 
if and only if $\alpha_i\subseteq V_i,\ i=1,2$ and $\alpha_i\in \Delta_i$ for at least one $i$.

We show that the fibers
\begin{equation*}
\phi^{-1}_{\ge}(\alpha_1\sqcup\alpha_2) = \left\{ (\sigma_1,\sigma_2) \ne \big( \hat 1_{\mathcal{L}(\Delta_1)}, \hat 1_{\mathcal{L}(\Delta_2)} \big) : \text{either $\sigma_i=\hat 1_{\mathcal{L}(\Delta_i)}$ or $\sigma_i\in\Delta_i$ and $\sigma_i\supseteq \alpha_i, i=1,2$} \right\}
\end{equation*}
have a unique minimal element, and hence are contractible.

Suppose  $\alpha_i\in \Delta_i$ for both $i=1,2$. Then clearly $(\alpha_1,\alpha_2)$ is in the fiber, and is its unique minimal element. 

Otherwise $\alpha_i\in \Delta_i$ for only one of $i=1,2$, say for $i=2$. 
Since $\alpha_1\notin \Delta_1$, if $(\sigma_1, \sigma_2)$ is in the fiber, this forces $\sigma_1=\hat 1_{\mathcal{L}(\Delta_1)}$. This is because $\Delta_1$ is a simplicial complex, so $\alpha_1\notin \Delta_1$ and 
$\sigma_1\supseteq \alpha_1$ implies $\sigma_1\notin \Delta_1$. 

Hence $(\hat 1_{\mathcal{L}(\Delta_k(G_1))}, \alpha_2)$ is in the fiber, and by the above argument is  its unique minimal element.

We have shown that in all cases the fiber is contractible, and hence by Theorem~\ref{thm:QuillenfiberLemma}, $\phi$ induces a group-equivariant homotopy equivalence of order complexes 
\begin{equation*}
\Delta(\overline{ \mathcal{L}(\Delta_1)\times \mathcal{L}(\Delta_2))}\simeq \Delta(\overline{\mathcal{L}( \Delta))}.
\end{equation*}
But the right-hand side is the barycentric subdivision of $\Delta$, 
and from  ~\eqref{eqn:prod-two-posets}, the left-hand side is homotopy equivalent to 
\[\susp \big(\Delta(\mathcal{L}(\Delta_1)) *\Delta( \mathcal{L}(\Delta_2))\big),\]
where again the order complex $\Delta(\mathcal{L}(\Delta_i))$ of the face lattice  is 
 homeomorphic to the barycentric subdivision of $\Delta_i$, 
 and hence to $\Delta_i$, $i=1,2$. Hence the left-hand side is  homeomorphic and thus homotopy equivalent to $\susp (\Delta_1 *\Delta_2)$, and the claim follows.
\end{proof}

\begin{theorem}\label{thm:susp-of-joins2022Sept12} Let $k\ge 1$,  let $G_i$ be a graph with vertex set $V_i$, $i=1,2$, where $V_1\cap V_2=\emptyset$.  Assume $\Delta_k(G_i)$ is nonvoid for at least one $i=1,2$.
\begin{enumerate}
\item We have the decomposition 
\begin{equation}\label{eqn:graph-join-decomp}
\Delta_k(G_1*G_2)=(\Delta_k(G_1)*\langle V_2\rangle)\cup(\langle V_1\rangle*\Delta_k(G_2)). 
\end{equation}
\item Assume only one cut complex, say $\Delta_k(G_1)$, is void. Then $\Delta_k(G_1*G_2)$ is contractible.
\item Assume both cut complexes $\Delta_k(G_i)$, $i=1,2$, are nonvoid. Then there is a homotopy equivalence 
\begin{equation}\label{eqn:join-susp-iso} \susp ( \Delta_k(G_1)*\Delta_k(G_2) )\simeq \Delta_k(G_1*G_2).
\end{equation}
Moreover, there is a group-equivariant poset map from the product of face lattices 
\[\mathcal{L}(\Delta_k(G_1))\times \mathcal{L}(\Delta_k(G_2))\]
to the face lattice of the simplicial complex $\Delta_k(G_1*G_2)$ 
 which induces a group-equivariant homotopy equivalence of the respective order complexes.
This in turn gives a group-equivariant $(H_1\times H_2)$-homotopy equivalence %
\begin{equation*}\label{eqn:join-susp-iso-group-equiv} \susp ( \Delta_k(G_1)*\Delta_k(G_2) )\simeq_{H_1\times H_2} \Delta_k(G_1*G_2),
\end{equation*}
where $H_i$ is a group acting simplicially on the cut complex $\Delta_k(G_i)$ of the graph $G_i$.
\end{enumerate}
\end{theorem}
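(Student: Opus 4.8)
The plan is to reduce everything to Proposition~\ref{prop:general-joinS2022Sept15} by identifying the cut complex of a join with the combinatorial gluing appearing there. For Part~(1), the key observation is the one already used in the proof of Theorem~\ref{thm:Mark-Joins}: a vertex set $S\subseteq V_1\cup V_2$ induces a disconnected subgraph of $G_1*G_2$ if and only if $S\subseteq V_1$ and $G_1[S]$ is disconnected, or $S\subseteq V_2$ and $G_2[S]$ is disconnected (any set meeting both parts is connected through the complete bipartite edges). Hence $D_k(G_1*G_2)=D_k(G_1)\sqcup D_k(G_2)$, so the facets of $\Delta_k(G_1*G_2)$ are exactly the sets $(V_1\cup V_2)\setminus S$ for $S\in D_k(G_1)$ — these have the form $F_1\cup V_2$ with $F_1$ a facet of $\Delta_k(G_1)$ — together with the sets $V_1\cup F_2$ with $F_2$ a facet of $\Delta_k(G_2)$. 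Since the facets of $\Delta_k(G_1)*\langle V_2\rangle$ are precisely the $F_1\cup V_2$ and the facets of $\langle V_1\rangle *\Delta_k(G_2)$ are precisely the $V_1\cup F_2$, and both of these complexes have the full vertex set $V_1\cup V_2$, their union is generated by exactly the facet set just described, giving~\eqref{eqn:graph-join-decomp}. (Here I use that $\Delta_k(G_i)$ nonvoid for at least one $i$ guarantees at least one facet exists, so the right-hand side is nonvoid; if, say, $\Delta_k(G_2)$ is void, then $\langle V_1\rangle*\Delta_k(G_2)$ is void and the decomposition reads $\Delta_k(G_1*G_2)=\Delta_k(G_1)*\langle V_2\rangle$.)

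For Part~(2), suppose $\Delta_k(G_1)$ is void and $\Delta_k(G_2)$ is nonvoid. By Part~(1), $\Delta_k(G_1*G_2)=\langle V_1\rangle*\Delta_k(G_2)$, which is a cone (every face unions with any fixed vertex of $V_1$ to a face), hence contractible. The symmetric case is identical.

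For Part~(3), when both $\Delta_k(G_i)$ are nonvoid, apply Proposition~\ref{prop:general-joinS2022Sept15} with $\Delta_i=\Delta_k(G_i)$ and the vertex sets $V_i$ — the complex $\Delta$ defined there is exactly $\Delta_k(G_1*G_2)$ by Part~(1). Part~(1) of that proposition then gives the homotopy equivalence $\susp(\Delta_k(G_1)*\Delta_k(G_2))\simeq\Delta_k(G_1*G_2)$, and Part~(2) of that proposition gives the group-equivariant poset map $\mathcal{L}(\Delta_k(G_1))\times\mathcal{L}(\Delta_k(G_2))\to\mathcal{L}(\Delta_k(G_1*G_2))$ inducing a group-equivariant homotopy equivalence of order complexes; combined with~\eqref{eqn:prod-two-posets} and the fact that the order complex of a face lattice is the barycentric subdivision, this yields the $(H_1\times H_2)$-equivariant statement. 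One small point to check is that a group $H_i$ acting simplicially on $\Delta_k(G_i)$ acts simplicially on $\Delta_k(G_i)$ as a complex with vertex set possibly smaller than $V_i$, but the construction in Proposition~\ref{prop:general-joinS2022Sept15} only uses $V_i$ as an index set for the cone; one extends the $H_i$-action to all of $V_i$ by fixing the vertices of $V_i$ not appearing in $\Delta_k(G_i)$, and this extension still permutes the facets $F_1\cup V_2$ and $V_1\cup F_2$ correctly, so the hypotheses of Proposition~\ref{prop:general-joinS2022Sept15}(2) are met.

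I expect the main (and only real) obstacle to be bookkeeping in Part~(1): carefully verifying that the union of the two complexes $\Delta_k(G_1)*\langle V_2\rangle$ and $\langle V_1\rangle*\Delta_k(G_2)$ has no facets beyond the expected ones — i.e., that a facet of one is never strictly contained in a face of the other — which follows from the dimension count $|F_1\cup V_2| = |V_1\cup V_2|-k = |V_1\cup F_2|$ showing all listed facets have the same dimension $|V_1\cup V_2|-k-1$, matching $\dim\Delta_k(G_1*G_2)$. Once Part~(1) is in hand, Parts~(2) and~(3) are essentially immediate invocations of Proposition~\ref{prop:general-joinS2022Sept15}.
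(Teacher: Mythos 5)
Your proposal is correct and follows essentially the same route as the paper: establish the facet description $D_k(G_1*G_2)=D_k(G_1)\sqcup D_k(G_2)$ to get the decomposition~\eqref{eqn:graph-join-decomp}, note that a join with a full simplex is contractible for Part~(2), and invoke Proposition~\ref{prop:general-joinS2022Sept15} for Part~(3). The extra care you take with the void case and with extending the $H_i$-action to all of $V_i$ is sound but not a departure from the paper's argument.
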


\begin{proof} 
Recall that $\sigma$ is a face of the cut complex 
of a graph $G$ if and only if it is contained in a facet, which by definition is the complement of a disconnected 
set of size $k$, i.e., if and only if the complement of $\sigma$ contains a disconnected  
set of size $k$.

There is an edge between any vertex of $G_1$ and any vertex of $G_2$ in the join of graphs $G_1*G_2$. Hence $F$ is a face of $\Delta_k(G_1*G_2)$ 
if and only if $F=F_1\sqcup F_2$, $F_i\subset V_i$,  where 
\begin{itemize}
\item the complement of $F_1$ in $V_1$ contains a disconnected %
set of size $k$ in $G_1$, or
\item the complement of $F_2$ in $V_2$ contains a disconnected %
set of size $k$ in $G_2$.
\end{itemize}

This shows that $\Delta_k(G_1*G_2)$ satisfies the hypotheses of Proposition~\ref{prop:general-joinS2022Sept15}. Equation~\eqref{eqn:graph-join-decomp} is now immediate, as are the 
remaining parts of the theorem. Note that $\Delta_k(G_i)*\langle V_{3-i}\rangle$, $i=1,2$, is contractible since the full simplex is contractible.
\end{proof}

\begin{rem}\label{rem:joinS-total}
We note that exactly the same theorem holds for the \emph{total} cut complex $\Delta^t_k(G_1*G_2)$ of the join of two graphs, which is studied in   \cite{BDJRSX-TOTAL2024}.
\end{rem}

From the K\"unneth Theorem we now also have:
\begin{cor}\label{cor:homology-joins}
Let $G_i$, $i=1,2$ be graphs and $k$ be such that the cut complexes $\Delta_k(G_i)$, $i=1,2$ are nonvoid. Assume the homology of one of $\Delta_k(G_1), \Delta_k(G_2)$ is always free. Then we have the following isomorphism in  homology, which is group-equivariant in rational homology:
\begin{equation*}\tilde{H}_d(\Delta_k(G_1*G_2))\cong \bigoplus_{p+q=d-2} \tilde{H}_{p}(\Delta_k(G_1)) \otimes \tilde{H}_{q}(\Delta_k(G_2)).
  \end{equation*}
\end{cor}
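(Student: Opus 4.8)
The plan is to deduce Corollary~\ref{cor:homology-joins} directly by combining the homotopy equivalence of Theorem~\ref{thm:susp-of-joins2022Sept12}(3) with the K\"unneth Theorem (Theorem~\ref{thm:Kunneth}) and the suspension isomorphism \eqref{hom:susp}. First I would invoke Theorem~\ref{thm:susp-of-joins2022Sept12}(3): since both $\Delta_k(G_1)$ and $\Delta_k(G_2)$ are nonvoid, we have a homotopy equivalence $\Delta_k(G_1*G_2)\simeq \susp(\Delta_k(G_1)*\Delta_k(G_2))$, so their reduced homology groups agree in every degree. Thus $\tilde{H}_d(\Delta_k(G_1*G_2))\cong \tilde{H}_d(\susp(\Delta_k(G_1)*\Delta_k(G_2)))$.

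Next I would apply the suspension isomorphism \eqref{hom:susp} to get $\tilde{H}_d(\susp(\Delta_k(G_1)*\Delta_k(G_2)))\cong \tilde{H}_{d-1}(\Delta_k(G_1)*\Delta_k(G_2))$. Now I would apply the K\"unneth Theorem to the join $\Delta_k(G_1)*\Delta_k(G_2)$ in degree $r=d-1$: the hypothesis that one of the two homologies is free (torsion-free over $\mathbb{Z}$) is exactly what Theorem~\ref{thm:Kunneth} requires, so $\tilde{H}_{d-1}(\Delta_k(G_1)*\Delta_k(G_2))\cong \bigoplus_{p+q=d-2}\tilde{H}_p(\Delta_k(G_1))\otimes\tilde{H}_q(\Delta_k(G_2))$. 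Chaining these three isomorphisms yields the stated formula.

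For the group-equivariant statement in rational homology, I would instead use the second half of Theorem~\ref{thm:susp-of-joins2022Sept12}(3), which provides a group-equivariant poset map from $\mathcal{L}(\Delta_k(G_1))\times\mathcal{L}(\Delta_k(G_2))$ to $\mathcal{L}(\Delta_k(G_1*G_2))$ inducing an $(H_1\times H_2)$-equivariant homotopy equivalence. Passing to the order complex of $\mathcal{L}(\Delta_k(G_1))\times\mathcal{L}(\Delta_k(G_2))$ and using \eqref{eqn:prod-two-posets}, this order complex is $(H_1\times H_2)$-equivariantly homotopy equivalent to $\bbS^0*\Delta(\overline{\mathcal{L}(\Delta_k(G_1))})*\Delta(\overline{\mathcal{L}(\Delta_k(G_2))})$, where the $\bbS^0$ carries the trivial action and each factor's order complex is equivariantly homeomorphic to the barycentric subdivision of $\Delta_k(G_i)$. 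Since over $\mathbb{Q}$ all homology is automatically free, the rational K\"unneth isomorphism respects the $(H_1\times H_2)$-action (the tensor product of $H_i$-modules being the natural module structure on $\tilde H_*(\Delta_k(G_1))\otimes\tilde H_*(\Delta_k(G_2))$), and the suspension shift by the trivial $\bbS^0$ introduces no twist, giving the equivariant version of the formula.

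I do not anticipate a serious obstacle here; the only point requiring a little care is bookkeeping the degree shifts — the suspension contributes one shift and the join formula in Theorem~\ref{thm:Kunneth} contributes another (the $p+q=r-1$ in the statement), so that overall $\tilde H_d$ on the left matches $\bigoplus_{p+q=d-2}$ on the right — and verifying that the freeness hypothesis is in the right place to license the integral K\"unneth statement. The equivariance is essentially automatic once one notes that everything in sight (the homotopy equivalence, the product decomposition \eqref{eqn:prod-two-posets}, and the rational K\"unneth isomorphism) is natural.
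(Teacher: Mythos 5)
Your proposal is correct and follows essentially the same route as the paper: the homotopy equivalence of Theorem~\ref{thm:susp-of-joins2022Sept12}, the suspension isomorphism, and the K\"unneth Theorem, with the equivariance supplied by the group-equivariant poset map on face lattices. The degree bookkeeping ($p+q=d-2$) and the placement of the freeness hypothesis are exactly as in the paper's argument.
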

\begin{proof} From the  isomorphism of face lattices in the preceding theorem and the K\"unneth Theorem, Theorem~\ref{thm:Kunneth},  we have 
\begin{align*}
\tilde{H}_d(\Delta_k(G_1*G_2)) & \cong \tilde{H}_d(\susp(\Delta_k(G_1)*\Delta_k(G_2)) )\\ 
& = \tilde{H}_{d-1}(\Delta_k(G_1)*\Delta_k(G_2))\\ 
& \cong \bigoplus_{p+q=d-2} \tilde{H}_{p}(\Delta_k(G_1)) \otimes \tilde{H}_{q}(\Delta_k(G_2)),
\end{align*}
as claimed. 
\end{proof}
    
\subsection{Wedge of Graphs}\label{subsec:Mark-Wedges}

\begin{df}
Given graphs $G_1$ and $G_2$, a \emph{wedge} of $G_1$ and $G_2$, denoted $G_1 \vee G_2$, is formed by taking a vertex from $G_1$ and a vertex from $G_2$ and identifying them.
\end{df}
Note that a wedge of two graphs is not unique in general. See Figure~\ref{fig:Wedge}.

\begin{figure}[htb]
   \scalebox{0.3}
        \centering
        \begin{tikzpicture}
\node[standard] (node3) at (0,0) {3};
\node[standard] (node1) at (0,2) {1};
\node[blue] (node4) at (2,0) {4};
\node[red] (node2) at (2,2) {2};

\draw (node1) -- (node3);
\draw (node1) -- (node2);
\draw (node1) -- (node4);
\draw (node2) -- (node4);
\draw (node3) -- (node4);
\end{tikzpicture}
\begin{tikzpicture}

\node[red] (node5) at (3,1) {5};
\node[standard] (node6) at (4.5,2) {6};
\node[standard] (node7) at (4.5,0) {7};

\draw (node5) -- (node6);
\draw (node5) -- (node7);
\draw (node7) -- (node6);
\end{tikzpicture}
\hspace{15mm}
\begin{tikzpicture}

\node[standard] (bnode3) at (6,1) {3};
\node[standard] (bnode1) at (7,2) {1};
\node[standard] (bnode4) at (7,0) {4};
\node[red, very thick] (bnode2) at (8,1) {2,5};
\node[standard] (bnode6) at (9,2) {6};
\node[standard] (bnode7) at (9,0) {7};

\draw (bnode1) -- (bnode3);
\draw (bnode1) -- (bnode2);
\draw (bnode1) -- (bnode4);
\draw (bnode2) -- (bnode4);
\draw (bnode3) -- (bnode4);
\draw (bnode2) -- (bnode6);
\draw (bnode2) -- (bnode7);
\draw (bnode6) -- (bnode7);

\end{tikzpicture}
\hspace{15mm}
\begin{tikzpicture}

\node[standard] (bnode1) at (10,1) {1};
\node[standard] (bnode2) at (11,2) {2};
\node[standard] (bnode3) at (11,0) {3};
\node[blue, very thick] (bnode4) at (12,1) {4,5};
\node[standard] (bnode6) at (13,2) {6};
\node[standard] (bnode7) at (13,0) {7};

\draw (bnode1) -- (bnode3);
\draw (bnode1) -- (bnode2);
\draw (bnode1) -- (bnode4);
\draw (bnode2) -- (bnode4);
\draw (bnode3) -- (bnode4);
\draw (bnode4) -- (bnode6);
\draw (bnode4) -- (bnode7);
\draw (bnode6) -- (bnode7);

\end{tikzpicture}
\caption{Example of different wedges of the same two graphs}\label{fig:Wedge}
\end{figure}
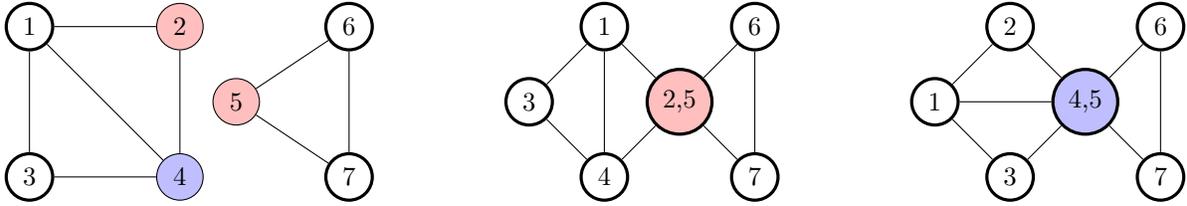

\begin{theorem}\label{thm:Mark-Wedge}
Let $k\geq 2$, and let $G_1$, $G_2$ be graphs. Then $\Delta_k(G_1\vee G_2)$ is shellable if and only if $\Delta_k(G_1)$ and $\Delta_k(G_2)$ are shellable.
\end{theorem}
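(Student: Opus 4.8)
Here is the plan. The ``only if'' direction is immediate: $G_1$ and $G_2$ occur as induced subgraphs of $G_1\vee G_2$ (take the subgraph induced on $V(G_1)$, respectively $V(G_2)$, with the glued vertex in the role of the identified vertex), so Proposition~\ref{prop:induced subgraphs} gives that $\Delta_k(G_1)$ and $\Delta_k(G_2)$ are shellable whenever $\Delta_k(G_1\vee G_2)$ is. For the converse, assume $\Delta_k(G_1)$ and $\Delta_k(G_2)$ are shellable. Write $H=G_1\vee G_2$; let $w$ be the glued vertex, formed by identifying $x_1\in V(G_1)$ with $x_2\in V(G_2)$, and put $W_i=V(G_i)\setminus\{x_i\}$, so that $V(H)=W_1\sqcup W_2\sqcup\{w\}$ has $n\coloneqq n_1+n_2-1$ vertices and every facet of $\Delta_k(H)$ has $n-k$ vertices.

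The first step is to describe the facets of $\Delta_k(H)$. For $S\subseteq V(H)$, the graph $H[S]$ is the disjoint union $G_1[S\cap W_1]\sqcup G_2[S\cap W_2]$ when $w\notin S$, and is obtained by gluing $G_1[(S\cap W_1)\cup\{x_1\}]$ to $G_2[(S\cap W_2)\cup\{x_2\}]$ at $w$ when $w\in S$; a short analysis of when $H[S]$ is disconnected shows the facets fall into three families, pairwise disjoint because $k\ge2$: \emph{pure-$1$} facets, those containing all of $W_2$, which are in natural bijection with the facets of $\Delta_k(G_1)$ (with $w$ playing the role of $x_1$) and hence span a copy of $\langle W_2\rangle*\Delta_k(G_1)$, a join of a simplex with a shellable complex, hence shellable; \emph{pure-$2$} facets, those containing all of $W_1$, spanning the shellable complex $\langle W_1\rangle*\Delta_k(G_2)$; and \emph{mixed} facets, those containing neither all of $W_1$ nor all of $W_2$. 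Split the mixed facets into $M_0$, those containing $w$, and $M_1$, those not containing $w$. The facets in $M_0$ are precisely \emph{all} $(n-k)$-subsets that contain $w$ but omit some vertex of $W_1$ and some vertex of $W_2$ (their complements avoid $w$ and meet both sides, hence induce disconnected subgraphs), so $M_0$ spans the join of the vertex $w$ with the $(n-k-2)$-skeleton of $\partial\langle W_1\rangle*\partial\langle W_2\rangle$, which is shellable by the skeleton-of-a-join results already used in the proof of Theorem~\ref{thm:MargeDisjunion}. A facet in $M_1$ is subtler: its complement contains $w$, and whether that complement is disconnected depends on the adjacencies of $x_1$ in $G_1$ and $x_2$ in $G_2$, so $M_1$ is in general a proper subfamily of the ``generic'' $w$-free mixed sets.

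The shelling order I would use for $\Delta_k(H)$ lists the facets of $M_0$ first (in a shelling order of the complex they span), then the facets of $M_1$ (in any order), then the pure-$1$ facets (in a shelling order of $\langle W_2\rangle*\Delta_k(G_1)$), then the pure-$2$ facets (in a shelling order of $\langle W_1\rangle*\Delta_k(G_2)$). The key observation that tames $M_1$ is: if $F\in M_1$ and $p\in F$, then the ridge $F\setminus\{p\}$ is contained in an $M_0$ facet. Indeed, the complement of $F\setminus\{p\}$ is $S\cup\{p\}$, where $S=V(H)\setminus F$ contains $w$; replacing $w$ by $p$ here yields a $w$-free $k$-set meeting both $W_1$ and $W_2$, hence a disconnected $k$-set whose complement is an $M_0$ facet containing $F\setminus\{p\}$. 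Consequently, when $F\in M_1$ is reached, the faces of $F$ that already appear form the entire boundary complex $\partial\langle F\rangle$, so the shelling condition of Definition~\ref{def:shelling} holds. The within-block conditions for the four blocks are immediate from the shellings of the respective shellable complexes, and the cross-block conditions are verified exactly as in the proof of Theorem~\ref{thm:MargeDisjunion}: for a later pure facet $F_j$ and an earlier facet $F_i$, one deletes from $F_j$ a suitable vertex of $W_1$ (if $F_j$ is pure-$2$) or of $W_2$ (if $F_j$ is pure-$1$) lying outside $F_i$, and re-adds one vertex outside $F_j$ — chosen, using $k\ge2$, so that the result is a facet in an earlier block — producing a codimension-one face of $F_j$ that contains $F_i\cap F_j$ and is of the form $F_\ell\cap F_j$ for an earlier facet $F_\ell$.

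The steps needing the most care are: establishing the facet classification and the $M_0$ description precisely; checking that the ``re-add one vertex'' step in the cross-block argument can always be arranged so that $F_\ell$ really is an earlier facet — the case where $F_j$ is pure-$2$ and $F_i$ is pure-$1$ is the most delicate, and it is here that one must choose carefully whether to re-introduce $w$ or another vertex; and treating the degenerate cases where $w$ is not a vertex of $\Delta_k(H)$, or where one of $\Delta_k(G_1),\Delta_k(G_2)$ is void — in the latter case $\Delta_k(H)$ is easily recognized as a cone (hence shellable) or as reducing to a disjoint-union/skeleton situation already handled. I expect the cross-block bookkeeping, rather than any single conceptual point, to be the main obstacle.
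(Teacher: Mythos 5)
Your proposal is correct and follows essentially the same route as the paper's proof: your blocks $M_0$, $M_1$, pure-$1$, pure-$2$ are exactly the paper's Types 1--4 in the same order, and your two key observations (every ridge of an $M_1$ facet lies in an $M_0$ facet, and cross-block intersections are absorbed into an $M_0$/Type-1 facet via a one-vertex swap using $k\ge 2$) are precisely the paper's arguments for Types 2--4. The only cosmetic difference is that you shell the $M_0$ block by recognizing it as a cone over a skeleton of a join of simplex boundaries, whereas the paper writes out an explicit lexicographic order for that block; both are valid.
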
  
\begin{proof}
We know $\Delta_k(G_1\vee G_2)$ being shellable implies $\Delta_k(G_1)$ and $\Delta_k(G_2)$ are shellable, since $G_1$ and $G_2$ are induced subgraphs of $G_1\vee G_2$. Now suppose $\Delta_k(G_1)$ and $\Delta_k(G_2)$ are shellable. If $\Delta_k(G_1\vee G_2)$ is the void complex, then it is shellable, so assume $\Delta_k(G_1\vee G_2)$ is not the void complex. Let $V_i$ be the vertex set of $G_i$ and let $\{v_0\}=V_1\cap V_2$, so that $v_0$ is the common wedge point.
We consider the facet set $\F$ of $\Delta_k(G_1\vee G_2)$; they are separating sets of size
$|V_1|+|V_2|-k-1$ of the graph $G_1\vee G_2$. %
There are four types of facets:

Type 1: Sets of the form $v_0\cup A\cup B$, where $A\subsetneq V_1-v_0$ and $B\subsetneq V_2-v_0$.

Type 2: Sets of the form $A\cup B$, where $\emptyset\neq A\subsetneq V_1-v_0$ and $\emptyset\neq B\subsetneq V_2-v_0$,
where either $A$ is a separating set for $G_1$ or $B$ is a separating set for $G_2$, or both.

Type 3: Sets of the form $A\cup (V_2-v_0)$, where $A$ is a $(|V_1|-k)$-subset of $V_1$ whose removal  disconnects $G_1$.

Type 4: Sets of the form $(V_1-v_0)\cup B$, where $B$ is a $(|V_2|-k)$-subset of $V_2$  whose removal  disconnects $G_2$.

Note that if $k\ge |V_1|-1$, then there is no Type 3 facet, and
if $k\ge |V_2|-1$, then there is no Type 4 facet.

We start the shelling with Type 1 facets.
Order the Type 1 facets of $\Delta_k(G_1\vee G_2)$ first in order of decreasing
size of $A$, then, among facets with sets $A$ of the same size, in lexicographic
order of the sets $A$, and then, among facets with fixed $A$, in lexicographic
order of the sets $B$.  Label the facets of Type 1 in the resulting order
$F_1,F_2, \ldots, F_r$.

Consider $F_i= v_0\cup A_i\cup B_i$ and
$F_j= v_0\cup A_j\cup B_j$, with $i<j$.  Here $|A_i|\ge |A_j|$.
If $A_i=A_j$, then $|B_i|=|B_j|$ and $B_i$ precedes $B_j$ in lexicographic
order.
By the lexicographic shellability of the $(|B_j|-1)$-skeleton of the simplex
on $V_2 - v_0$, there exists a set $B\subsetneq V_2 - v_0$, with
$|B|=|B_j|$, preceding $B_j$ in lexicographic order such that
$B_i\cap B_j\subseteq B\cap B_j$ and $|B\cap B_j|=|B_j|-1$.  Then for
$F= v_0\cup A_j\cup B$, $F$ precedes $F_j$, $F_i\cap F_j\subseteq F\cap F_j$,
and $|F\cap F_j| = |F_j|-1$.

Now suppose $A_i\ne A_j$ but $|A_i|=|A_j|$.  Then $A_i$ precedes $A_j$ in
lexicographic order.
By the lexicographic shellability of the $(|A_j|-1)$-skeleton of the simplex
on $V_1 - v_0$, there exists a set $A\subsetneq V_1 - v_0$, with
$|A|=|A_j|$, preceding $A_j$ in lexicographic order such that
$A_i\cap A_j\subseteq A\cap A_j$ and $|A\cap A_j|=|A_j|-1$.  Then for
$F= v_0\cup A\cup B_j$, $F$ precedes $F_j$, $F_i\cap F_j\subseteq F\cap F_j$,
and $|F\cap F_j| = |F_j|-1$.

Finally, suppose $|A_i|>|A_j|$.  Then $|B_i|<|B_j|$.
Let $x\in A_i\setminus A_j$, $y\in B_j\setminus B_i$, and
$F=v_0\cup (A_j\cup x)\cup (B_j - y)$.
Then $F$ comes before $F_j$ and
$$F_i\cap F_j = v_0\cup (A_i\cap A_j) \cup (B_i\cap B_j)
  \subseteq v_0 \cup (A_j\cup x)\cup (B_j - y)=F,$$
so $F_i\cap F_j\subseteq F\cap F_j$ and
$$ |F\cap F_j| = | ( v_0\cup A_j\cup B_j)\cap
   (v_0 \cup (A_j\cup x)\cup (B_j - y))| = |v_0 \cup A_j\cup (B_j - y)|
   = |F_j| - 1.$$

So $F_1,F_2, \ldots, F_r$ is a shelling order of the Type 1 facets.

We now claim that the Type 2 facets can be added to the shelling order in any order. We will demonstrate this by showing that the intersection of a Type 2 facet and any other Type 2 or Type 1 facet is contained in a Type 1 facet that differs by only one vertex. Let $F$ be a Type 2 facet and $X$ be a distinct Type 1 or Type 2 facet. As $F\neq X$, there exists $v\in F\setminus X$. We claim $F'=F-v+v_0$ is a Type 1 facet. As $v_0\notin F$, $|F'|=|F|$. As $V_1-v_0 \nsubseteq F$, then $V_1-v_0 \nsubseteq F-v$, so $V_1-v_0 \nsubseteq F'$; similarly, $V_2-v_0\nsubseteq F'$. Then $F'=v_0\cup A\cup B$ where $A\subsetneq V_1-v_0$ and $B\subsetneq V_2-v_0$. So $F'$ is a Type 1 facet, and $|F\setminus F'|=1$, and thus any order of Type 2 facets placed after the Type 1 facets will result in a shelling order. 

Finally we add the Type 3 and Type 4 facets.  As $\Delta_k(G_1)$ is shellable, the join of $\Delta_k(G_1)$ with the simplex on $V_2-v_0$ is shellable, and we add the Type 3 facets in such a shelling order. We do the same for the Type 4 facets. Now we just need to verify that the intersection of a Type 3 or 4 facet with any facet of a different type is contained in a Type 1 facet. Without loss of generality, assume $F$ is a Type 4 facet, and $X$ is a facet of another type. Type 4 facets are the only facets that contain all of $V_1-v_0$, so we know there exists $v\in (F\setminus X)\cap (V_1-v_0)$. We now need to break into two small cases. In the first case, $v_0\notin F$, and so we choose $F'=F-v+v_0$. Then $|F'|=|F|-1+1$, and $V_1-v_0\nsubseteq F'$ as $v\notin F'$. Now, $V_2-v_0\nsubseteq F'$ as $F$ was missing $k$ vertices from $V_2$ and we only added one vertex to it. So $F'=v_0\cup A\cup B$ where $A\subsetneq V_1-v_0$ and $B\subsetneq V_2-v_0$. So $F'$ is a Type 1 facet, and $|F\setminus F'|=1$. The second case has $v_0\in F$; we choose any $v'\in F^c\cap V_2$, and set $F'=F-v+v'$. Then $|F'|=|F|$, and still  $V_1-v_0\nsubseteq F'$ as $v\notin F'$. Also, $V_2-v_0\nsubseteq F'$ as $F$ was missing $k$ vertices from $V_2$ and we only added one vertex to it. So $F'=v_0\cup A\cup B$ where $A\subsetneq V_1-v_0$ and $B\subsetneq V_2-v_0$. So $F'$ is a Type 1 facet, and $|F\setminus F'|=1$. This demonstrates that the intersection of any Type 4 facet with a facet of another type is contained in a Type 1 facet with only one vertex different, and very similar arguments can be used to show the same is true with Type 3 facets. This is the final step in confirming our shelling order, demonstrating that the complex $\Delta_k(G_1\vee G_2)$ is shellable.
\end{proof}

As an immediate corollary we obtain the next result, which was originally proved in \cite{DenkerMastersReport}  by exhibiting an explicit shelling order.

\begin{cor}\label{cor:Mark-trees}
    If $G$ is a tree, then $\Delta_k(G)$ is shellable for all $k\geq 2$.
 \end{cor}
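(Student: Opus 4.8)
The plan is to deduce this corollary from Theorem~\ref{thm:Mark-Wedge} by induction on the number of edges (equivalently vertices) of the tree $G$. The base case is a tree with a single vertex, or with a single edge $K_2$; in either case $\Delta_k(G)$ is void for all $k\ge 2$ (by Example~\ref{ex:Examples-cut-complex}, since $K_2$ is the complete graph $K_2$, or more directly since $G$ has fewer than $k$ vertices once we reach $k\ge 3$, and for $k=2$ a single edge induces no disconnected $2$-set while the one-vertex graph has no $2$-set at all). By Remark~\ref{rem:empty-complex-0-dim-complex} the void complex is shellable, so the base case holds.

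For the inductive step, let $G$ be a tree with at least two edges. Then $G$ has a leaf $\ell$ adjacent to a vertex $v$. Consider the tree $G' = G\setminus\{\ell\}$ (delete the leaf and its pendant edge) and the single edge $H$ on vertices $\{v,\ell\}$. Then $G$ is precisely a wedge $G' \vee H$, with wedge point $v$: the vertex sets of $G'$ and $H$ meet only in $v$, and the edge sets are disjoint and union to $E(G)$. By the inductive hypothesis $\Delta_k(G')$ is shellable for all $k\ge 2$, and $\Delta_k(H)$ is void, hence shellable, for all $k\ge 2$. Theorem~\ref{thm:Mark-Wedge} then gives that $\Delta_k(G' \vee H) = \Delta_k(G)$ is shellable for all $k\ge 2$, completing the induction.

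I do not expect any real obstacle here, since Theorem~\ref{thm:Mark-Wedge} has already done all the work; the only minor care needed is in the base case and in confirming that a tree always decomposes as a wedge of a smaller tree with a pendant edge, which is immediate from the existence of a leaf in any finite tree with at least one edge. An alternative — and perhaps cleaner — phrasing avoids induction entirely: any tree is built up from a single vertex by repeatedly wedging on an edge $K_2$ at a leaf, so since a single vertex and $K_2$ both have void (hence shellable) $k$-cut complexes, repeated application of Theorem~\ref{thm:Mark-Wedge} yields shellability of $\Delta_k(G)$. (This is exactly the structural description used for threshold graphs in Corollary~\ref{cor:Mark-threshold-graph}, and the parenthetical remark in the statement about an explicit shelling in \cite{DenkerMastersReport} is consistent with this.) I would present the inductive version as the main proof for rigor, and note the structural rephrasing as a remark.
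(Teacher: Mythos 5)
Your proof is correct and takes essentially the same approach as the paper: the paper likewise writes a tree on $n$ vertices as a wedge of $n-1$ copies of $K_2$ and applies Theorem~\ref{thm:Mark-Wedge}. The only (cosmetic) difference is that the paper handles $k=2$ separately via Fr\"oberg's theorem, whereas you treat it uniformly by observing that $\Delta_2(K_2)$ is already void; both are valid.
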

    \begin{proof} A tree on $n$ vertices is the wedge of $n-1$ copies of $K_2$.
    The cut complex of any complete graph is the void complex for all $k>2$, and so is trivially shellable. Hence the wedge of these shellable graphs is shellable.  For $k=2$ the claim follows from  Theorem~\ref{thm:Froberg}, since trees are chordal. \end{proof}

\subsection{Minimal nonshellable graphs}\label{sec:min}

Recall that Proposition~\ref{prop:induced subgraphs} says that if $G$ has a shellable $k$-cut complex, then any induced subgraph of $G$ also has a shellable $k$-cut complex. In light of this result, it is natural to seek a description of the \emph{minimal} graphs (with respect to induced inclusion) whose $k$-cut complex fails to be shellable. We will call such a graph a {\em  minimal nonshellable graph} or a {\em minimal forbidden subgraph} for $k$-cut complex shellability.

Figure~\ref{fig:min-nonshellable}  shows some minimal nonshellable graphs for $k = 3$.  
The first graph in Figure~\ref{fig:min-nonshellable} is in fact the $k = 3$ case of a family of graphs considered in Lemma~\ref{lem:Prism-over-clique-min-nonshellable}. More generally, for arbitrary $k\ge 4$ we have the following proposition.    

 \begin{prop}\label{prop:existence-min-nonshellablegraph-all-kS-kayaks}  For each $k\ge4$, there is  a chordal graph $G_k$ with $k+2$ vertices such that $\Delta_k(G_k)$ is NOT shellable.  Furthermore, $G_k$ is a minimal forbidden subgraph for $k$-cut complex shellability.
 \end{prop}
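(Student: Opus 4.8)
## Proof Proposal

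The plan is to exhibit an explicit family $G_k$ and verify the two assertions (non-shellability of $\Delta_k(G_k)$, and minimality) separately. For the construction, I would take $G_k$ to have vertex set $\{a_1,\dots,a_{k-2}\}\cup\{b_1,b_2\}\cup\{c_1,c_2\}$ where $\{a_1,\dots,a_{k-2}, b_1, b_2\}$ and $\{a_1,\dots,a_{k-2}, c_1, c_2\}$ each span a clique, but there are no edges between the $b$'s and the $c$'s — i.e.\ two $k$-cliques glued along a common $(k-2)$-clique (the "kayak" of the figure). This is clearly chordal: any induced cycle must use vertices from both $\{b_1,b_2\}$ and $\{c_1,c_2\}$, but then it passes through the separating clique $\{a_1,\dots,a_{k-2}\}$, forcing a chord. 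It has $k+2$ vertices as required.

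For non-shellability, I would compute $\Delta_k(G_k)$ directly. A $k$-subset $S$ induces a disconnected graph exactly when $S$ misses the common clique enough to separate $\{b_1,b_2\}$ from $\{c_1,c_2\}$; concretely the disconnected $k$-sets should turn out to be exactly $\{a_1,\dots,a_{k-2}, b_i, c_j\}$ for $i,j\in\{1,2\}$, giving four facets of $\Delta_k(G_k)$, which (being $(n-k) = 2$-element complements) makes $\Delta_k(G_k)$ a pure $1$-dimensional complex, i.e.\ a graph on the vertex set $\{b_1,b_2,c_1,c_2\}$ with edges the complements $\{b_{3-i}, c_{3-j}\}$ — that is, the $4$-cycle $b_1 c_1 b_2 c_2$. (I should double-check the exact dimension: with $n = k+2$ the facets have size $2$, so $\Delta_k(G_k)$ is $1$-dimensional, and it is a $4$-cycle $C_4$, which is connected but has $\tilde H_1 \neq 0$; more to the point, its $f$-vector and homotopy type — a circle — are consistent with shellability of a $1$-sphere, so I need to be more careful.) The correct picture is likely that the disconnected $k$-sets are slightly more numerous or the graph slightly larger; the real content is that $\Delta_k(G_k)$ is \emph{not} pure, or is a pure complex that is disconnected in the wrong dimension, or has the homotopy type of a wedge of spheres in two different dimensions — any of which obstructs shellability via Theorem~\ref{thm:shell-implies-homotopytype}. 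So the key step here is to pin down $D_k(G_k)$ exactly, identify the resulting complex, and then invoke either Theorem~\ref{thm:shell-implies-homotopytype} (if the homotopy type is not a wedge of spheres in a single top dimension) or a direct check of the shelling condition in Definition~\ref{def:shelling}.

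For minimality, I would use Proposition~\ref{prop:induced subgraphs}: it suffices to show that deleting any single vertex $v$ of $G_k$ yields a graph $G_k\setminus v$ with shellable $k$-cut complex, since every proper induced subgraph is contained in one of these, and shellability passes to induced subgraphs. Deleting one of $a_1,\dots,a_{k-2}$ leaves a graph on $k+1$ vertices; deleting $b_i$ or $c_i$ leaves $K_{k-1}$ joined to a structure on three remaining vertices. In each case I expect either $\Delta_k(G_k\setminus v)$ is void (hence shellable) or it is very small (a simplex, or the boundary of a simplex, or covered by the join/wedge results Theorem~\ref{thm:Mark-Joins} and Theorem~\ref{thm:Mark-Wedge}). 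For instance, $G_k\setminus a_1$ is again two $k$-cliques overlapping in a $(k-3)$-clique, now with $k+1$ vertices, whose $k$-cut complex is $1$-dimensional with at most... — I would check that in that range it becomes a shellable complex (likely a single facet, i.e.\ a $1$-simplex, or void).

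The main obstacle I anticipate is the bookkeeping in the non-shellability step: correctly enumerating $D_k(G_k)$ and the facets of $\Delta_k(G_k)$, and then producing a clean reason why no shelling order exists — I expect the cleanest route is to show $\Delta_k(G_k)$ is a pure complex whose facets pairwise intersect in a set too small (codimension $\geq 2$) to satisfy the shelling condition, exactly as in the proofs of Theorem~\ref{thm:Mark-Joins} and Lemma~\ref{lem:Prism-over-clique-min-nonshellable}. If instead the complex is non-pure, non-shellability is immediate from the definition of shellability for pure complexes (or from the fact that shellable complexes, while they may be non-pure in general, here would be pure since $\Delta_k$ of a graph with a disconnected $k$-set has all facets of the same size $n-k$). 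Either way, the verification is finite and mechanical once $G_k$ is fixed; the art is in choosing $G_k$ so that both the non-shellability and the minimality are transparent.
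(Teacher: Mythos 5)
Your construction does not work, and the failure is not just bookkeeping. In your graph every $a_i$ is adjacent to \emph{all} other vertices (it lies in both $k$-cliques), so any induced subgraph containing some $a_i$ is connected: $a_i$ is a dominating vertex of it. In particular the sets $\{a_1,\dots,a_{k-2},b_i,c_j\}$ that you claim are the disconnected $k$-sets are all connected. The only subsets of $V(G_k)$ inducing disconnected graphs are subsets of $\{b_1,b_2,c_1,c_2\}$ meeting both the $b$'s and the $c$'s. Hence for $k\ge 5$ there are no disconnected $k$-sets at all and $\Delta_k(G_k)$ is void, while for $k=4$ the unique disconnected $4$-set is $\{b_1,b_2,c_1,c_2\}$ and $\Delta_4(G_4)$ is the single edge $\{a_1,a_2\}$. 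Both are shellable, so your $G_k$ is not a counterexample for any $k\ge 4$. (Even your tentative fallback computation would not have saved you: a connected pure $1$-dimensional complex such as the $4$-cycle \emph{is} shellable, as you half-noticed.) Your hedge that ``the correct picture is likely that the disconnected $k$-sets are slightly more numerous or the graph slightly larger'' is not a proof; the construction has to be changed, not patched.

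The paper's graph is genuinely different: it is a \emph{chain} of overlapping $4$-cliques $\{2i-1,2i,2i+1,2i+2\}$ for $1\le i\le m=\lfloor k/2\rfloor$, with one or two pendant vertices $a$ (and $b$) each attached to only two consecutive vertices to bring the count to $k+2$. Because no vertex dominates, removing a single pair $\{2i-1,2i\}$ from the vertex set leaves a disconnected graph (e.g.\ $a$ becomes isolated, or the two ends of the chain are severed), and these are the only separating $2$-sets. So $\Delta_k(G_k)$ is a disjoint union of $m\ge 2$ edges: pure, $1$-dimensional, and disconnected, hence not shellable since distinct facets intersect in $\emptyset$, of cardinality $0<|F_j|-1$. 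Your minimality argument, by contrast, is essentially correct and matches the paper's: by Proposition~\ref{prop:induced subgraphs} it suffices to delete one vertex, and any graph on $k+1$ vertices has $k$-cut complex of dimension at most $0$, hence shellable. But without a valid construction the proposition is not proved.
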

 \begin{proof}
     The graph $G_k$ has $k+2$ vertices, which we partition into three disjoint subsets according to the parity of $k$: 
$\{2i-1: 1\le i\le m=\lfloor k/2\rfloor\}, \{2i:1\le i\le m=\lfloor k/2\rfloor\}$ and $\begin{cases} \{a\} &\text{ for odd } k=2m+1,\\ 
\{a,b\} &\text{ for even } k=2m. 
\end{cases} $

See Figure~\ref{fig:long-kayaks}. The edges are precisely those specified by requiring that the induced subgraph on each 4-vertex subset $\{2i-1, 2i, 2i+1, 2i+2\}$ forms a clique for all $1\le i\le m=\lfloor k/2\rfloor$, as do the induced subgraphs on the 3-vertex sets $\{a,1,2\}$ and $\{b, 2m-1,2m\}$, the latter existing only if $k=2m$ is even. 
Then $\Delta_k(G_k)$ has facets $\{\{2i-1, 2i\}: 1\le i\le m=\lfloor k/2\rfloor\}$ (highlighted in the figures), and is thus not shellable since $m=\lfloor k/2\rfloor\ge 2$.  Also $\Delta_{2m}(G_{2m})=\Delta_{2m+1}(G_{2m+1})$, $m\ge 2$.  Finally, for each $k$, $G_k$ is minimal nonshellable:  for all vertices $v$, $G_k\backslash v$ has only $k+1$ vertices, and $\Delta_k(G_k\backslash v)$ is either void or 0-dimensional.
\end{proof}

\begin{figure}[htb]
\captionsetup[subfigure]{labelformat=empty}
\centering
        \begin{subfigure}{0.4\textwidth}
        \centering
        \begin{tikzpicture}[scale=0.5]
\node[standard] (node1) at (0,2) {1};
\node[standard] (node2) at (0,0) {2};
\node[standard] (node3) at (2,2) {3};
\node[standard] (node4) at (2,0) {4};

\node[standard] (node5) at (-2,1) {a};
\node[standard] (node6) at (4,1) {b};

\draw[ultra thick, color=red] (node1) -- (node2);
\draw[ultra thick, color=red] (node3) -- (node4);
\draw (node1) -- (node3);
\draw (node1) -- (node4);  
\draw (node2) -- (node3);  

\draw (node3) -- (node6);
\draw (node2) -- (node4);
\draw (node4) -- (node6);
\draw (node1) -- (node5);
\draw (node2) -- (node5);

        \end{tikzpicture}
       
\caption{$G_4$}
        \end{subfigure} \qquad
        \begin{subfigure}{0.4\textwidth}
        \centering
        \begin{tikzpicture}[scale=0.5]
\node[standard] (node1) at (0,2) {1};
\node[standard] (node2) at (0,0) {2};
\node[standard] (node3) at (2,2) {3};
\node[standard] (node4) at (2,0) {4};

\draw[ultra thick, color=red] (node1) -- (node2);
\draw[ultra thick, color=red] (node3) -- (node4);

        \end{tikzpicture}
        
\caption{$\Delta_4(G_4)$}
\end{subfigure}
   \label{fig:kayak1}

\vspace{3ex}

        \begin{subfigure}{0.4\textwidth}
        \centering
        \begin{tikzpicture}[scale=0.5]
\node[standard] (node1) at (0,2) {1};
\node[standard] (node3) at (2,2) {3};
\node[standard] (node5) at (4,2) {5};
\node[standard] (node2) at (0,0) {2};
\node[standard] (node4) at (2,0) {4};
\node[standard] (node6) at (4,0) {6};

\node[standard] (node7) at (-2,1) {a};

\draw[ultra thick, color=red] (node1) -- (node2);
\draw[ultra thick, color=red] (node3) -- (node4);

\draw (node1) -- (node3);
\draw (node1) -- (node4);  
\draw (node2) -- (node3);  
\draw (node2) -- (node4);

\draw (node3) -- (node5);
\draw (node3) -- (node6);
\draw (node4) -- (node5);
\draw (node4) -- (node6);
\draw (node5) -- (node6);

\draw (node7) -- (node1);
\draw (node7) -- (node2);

        \end{tikzpicture}
\caption{$G_5$}
        \end{subfigure} \quad
        \begin{subfigure}{0.4\textwidth}
        \centering
        \begin{tikzpicture}[scale=0.5]
\node[standard] (node1) at (0,2) {1};
\node[standard] (node3) at (2,2) {3};
\node[standard] (node2) at (0,0) {2};
\node[standard] (node4) at (2,0) {4};

\draw[ultra thick, color=red] (node1) -- (node2);
\draw[ultra thick, color=red] (node3) -- (node4);

        \end{tikzpicture}

\caption{$\Delta_5(G_5)$}

        \end{subfigure}
       
   \label{fig:kayak2}

\vspace{3ex}

        \begin{subfigure}{0.4\textwidth}
        \centering
        \begin{tikzpicture}[scale=0.5]
\node[standard] (node1) at (0,2) {1};
\node[standard] (node3) at (2,2) {3};
\node[standard] (node5) at (4,2) {5};
\node[standard] (node2) at (0,0) {2};
\node[standard] (node4) at (2,0) {4};
\node[standard] (node6) at (4,0) {6};

\node[standard] (node7) at (-2,1) {a};
\node[standard] (node8) at (6, 1) {b};

\draw[ultra thick, color=red] (node1) -- (node2);
\draw[ultra thick, color=red] (node3) -- (node4);
\draw[ultra thick, color=red] (node5) -- (node6);

\draw (node1) -- (node3);
\draw (node1) -- (node4);  
\draw (node2) -- (node3);  
\draw (node2) -- (node4);

\draw (node3) -- (node5);
\draw (node3) -- (node6);
\draw (node4) -- (node5);
\draw (node4) -- (node6);

\draw (node7) -- (node1);
\draw (node7) -- (node2);

\draw (node8) -- (node5);
\draw (node8) -- (node6);

        \end{tikzpicture}
       
\caption{$G_6$}
        \end{subfigure} \qquad
        \begin{subfigure}{0.4\textwidth}
        \centering
        \begin{tikzpicture}[scale=0.5]
\node[standard] (node1) at (0,2) {1};
\node[standard] (node3) at (2,2) {3};
\node[standard] (node5) at (4,2) {5};
\node[standard] (node2) at (0,0) {2};
\node[standard] (node4) at (2,0) {4};
\node[standard] (node6) at (4,0) {6};

\draw[ultra thick, color=red] (node1) -- (node2);
\draw[ultra thick, color=red] (node3) -- (node4);
\draw[ultra thick, color=red] (node5) -- (node6);

        \end{tikzpicture}

\caption{$\Delta_6(G_6)$}
        \end{subfigure}
       
\vspace{3ex}

        \begin{subfigure}{0.4\textwidth}
        \centering
        \begin{tikzpicture}[scale=0.5]
\node[standard] (node1) at (0,2) {1};
\node[standard] (node3) at (2,2) {3};
\node[standard] (node5) at (4,2) {5};
\node[standard] (node7) at (6, 2) {7};
\node[standard] (node2) at (0,0) {2};
\node[standard] (node4) at (2,0) {4};
\node[standard] (node6) at (4,0) {6};
\node[standard] (node8) at (6, 0) {8};

\node[standard] (node9) at (-2,1) {a};

\draw[ultra thick, color=red] (node1) -- (node2);
\draw[ultra thick, color=red] (node3) -- (node4);
\draw[ultra thick, color=red] (node5) -- (node6);

\draw (node1) -- (node3);
\draw (node1) -- (node4);  
\draw (node2) -- (node3);  
\draw (node2) -- (node4);

\draw (node3) -- (node5);
\draw (node3) -- (node6);
\draw (node4) -- (node5);
\draw (node4) -- (node6);

\draw (node5) -- (node7);
\draw (node5) -- (node8);
\draw (node6) -- (node7);
\draw (node6) -- (node8);

\draw (node7) -- (node8);

\draw (node9) -- (node1);
\draw (node9) -- (node2);

        \end{tikzpicture}
        
\caption{$G_7$}
        \end{subfigure} \qquad
        \begin{subfigure}{0.4\textwidth}
        \centering
        \begin{tikzpicture}[scale=0.5]
\node[standard] (node1) at (0,2) {1};
\node[standard] (node3) at (2,2) {3};
\node[standard] (node5) at (4,2) {5};
\node[standard] (node2) at (0,0) {2};
\node[standard] (node4) at (2,0) {4};
\node[standard] (node6) at (4,0) {6};

\draw[ultra thick, color=red] (node1) -- (node2);
\draw[ultra thick, color=red] (node3) -- (node4);
\draw[ultra thick, color=red] (node5) -- (node6);
        \end{tikzpicture}

\caption{$\Delta_7(G_7)$}
        \end{subfigure}
        
        \caption{Graphs and cut complexes described in Proposition~\ref{prop:existence-min-nonshellablegraph-all-kS-kayaks}}
   \label{fig:long-kayaks}
    \end{figure}
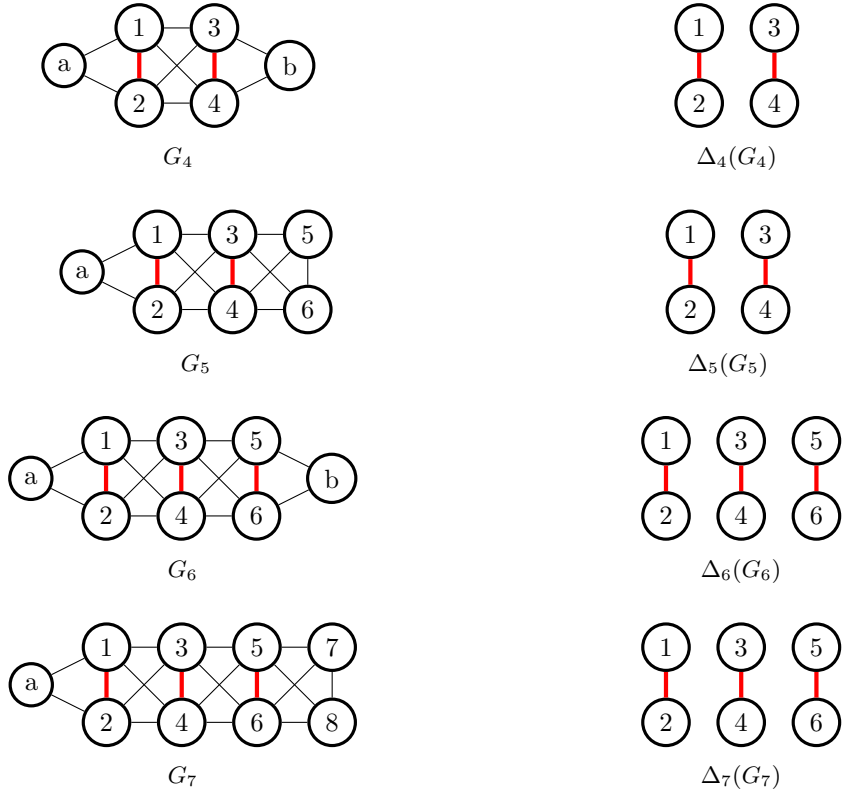

We will encounter some more families of minimal nonshellable graphs in this paper: in particular, the prism over a $k$-clique discussed in Section~\ref{sec:RowanNonshellableGraph} and the squared cycle on $k+4$ vertices in Section~\ref{sec:Wreath-MarijaDMT-Rowan-Dane-Mark} are both minimal forbidden subgraphs for $k$-cut complex shellability, by Lemma~\ref{lem:Prism-over-clique-min-nonshellable} and Proposition~\ref{prop:Dane-Wreath-k+4-min-nonshellable}, respectively.

\begin{figure}[htb]
    \centering
    \includegraphics[scale=.4]{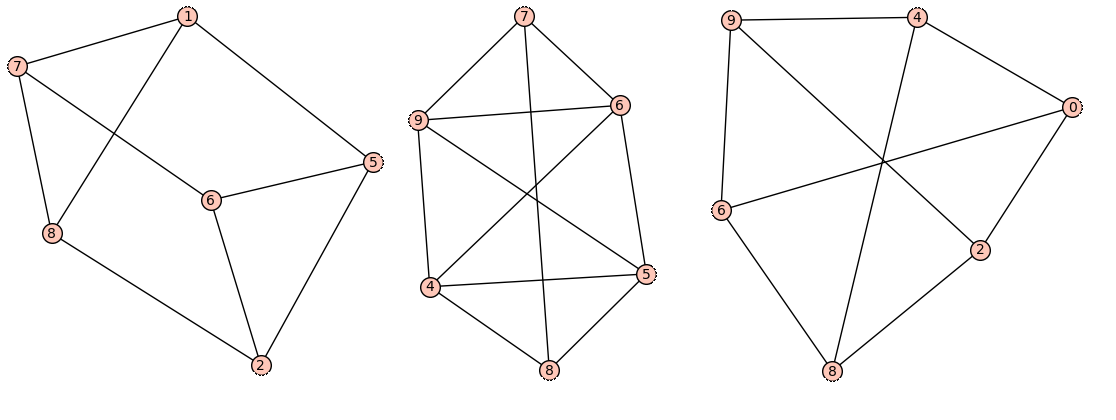}
    \caption{Some minimal nonshellable graphs for $k=3$}
    \label{fig:min-nonshellable}
\end{figure}
    
\section{Some theorems for the case $k=3$}\label{sec:Mark:k=3-cut-complex}

In this section we consider an operation which simultaneously generalizes disjoint unions and wedges of graphs.  
\begin{theorem}[Generalized Wedge]\label{thm:Gen-wedge}
Let $G=(V,E)$, $A\cup B=V$ such that if $e\in E$, e is between two vertices in $A$ or two vertices in $B$, and $\Delta_3(G[A\cap B])$ is the void complex. Then $\Delta_3(G)$ is shellable if and only if $\Delta_3(G[A])$ and $\Delta_3(G[B])$ are shellable.
\end{theorem}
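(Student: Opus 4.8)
The statement generalizes Theorem~\ref{thm:MargeDisjunion} (the case $A\cap B=\emptyset$) and Theorem~\ref{thm:Mark-Wedge} (the case $|A\cap B|=1$), and the plan is to adapt those two proofs. The forward implication is immediate: $G[A]$ and $G[B]$ are induced subgraphs of $G$, so Proposition~\ref{prop:induced subgraphs} gives that shellability of $\Delta_3(G)$ forces shellability of $\Delta_3(G[A])$ and of $\Delta_3(G[B])$. For the converse, write $A'=A\setminus B$, $B'=B\setminus A$ and $M=A\cap B$, and observe that the edge hypothesis says precisely that no edge of $G$ joins a vertex of $A'$ to a vertex of $B'$.

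First I would classify the facets. A facet of $\Delta_3(G)$ is a complement $V\setminus S$ of a disconnected $3$-set $S$, and since $\Delta_3(G[M])$ is void, $G[M]$ has no disconnected $3$-set; hence every disconnected $3$-set falls into exactly one of the classes $\mathcal{F}_A$ (with $S\subseteq A$), $\mathcal{F}_B$ (with $S\subseteq B$), or $\mathcal{F}_{\mathrm{mix}}$ (with $S$ meeting both $A'$ and $B'$). For $S\subseteq A$ one has $V\setminus S=(A\setminus S)\cup B'$, so $\langle\mathcal{F}_A\rangle=\langle B'\rangle*\Delta_3(G[A])$, the join of a simplex with the shellable complex $\Delta_3(G[A])$ (empty if $\Delta_3(G[A])$ is void), hence shellable; symmetrically $\langle\mathcal{F}_B\rangle=\langle A'\rangle*\Delta_3(G[B])$ is shellable. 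For $S$ meeting both $A'$ and $B'$, the absence of $A'$--$B'$ edges makes $G[S]$ disconnected automatically, except when $S=\{a,m,b\}$ with $a\in A'$, $b\in B'$, $m\in M$ and $a\sim m\sim b$; accordingly split $\mathcal{F}_{\mathrm{mix}}=\mathcal{F}^{0}_{\mathrm{mix}}\sqcup\mathcal{F}^{1}_{\mathrm{mix}}$ according to whether $S\cap M=\emptyset$ or $S=\{a,m,b\}$ is a disconnected triple with $m\in M$. One checks that $\langle\mathcal{F}^{0}_{\mathrm{mix}}\rangle=\langle M\rangle*\Sigma$, where $\Sigma$ is the $(|A'|+|B'|-4)$-skeleton of the join of the boundary of the simplex on $A'$ with the boundary of the simplex on $B'$; since boundaries of simplices are shellable, joins of shellable complexes are shellable by \cite[Corollary~2.9]{Provan-Billera}, and skeleta of shellable complexes are shellable by \cite[Theorem~2.9]{BjWachsI1996}, both $\Sigma$ and the cone $\langle M\rangle*\Sigma$ are shellable.

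The proposed shelling order of $\Delta_3(G)$ is then: the facets of $\mathcal{F}^{0}_{\mathrm{mix}}$ in a shelling order of $\langle M\rangle*\Sigma$, then the facets of $\mathcal{F}^{1}_{\mathrm{mix}}$ in any order, then the facets of $\mathcal{F}_A$ in a shelling order of $\langle B'\rangle*\Delta_3(G[A])$, then the facets of $\mathcal{F}_B$ in a shelling order of $\langle A'\rangle*\Delta_3(G[B])$. Within-block pairs satisfy the shelling condition by construction, so what remains — and this is where I expect the main obstacle — is to verify the condition for every cross-block pair and for pairs inside $\mathcal{F}^{1}_{\mathrm{mix}}$. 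Concretely, given a later facet $F=V\setminus S$ and an earlier facet $F'=V\setminus S'$ with $F\cap F'=V\setminus(S\cup S')$ of codimension at least $2$ in $F$, one must exhibit an earlier facet $F_k=V\setminus S_k$ with $F\cap F'\subseteq F_k\cap F$ and $|F_k\cap F|=|F|-1$, i.e. a disconnected $3$-set $S_k\subseteq S\cup S'$ containing a vertex of $S'\setminus S$ and otherwise built from vertices of $S$. The guiding idea is to take $S_k$ of the form $\{a,b,w\}$ with $a$ a vertex of $A'$ in $S$ (or the analogous vertex), $b$ a vertex of $B'$ drawn from $S$ or $S'$, and $w\in S'\setminus S$: if $w\in A'\cup B'$ the triple is automatically disconnected, so $F_k\in\mathcal{F}^{0}_{\mathrm{mix}}$ lies in the first block; and if one is forced to take $w\in M$, the disconnectedness of the co-facet $S$ (which lies entirely inside $A$, or inside $B$) prevents a vertex of $M$ from being adjacent to the whole of the other side of $S$, which again supplies a valid $S_k$ earlier in the order. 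The same bookkeeping shows $\mathcal{F}^{1}_{\mathrm{mix}}$ may be ordered arbitrarily, and when $|M|\le 1$ the block $\mathcal{F}^{1}_{\mathrm{mix}}$ is empty or has only pairwise codimension-$1$ intersections, so the argument collapses to the disjoint-union and wedge cases. The genuine difficulty is thus the case analysis: tracking which codimension-$\ge 2$ intersections arise across the four blocks and confirming in each case that the candidate intermediate triple $S_k$ is truly disconnected (hence a facet) and truly precedes $F$; everything else is routine.
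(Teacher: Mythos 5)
Your decomposition and ordering are exactly the paper's: your $\mathcal{F}^{0}_{\mathrm{mix}},\mathcal{F}^{1}_{\mathrm{mix}},\mathcal{F}_A,\mathcal{F}_B$ are its $\F_0,\F_1,\F_A,\F_B$ (after observing that a disconnected $3$-set meeting both $A'$ and $B'$ can contain at most one vertex of $M=A\cap B$), the block order $\F_0,\F_1,\F_A,\F_B$ is the same, and your identification of $\langle\mathcal{F}^{0}_{\mathrm{mix}}\rangle$ as a cone over a skeleton of a join of simplex boundaries is just an unrolling of the paper's appeal to Theorem~\ref{thm:MargeDisjunion}. The forward direction via Proposition~\ref{prop:induced subgraphs} is also identical.

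The caveat is that you have explicitly deferred the cross-block verification, and that verification \emph{is} the proof; as written your argument is an accurate outline rather than a complete proof. For the record, your ``guiding idea'' is the correct one and the paper executes it as follows. For $F\in\F_1$ against an earlier $X$: since $X^c$ has at most one vertex in $M$, if $|F\cap X|<|F|-1$ there is $x\in F\setminus(X\cup M)$; swapping $x$ for the unique $y\in F^c\cap M$ lands in $\F_0$. For $F\in\F_B$ (say) against any earlier $X\notin\F_B$: pick $x\in X^c\cap A'$ (nonempty because $X^c\not\subseteq M$, as $\Delta_3(G[M])$ is void), and choose $y\in F^c$ according to $|F^c\cap M|$. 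The only delicate case is $|F^c\cap M|=2$: writing $F^c=\{b,m_1,m_2\}$ with $b\in B\setminus M$, disconnectedness of $G[F^c]$ forces $b\not\sim m_i$ for some $i$; deleting the \emph{other} $m_j$ and adding $x$ isolates $b$ in the new complement (it has no neighbours in $A'$ either), so $F-m_j+x$ --- in facet terms $F'=F-x+m_j$ --- lies in $\F_1$ and precedes $F$. This is precisely the point your sketch gestures at (``the disconnectedness of $S$ prevents a vertex of $M$ from being adjacent \dots''), so nothing in your plan would fail; it simply stops short of carrying out the case analysis that constitutes the theorem's content.
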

\begin{proof}
We know that $G$ having a shellable cut complex implies its induced subgraphs also have shellable cut complexes. So we assume $\Delta_3(G[A])$ and $\Delta_3(G[B])$ are shellable. We will now construct a shelling order for $\Delta_3(G)$ on its set of facets $\F$. 

Let $S=A\cap B$.  Note that the cases $|S|=0, 1$ were settled in 
Theorem~\ref{thm:MargeDisjunion} and Theorem~\ref{thm:Mark-Wedge}, respectively.

Because $k=3$, it is more convenient to classify our facets in terms of their complements, the disconnected 3-sets.
Let $\F_A=\{F\in\F\mid F^c\subseteq A\}$ be the set of facets whose complement lies entirely in $A$, and $\F_B=\{F\in\F\mid F^c\subseteq B\}$ be facets whose complement lies in $B$. The intersection $\F_A\cap\F_B$ is empty because $\Delta_3(G[A\cap B])$ is the void complex, so no disconnected 3-sets lie in $S=A\cap B$. For the remaining facets, we characterize them by the intersection of their complement with $S$, $\F_i=\{F\in\F\setminus(\F_A\cup\F_B)\mid |F^c\cap S|=i\}$. We know $S$ cannot contain a disconnected set of size 3, so $\F_3 = \emptyset$, and if a set of three elements has 2 elements in $S$, then it is completely contained in either $A$ or $B$, so $\F_2=\emptyset$. So only $\F_0$ and $\F_1$ are potentially nonempty. This gives a complete classification of facets $\F=\F_0\sqcup\F_1\sqcup\F_A\sqcup\F_B$.

Our shelling order construction starts by ordering the facets in $\F_0$, the facets whose complement shares no elements with $S$, or, equivalently, the facets of $\Delta_3(G)$ that contain $S$. This subcomplex of $\Delta_3(G)$ is precisely $\Delta_3(G[V\setminus S])*\langle S\rangle$   as in Proposition~\ref{prop:induced subgraphs}, since  $\Delta_3(G[V\setminus S])$ is the link of $\langle S\rangle$, but $\Delta_3(G[V\setminus S])=\Delta_3(G[A\setminus S]+G[B\setminus S])$, and so is shellable from Theorem \ref{thm:MargeDisjunion}.   This provides us with a shelling order for $\F_0$, call it $S_0$. (We note in passing that the complex $\langle \mathcal{F}_0\rangle$ is contractible.)

Now we append an order of the facets in $\F_1$; in this case any order will do. If $F\in \F_1$ and $X\in\F_0\cup \F_1$, then either $|F\cap X|=|F|-1$ (in which case $X=F'$ suffices for the shellability criterion) or there exists $x\in F\setminus (X\cup S)$ as $X^c$ has at most one element in $S$. In that case, let $y\in F^c\cap S$, and consider $F'=F-x+y$.  Then $F\cap X\subseteq F'$, and $|F\cap F'|=|F|-1$. Additionally, $F'\in \F_0$ as $F'^c\cap S=\emptyset$, and $F'\nsubseteq A$, $F'\nsubseteq B$, so any order of $\F_1$ will do. Choose some order and call it $S_1$.

The facets in $\F_A$ are the facets of $\Delta_3(G[A])$ with the elements of $B\setminus A$ added, and so can inherit the shelling order of $\Delta_3(G[A])$: call it $S_A$.
A similar argument holds for $S_B$. We claim that the order of $S_0$ followed by $S_1$ followed by $S_A$ followed by $S_B$ is a shelling order. We have already demonstrated that $S_0$ followed by $S_1$ fulfills the shelling criterion, and $S_A$ and $S_B$ work internally. To complete the proof we will demonstrate that the intersection of facets of $\F_B$ with facets outside of $\F_B$ are always contained in facets in $\F_0$ or $\F_1$ that fulfill the shelling criterion, and a similar argument will hold for the facets of $\F_A$.

Let $F\in \F_B$, and $X\in\F\setminus\F_B$. Then there exists $x\in X^c\cap B^c$. We will choose $F'=F-x+y$ so that $F\cap X \subset F'$ and $|F\cap F'|= |F|-1$, but we need to decide which $y$ to add. If $|F^c\cap S|=0$, then any $y\in F^c$ will do, as then $F'\in \F_0$; if $|F^c\cap S|=1$, add the $y\in F^c\cap S$, and again $F'\in\F_0$. If $|F^c\cap S|=2$, then as $|F^c|=3$ and $G[F^c]$ is not connected, the element of $F^c$ not in $S$ cannot be connected to both elements in $S$. Choose the $y\in F^c\cap S$ such that the remaining two elements of $F^c$ do not share an edge. Then $F'=F-x+y\in\F_1$, as the element of $F'$ contained in $B\setminus A$ cannot share an edge with $x$ as $x\in A\setminus B$, and also does not share an edge with the element in $S$, so it is an isolated vertex in $G[F'^c]$ and so $F'\in\F_1$. In each case $|F\cap F'|=|F|-1$, $F\cap X\subseteq F'$, and $F'$ comes before $F$ in the shelling order as $F'\in\F_0\cup\F_1$. A similar argument holds for every facet in $\F_A$ and every facet not in $\F_A$. So every pair of facets in the shelling order meets the shelling criterion, and $S_0+S_1+S_A+S_B$ is a shelling order for $\Delta_3(G)$.
\end{proof}

The construction of Proposition~\ref{prop:existence-min-nonshellablegraph-all-kS-kayaks} shows that the conclusion of Theorem~\ref{thm:Gen-wedge} is false for $k\ge 4$. 

\begin{cor}\label{cor:not-min}
If $G$ contains a separating set $S$ such that $\Delta_3(S)$ is the void complex, then $G$ is not a minimal forbidden subgraph for 3-cut complex shellability.
\end{cor}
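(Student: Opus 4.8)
The plan is to apply the Generalized Wedge theorem, Theorem~\ref{thm:Gen-wedge}, to the splitting of $G$ determined by $S$. First I would fix notation: a separating set $S$ is a set of vertices of $G$ whose removal disconnects $G$, so $G\setminus S$ has at least two connected components. Choose one such component $C$, and set $A=V(C)\cup S$ and $B=V(G)\setminus V(C)$. The routine checks are that $A\cup B=V(G)$, that $A\cap B=S$, and that every edge of $G$ has both endpoints in $A$ or both in $B$: an edge with one endpoint in $A\setminus B=V(C)$ and one in $B\setminus A=V(G)\setminus(S\cup V(C))$ would join two distinct components of $G\setminus S$, which is impossible. Since by hypothesis $\Delta_3(G[A\cap B])=\Delta_3(G[S])$ is void, the hypotheses of Theorem~\ref{thm:Gen-wedge} hold, and it gives the equivalence that $\Delta_3(G)$ is shellable if and only if both $\Delta_3(G[A])$ and $\Delta_3(G[B])$ are shellable.

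Then I would finish with a two-case argument. If $\Delta_3(G)$ is shellable, then $G$ is not a forbidden subgraph for $3$-cut complex shellability at all, hence not a minimal one. If $\Delta_3(G)$ is not shellable, then by the equivalence above at least one of $\Delta_3(G[A])$, $\Delta_3(G[B])$ is not shellable; and since $G\setminus S$ has at least two components, both $V(C)$ and $V(G)\setminus(S\cup V(C))$ are nonempty, so $A$ and $B$ are proper subsets of $V(G)$ and $G[A]$, $G[B]$ are proper induced subgraphs of $G$. Thus in this case $G$ has a proper induced subgraph whose $3$-cut complex is not shellable, so $G$ is not minimal with respect to this property. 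Either way, $G$ is not a minimal forbidden subgraph.

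I do not expect a genuine obstacle here: the entire content is the observation that a separating set $S$ with $\Delta_3(G[S])$ void exhibits $G$ as a generalized wedge of its two proper pieces $G[A]$ and $G[B]$ in exactly the sense required by Theorem~\ref{thm:Gen-wedge}. The only point that needs a moment's attention is verifying that $A$ and $B$ are \emph{proper} vertex subsets, which is where the hypothesis that $S$ is actually separating (so that $G\setminus S$ has two or more components) gets used; the edge-separation condition and the voidness of $\Delta_3(G[A\cap B])$ then transfer verbatim from the definition of a separating set and from the hypothesis.
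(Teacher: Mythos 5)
Your proposal is correct and follows essentially the same route as the paper's proof: decompose $V(G)$ as $A\cup B$ with $A\cap B=S$ using one component of $G\setminus S$, apply Theorem~\ref{thm:Gen-wedge}, and conclude that nonshellability of $\Delta_3(G)$ forces nonshellability of a proper induced subgraph. Your write-up is slightly more careful than the paper's (explicitly verifying the edge-separation condition and the properness of $A$ and $B$), but the argument is the same.
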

\begin{proof}
Let $G=(V,E)$, $V=A\sqcup S\sqcup B$, where $S$ separates $A$ and $B$, both non-empty.  If $\Delta_3(G)$ is not shellable, either $\Delta_3(G[A\cup S])$ or $\Delta_3(G[B\cup S])$ is not shellable, so $G$ is not a minimal forbidden subgraph. 
\end{proof}
The following is in contrast to  Proposition~\ref{prop:existence-min-nonshellablegraph-all-kS-kayaks}.
\begin{cor}\label{cor:chordal-Delta3-shell}
If $G$ is chordal, then $\Delta_3(G)$ is shellable.
\end{cor}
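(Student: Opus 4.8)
My plan is to reduce the statement to Theorem~\ref{thm:Gen-wedge} by exploiting the clique-tree structure of chordal graphs. Recall that every chordal graph admits a perfect elimination ordering, and equivalently can be built up by repeatedly gluing cliques along clique-separators; concretely, a connected chordal graph $G$ that is not itself a clique has a clique $S$ whose removal disconnects $G$. I would argue by induction on the number of vertices of $G$. The base case and the case where $G$ is a complete graph are handled immediately: if $G=K_n$ then $\Delta_3(G)$ is the void complex, which is shellable by convention (Remark~\ref{rem:empty-complex-0-dim-complex}).

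If $G$ is disconnected, Theorem~\ref{thm:MargeDisjunion} lets me split $G$ into its connected components, each of which is chordal on fewer vertices, so induction applies. So assume $G$ is connected and not complete. Then there is a clique separator $S$ in $G$; write $V = A' \sqcup S \sqcup B'$ with $S$ separating the nonempty sets $A'$ and $B'$, and set $A = A' \cup S$, $B = B' \cup S$, so that $A \cup B = V$, $A \cap B = S$, and every edge of $G$ lies inside $A$ or inside $B$ (no edges run between $A'$ and $B'$ by the separating property). Since $S$ is a clique, $G[A\cap B] = G[S]$ is a complete graph, hence $\Delta_3(G[S])$ is the void complex, and the hypotheses of Theorem~\ref{thm:Gen-wedge} are satisfied. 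That theorem then reduces shellability of $\Delta_3(G)$ to shellability of $\Delta_3(G[A])$ and $\Delta_3(G[B])$. Both $G[A]$ and $G[B]$ are induced subgraphs of a chordal graph, hence chordal, and each has strictly fewer vertices than $G$ (because $A'$ and $B'$ are both nonempty), so by the induction hypothesis their $3$-cut complexes are shellable. Applying Theorem~\ref{thm:Gen-wedge} completes the induction.

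The only real content beyond bookkeeping is the existence of a clique separator in a connected chordal graph that is not a clique; this is the classical structure theorem for chordal graphs (Dirac's lemma, or the clique-tree decomposition), which I would cite from the graph-theory reference \cite{WestGraphTheory1996} rather than reprove. The rest is a straightforward double induction (on disconnectedness, then on vertex count) feeding into Theorem~\ref{thm:Gen-wedge}, and I do not anticipate any delicate point there.
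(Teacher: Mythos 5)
Your proof is correct and is essentially the paper's argument: both reduce to Theorem~\ref{thm:Gen-wedge} by producing a clique separator in a chordal graph and inducting (the paper phrases this as a minimal-counterexample argument via Corollary~\ref{cor:not-min}, extracting the clique separator as the neighborhood of a simplicial vertex, while you invoke Dirac's clique-separator theorem directly). The only difference is cosmetic packaging of the induction, not the underlying route.
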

\begin{proof}
Suppose $G$ is chordal, but $\Delta_3(G)$ is not shellable. Let $H$ be a minimal induced subgraph of $G$ with a nonshellable 3-cut complex. $H$ is chordal as it is an induced subgraph of $G$, so it has a vertex $v$ whose neighborhood $N(v)=S$ is a clique. Either $H$ is the complete graph, in which case $\Delta_3(H)$ is the void complex and therefore shellable, or $S$ is a separating set such that $\Delta_3(S)$ is the void complex, so $H$ is shellable. In either case we have a contradiction, so $\Delta_3(G)$ is in fact shellable.
\end{proof}

\begin{cor}\label{cor:Delta3-min-forbidden-is3-conn}
Every minimal forbidden subgraph for 3-cut complex shellability is at least 3-connected.
\end{cor}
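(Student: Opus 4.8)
The plan is to argue by contradiction using Corollary~\ref{cor:not-min}. Suppose $G$ is a minimal forbidden subgraph for $3$-cut complex shellability that fails to be $3$-connected. The first step is to dispose of small cases: if $|V(G)|\le 2$ then $\Delta_3(G)$ is void, and if $|V(G)|=3$ then $\Delta_3(G)$ is either void (when $G$ is connected) or the complex $\{\emptyset\}$ (when $G$ is disconnected), so in every case $\Delta_3(G)$ is shellable, contradicting that $G$ is forbidden. Hence $|V(G)|\ge 4$.

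Since $G$ has at least four vertices but is not $3$-connected, there is a set $S\subseteq V(G)$ with $|S|\le 2$ whose deletion disconnects $G$; I allow $S=\emptyset$ here, so that this also covers the possibility that $G$ is itself disconnected. Because $|S|<3$, the induced subgraph $G[S]$ has no three-element subset, so $\Delta_3(G[S])$ is the void complex by Example~\ref{ex:Examples-cut-complex}, Part~(1). Moreover $G\setminus S$ is disconnected, so its vertex set splits as a disjoint union $A\sqcup B$ with $A,B$ nonempty and no edges of $G$ between $A$ and $B$; thus $S$ is a separating set of the kind required by Corollary~\ref{cor:not-min}. That corollary then asserts that $G$ is not a minimal forbidden subgraph, a contradiction. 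This completes the proof.

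I do not anticipate a genuine obstacle: the essential work is already contained in the Generalized Wedge theorem (Theorem~\ref{thm:Gen-wedge}) and Corollary~\ref{cor:not-min}, and the point here is simply that any vertex cut of size at most $2$ is automatically a separating set whose $3$-cut complex on $S$ is void, because $k=3$ exceeds $|S|$. The only care needed is the bookkeeping in the two extreme cases ($S=\emptyset$ and $|S|=1$), which are handled identically; alternatively one may invoke Theorem~\ref{thm:MargeDisjunion} for $S=\emptyset$, Theorem~\ref{thm:Mark-Wedge} for $|S|=1$, and Theorem~\ref{thm:Gen-wedge} for $|S|=2$ directly, each time using minimality of $G$ to reach the contradiction.
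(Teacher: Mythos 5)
Your proposal is correct and follows essentially the same route as the paper: the paper's proof is the one-line observation that a cut set $S$ with $|S|<3$ has $\Delta_3(G[S])$ void, so Corollary~\ref{cor:not-min} applies. Your extra bookkeeping for the cases $|V(G)|\le 3$ and $S=\emptyset$ is harmless added rigor, not a different argument.
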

\begin{proof}
If $G$ has a separating set $S$ such that $|S|<3$, then $\Delta_3(S)$ is trivially the void complex, so $G$ is not a minimal forbidden subgraph for 3-cut complex shellability.
\end{proof}

\begin{cor}\label{cor:min-forb-subgraph-Marge}
Suppose $G$ is 3-connected, every vertex of $G$ has a neighbor of degree 3, and $\Delta_3(G)$ is not shellable.  Then $G$ is a minimal forbidden subgraph for 3-cut complex shellability.
\end{cor}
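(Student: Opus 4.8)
The plan is to prove that \emph{every} proper induced subgraph of $G$ has a shellable $3$-cut complex; since $\Delta_3(G)$ is assumed non-shellable, this is exactly the statement that $G$ is a minimal forbidden subgraph. By Proposition~\ref{prop:induced subgraphs} every proper induced subgraph of $G$ is contained, as an induced subgraph, in some $G\setminus v$, so it suffices to show that $\Delta_3(G\setminus v)$ is shellable for every vertex $v$ of $G$.

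Fix $v$, and use the hypothesis to pick a neighbour $u$ of $v$ with $\deg_G(u)=3$. In $H:=G\setminus v$ the vertex $u$ has degree $2$; set $S:=N_H(u)$, so $|S|\le 2$. If $|V(H)|\le 3$ then $\Delta_3(H)$ is either void or the complex $\{\emptyset\}$, hence shellable, so assume $|V(H)|\ge 4$. Then $S$ is a separating set of $H$ — it isolates $u$ from the remaining nonempty vertex set — and $\Delta_3(H[S])$ is void because $|S|<3$. Now apply the Generalized Wedge Theorem~\ref{thm:Gen-wedge} with $A=\{u\}\cup S$ and $B=V(H)\setminus\{u\}$. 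This is legitimate: $A\cup B=V(H)$, every edge of $H$ has both endpoints in $A$ or both in $B$ (the only edges at $u$ go into $S\subseteq A$), and $\Delta_3(H[A\cap B])=\Delta_3(H[S])$ is void. The theorem gives that $\Delta_3(H)$ is shellable if and only if $\Delta_3(H[A])$ and $\Delta_3(H[B])$ are both shellable; and $\Delta_3(H[A])$ is shellable for free since $|A|\le 3$. Therefore $\Delta_3(G\setminus v)$ is shellable if and only if $\Delta_3(G\setminus\{u,v\})$ is.

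The remaining task is to show $\Delta_3(G\setminus\{u,v\})$ is shellable, and I would do this by iterating the reduction: whenever the current graph $K$ has at least $4$ vertices and a vertex $w$ with $\deg_K(w)\le 2$, Theorem~\ref{thm:Gen-wedge} applied with $A=\{w\}\cup N_K(w)$ and $B=V(K)\setminus\{w\}$ shows that $\Delta_3(K)$ is shellable if and only if $\Delta_3(K\setminus w)$ is. If, starting from $G\setminus\{u,v\}$, this peeling of degree-$\le 2$ vertices always terminates at a graph with at most $3$ vertices or with void $3$-cut complex — the latter occurring, for instance, for complete graphs, and more generally for complements of matchings, where every three vertices span an edge — then following the chain of equivalences backwards yields that $\Delta_3(G\setminus v)$ is shellable, as required. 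A possible alternative is a proof by contradiction: a minimal forbidden induced subgraph $H=G[W]\subsetneq G$ is $3$-connected by Corollary~\ref{cor:Delta3-min-forbidden-is3-conn}, so $\delta(H)\ge 3$; one then tries to exhibit a degree-$3$ vertex of $G$ that lies in $W$ yet has a neighbour outside $W$, which forces $\deg_H<3$, a contradiction.

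I expect the termination-and-backward-propagation step to be the main obstacle: one must rule out getting stuck at an induced subgraph of $G$ that has minimum degree $\ge 3$ and still carries a disconnected $3$-set. The natural approach is an induction on $|V(G)|$ that tracks how the hypothesis ``every vertex has a neighbour of degree $3$'' degrades under deletion of a degree-$\le 2$ vertex $w$ — it can only break at the at most two neighbours of $w$ — and that uses $3$-connectivity of $G$ (and the fact that its degree-$3$ vertices form a dominating set) to ensure that, as long as a non-trivial block remains, there is always either a fresh vertex of degree $\le 2$ or a clique-like separating set with void $3$-cut complex to reduce by.
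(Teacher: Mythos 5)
Your opening reduction is sound: by Proposition~\ref{prop:induced subgraphs} it suffices to treat $H=G\setminus v$, and the application of Theorem~\ref{thm:Gen-wedge} with $A=\{u\}\cup N_H(u)$ and $B=V(H)\setminus\{u\}$, where $u$ is a degree-$3$ neighbour of $v$ (so $\deg_H(u)=2$), is legitimate and correctly trades $\Delta_3(G\setminus v)$ for $\Delta_3(G\setminus\{u,v\})$. But the proof is not finished, and the obstacle you flag is a genuine gap, not a routine verification. The hypotheses ``$G$ is $3$-connected and its degree-$3$ vertices form a dominating set'' do \emph{not} prevent $G$ from containing a proper induced subgraph $G[W]$ with $\delta(G[W])\ge 3$, at which your peeling would stall: this happens precisely when every degree-$3$ vertex of $G$ lying in $W$ has its whole neighbourhood inside $W$. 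For a concrete instance, take the pentagonal prism $C_5\times K_2$ and complete one of the two pentagons to a $K_5$: the graph is $3$-connected, every vertex has a degree-$3$ neighbour (the outer pentagon is $3$-regular and matched to the $K_5$), yet the induced $K_5$ has minimum degree $4$. So the iteration cannot be pushed through from the stated hypotheses alone, and your second, ``contradiction'' route stalls at exactly the same place: you cannot in general exhibit a degree-$3$ vertex of $G$ inside $W$ with a neighbour outside $W$.

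For comparison, the paper's proof is your alternative route, made efficient by the minimal-counterexample framing: since $\Delta_3(G)$ is not shellable, $G$ contains a minimal forbidden induced subgraph; by Corollary~\ref{cor:Delta3-min-forbidden-is3-conn} that subgraph is $3$-connected, hence has minimum degree at least $3$; and the paper then asserts that every proper induced subgraph of $G$ has a vertex of degree at most $2$, forcing the minimal forbidden subgraph to be $G$ itself. This avoids your termination-and-backward-propagation bookkeeping entirely, which is what the framing buys. However, the one nontrivial claim it rests on is exactly the statement you could not establish, and the paper offers no argument for it: it is immediate when a single vertex is deleted (its degree-$3$ neighbour drops to degree $2$ -- this is your first step), but for an arbitrary $W\subsetneq V$ it is the claim refuted by the example above. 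So your diagnosis of where the difficulty lies is accurate; adopt the paper's framing via Corollary~\ref{cor:Delta3-min-forbidden-is3-conn}, but be aware that closing the argument still requires either a proof of the degree claim for the particular graphs to which the corollary is applied, or an additional hypothesis ruling out induced subgraphs of minimum degree $\ge 3$.
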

\begin{proof}
Assume the hypothesis.  Then every proper induced subgraph $H$ of $G$ has a vertex of degree at most 2.  So $H$ has a component that is either  $K_i$ for $1\le i\le 3$ or is not 3-connected.  So $H$ is not a minimal forbidden subgraph for 3-cut complex shellability. So $G$ is a minimal forbidden subgraph for 3-cut complex shellability.
\end{proof}

\section{The face lattice of the $k$-cut complex}\label{sec:Euler-char}

In this section we investigate the face lattice of the cut complex. Note that a simplicial complex $\Delta$ is shellable if and only if its face lattice admits a recursive coatom ordering \cite[Theorem~4.3, Corollary~4.4]{BjWachsTAMS1983}. Under certain favourable conditions (see Theorem~\ref{thm:truncBoolean-minus-antichain} below), the face lattice admits a particularly simple description which allows us to conclude that homology is torsion-free and occurs in at most two dimensions, and also gives us a way to compute the Betti numbers of the cut complex.

The reduced Euler characteristic of a simplicial complex $\Delta$ is the M\"obius number $\mu(\mathcal{L}(\Delta))$ of its face lattice $\mathcal{L}(\Delta)$ \cite{RPSEC11997}. We exploit the fact that the face lattice of the cut complex is a subposet of the truncated Boolean lattice, and then use poset topology techniques to determine the M\"obius number, using Theorem~\ref{thm:Bac-mu}.  Recall that the truncated Boolean lattice, by definition, has an artificially appended top element.

For a subset  $A$  of the vertex set $V(G)$ of a graph $G$, we say $A$ is a \emph{connected set}  if the induced subgraph $G[A]$ is connected. 
Recall from Section~\ref{sec:Topology} that $P(n,k)$ denotes the truncated Boolean lattice $B_n^{\le n-k}$, and let 
 \[\mathcal{Z}_k(G) \coloneqq \{A^c: |A|=k, A \text{ is a connected subset of }V(G)\}\subseteq P(n,k),\]
 where $A^c$ denotes the complement of $A$ in the vertex set $V(G)$. 
Thus $\mathcal{Z}_k(G)$  is the set of those $(n-k)$-element subsets of $[n]$ that are not facets of the cut complex,
 and 
the number of facets of $ \Delta_k(G)$ is $\binom{n}{k}-|\mathcal{Z}_k(G)|$.
Clearly the face lattice of $\Delta_k(G)$ is a subposet of $P(n,k)\setminus \mathcal{Z}_k(G)$.
\begin{theorem}\label{thm:truncBoolean-minus-antichain} Let $G$ be a graph with vertex set $V(G)$ of size $n$, and let $k\ge 2$.
 Then 
 the face lattice of $\Delta_k(G)$ coincides with  $P(n,k)\setminus \mathcal{Z}_k(G)$ if and only if the $(n-k-1)$-dimensional complex $\Delta_k(G)$ contains a complete $(n-k-2)$-skeleton, that is, if and only if either of the following equivalent conditions holds:
\begin{gather}\label{eqn:Condition-truncBoolean} \text{for every subset $X$ of a set $A^c\in \mathcal{Z}_k(G)$ with $|X|=n-k-1$, $X^c$ contains a disconnected set of size $k$;} \\
\label{eqn:Condition-truncBoolean-2} 
\text{if $A^c\in \mathcal{Z}_k(G)$ and  $x\notin A$, 
 there is a $y\in A$}  \text{ such that $(A\setminus\{y\})\cup\{x\}$ is disconnected. } 
\end{gather}
 If condition~\eqref{eqn:Condition-truncBoolean} holds, the reduced Euler characteristic of $\Delta_k(G)$ is given by 
\[(-1)^{n-k-1}\mu(\Delta_k(G))=\binom{n-1}{k-1}- |\mathcal{Z}_k(G)|
=|\{F: \text{$F$ is a facet of $\Delta_k(G)$}\}| -\binom{n-1}{k}.\]
Furthermore, in this case the nonzero homology of $\Delta_k(G)$ is torsion-free and occurs in at most two dimensions, $n-k-1$ and $n-k-2$.

If condition~\eqref{eqn:Condition-truncBoolean} holds and $\Delta_k(G)$ is shellable, then it is homotopy equivalent to 
\[\begin{cases}
\text{a point, if $\mu(\Delta_k(G))=0$, i.e., if the number of facets of $\Delta_k(G)$ is $\binom{n-1}{k}$}, &\\
\text{a wedge of $\left(\binom{n-1}{k-1}- |\mathcal{Z}_k(G)|\right)$ spheres in dimension $n-k-1$, otherwise}.
\end{cases}\]

\end{theorem}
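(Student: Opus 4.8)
Write $n=|V(G)|$ and $m=n-k-1=\dim\Delta_k(G)$, and assume $\Delta_k(G)$ is nonvoid (the cases $m\le 0$ are immediate). Since $\mathcal L(\Delta_k(G))\subseteq P(n,k)\setminus\mathcal Z_k(G)$ always, and since every cardinality-$(n-k)$ element of $P(n,k)\setminus\mathcal Z_k(G)$ is by definition a facet hence a face, the two posets can differ only among the subsets of size $\le m$; and those constitute all of $B_n^{\le m}$. Hence $\mathcal L(\Delta_k(G))=P(n,k)\setminus\mathcal Z_k(G)$ if and only if every subset of $V(G)$ of size $\le m$ is a face of $\Delta_k(G)$, i.e.\ $\Delta_k(G)$ contains the complete $(n-k-2)$-skeleton of the simplex on $V(G)$. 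That this is equivalent to \eqref{eqn:Condition-truncBoolean}: one direction is trivial, and for the other, given an $m$-subset $X$ of $V(G)$, either some $(m+1)$-superset of $X$ has disconnected complement (so $X$ lies in a facet, hence is a face), or every $(m+1)$-superset of $X$ lies in $\mathcal Z_k(G)$ and, taking any such superset $A^c\supseteq X$, condition \eqref{eqn:Condition-truncBoolean} says $X^c$ contains a disconnected $k$-set, i.e.\ $X$ is a face. Finally \eqref{eqn:Condition-truncBoolean}$\iff$\eqref{eqn:Condition-truncBoolean-2} is a direct rewriting via the correspondence $X\leftrightarrow(A,x)$ with $A=X^c\setminus\{x\}$ and $x$ the unique element of $X^c\setminus A$, using that the $k$-subsets of $X^c=A\cup\{x\}$ are $A$ itself (which is connected) together with the sets $(A\setminus\{y\})\cup\{x\}$, $y\in A$.

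\textbf{The reduced Euler characteristic.} Assuming \eqref{eqn:Condition-truncBoolean}, the face lattice $\mathcal L(\Delta_k(G))=P(n,k)\setminus\mathcal Z_k(G)$ is $P(n,k)$ with the coatom antichain $\mathcal Z_k(G)$ deleted, so I apply \eqref{eqn:mu-deleted-antichain}: for $c\in\mathcal Z_k(G)$ the interval $[\hat 0,c]$ in $P(n,k)$ is the Boolean lattice $B_{n-k}$ and $[c,\hat 1]$ is a two-element chain, whence $\mu_{P(n,k)}(\hat 0,c)=(-1)^{n-k}$, $\mu_{P(n,k)}(c,\hat 1)=-1$, and each summand equals $(-1)\cdot(-1)^{n-k}\cdot(-1)=(-1)^{n-k}$. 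Using that the order complex of $P(n,k)$ is the barycentric subdivision of the $(n-k-1)$-skeleton of the $(n-1)$-simplex, a wedge of $\binom{n-1}{k-1}$ spheres of dimension $n-k-1$, we get $\mu(P(n,k))=(-1)^{m}\binom{n-1}{k-1}$, so $\mu(\Delta_k(G))=\mu(\mathcal L(\Delta_k(G)))=(-1)^{m}\bigl(\binom{n-1}{k-1}-|\mathcal Z_k(G)|\bigr)$; the alternative expression follows from $\#\{\text{facets}\}=\binom nk-|\mathcal Z_k(G)|$ together with $\binom nk-\binom{n-1}{k}=\binom{n-1}{k-1}$.

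\textbf{Homology in two degrees, and torsion-freeness.} Still assuming \eqref{eqn:Condition-truncBoolean}, let $K\subseteq L$ be the $(m-1)$- and $m$-skeleta of the simplex on $V(G)$, so that $K\subseteq\Delta_k(G)\subseteq L$. The relative chain complex $C_\bullet(L,\Delta_k(G))$ is concentrated in degree $m$ and free on $\mathcal Z_k(G)$ (the $m$-cells of $L$ lying outside $\Delta_k(G)$), so $H_m(L,\Delta_k(G))\cong\mathbb Z^{|\mathcal Z_k(G)|}$ and $H_j(L,\Delta_k(G))=0$ otherwise. Since $\tilde H_\bullet(L)$ is free and concentrated in degree $m$, with $\tilde H_m(L)\cong\mathbb Z^{\binom{n-1}{k-1}}$, the long exact sequence of $(L,\Delta_k(G))$ yields $\tilde H_j(\Delta_k(G))=0$ for $j\le m-2$, an injection $\tilde H_m(\Delta_k(G))\hookrightarrow\tilde H_m(L)$ (so $\tilde H_m(\Delta_k(G))$ is free), and $\tilde H_{m-1}(\Delta_k(G))\cong\operatorname{coker}\!\bigl(\tilde H_m(L)\to H_m(L,\Delta_k(G))\bigr)$. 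Thus the homology is supported in degrees $n-k-1$ and $n-k-2$, with the top one free. It remains to show $\tilde H_{n-k-2}(\Delta_k(G))$ is torsion-free, equivalently that $\operatorname{im}\partial_m=\langle\partial_m F:F\text{ a facet}\rangle$ is a pure subgroup of the cycle group $Z_{m-1}$; \emph{this is the step I expect to be the main obstacle}. It genuinely requires the cut-complex structure and not merely the presence of the full $(m-1)$-skeleton, since the minimal $6$-vertex triangulation of $\mathbb{RP}^2$ has a complete $1$-skeleton yet $2$-torsion in $H_1$. My plan is to exploit \eqref{eqn:Condition-truncBoolean-2}: for each $c=A^c\in\mathcal Z_k(G)$ and each vertex $x\in c$ it supplies a facet $Y_{c,x}=(c\setminus\{x\})\cup\{y\}$ with $y\notin c$ passing through the ridge $c\setminus\{x\}$, and — organizing these facets, e.g.\ by fixing a spanning tree of each connected $k$-set $A$ and peeling leaves — to exhibit a unimodular triangular block inside the ridge-versus-facet incidence matrix, which forces the quotient $Z_{m-1}/\operatorname{im}\partial_m$ to be free.

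\textbf{The shellable case.} If in addition $\Delta_k(G)$ is shellable, then by Bj\"orner's theorem (Theorem~\ref{thm:shell-implies-homotopytype}) the pure complex $\Delta_k(G)$ is homotopy equivalent to a wedge of $\beta$ spheres of dimension $n-k-1$, where $\beta=\tilde\beta_{n-k-1}$; in particular all reduced homology sits in the top degree, so the reduced Euler characteristic equals $(-1)^{n-k-1}\beta$. Comparing with the formula of the second part gives $\beta=\binom{n-1}{k-1}-|\mathcal Z_k(G)|$. When $\beta=0$ the wedge is a point, and this occurs exactly when $\#\{\text{facets}\}=\binom nk-|\mathcal Z_k(G)|=\binom{n-1}{k}$; otherwise $\Delta_k(G)\simeq\bigvee_{\binom{n-1}{k-1}-|\mathcal Z_k(G)|}\mathbb S^{n-k-1}$, as claimed. (Note this last part does not need the torsion-freeness of the previous paragraph, only Bj\"orner's theorem and the Euler characteristic computation.)
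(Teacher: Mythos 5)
Most of your argument is correct and runs along the same lines as the paper's: the identification of $\mathcal L(\Delta_k(G))$ with $P(n,k)\setminus\mathcal Z_k(G)$ under the skeleton condition, the equivalence of \eqref{eqn:Condition-truncBoolean} and \eqref{eqn:Condition-truncBoolean-2}, the M\"obius-function computation via Theorem~\ref{thm:Bac-mu} (Equation~\eqref{eqn:mu-deleted-antichain}), and the shellable case read off from Bj\"orner's theorem together with the Euler characteristic. Your skeleton sandwich $K\subseteq\Delta_k(G)\subseteq L$ with the long exact sequence of the pair $(L,\Delta_k(G))$ is a clean, elementary way to see that homology is concentrated in degrees $n-k-1$ and $n-k-2$ with the top group free.

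However, there is a genuine gap exactly where you flag it: the torsion-freeness of $\tilde H_{n-k-2}(\Delta_k(G))$. Your long exact sequence only identifies this group as the cokernel of a map of free abelian groups, which can of course have torsion, and your proposed remedy (organizing the facets $(A\setminus\{y\})\cup\{x\}$ supplied by \eqref{eqn:Condition-truncBoolean-2} via spanning trees to produce a unimodular block in the boundary matrix) is a plan, not a proof; to conclude freeness of $Z_{m-1}/\operatorname{im}\partial_m$ you would need a unimodular square submatrix of size equal to the full rank of $\partial_m$, and you have not exhibited one. The paper closes this step by a citation rather than a direct computation: since $\mathcal Z_k(G)$ is an antichain (of coatoms) deleted from the Cohen--Macaulay poset $B_n^{\le n-k}$, the results of \cite[Theorems~2.1, 2.5]{NonmodJAC1999} guarantee that the resulting poset has torsion-free homology concentrated in at most two consecutive dimensions. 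Note also that your $\mathbb{RP}^2$ remark, while a useful sanity check that ``full codimension-one skeleton'' alone cannot suffice, does not contradict the theorem: in the realization of Remark~\ref{rem:torsionS} the graph has $16$ vertices while the complex has only $6$, so condition~\eqref{eqn:Condition-truncBoolean} fails there. To complete your write-up you should either carry out the unimodularity argument in full or invoke the antichain-deletion theorem as the paper does.
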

\begin{proof} The equivalence of the two conditions, and the fact that  $\Delta_k(G)$ contains a complete $(n-k-2)$-skeleton, follow from the definitions.  Either condition  ensures that  any faces contained in the removed facets $\mathcal{Z}_k(G)$ are already  faces of $\Delta_k(G)$, since $X^c$ contains a disconnected set of size $k$ if and only if $X$ is a subset of a facet of $\Delta_k(G)$.

For the remaining statements, we apply Theorem~\ref{thm:Bac-mu} to $P=P(n,k)$ and $Q=P\setminus\mathcal{Z}_k(G)$; note that $\mathcal{A}=\mathcal{Z}_k(G)$ is an antichain.

Also note that the intervals $(\hat 0, A^c)$ are Boolean intervals of $P(n,k)$ for all $A^c\in \mathcal{Z}_k(G)$, where $A$ ranges over all connected $k$-subsets of $V(G)$. We obtain 
\[\mu(P(n,k)\setminus\mathcal{Z}_k(G))
=\mu(P(n,k)) -\sum_{A^c\in \mathcal{Z}_k(G)} \mu(\hat 0, A^c) \mu(A^c, \hat 1) 
= \mu(P(n,k)) -  (-1)^{n-k} (-1)| \mathcal{Z}_k(G)|.\]
Since $\mu(P(n,k))=(-1)^{n-k-1}\binom{n-1}{k-1},$ the result follows.

The second equality  follows from the discussion preceding the theorem.

The last statement about the homology of $\Delta_k(G)$ is a consequence of \cite[Theorem~2.1, Theorem~2.5]{NonmodJAC1999}, which applies because we are deleting an antichain from the Cohen--Macaulay poset $B_n^{\le n- k}$. 

Finally, note that a nonzero reduced Euler characteristic gives the dimension of the unique nonvanishing homology, if the nonzero homology is concentrated in a single degree, as is the case for a shellable complex.  If the Euler characteristic is zero, the shellable complex must be contractible.
\end{proof}

\begin{prop}\label{prop:class-of-graphs-2022-3-26} Let $G$ be a graph on $n$ vertices, let $k\ge 2$, and assume that $G$ contains no cycles of length less than or equal to $(k+1)$.  Then 
$\Delta_k(G)$ contains a complete $(n-k-2)$-skeleton.
\end{prop}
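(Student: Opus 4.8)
The plan is to reformulate the conclusion combinatorially and then reduce to an elementary fact about $2$-connected graphs. To say that the $(n-k-1)$-dimensional complex $\Delta_k(G)$ contains a complete $(n-k-2)$-skeleton is exactly to say that every subset $X\subseteq V(G)$ with $|X|=n-k-1$ is a face of $\Delta_k(G)$. Since, by Definition~\ref{def:cut-cplx}, $X$ is a face of $\Delta_k(G)$ precisely when its complement $Y=V(G)\setminus X$, a set of size $k+1$, contains a disconnected $k$-set, it suffices to prove the following: for every $Y\subseteq V(G)$ with $|Y|=k+1$ there is a vertex $v\in Y$ with $G[Y\setminus\{v\}]$ disconnected. (If $n\le k$ there is nothing to prove; if $n=k+1$ the argument below also yields a disconnected $k$-set, so $\Delta_k(G)$ is nonvoid. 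Hence assume $n\ge k+1$.)

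Fix $Y$ with $|Y|=k+1\ge 3$ and set $H=G[Y]$. The key observation is that $H$ cannot be $2$-connected. Indeed, a $2$-connected graph on at least three vertices has minimum degree at least $2$ (a vertex of degree $0$ disconnects it, and the unique neighbour of a vertex of degree $1$ is a cut vertex), and any graph of minimum degree at least $2$ contains a cycle; such a cycle lies in $H$ and so has length at most $|Y|=k+1$, contradicting the hypothesis that $G$ has no cycle of length $\le k+1$. Therefore $H$ is either disconnected or connected with a cut vertex.

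It remains to produce, in each case, a vertex of $H$ whose removal leaves a disconnected graph. If $H$ is connected with a cut vertex $v$, then $H\setminus\{v\}$ is disconnected by definition. If $H$ is disconnected, let $C_1,\dots,C_r$ be its components, so $r\ge 2$ and $\sum_i|C_i|=k+1\ge 3$; if $r\ge 3$, then deleting any vertex still leaves at least two components, while if $r=2$ then one component, say $C_1$, has at least two vertices, and deleting a vertex of $C_1$ leaves $C_1$ nonempty and still separated from $C_2$. In every case we obtain a vertex $v\in Y$ with $G[Y\setminus\{v\}]$ disconnected, which completes the argument.

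I do not anticipate a real obstacle here. The two things to be careful about are the translation in the first paragraph (routine given Definition~\ref{def:cut-cplx}) and the final case analysis for disconnected $H$, where one must check the deletion does not inadvertently reconnect the graph; the latter is immediate once one notes that, because $|Y|\ge 3$, a disconnected $H$ has either three components or a component of size at least two.
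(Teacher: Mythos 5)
Your proof is correct and takes essentially the same approach as the paper's: reduce the claim to showing that every $(k+1)$-subset of $V(G)$ contains a disconnected $k$-set, and deduce this from the absence of short cycles (the paper observes directly that the induced subgraph is a forest, while your detour through $2$-connectivity establishes the same point). The final case analysis you supply is exactly the detail the paper leaves implicit.
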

\begin{proof}    If $G$ has no cycles of length less than or equal to $(k+1)$, any $(k+1)$-subset of the vertex set induces    a 
 forest, and thus contains a disconnected set of size $k$.  
By Definition~\ref{def:cut-cplx}, all subsets of size $(n-k-1)$ are faces of the cut complex. 
\end{proof}

For instance, consider the disjoint union $C_n+C_m$ of two cycle graphs $C_n$, $C_m$ with $n,m \ge 4$ and the cut complex $\Delta_2(C_n+C_m)$. There are no triangles, and there are $(n+m)$ connected subsets of size 2, so the preceding results give the reduced Euler characteristic $\mu(\Delta_2(C_n+C_m))=
(-1)^{n+m}.$  Sage code (for $9\ge m\ge n\ge 4$) gives homology of rank 1 in the top dimension and rank 2 in the dimension one below the top, which is consistent with the Euler characteristic.

The converse of Proposition~\ref{prop:class-of-graphs-2022-3-26} is false, as shown by  Theorem~\ref{thm:anycomplexiscutcomplex}. 

Since trees are acyclic, Corollary~\ref{cor:Mark-trees} together with Theorem~\ref{thm:truncBoolean-minus-antichain} and Proposition~\ref{prop:class-of-graphs-2022-3-26} immediately give:
\begin{cor}\label{cor:Delta2trees-contractible} If $T$ is a tree, then $\Delta_2(T)$ is contractible.
\end{cor}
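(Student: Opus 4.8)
The plan is to check that $\Delta_2(T)$ meets the hypotheses of Theorem~\ref{thm:truncBoolean-minus-antichain} in the degenerate regime where its reduced Euler characteristic vanishes, and then read off the conclusion from the classification at the end of that theorem.

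First I would dispose of the small cases: if $T$ has at most two vertices then $\Delta_2(T)$ is void, so assume $T$ has $n\ge 3$ vertices, so that $\Delta_2(T)$ is a nonvoid complex of dimension $n-3$. Next, since a tree is acyclic it contains no cycle of length $\le 3 = k+1$ (with $k=2$), so Proposition~\ref{prop:class-of-graphs-2022-3-26} applies and $\Delta_2(T)$ contains a complete $(n-4)$-skeleton; by Theorem~\ref{thm:truncBoolean-minus-antichain} this is precisely condition~\eqref{eqn:Condition-truncBoolean}. Also $\Delta_2(T)$ is shellable by Corollary~\ref{cor:Mark-trees}.

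The only genuine computation is the count $|\mathcal{Z}_2(T)|$: its elements are the complements of the connected $2$-subsets of $V(T)$, i.e.\ the complements of the edges of $T$, and a tree on $n$ vertices has exactly $n-1$ edges, so $|\mathcal{Z}_2(T)| = n-1$. Plugging into the Euler-characteristic formula of Theorem~\ref{thm:truncBoolean-minus-antichain} gives
\[
(-1)^{n-3}\mu(\Delta_2(T)) = \binom{n-1}{1} - |\mathcal{Z}_2(T)| = (n-1)-(n-1) = 0 .
\]
Since $\Delta_2(T)$ is shellable, satisfies~\eqref{eqn:Condition-truncBoolean}, and has vanishing reduced Euler characteristic, the concluding case analysis in Theorem~\ref{thm:truncBoolean-minus-antichain} forces it to be homotopy equivalent to a point, i.e.\ contractible.

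I do not expect any real obstacle: the statement is a direct corollary of the machinery already assembled. The only points requiring care are that the vanishing of $\mu$ by itself yields only $\mathbb{Z}$-acyclicity, so shellability (Corollary~\ref{cor:Mark-trees}) is genuinely needed to upgrade to a homotopy equivalence with a point, and the bookkeeping identity $\binom{n-1}{k-1}=\binom{n-1}{1}=n-1=\bigl|E(T)\bigr|$.
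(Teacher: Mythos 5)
Your argument is correct and is exactly the derivation the paper intends: it invokes Proposition~\ref{prop:class-of-graphs-2022-3-26} (acyclicity gives the full codimension-one skeleton), Corollary~\ref{cor:Mark-trees} (shellability), and the Euler-characteristic formula of Theorem~\ref{thm:truncBoolean-minus-antichain} with $|\mathcal{Z}_2(T)|=n-1=\binom{n-1}{1}$, which is precisely the computation the paper spells out after Proposition~\ref{prop:cut-complex-trees}. Your remark that shellability is genuinely needed to pass from $\mu=0$ to contractibility is also the right point of care.
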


If the graph $G$ has $(k+1)$-cycles,  the conditions in Theorem~\ref{thm:truncBoolean-minus-antichain} can be modified 
 to obtain a precise description of the face lattice of $\Delta_k(G)$, from which  we again obtain a formula for the reduced Euler characteristic. This is described in a forthcoming paper.

\subsection{A theorem about $\Delta_2$ for connected triangle-free  graphs}

Theorem~\ref{thm:truncBoolean-minus-antichain} 
asserts that if $G$ has $n$ vertices and $e$ edges and $\Delta_2(G)$ contains a complete $(n-4)$-skeleton, then $\Delta_2(G)$ has torsion-free homology, with Euler characteristic $(-1)^{n-4}(e-n+1)$, and the homology is nonzero in at most the top two dimensions.  In this section we refine this result, 
noting the following special case of Proposition~\ref{prop:class-of-graphs-2022-3-26}: if $G$ is triangle-free, $\Delta_2(G)$ contains a complete codimension 1 skeleton.

\begin{theorem}\label{thm:Delta2nonchordalSMark}
Let $G$ be a graph on $n$ vertices with $e$ edges, and assume $G$ is connected and triangle-free.  If $G$ is not a tree, then $\Delta_2(G)$ is a wedge of $e-n+1$ spheres in dimension $n-4$, that is, in codimension 1. 
\end{theorem}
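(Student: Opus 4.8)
The plan is to combine the structural information already in hand with a Morse-theoretic or inductive argument that pins down the homotopy type, not merely the homology. Since $G$ is connected and triangle-free, Proposition~\ref{prop:class-of-graphs-2022-3-26} (with $k=2$) guarantees that $\Delta_2(G)$ contains a complete $(n-4)$-skeleton, so condition~\eqref{eqn:Condition-truncBoolean} of Theorem~\ref{thm:truncBoolean-minus-antichain} holds. Theorem~\ref{thm:truncBoolean-minus-antichain} then tells us three things at once: the homology of $\Delta_2(G)$ is torsion-free; it is concentrated in dimensions $n-3$ and $n-4$; and the reduced Euler characteristic equals $(-1)^{n-4}\big(\binom{n-1}{1} - |\mathcal{Z}_2(G)|\big)$. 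Here $\mathcal{Z}_2(G)$ is the set of complements of connected $2$-subsets, i.e.\ of edges, so $|\mathcal{Z}_2(G)| = e$ and $\mu(\Delta_2(G)) = (-1)^{n-4}(n-1-e) = (-1)^{n-3}(e-n+1)$. Because $G$ is not a tree we have $e \geq n$, so $e-n+1 \geq 1$.

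The crux is then to rule out homology in dimension $n-3$, since then the torsion-free homology is concentrated in the single dimension $n-4$, of rank $e-n+1$ by the Euler characteristic computation, and this forces the wedge-of-spheres conclusion provided we also know $\Delta_2(G)$ is simply connected (automatic once $n-4 \geq 2$, i.e.\ $n \geq 6$; the small cases $n \leq 5$ can be checked directly, and for $n\le 4$ the statement is vacuous or immediate). So the main work is: \emph{show $\tilde H_{n-3}(\Delta_2(G)) = 0$.} Equivalently, since $\Delta_2(G)$ is a pure $(n-3)$-dimensional complex, I want to show it is not "too large at the top" — its top homology vanishes. One clean way: exhibit a facet $F_0$ and a free face, or more robustly, use the fact that $\Delta_2(G)$ is obtained from the full $(n-3)$-skeleton $P(n,2)$ of the $(n-1)$-simplex by deleting the $e$ facets in $\mathcal{Z}_2(G)$. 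The full $(n-3)$-skeleton has $\tilde H_{n-3}$ of rank $\binom{n-1}{2}$ (it is a wedge of that many $(n-3)$-spheres) and $\tilde H_i = 0$ otherwise; each facet-deletion can only change $\tilde H_{n-3}$ (lowering its rank by $1$) or create $\tilde H_{n-4}$ — this is exactly the content of the long exact sequence of the pair, or of \cite[Theorem~2.1, Theorem~2.5]{NonmodJAC1999} cited in Theorem~\ref{thm:truncBoolean-minus-antichain}. So I need: after deleting the $e$ facets corresponding to the edges of $G$, \emph{every} top homology class has been killed, i.e.\ the $e$ deleted facets "span" $\tilde H_{n-3}$ of the skeleton in the appropriate sense.

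Concretely, I would argue as follows. A top homology class of the $(n-3)$-skeleton is a cycle; deleting a facet $\sigma$ kills exactly the classes detected by $\sigma$. The claim reduces to: the $(n-3)$-cycles of $\Delta_2(G)$ supported on the remaining facets are all boundaries. Here it is cleanest to pass to the complement description: a facet of $\Delta_2(G)$ is $V \setminus \{u,v\}$ for a non-edge $\{u,v\}$, and the deleted facets correspond to edges. Dualize via Alexander duality (as set up in the introduction): $\tilde H_{n-3}(\Delta_2(G)) \cong \tilde H^{n - (n-3) - 3}(\Delta_2(G)^\vee) = \tilde H^0(\Delta_2(G)^\vee)$, and $\Delta_2(G)^\vee$ is the clique complex $\Delta(G)$ — which for triangle-free $G$ is just the graph $G$ itself (as a $1$-complex). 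Since $G$ is connected, $\tilde H^0(G) = 0$, so $\tilde H_{n-3}(\Delta_2(G)) = 0$, and dually $\tilde H_{n-4}(\Delta_2(G)) \cong \tilde H^1(G) = \mathbb{Z}^{e-n+1}$. This is the quick route and I expect it to be the actual proof. The remaining step is to upgrade "homology of a wedge of $(e-n+1)$ spheres" to "homotopy equivalent to a wedge of $(e-n+1)$ spheres": for $n \geq 6$ the complex is simply connected (it contains the full $2$-skeleton of a simplex on $\geq 5$ vertices when $n-3 \geq 2$, which is simply connected), so by Hurewicz and the standard argument attaching spheres to representatives of a homology basis, it is homotopy equivalent to the wedge; the finitely many cases $n \leq 5$ are handled by inspection. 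The main obstacle is organizing the Alexander-duality bookkeeping correctly (degrees, reduced vs.\ unreduced, which complex is dual to which) and dispatching the low-dimensional cases where simple connectivity is not free; the rest is formal.
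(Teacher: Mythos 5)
Your proposal is correct, but it takes a genuinely different route from the paper. The paper proves Theorem~\ref{thm:Delta2nonchordalSMark} by discrete Morse theory: Lemma~\ref{lem:MorseMatchingTrees} builds a perfect element matching on $\Delta_2(T)$ for a spanning tree $T$ (ordered by a root-to-leaf labelling), and restricting that matching to the subcomplex $\Delta_2(G)$ leaves exactly one critical cell of dimension $n-4$ for each of the $e-(n-1)$ non-tree edges, which yields the wedge of spheres directly and uniformly in all dimensions. You instead compute the homology outright via Alexander duality: since $G$ is triangle-free, the clique complex $\Delta(G)=\Delta_2(G)^{\vee}$ is just the graph $G$ itself, so $\tilde H_{n-3}(\Delta_2(G))\cong\tilde H^{0}(G)=0$ by connectivity and $\tilde H_{n-4}(\Delta_2(G))\cong\tilde H^{1}(G)\cong\mathbb{Z}^{e-n+1}$, which makes it conceptually transparent that the answer is the cycle rank of $G$ (and renders the Euler-characteristic bookkeeping from Theorem~\ref{thm:truncBoolean-minus-antichain} redundant; note also a harmless sign slip there, as $\mu=(-1)^{n-4}(e-n+1)$). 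The cost of your route is the passage from homology to homotopy type: for $n\ge 6$ the complete $2$-skeleton guaranteed by Proposition~\ref{prop:class-of-graphs-2022-3-26} gives simple connectivity and the Hurewicz--Whitehead argument applies, but for $n=5$ the target spheres are circles and free homology does not by itself force a wedge of circles, so the three graphs $C_5$, $C_4$ plus a pendant, and $K_{2,3}$ genuinely must be checked by hand (the paper's Figure~\ref{fig:Delta2C5} and Theorem~\ref{thm:BipartiteEquiv:k<=m<=n} cover two of them). Provided you actually carry out that finite verification, the argument is complete; the Morse-theoretic proof avoids the case split entirely and additionally recovers the contractibility of $\Delta_2$ of a tree along the way.
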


We will prove this below.

First, we refer the reader to  \cite{JonssonBook2008} for the necessary background on discrete Morse matchings. The particular case of element matchings is described in \cite{DeshpandeSingh2021};  a brief summary appears in \cite[Appendix]{BDJRSX-TOTAL2024}.
We begin by showing that there is a Morse matching on  $\Delta_2(T)$ for any tree $T$.  If $G$ is a triangle-free connected graph, we can then choose a spanning tree $T$ of $G$, and restrict the Morse matching of $\Delta_2(T)$ to $\Delta_2(G)$.  

\begin{lemma}\label{lem:MorseMatchingTrees}  Let $T$ be any tree on $n$ vertices. Then $\Delta_2(T)$ admits a perfect Morse matching.
\end{lemma}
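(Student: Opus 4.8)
The goal is to construct an explicit element matching (in the sense of discrete Morse theory, as in \cite{DeshpandeSingh2021}) on the face poset of $\Delta_2(T)$ that pairs up all nonempty faces, leaving no critical cells; this forces $\Delta_2(T)$ to be collapsible, and in particular contractible (consistent with Corollary~\ref{cor:Delta2trees-contractible}). Recall that a face of $\Delta_2(T)$ is a subset $\sigma \subseteq V(T)$ whose complement $V(T)\setminus\sigma$ contains two vertices in different components of the induced subgraph, i.e. $V\setminus\sigma$ is disconnected or has at least two vertices lying in distinct components. Since $T$ is triangle-free, by Proposition~\ref{prop:class-of-graphs-2022-3-26} the complex $\Delta_2(T)$ is the full $(n-4)$-skeleton of the simplex on $V(T)$ with the faces $\sigma$ of size $n-2$ whose complement is a (connected) edge removed. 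Equivalently, the faces of $\Delta_2(T)$ are all $\sigma\subseteq V$ with $|\sigma|\le n-3$, together with those $\sigma$ of size $n-2$ whose complement $\{x,y\}$ is a \emph{non-edge} of $T$.

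First I would pick a root and a leaf, or more precisely fix a leaf $v$ of $T$ with unique neighbor $u$; I will match using the element $v$. Define, for a face $\sigma$, the proposed pairing $\sigma \leftrightarrow \sigma \triangle \{v\}$ (add $v$ if absent, delete $v$ if present). For this to be a well-defined element matching I must check: (i) if $\sigma$ is a face with $v\notin\sigma$, then $\sigma\cup\{v\}$ is a face; and (ii) the matching is acyclic. For (i): adding $v$ can only fail to produce a face in the top-size case, namely when $|\sigma| = n-3$ and $\sigma\cup\{v\}$ has complement a size-2 set that is an edge of $T$; if $v\notin\sigma\cup\{v\}$ is impossible, so the complement of $\sigma\cup\{v\}$ consists of two vertices both $\ne v$, and since $v$ is a leaf with neighbor $u$, if this complement $\{x,y\}$ is an edge it cannot involve $v$, so actually the obstruction is genuine — I must exclude those $\sigma$. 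The standard fix for element matchings on skeleta is to match $v$ only on the "lower part" and handle the top dimension separately: faces of size $\le n-4$ get matched freely by $\triangle\{v\}$, while the size $n-3$ and size $n-2$ faces need a more careful secondary matching among themselves, pairing a size-$(n-3)$ face $\sigma$ (a ridge) with a size-$(n-2)$ facet containing it, chosen via a fixed linear order or via the tree structure.

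The cleaner route, which I would actually carry out, is to induct on $n$ using the link/deletion decomposition $\Delta_2(T) = \st(v)\cup\del(v)$ with $\lk(v) = \st(v)\cap\del(v)$, where by Lemma~\ref{lem:Extendedlinks} we have $\lk_{\Delta_2(T)}(v) = \Delta_2(T\setminus v)$ when $\{v\}$ is a face. Here $T\setminus v$ is the tree $T$ with leaf $v$ removed, on $n-1$ vertices, and $\del_{\Delta_2(T)}(v)$ is the subcomplex of faces avoiding $v$, i.e. $\Delta_2(T)$ restricted to $V\setminus\{v\}$ — this is the full $(n-4)$-skeleton of the $(n-2)$-simplex on $V\setminus\{v\}$ with one extra facet (complement of $\{u, w\}$ for the relevant non-edges) adjusted; one identifies it precisely and checks it is a cone or has an easy perfect matching. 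The element matching via $v$ then matches exactly the faces of $\st(v)$ that genuinely acquire/lose $v$; the unmatched faces are those in $\del(v)$ that fail condition (i), together with the critical cells of the induced matching on $\lk(v) = \Delta_2(T\setminus v)$, which by induction has a perfect Morse matching. The bookkeeping shows the leftover faces in $\del(v)$ assemble into a shifted copy of $\Delta_2(T\setminus v)$ or a collapsible complex, and one matches them using the inductive matching.

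\textbf{Main obstacle.} The delicate point is the interface between the top two dimensions: the element $v$ cannot be used to match every face because the facets whose complement is a non-edge \emph{not} incident to $v$ obstruct adding $v$ to their ridges. So the heart of the argument is organizing a secondary matching on those top-dimensional faces — most naturally by exploiting that $T$ is a tree (so it has $n-1$ edges, and the $\binom{n}{2}-(n-1)$ non-edges, which are exactly the missing facets, have a tree-compatible structure), or equivalently by induction peeling off leaves one at a time and tracking that at each stage the "residual" complex of unmatched faces is itself the $2$-cut complex of a smaller tree. Verifying acyclicity of the combined matching (the poset-matching digraph has no directed cycles) is routine once the matching is built by this leaf-induction, since each stage only reverses edges within one rank-pair, but it still must be stated carefully. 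I expect the write-up to proceed by this induction on $n$, with base case a path or star on $3$ or $4$ vertices checked by hand, and the inductive step as sketched above.
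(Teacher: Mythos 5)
Your setup is right---$\Delta_2(T)$ is the full $(n-4)$-skeleton of the simplex on $V(T)$ together with the facets whose complements are non-edges, and you have correctly located the difficulty at the interface of the top two dimensions---but the proposal stops exactly where the proof has to begin. After the element matching with the leaf $v$ (unique neighbour $u$), the unmatched faces are precisely the $n-2$ ridges $\rho$ with $\rho^c=\{v,x,y\}$ and $xy$ an edge of $T-v$, and the $n-2$ facets $\tau$ with $\tau^c=\{v,y\}$, $y\ne u$. The entire content of the lemma is to exhibit a specific acyclic perfect matching between these two collections, and you never construct one: ``a secondary matching chosen via a fixed linear order or via the tree structure,'' ``one identifies it precisely and checks it is a cone,'' and ``the bookkeeping shows the leftover faces assemble into a shifted copy of $\Delta_2(T\setminus v)$ or a collapsible complex'' are placeholders, not arguments, and the two alternative routes you hedge between are both left unexecuted. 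A second genuine problem is your acyclicity claim: ``each stage only reverses edges within one rank-pair'' does \emph{not} imply acyclicity---a matching confined to a single pair of consecutive ranks can perfectly well create directed cycles in the modified Hasse diagram (that is exactly what the acyclicity hypothesis is there to exclude)---so even with the secondary matching written down you would still owe a proof that it is acyclic.

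The gap is fillable, and the counts do work out: the leftover ridges correspond to the $n-2$ edges of the tree $T-v$ and the leftover facets to the $n-2$ vertices of $T-v-u$, and rooting $T-v$ at $u$ and assigning each edge to its child endpoint gives the required bijection. The paper packages exactly this idea so that acyclicity comes for free: it fixes a root, labels the vertices so that parents precede children, and runs the whole construction as a \emph{sequence} of element matchings $\mathcal{M}_1,\mathcal{M}_2,\dotsc$ in that order; each leftover ridge $\rho$ with $\rho^c\supseteq\{x,y\}$, $x$ the parent of $y$, is then matched by $\mathcal{M}_x$ with $\rho\cup x$, the remaining facets are picked up by the matching at an appropriate ancestor, and a sequence of element matchings is automatically acyclic. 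To salvage your version you would need to realize your ``secondary matching'' as further element matchings in a parent-before-child order (or verify acyclicity of the combined matching directly), and then actually check that every leftover ridge and facet is matched exactly once.
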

\begin{proof}
  We  construct an element matching for $\Delta_2(T)$. First  choose a root for $T$, labelled 1, and then label $T$ recursively with labels $1, 2, 3, \dotsc$, so that $x<y$ if $x$ is the parent of $y$ in $T$. We use this recursive labelling as the order for our element matching. By Proposition~\ref{prop:class-of-graphs-2022-3-26}, as $T$ is triangle-free, the $(n-3)$-dimensional cut complex $\Delta_2(T)$ has a complete $(n-4)$-skeleton.

For each vertex $a$, denote by  $\mathcal{M}_a$ the set of element matchings 
\[\{(\sigma, \sigma \cup a): 
\text{$a\notin \sigma$, and $\sigma$, $\sigma \cup a$ are both faces of $\Delta_2(G)$ that have not been matched by any $\mathcal M_b$, $b < a$}\},\]
and consider the sequence $\mathcal M_1, \mathcal M_2, \mathcal M_3, \dotsc$.

The unmatched faces after $\mathcal{M}_1$ are precisely the faces $\sigma$ not containing 1 such that $\sigma\cup 1$ is not a face; since $\Delta_2(T)$ has a complete $(n-4)$-skeleton,  $\sigma \cup 1$ must have dimension greater than $n-4$, and hence $\sigma$ is either a facet or a face of dimension $(n-4)$.

Suppose $\sigma$ is a face of dimension $(n-4)$, so that its (set) complement is $\sigma^c=\{1,x,y\}$. Since $\sigma \cup 1$ is NOT a face, $xy$ must be an edge. 
Also $\sigma$ must be contained in some facet, and $G$ is triangle-free, so exactly  one of $1x$, $1y$ is not an edge, and exactly one of $\sigma \cup x$, $\sigma\cup y$ is a facet.  Assume without loss of generality that  $x$ is the parent of $y$; thus $1x$ is the edge, and 
 the matching $\mathcal{M}_x$ matches the pair 
 $(\sigma, \sigma \cup x) $.  

This shows that all  $(n-4)$-dimensional faces are matched. 

Now suppose $\tau$ is an unmatched facet after $\mathcal{M}_1$, so that its complement $\tau^c=\{x,y\}$ where $x\ne 1, y\ne 1$, and $xy$ is not an edge. But then $x,y$ have a common ancestor $a\ne 1$, and $\{xya\}^c$ will be matched to $\tau^c$ by the element matching $\mathcal{M}_a$.

Since a sequence of element matchings is acyclic (see \cite[Appendix]{BDJRSX-TOTAL2024}, \cite{DeshpandeSingh2021}), we have exhibited a Morse matching for $\Delta_2(T)$. 
In fact we have shown that all faces are matched, and hence there are no critical cells, recovering the result of Corollary~\ref{cor:Delta2trees-contractible} that $\Delta_2(T)$ is contractible. 
\end{proof}

\begin{proof}[Proof of Theorem~\ref{thm:Delta2nonchordalSMark}]
 Let $G$ be a connected,  triangle-free graph. If $G$ is chordal, it must be a tree, and hence $\Delta_2(G)$ is shellable and contractible, and the theorem is verified, since the number of edges is one less than the number of vertices. 
 
 Now assume $G$ is nonchordal, so that $G$ is not a tree. Let $T$ be a spanning tree of $G$. Then  $\Delta_2(G)\subset \Delta_2(T)$, $\Delta_2(G)\ne \Delta_2(T)$. 
 We will show that there is a Morse matching for $\Delta_2(G)$ with $e-n+1$ critical cells in codimension 1, one below the top.

By the preceding lemma, the cut complex $\Delta_2(T)$ has a Morse matching $\mathcal{M}$ with no unmatched cells.  The restriction $\mathcal{M}\vert_{\Delta_2(G)}$ to the subcomplex $\Delta_2(G)$ remains acyclic, because there are fewer cells, and hence we have a Morse matching for $\Delta_2(G)$.    Unmatched faces in $\mathcal{M}\vert_{\Delta_2(G)}$ must be of the form $\sigma\setminus\{b\}$, where $\sigma$ is 
a face in $\Delta_2(T)\setminus \Delta_2(G)$ and $(\sigma\setminus\{b\},\sigma)$ was a matched pair under $\mathcal{M}$  in $\Delta_2(T)$. 
Since $G$ is triangle-free, $\Delta_2(G)$ contains a complete skeleton of codimension 1, so $\sigma\setminus\{b\}\in \Delta_2(G)$ and  $\sigma\in\Delta_2(T)\setminus \Delta_2(G)$ must be a face in the top dimension. 

Hence in the restriction $\mathcal{M}\vert_{\Delta_2(G)}$, there is exactly one critical cell of codimension 1 corresponding to each  facet  of $\Delta_2(T)\setminus \Delta_2(G)$. Each of the $e-(n-1)$ edges not in the spanning tree yields exactly one such facet, giving $e-(n-1)$ critical cells, where $e$ is the number of edges of $G$, and $n$ is the number of vertices.  It follows that 
$\Delta_2(G)$ is homotopy equivalent to a wedge of $(e-n+1)$ spheres of dimension $(n-4)$.  
\end{proof}

Theorem~\ref{thm:Delta2nonchordalSMark} is corroborated by  the nonchordal triangle-free families studied in \cite{BDJRSX-TOTAL2024}, namely complete bipartite graphs, cycles, and grid graphs.  In addition, we have the following examples.  Recall that the Kneser graph $K(m,r)$ \cite{WestGraphTheory1996} has one vertex for each $r$-subset of $[m]$, with an edge between subsets if and only if they are disjoint.

\begin{cor}\label{cor:Ex-Delta2-triangle-freeS}
For each of the following graphs $G$, $\Delta_2(G)$ is a wedge of spheres in
codimension 1.
\begin{enumerate}
\item Let $G$ be the Kneser graph $K(m,r)$, $m< 3r$.  Then $\Delta_2(G)$ is a wedge
      of $\frac{1}{2}\binom{m}{r}\binom{m-r}{r}-\binom{m}{r}+1$ spheres in
      dimension $\binom{m}{r}-4$.
\item In particular, for  the Petersen graph $G=K(5,2)$,
      $\Delta_2(G)$ is a wedge of $6$ spheres in dimension~6.
\item Let $G$ be the Cartesian product $H_1\times H_2$ of two triangle-free
      connected graphs $H_1$ and $H_2$, with $n_i = |V(H_i)|\ge 2$ and
     $m_i = |E(H_i)|$.  Then $\Delta_2(G) $ is a wedge
      of $n_1m_2+n_2m_1 - n_1n_2+1$ spheres in dimension $n_1+n_2-4$.
\end{enumerate}
\end{cor}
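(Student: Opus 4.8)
The plan is to deduce all three parts directly from Theorem~\ref{thm:Delta2nonchordalSMark}, which already carries out all of the topological work: for a connected, triangle-free graph $G$ that is not a tree, $\Delta_2(G)$ is a wedge of $e-n+1$ spheres in dimension $n-4$ (codimension $1$), where $n=|V(G)|$ and $e=|E(G)|$. Thus for each family I would only need to (i) verify that $G$ is connected and triangle-free, (ii) verify that $G$ is not a tree, so that $e-n+1>0$ and the wedge is genuinely nonempty, and (iii) count the vertices and edges and substitute into the formula.

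For Part~(1), let $G=K(m,r)$. A triangle in $G$ would consist of three pairwise disjoint $r$-subsets of $[m]$, which requires $m\ge 3r$; since $m<3r$, the graph $G$ is triangle-free. Connectedness of the Kneser graph holds once $m\ge 2r+1$ (a standard fact, and implicitly needed here, since $K(2r,r)$ is a perfect matching). The vertex count is $n=\binom{m}{r}$, and counting ordered pairs of disjoint $r$-subsets of $[m]$ gives $e=\frac12\binom{m}{r}\binom{m-r}{r}$. Triangle-freeness forces $r\ge 2$ (as $K(m,1)=K_m$ has triangles once $m\ge 3$), and for $r\ge 2$ one checks $e\ge n$, so $G$ is not a tree; substituting into Theorem~\ref{thm:Delta2nonchordalSMark} yields a wedge of $\frac12\binom{m}{r}\binom{m-r}{r}-\binom{m}{r}+1$ spheres in dimension $\binom{m}{r}-4$. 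Part~(2) is simply the instance $m=5$, $r=2$: $K(5,2)$ is the Petersen graph, with $n=10$ and $e=15$, so $e-n+1=6$ and $n-4=6$.

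For Part~(3), let $G=H_1\times H_2$ be the Cartesian product. A Cartesian product of connected graphs is connected. For triangle-freeness I would argue that every edge of $H_1\times H_2$ changes exactly one of the two coordinates, so in any triangle two of the three edges change the same coordinate; those two edges fix the other coordinate, and together with the third edge of the triangle they project onto a triangle in $H_1$ or in $H_2$, contradicting the hypothesis. The counts are $n=n_1n_2$ and $e=n_1m_2+n_2m_1$, and since $m_i\ge n_i-1$ by connectedness of $H_i$, we get $e-n+1\ge (n_1-1)(n_2-1)\ge 1$, so $G$ is not a tree. Hence Theorem~\ref{thm:Delta2nonchordalSMark} gives a wedge of $n_1m_2+n_2m_1-n_1n_2+1$ spheres in codimension $1$, i.e.\ dimension $n_1n_2-4$.

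There is no substantial obstacle in this corollary; the content lies entirely in Theorem~\ref{thm:Delta2nonchordalSMark}. The only points that merit a line of care are the standard connectivity fact for Kneser graphs together with the implicit restriction $m>2r$, the verification that a Cartesian product of triangle-free graphs is again triangle-free, and getting the edge count of $K(m,r)$ right; everything else is a direct substitution.
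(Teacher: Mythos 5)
Your proposal is correct and follows essentially the same route as the paper: both deduce all three parts by verifying the hypotheses of Theorem~\ref{thm:Delta2nonchordalSMark} (connected, triangle-free, not a tree) and substituting the vertex and edge counts. Your extra care about connectivity of the Kneser graph (the implicit need for $m\ge 2r+1$, since $K(2r,r)$ is a disconnected perfect matching) and about triangle-freeness of Cartesian products is a worthwhile refinement of details the paper's proof states without justification.
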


\begin{proof} 
The Kneser graph is connected and, by the pigeonhole principle, it is triangle-free if and only if  $m< 3r$.
It has $\binom{m}{r}$ vertices and $\frac{1}{2}\binom{m}{r}\binom{m-r}{r}$ edges, so the result follows. 

 One can check that Cartesian products preserve connectedness and the triangle-free property. 
\end{proof}

\begin{rem}\label{rem:counterexample:Delta2NOTtriangle-free}
 The hypotheses in Theorem~\ref{thm:Delta2nonchordalSMark} are necessary. 
If $G$ is not triangle-free, the  conclusion is false, the counterexample being multipartite graphs with three or more parts. (These contain a 4-cycle without a chord as an induced subgraph.) The smallest counterexample is the multipartite graph with $r\ge 4$ parts each with 2 vertices:
 we will prove in Proposition~\ref{prop:MultipartiteCase3} that $\Delta_2(G)$ has dimension $(2r-3)$, but is homotopy equivalent to one sphere $ \bbS^{r-2}$ in dimension $(r-2)$.

Likewise, if $G$ is triangle-free but not connected, the conclusion of 
Theorem~\ref{thm:Delta2nonchordalSMark} is again false.  Sage computations show that the disjoint union $\Delta_2(C_m+C_n)$ has homology of rank 1 in the top dimension and rank 2 in codimension 1, for $m\ge n\ge 4.$  Again, we already know homology is torsion-free and concentrated in the top two dimensions.

\end{rem}

\subsection{Applications to computing Betti numbers of cut complexes}

In this subsection we show how to use Theorem~\ref{thm:truncBoolean-minus-antichain} to compute the reduced Euler characteristic for cut complexes for some families of graphs. This in turn determines the Betti number if the homotopy type is a wedge of spheres in a single dimension, as is the case for the families studied in the next section.

\begin{prop}\label{prop:disj-unions} Fix $k\ge 2$ and let $G_i$ be a graph on $n_i$ vertices, $1\le i\le r$, such that for all $i$, the $k$-cut complex $\Delta_k(G_i)$ satisfies condition~\eqref{eqn:Condition-truncBoolean}  of Theorem~\ref{thm:truncBoolean-minus-antichain}. Let $N=\sum_{i=1}^r n_i$. Then the disjoint union $G_1+\dotsb +G_r$ also satisfies the condition, and has Euler characteristic equal to 
\[(-1)^{N-k-1}\left( \binom{N-1}{k-1} - \sum_{i=1}^r |\mathcal{Z}_k(G_i)|\right).\]
If, in addition, $\Delta_k(G_1+\dotsb +G_r)$ is nonvoid and 
shellable, its  Betti number is given by $(-1)^{N-k-1}$ times the above formula. 

In the special case when  each $\Delta_k(G_i)$ is nonvoid and shellable with Betti number $\beta_i$, $\Delta_k(G_1+\dotsb +G_r)$ is shellable with Betti number given by 
\[ \binom{N-1}{k-1} -\sum_{i=1}^r \binom{n_i-1}{k-1}  + \sum_{i=1}^r \beta_i.\]
\end{prop}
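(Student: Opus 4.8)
The plan is to prove the three assertions of Proposition~\ref{prop:disj-unions} in sequence, with the main input being Theorem~\ref{thm:truncBoolean-minus-antichain} applied to the disjoint union, together with the shellability result of Theorem~\ref{thm:MargeDisjunion}. First I would check that $G_1 + \dotsb + G_r$ satisfies condition~\eqref{eqn:Condition-truncBoolean}, or equivalently~\eqref{eqn:Condition-truncBoolean-2}: given $A^c \in \mathcal{Z}_k(G_1+\dotsb+G_r)$, so $A$ is a connected $k$-subset, $A$ must lie entirely inside a single $V(G_i)$ since a connected set cannot meet two components. For $x \notin A$, if $x$ also lies in $V(G_i)$ then the hypothesis on $\Delta_k(G_i)$ furnishes a $y \in A$ with $(A \setminus \{y\}) \cup \{x\}$ disconnected in $G_i$, hence in the disjoint union; if $x$ lies in some other component $V(G_j)$, then $(A \setminus \{y\}) \cup \{x\}$ is automatically disconnected for \emph{any} $y \in A$ (it meets two components). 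So condition~\eqref{eqn:Condition-truncBoolean-2} holds.

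Next I would compute $|\mathcal{Z}_k(G_1+\dotsb+G_r)|$. By the same observation, the connected $k$-subsets of the disjoint union are precisely the connected $k$-subsets of the individual $G_i$, so $|\mathcal{Z}_k(G_1+\dotsb+G_r)| = \sum_{i=1}^r |\mathcal{Z}_k(G_i)|$. Plugging $n = N$ and this count into the Euler characteristic formula of Theorem~\ref{thm:truncBoolean-minus-antichain} gives
\[
(-1)^{N-k-1}\mu(\Delta_k(G_1+\dotsb+G_r)) = \binom{N-1}{k-1} - \sum_{i=1}^r |\mathcal{Z}_k(G_i)|,
\]
which is the claimed Euler characteristic. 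For the Betti number statement, Theorem~\ref{thm:truncBoolean-minus-antichain} tells us that under condition~\eqref{eqn:Condition-truncBoolean} the homology is torsion-free and concentrated in the top two dimensions; if in addition $\Delta_k(G_1+\dotsb+G_r)$ is shellable it is a wedge of spheres in the single top dimension $N-k-1$ (Theorem~\ref{thm:shell-implies-homotopytype}), so the reduced Euler characteristic determines the top Betti number up to the sign $(-1)^{N-k-1}$, giving exactly the stated expression.

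For the final special case, I would first invoke Theorem~\ref{thm:MargeDisjunion} iteratively: if each $\Delta_k(G_i)$ is shellable, then $\Delta_k(G_1+\dotsb+G_r)$ is shellable (and nonvoid, since each summand is nonvoid). Then each $\Delta_k(G_i)$, being shellable and satisfying~\eqref{eqn:Condition-truncBoolean}, is by Theorem~\ref{thm:truncBoolean-minus-antichain} a wedge of $\binom{n_i-1}{k-1} - |\mathcal{Z}_k(G_i)|$ spheres, so $\beta_i = \binom{n_i-1}{k-1} - |\mathcal{Z}_k(G_i)|$, i.e.\ $|\mathcal{Z}_k(G_i)| = \binom{n_i-1}{k-1} - \beta_i$. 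Substituting into the general Betti formula just obtained yields
\[
\binom{N-1}{k-1} - \sum_{i=1}^r \left( \binom{n_i-1}{k-1} - \beta_i \right) = \binom{N-1}{k-1} - \sum_{i=1}^r \binom{n_i-1}{k-1} + \sum_{i=1}^r \beta_i,
\]
as claimed.

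The only genuinely delicate point is the very first step: one must be slightly careful about the case where $\Delta_k(G_i)$ is void for some $i$, since then Theorem~\ref{thm:truncBoolean-minus-antichain} and the quantity $\mathcal{Z}_k(G_i)$ still make sense (the cut complex being the full $(n_i-k-1)$-skeleton would be the relevant alternative, but actually $\Delta_k(G_i)$ void means $G_i$ has no disconnected $k$-set, so $\mathcal{Z}_k(G_i)$ consists of all $\binom{n_i}{k}$ size-$k$ complements and condition~\eqref{eqn:Condition-truncBoolean} is vacuous); I would simply note that in the hypothesis ``$\Delta_k(G_i)$ satisfies condition~\eqref{eqn:Condition-truncBoolean}'' this is permitted, and the counting identity $|\mathcal{Z}_k(G_1+\dotsb+G_r)| = \sum_i |\mathcal{Z}_k(G_i)|$ and the connectedness-of-$A$ argument go through verbatim regardless. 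Everything else is bookkeeping with Theorem~\ref{thm:truncBoolean-minus-antichain}, Theorem~\ref{thm:MargeDisjunion}, and the Euler-Poincar\'e relation.
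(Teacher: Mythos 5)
Your proof is correct and follows exactly the route the paper takes: the paper's entire argument is the one-line observation that a subset of the disjoint union is connected if and only if it is a connected subset of a single $G_i$, from which the verification of condition~\eqref{eqn:Condition-truncBoolean-2}, the identity $|\mathcal{Z}_k(G_1+\dotsb+G_r)|=\sum_i|\mathcal{Z}_k(G_i)|$, and the substitution $|\mathcal{Z}_k(G_i)|=\binom{n_i-1}{k-1}-\beta_i$ all follow as you describe. (Your closing parenthetical is slightly off --- a void $\Delta_k(G_i)$ with $n_i>k$ actually \emph{fails} condition~\eqref{eqn:Condition-truncBoolean}, so that case is excluded by hypothesis rather than ``vacuously permitted'' --- but this aside does not affect the argument.)
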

\begin{proof}  This is clear from Theorem~\ref{thm:truncBoolean-minus-antichain}, since $A$ is a connected subset of the disjoint union if and only if it is a connected subset of some $G_i$.  In particular the last two statements are a consequence of the shellability condition, by the final statement of 
Theorem~\ref{thm:truncBoolean-minus-antichain}. 
 Note that $|\mathcal{Z}_k(G_i)|=0$ if $n_i<k$. 
\end{proof}

For instance, for paths $P_{n_i}$, $1\le i \le r$, where each $P_{n_i}$ has $n_i$ vertices, we can use Proposition~\ref{prop:cut-complex-trees} in the next section to compute that the rank of the unique nonvanishing homology group of $P_{n_1}+\dotsb +P_{n_r}$ is given by the explicit formula $\binom{N-1}{k-1}-\sum_{i \in \{1, \dotsc, r\}, n_i \ge k}
(n_i-k+1)$.

Next we consider the wedge of two graphs, as defined in Section~\ref{subsec:Mark-Wedges}.
Examples are the wedge of two trees (also a tree), two  paths end-to-end (also a path), two cycles (a figure 8), a cycle and a tree, a cycle and a path (balloon).
\begin{prop}\label{prop:S-wedges} Let $G_i$ be a graph on $n_i$ vertices, $i=1,2$. Let  $k\ge 2$, $k\le n_1+n_2-3$.
  Let $G_1 \vee_{w_0} G_2$ be the wedge  of the two graphs at some vertex $w_0$.  If $G_i, i=1,2$ satisfy the condition~\eqref{eqn:Condition-truncBoolean}, then so does $G_1 \vee_{w_0} G_2$.  Furthermore, its Euler characteristic $\mu(G_1 \vee_{w_0} G_2)$ satisfies 
\[(-1)^{n_1+n_2-k-2}\mu(G_1 \vee_{w_0} G_2)=\binom{n_1+n_2-2}{k-1}-|\mathcal{Z}_k(G_1)|\delta_{k\le n_1} -|\mathcal{Z}_k(G_2)| \delta_{k\le n_2}- \bar\xi_k(G_1,G_2, w_0),\]
where $\delta_{A}$ is the Kronecker delta, equal to 1 or 0 according as the statement $A$ is true or false, and 
\[ \bar\xi_k(G_1,G_2, w_0)=|\{(A_1,A_2)\in V(G_1)\times V(G_2): w_0\in A_i, \ A_i^c\in \mathcal{Z}_{r_i}(G_i), \ r_1+r_2=k+1, \ A_i\setminus\{w_0\}\ne \emptyset\}|.\] 
In particular, if  $\Delta_k(G_i)$ is shellable for $i=1,2$, then so is 
$\Delta_k(G_1 \vee_{w_0} G_2)$, with Betti number given by 
\[(-1)^{n_1+n_2-k-2}\mu(G_1 \vee_{w_0} G_2)=\binom{n_1+n_2-2}{k-1}- \xi_k(G_1,G_2, w_0),\]
where 
\[ \xi_k(G_1,G_2, w_0)=|\{(A_1,A_2)\in V(G_1)\times V(G_2): w_0\in A_i,\ A_i^c\in \mathcal{Z}_{r_i}(G_i),\ r_1+r_2=k+1\}|.\] 
\end{prop}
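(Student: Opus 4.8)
The plan is to deduce the proposition from Theorem~\ref{thm:truncBoolean-minus-antichain} applied to the wedge $G \coloneqq G_1 \vee_{w_0} G_2$, which has $N \coloneqq n_1+n_2-1$ vertices; thus $\binom{N-1}{k-1} = \binom{n_1+n_2-2}{k-1}$ and $N-k-1 = n_1+n_2-k-2$, and the hypothesis $k \le n_1+n_2-3$ keeps us in the range where $\Delta_k(G)$ is nonvoid (of positive dimension) and the framework of Theorem~\ref{thm:truncBoolean-minus-antichain} applies. Two inputs are needed: that condition~\eqref{eqn:Condition-truncBoolean} holds for $\Delta_k(G)$, and that $|\mathcal{Z}_k(G)|$ equals the asserted sum. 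For the latter I would analyze the connected $k$-subsets of $G$. The structural point is that every edge of $G$ lies inside $V(G_1)$ or inside $V(G_2)$ and $V(G_1) \cap V(G_2) = \{w_0\}$; hence any $A \subseteq V(G)$ meeting both $V(G_1) \setminus \{w_0\}$ and $V(G_2) \setminus \{w_0\}$ has $G[A]$ connected if and only if $w_0 \in A$ and each $A \cap V(G_i)$ is connected in $G_i$. This partitions the connected $k$-sets of $G$ into three disjoint families: (a) those contained in $V(G_1)$, which are exactly the connected $k$-sets of $G_1$; (b) symmetrically those contained in $V(G_2)$; and (c) the ``straddling'' ones, where $w_0 \in A$ and, with $A_i = A \cap V(G_i)$ and $r_i = |A_i|$, one has $r_i \ge 2$, $r_1+r_2 = k+1$, and $A_i$ connected in $G_i$. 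Family (c) is counted exactly by $\bar\xi_k(G_1,G_2,w_0)$, the constraint $A_i \setminus \{w_0\} \ne \emptyset$ being just $r_i \ge 2$; so $|\mathcal{Z}_k(G)| = |\mathcal{Z}_k(G_1)|\delta_{k\le n_1} + |\mathcal{Z}_k(G_2)|\delta_{k\le n_2} + \bar\xi_k(G_1,G_2,w_0)$.

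Next I would check condition~\eqref{eqn:Condition-truncBoolean} in the equivalent form~\eqref{eqn:Condition-truncBoolean-2}: given a connected $k$-set $A$ of $G$ and $x \in V(G) \setminus A$, find $y \in A$ with $(A \setminus \{y\}) \cup \{x\}$ disconnected in $G$. The workhorse is that deleting $w_0$ severs the two sides: if $w_0 \in A$ and $x$ lies on, say, the $G_2$-side, then $(A \setminus \{w_0\}) \cup \{x\}$ is a disjoint union of a nonempty subset of $V(G_1) \setminus \{w_0\}$ and a nonempty subset of $V(G_2) \setminus \{w_0\}$ with no edges between them, hence disconnected (nonemptiness uses $k \ge 2$, and for family (c) also $r_i \ge 2$). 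This covers all straddling $A$, and all type-(a)/(b) sets that contain $w_0$ with $x$ on the opposite side. If $A$ is of type (a) with $w_0 \notin A$ and $x \notin V(G_1)$, then any $y \in A$ makes $x$ an isolated vertex. The only remaining case is $A$ of type (a) or (b) with $x$ on the same side: then $A$ is a connected $k$-set of $G_1$, say, and $x \in V(G_1) \setminus A$, so I apply condition~\eqref{eqn:Condition-truncBoolean-2} for $G_1$ — a genuine instance, since $A^c \in \mathcal{Z}_k(G_1)$ — to produce $y$ with $(A \setminus \{y\}) \cup \{x\}$ disconnected in $G_1$; as this set lies in $V(G_1)$, it stays disconnected in $G$.

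With both inputs established, Theorem~\ref{thm:truncBoolean-minus-antichain} yields $\mathcal{L}(\Delta_k(G)) = P(N,k) \setminus \mathcal{Z}_k(G)$, torsion-free homology concentrated in dimensions $N-k-1$ and $N-k-2$, and $(-1)^{N-k-1}\mu(\Delta_k(G)) = \binom{N-1}{k-1} - |\mathcal{Z}_k(G)|$, which is the stated Euler-characteristic formula once the count above is substituted. If in addition $\Delta_k(G_1)$ and $\Delta_k(G_2)$ are shellable, then $\Delta_k(G)$ is shellable by Theorem~\ref{thm:Mark-Wedge}, so the final clause of Theorem~\ref{thm:truncBoolean-minus-antichain} identifies $\Delta_k(G)$ up to homotopy with a wedge of $\binom{N-1}{k-1} - |\mathcal{Z}_k(G)| = (-1)^{N-k-1}\mu(\Delta_k(G))$ spheres of dimension $N-k-1$; that number is the Betti number. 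The reformulation using $\xi_k$ in place of $\bar\xi_k$ I would obtain by reindexing the connected $k$-subsets of $G$ according to whether they contain $w_0$ — those that do being in bijection with the pairs $(A_1,A_2)$ counted by $\xi_k$ (now allowing $|A_i|=1$).

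I expect the only genuinely nontrivial step to be the verification that condition~\eqref{eqn:Condition-truncBoolean} is inherited by the wedge, and within it the straddling sets, for which the hypotheses on $G_1$ and $G_2$ separately give no leverage; the clean resolution is the ``$y=w_0$'' disconnection. Everything else is combinatorial bookkeeping, the Kronecker deltas serving only to absorb the degenerate ranges $k > n_i$.
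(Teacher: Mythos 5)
Your argument for the main assertions --- the three-way classification of the connected $k$-sets of the wedge, the case-by-case verification of condition~\eqref{eqn:Condition-truncBoolean-2} (with $y=w_0$ doing the work for straddling sets and for cross-side $x$, the isolated-vertex observation when $w_0\notin A$, and the hypothesis on $G_i$ handling the same-side case), and the resulting count $|\mathcal{Z}_k(G_1\vee_{w_0}G_2)| = |\mathcal{Z}_k(G_1)|\delta_{k\le n_1}+|\mathcal{Z}_k(G_2)|\delta_{k\le n_2}+\bar\xi_k(G_1,G_2,w_0)$ fed into Theorem~\ref{thm:truncBoolean-minus-antichain} --- is exactly the paper's proof, and it is correct. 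The appeal to Theorem~\ref{thm:Mark-Wedge} for the shellability clause is also the intended route.

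The one step that does not go through is your final sentence, the passage from $\bar\xi_k$ to $\xi_k$. The reindexing you describe shows that $\xi_k(G_1,G_2,w_0)$ counts precisely the connected $k$-subsets of the wedge that \emph{contain} $w_0$; but the quantity entering the Euler characteristic is $|\mathcal{Z}_k(G_1\vee_{w_0}G_2)|$, the number of \emph{all} connected $k$-subsets. The discrepancy is the number of connected $k$-sets of $G_1$ or of $G_2$ that avoid $w_0$, which is nonzero in general, so the two displayed formulas in the statement are not equivalent and no reindexing reconciles them. Concretely, take $G_1=G_2=P_3$ wedged at leaves with $k=2$: the wedge is $P_5$, whose $2$-cut complex is contractible, consistent with the $\bar\xi$-formula ($4-2-2-0=0$), whereas $\xi_2=2$ and the $\xi$-formula would give Betti number $2$. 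The $\xi_k$-version is valid only under the additional hypothesis that every connected $k$-subset of each $G_i$ passes through $w_0$. (The paper's own proof is silent on this step, so this is a defect of the statement as much as of your argument, but as written your justification of it is not valid.)
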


\begin{proof}  Denote the vertex set of $G_i$ by $V(G_i), i=1,2$. We assume $V(G_1)\cap V(G_2)=\{w_0\}$, the wedge point. The connected subsets of the wedge $G_1 \vee_{w_0} G_2$ fall into three categories (some of which may be empty):
\begin{enumerate}
\item Connected subsets $A_1$ of  $V(G_1)$ of size $k$ such that $A_1\cap V(G_2)\subseteq \{w_0\}$, provided $k\le n_1$;
thus $A_1^c\in \mathcal{Z}_k(G_1)\subset \mathcal{Z}_k(G_1 \vee_{w_0} G_2)$.  
\item Connected subsets $A_2$ of  $V(G_2)$ of size $k$ such that $A_2\cap V(G_1)\subseteq \{w_0\}$, provided $k\le n_2$; 
thus $A_2^c\in \mathcal{Z}_k(G_2)\subset \mathcal{Z}_k(G_1 \vee_{w_0} G_2)$. 
\item Connected sets in $G_1 \vee_{w_0} G_2$ of size $k$, of the form $A_1\cup A_2$ where for $ i=1,2$ we have $w_0\in A_i\subset V(G_i)$ and  $A_i\setminus\{w_0\}\ne \emptyset$;  thus  
$A_i\cap V(G_{3-i})= \{w_0\}$ and $|A_1\cup A_2|=k=|A_1|+|A_2|-1$. Note that $(A_1\cup A_2)^c \in \mathcal{Z}_k(G_1 \vee_{w_0} G_2)$.
\end{enumerate}

Clearly these account for all the connected subsets of size $k$ of $G_1 \vee_{w_0} G_2$. 
We claim that condition~\eqref{eqn:Condition-truncBoolean-2} is satisfied for all three cases. 
For Case (3) it is clear that removing $w_0$ from $A_1\cup A_2$ results in two disconnected components, and hence makes $((A_1\cup A_2)\setminus \{w_0\}) \cup\{x\}$ 
disconnected for any $x\notin A_1\cup A_2$.

For Case (1), let $X=A_1\cup\{x\}$, $x\notin A_1$.  If $x\in V(G_1)$ we are done since $G_1$ satisfies condition~\eqref{eqn:Condition-truncBoolean-2}.  

Otherwise $x\in V(G_2)\setminus V(G_1)$. If $w_0\notin A_1$, then $A_1\cup\{x\}$ is itself disconnected and we are done. 
If $w_0\in A_1$, then clearly $(A_1\setminus\{w_0\}) \cup\{x\}$ results in a disconnected set of size $k$.   Case (2) follows similarly.

Since the number of vertices of the wedge is $n_1+n_2-1$,  the dimension of the cut complex $\Delta_k(G_1 \vee_{w_0} G_2)$ is $n_1+n_2-2-k$ and the expression for the Euler characteristic  follows from Theorem~\ref{thm:truncBoolean-minus-antichain}.
\end{proof}

For instance, the wedge of two paths $P_{n_1}$ and $P_{n_2}$ wedged at a leaf is the path $P_{n_1+n_2-1}$. Assume $k\le\min(n_1-2, n_2-2)$; one can check that in this case $\xi_k(P_{n_1},P_{n_2}, w_0)=k-2$, and hence  the formula of Proposition~\ref{prop:S-wedges} 
agrees with Proposition~\ref{prop:cut-complex-trees} in the next section.

We record the following computation that will be needed to write down a formula for the Betti numbers for wedges of cycles and paths.

\begin{lemma}\label{lem:count-connected-subsets}  Assume $k\ge 1$. 
\begin{enumerate}
\item Let $n\ge 1$. Then
$|\mathcal{Z}_k(P_n)|=n-k+1$ if $k\le n$ and $|\mathcal{Z}_k(P_n)|$ is zero otherwise.
\item
Let  $n\ge 3$. Then $|\mathcal{Z}_k(C_n)|=n$ if $k<n$, $|\mathcal{Z}_n(C_n)|=1$, and $|\mathcal{Z}_k(C_n)|$ is zero otherwise.
\item
Let $\xi_n(a)$ denote the number of connected subsets of size $a$ in the cycle $C_n$ passing through a fixed vertex $w_0$. Then $\xi_n(a)=a$ if $a<n$, $\xi_n(n)=1$, and $\xi_n(a)$ is zero if $a>n$.
\item
Let  $n_1\ge 3$, $n_2\ge 1$.  Let $w_0$ be a leaf of $P_{n_2}$ and let $G$ be the wedge $G=C_{n_1}\vee_{w_0} P_{n_2}$.  Then 
\begin{equation}\label{eqn:Zk-cycle-path-wedge}
\bar\xi_k(G)=\sum_{a=\max(2,k+1-n_2)}^{\min(n_1, k-1)} \xi_n(a)
\end{equation}
\end{enumerate}
\end{lemma}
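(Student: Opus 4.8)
The plan is to handle the four parts in turn; each reduces to the elementary fact that the connected induced subgraphs of a path are exactly its subpaths, and those of a cycle are exactly its arcs of consecutive vertices together with the whole vertex set.

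For parts (1)--(3) I would argue as follows. Label the vertices of $P_n$ as $1,2,\dots,n$ along the path. A set $A$ with $|A|=k$ induces a connected subgraph precisely when $A=\{i,i+1,\dots,i+k-1\}$ for some $i$ with $1\le i\le n-k+1$; there are $n-k+1$ such sets when $k\le n$ and none when $k>n$, which is (1). For $C_n$ the same local reasoning shows that a proper nonempty subset is connected iff it is an arc of consecutive vertices read cyclically: when $k<n$ the $n$ arcs of length $k$, one for each choice of starting vertex, are pairwise distinct; when $k=n$ the whole vertex set is the unique connected $k$-set; when $k>n$ there is none. This is (2). For (3), among the $n$ arcs of a fixed length $a<n$ in $C_n$, an arc contains a prescribed vertex $w_0$ exactly when its starting vertex is one of the $a$ vertices $w_0,w_0-1,\dots,w_0-a+1$ (indices cyclic), so there are $a$ of them; the cases $a=n$ and $a>n$ are immediate.

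For part (4) I would unwind the definition of $\bar\xi_k$ from Proposition~\ref{prop:S-wedges}: with $G=C_{n_1}\vee_{w_0}P_{n_2}$ it counts the pairs $(A_1,A_2)$ in which $A_1$ is a connected subset of $C_{n_1}$ of some size $r_1$ containing $w_0$, $A_2$ is a connected subset of $P_{n_2}$ of some size $r_2$ containing $w_0$, $r_1+r_2=k+1$, and $A_i\setminus\{w_0\}\ne\emptyset$ --- the last condition being equivalent to $r_1,r_2\ge 2$. By part (3) the number of admissible $A_1$ of size $r_1$ is $\xi_{n_1}(r_1)$. Since $w_0$ is a leaf of $P_{n_2}$, a connected subgraph of $P_{n_2}$ through $w_0$ is forced to be an initial segment of the path, so for each $r_2$ with $1\le r_2\le n_2$ there is exactly one admissible $A_2$ and none otherwise. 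Hence $\bar\xi_k(G)=\sum\xi_{n_1}(r_1)$ over $r_1+r_2=k+1$ with $r_1\ge 2$ and $2\le r_2\le n_2$; reindexing by $a=r_1$ (so $r_2=k+1-a$), the constraints become $a\ge 2$, $a\le k-1$, and $a\ge k+1-n_2$, and using $\xi_{n_1}(a)=0$ for $a>n_1$ to cap the top of the range gives $\bar\xi_k(G)=\sum_{a=\max(2,\,k+1-n_2)}^{\min(n_1,\,k-1)}\xi_{n_1}(a)$, which is the asserted formula (where the $\xi_n$ of the statement means $\xi_{n_1}$).

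The only real obstacle is bookkeeping: translating the side condition $A_i\setminus\{w_0\}\ne\emptyset$ in the definition of $\bar\xi_k$ into $r_i\ge 2$, pinning down the summation bounds exactly, and noting that the boundary value $a=n_1$ must be counted with weight $\xi_{n_1}(n_1)=1$ rather than $n_1$ --- which is precisely why the identity is phrased in terms of $\xi_n$ rather than a closed-form sum. No genuine difficulty arises beyond this.
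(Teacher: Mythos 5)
Your proof is correct and matches the paper's argument: the paper likewise treats (1)--(3) as immediate counts of subpaths/arcs and proves (4) by observing that the path part of a connected set through the leaf $w_0$ is uniquely determined by its size, so the count reduces to summing $\xi_{n_1}(a)$ over the admissible cycle-part sizes $a$. Your bookkeeping of the bounds (including reading the statement's $\xi_n$ as $\xi_{n_1}$) is accurate.
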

\begin{proof} Only Equation \eqref{eqn:Zk-cycle-path-wedge} requires comment.  By definition of $\bar\xi_k(G)$, we are counting connected subsets $A$ of size $k$, passing through $w_0$, consisting of $a\ge 2$ vertices in $C_{n_1}$ through $w_0$ and 
$k-a+1\ge 2$ vertices in $P_{n_2}$, also passing through $w_0$. Thus $a\le n_1$ and $k-a+1\le n_2$.  Since the set  $A$ is uniquely determined by its intersection with $C_{n_1}$,  the expression  follows.
\end{proof}

Now define $\xi_k(C_{n_1}, P_{n_2}, w_0)$ to be the sum 
\[\xi_k(C_{n_1}, P_{n_2}, w_0) \coloneqq \sum_{a=\max(1,k+1-n_2)}^{\min(n_1, k)} \xi_n(a).\]
\begin{prop}\label{prop:special-wedges-balloons} Let $G=C_{n_1} \vee_{w_0} P_{n_2}$ where $C_n$ is the cycle graph on $n$ vertices and $P_n$ is the path on $n$ vertices, and $w_0$ is a leaf of the path.   Then $G$ has $n_1+n_2-1$ vertices. If $k=n_1+n_2-2$, $\Delta_k(G)$ is a single point $\{w_0\}$.  Assume $2\le k\le n_1+n_2-3$.
If $k\ne n_1-1$, then  $G$ satisfies condition~\eqref{eqn:Condition-truncBoolean-2} and $\Delta_k(G)$ has reduced Euler characteristic $\mu(\Delta_k(G))$ where 
\begin{align*}
(-1)^{n_1+n_2-1-k} \mu(\Delta_k(G)) & =\binom{n_1+n_2-2}{k-1} -|\mathcal{Z}_k(C_{n_1})| -(n_2-k+1) \delta_{k\le n_2} -\bar\xi_k(C_{n_1}, P_{n_2}, w_0)\\
& = \binom{n_1+n_2-2}{k-1} - \xi_k(C_{n_1}, P_{n_2}, w_0).
\end{align*} 

If $k\ge 3$, $k\ne n_1-1$, , 
$\Delta_k(G)$ is shellable, and has the homotopy type of a wedge of  $(-1)^{n_1+n_2-1-k} \mu(\Delta_k(G))$ spheres in dimension $n_1+n_2-2-k$.

If $k=2$, $k\ne n_1-1$, we obtain $(-1)^{n_1+n_2-3} \mu(\Delta_k(G))=-1$.  Nonzero homology can only be in the top two dimensions.  
\end{prop}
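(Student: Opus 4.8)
The result is the specialization of Proposition~\ref{prop:S-wedges} to the pair $G_1=C_{n_1}$, $G_2=P_{n_2}$, combined with the explicit enumeration in Lemma~\ref{lem:count-connected-subsets} and, for the homotopy assertions, the known shellability of the cut complexes of cycles and trees. The plan is first to dispose of $k=n_1+n_2-2$, where $\Delta_k(G)$ is $0$-dimensional --- its facets are the separating singletons (the cut vertices) of the balloon --- so the claim is read off the structure of $G$ directly; and then, for $2\le k\le n_1+n_2-3$ with $k\ne n_1-1$, to run the machinery of Section~\ref{sec:Euler-char}.

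The crux is verifying the hypotheses of Proposition~\ref{prop:S-wedges}, i.e.\ that $C_{n_1}$ and $P_{n_2}$ each satisfy condition~\eqref{eqn:Condition-truncBoolean}. For $P_{n_2}$ this is immediate from Proposition~\ref{prop:class-of-graphs-2022-3-26}, a path being acyclic. For $C_{n_1}$ this is the one place the hypothesis $k\ne n_1-1$ is used: if $k\le n_1-2$ then $C_{n_1}$ has girth $n_1\ge k+2$ and Proposition~\ref{prop:class-of-graphs-2022-3-26} applies again; if $k\ge n_1$ then $\mathcal{Z}_k(C_{n_1})$ is either empty ($k>n_1$) or the single coatom $\emptyset$ ($k=n_1$, since $V(C_{n_1})$ is connected), and condition~\eqref{eqn:Condition-truncBoolean-2} holds vacuously; the excluded value $k=n_1-1$ is precisely the one for which $C_{n_1}$ is itself a $(k+1)$-cycle and the condition fails. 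Granting this, Proposition~\ref{prop:S-wedges} gives that $G$ satisfies condition~\eqref{eqn:Condition-truncBoolean-2} and furnishes $\mu(\Delta_k(G))$; substituting $|\mathcal{Z}_k(P_{n_2})|=n_2-k+1$ for $k\le n_2$ from Lemma~\ref{lem:count-connected-subsets}(1), and the value of $\bar\xi_k(C_{n_1},P_{n_2},w_0)$ from \eqref{eqn:Zk-cycle-path-wedge}, produces the first displayed identity. The second equality then follows by comparing the index ranges of the two sums $\xi_k$ and $\bar\xi_k$ of Lemma~\ref{lem:count-connected-subsets} and absorbing the resulting boundary terms via parts (2) and (3) of that lemma.

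For the homotopy type, assume first $k\ge 3$. Then $\Delta_k(C_{n_1})$ is shellable by Theorem~\ref{thm:DaneCycleShellability} and $\Delta_k(P_{n_2})$ is shellable by Corollary~\ref{cor:Mark-trees}, so $\Delta_k(G)=\Delta_k(C_{n_1}\vee_{w_0}P_{n_2})$ is shellable by Theorem~\ref{thm:Mark-Wedge}; together with condition~\eqref{eqn:Condition-truncBoolean} already established, the final part of Theorem~\ref{thm:truncBoolean-minus-antichain} upgrades this to a wedge of $|\mu(\Delta_k(G))|$ spheres of dimension $n_1+n_2-2-k$ (a point when $\mu=0$), as claimed. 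For $k=2$ the condition $k\ne n_1-1$ forces $n_1\ge 4$, so $C_{n_1}$, and hence $G$, is non-chordal and $\Delta_2(G)$ is \emph{not} shellable by Theorem~\ref{thm:Froberg}; here one invokes only the Euler-characteristic computation together with the final sentence of Theorem~\ref{thm:truncBoolean-minus-antichain} (which does not use shellability) to conclude that the homology is torsion-free, concentrated in the top two dimensions, with the stated Euler characteristic --- a conclusion also consistent with Theorem~\ref{thm:Delta2nonchordalSMark}, as $G$ is then connected and triangle-free with just one more edge than a tree, so that $\Delta_2(G)$ is in fact a single sphere of dimension $n_1+n_2-5$. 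I expect the main obstacle to be the bookkeeping of the middle paragraph: keeping straight the regimes $k\le n_1-2$, $k=n_1$, $k>n_1$, and the comparison of $k$ with $n_2$, when converting the abstract counts of Proposition~\ref{prop:S-wedges} into the closed forms of Lemma~\ref{lem:count-connected-subsets}, and checking that the two displayed formulas for $\mu(\Delta_k(G))$ agree.
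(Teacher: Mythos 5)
Your proposal takes essentially the same route as the paper's proof: verify condition~\eqref{eqn:Condition-truncBoolean-2} for each piece (Proposition~\ref{prop:class-of-graphs-2022-3-26} for the path, and for the cycle when $k\le n_1-2$, with $k=n_1-1$ excluded precisely because the condition fails there), feed this into Proposition~\ref{prop:S-wedges}, read off the counts from Lemma~\ref{lem:count-connected-subsets}, and get shellability for $k\ge 3$ from Theorem~\ref{thm:DaneCycleShellability}, Corollary~\ref{cor:Mark-trees} and Theorem~\ref{thm:Mark-Wedge} before invoking Theorem~\ref{thm:truncBoolean-minus-antichain}; you are in fact more explicit than the paper about the regimes $k\ge n_1$ and $k=n_1+n_2-2$ and about the $k=2$ cross-check. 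One caveat: your assertion that the second displayed equality ``follows by comparing the index ranges'' does not check out as written --- the difference $\xi_k-\bar\xi_k$ only contributes the boundary terms $a=1$ and $a=k$ of the sum (namely $\delta_{k\le n_2}+\xi_{n_1}(k)\delta_{k\le n_1}$), which does not match $|\mathcal{Z}_k(C_{n_1})|+(n_2-k+1)\delta_{k\le n_2}$, since the latter also counts connected $k$-sets avoiding $w_0$. This discrepancy lies in the statement/definition of $\xi_k$ rather than in your strategy, and the paper's own proof is equally silent on it; the substantive computation in both your argument and the paper's runs entirely through the first displayed line, which is correct.
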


\begin{proof} First we observe that the $k$-cut complex of the cycle graph $C_n$ satisfies the condition~\eqref{eqn:Condition-truncBoolean-2}  provided $k\le n-1$. If $k=n-1,$ the condition fails because all subsets of $C_n$ of size $n-1$ are connected. 

By definition, $\bar\xi_k(C_{n_1},P_{n_2}, w_0)$ 
counts connected $k$-subsets passing through the wedge point $w_0$, and these consist of a path with $k=a+1\ge 2$ vertices in $P_{n_2}$ starting at $w_0$, attached to a path in $C_{n_1}$ through $w_0$ with  $a\ge 2$ vertices.  The precise counts are provided in Lemma~\ref{lem:count-connected-subsets}.

When $3\le k$, we know from Proposition~\ref{prop:class-of-graphs-2022-3-26}  that $\Delta_k(C_{n_1})$ and $\Delta_k(P_{n_2})$ satisfy condition~\eqref{eqn:Condition-truncBoolean-2}, and hence so does $\Delta_k(G)$, by Proposition~\ref{prop:S-wedges}.  The Betti number follows from the general expression for the reduced Euler characteristic. \qedhere
\end{proof}

A completely analogous argument gives us the following for a wedge of two cycles.

\begin{prop}\label{prop:figure-eight} Consider the wedge  $G= C_{n_1}\vee_{w_0} C_{n_2}$ where $C_n$ is the cycle graph on $n$ vertices, and $w_0$ is a common vertex of both $C_{n_1}$ and $C_{n_2}$. Then $G$ has $n_1+n_2-1$ vertices.  If $k=n_1+n_2-1$, 
$\Delta_k(G)$ is a point $\{w_0\}$. Assume $2\le k\le n_1+n_2-3.$ 
If $k\ne n_i-1, i=1,2$, then $G$ satisfies  condition~\eqref{eqn:Condition-truncBoolean-2} and $\Delta_k(G)$ has reduced Euler characteristic $\mu(\Delta_k(G))$ where 
\begin{align*}
(-1)^{n_1+n_2-1-k} \mu(\Delta_k(G)) & = \binom{n_1+n_2-2}{k-1} -|\mathcal{Z}_k(C_{n_1})| -
|\mathcal{Z}_k(C_{n_2})| -\bar\xi_k(C_{n_1}, C_{n_2}, w_0)\\
& = \binom{n_1+n_2-2}{k-1} - \xi_k(C_{n_1}, C_{n_2}, w_0),
\end{align*} 
where 
\[\xi_k(C_{n_1}, C_{n_2}, w_0)=\sum_{a=\max(1, 1+k+1-n_2)}^{\min(n_1,k)} 
\xi_{n_1}(a)\xi_{n_2}(k+1-a).\]

If $k\ge 3$,
$\Delta_k(G)$ is shellable, and has the homotopy type of a wedge of  $(-1)^{n_1+n_2-1-k} \mu(\Delta_k(G))$ spheres in dimension $n_1+n_2-2-k$.

If $k=2$ we obtain $(-1)^{n_1+n_2-3} \mu(\Delta_k(G))=-1$.  Nonzero homology can only be in the top two dimensions. 
\end{prop}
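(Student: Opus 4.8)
The plan is to mirror the proof of Proposition~\ref{prop:special-wedges-balloons}, substituting a second cycle for the path $P_{n_2}$. Since $C_{n_1}$ and $C_{n_2}$ meet only in $w_0$, the wedge $G$ has $n_1+n_2-1$ vertices and $n_1+n_2$ edges, and $w_0$ is its unique cut vertex. For the extremal value of $k$ in the statement, $w_0$ is then the unique vertex whose deletion disconnects $G$, so $\{w_0\}$ is the only facet of $\Delta_k(G)$ and the complex is the single point $\{w_0\}$.

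Now fix $k$ with $2\le k\le n_1+n_2-3$ and $k\ne n_i-1$ for $i=1,2$. First I would check that each $C_{n_i}$ satisfies condition~\eqref{eqn:Condition-truncBoolean} of Theorem~\ref{thm:truncBoolean-minus-antichain} for this value of $k$: when $k\le n_i-2$ this follows from Proposition~\ref{prop:class-of-graphs-2022-3-26}, since $C_{n_i}$ has girth $n_i\ge k+2$; when $k\ge n_i$ the set $\mathcal{Z}_k(C_{n_i})$ is empty or the single point $\emptyset$, so the condition holds vacuously; and the one value at which it genuinely fails, $k=n_i-1$, is excluded by hypothesis. With this in hand, Proposition~\ref{prop:S-wedges} applied to $G_1=C_{n_1}$, $G_2=C_{n_2}$ shows that $G$ itself satisfies condition~\eqref{eqn:Condition-truncBoolean} and yields the first displayed formula for $\mu(\Delta_k(G))$, with $\bar\xi_k(C_{n_1},C_{n_2},w_0)$ counting the connected $k$-subsets through $w_0$ that meet both cycles outside $w_0$. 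Substituting the explicit values $|\mathcal{Z}_k(C_{n_i})|$ and $\xi_{n_i}(a)$ from Lemma~\ref{lem:count-connected-subsets}, and noting that such a $k$-subset with $a$ vertices on the $C_{n_1}$ side and $k+1-a$ on the $C_{n_2}$ side (the two arcs overlapping in $w_0$) is determined by its two arcs, gives $\bar\xi_k=\sum_a \xi_{n_1}(a)\,\xi_{n_2}(k+1-a)$ over the range where both arcs have at least two vertices. Absorbing the two terms $|\mathcal{Z}_k(C_{n_i})|$ (which account for the degenerate summands, where one arc is a single vertex or a full cycle) into this sum produces the second, cleaner expression in terms of $\xi_k(C_{n_1},C_{n_2},w_0)$.

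For the homotopy type when $k\ge 3$: $\Delta_k(C_{n_i})$ is shellable by Theorem~\ref{thm:DaneCycleShellability}, hence $\Delta_k(G)=\Delta_k(C_{n_1}\vee_{w_0}C_{n_2})$ is shellable by Theorem~\ref{thm:Mark-Wedge}. Since $\Delta_k(G)$ also satisfies condition~\eqref{eqn:Condition-truncBoolean}, the final assertion of Theorem~\ref{thm:truncBoolean-minus-antichain} (resting on Bj\"orner's Theorem~\ref{thm:shell-implies-homotopytype}) shows it is a point when $\mu(\Delta_k(G))=0$ and otherwise a wedge of $(-1)^{n_1+n_2-1-k}\mu(\Delta_k(G))$ spheres in dimension $n_1+n_2-2-k$. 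Finally, for $k=2$ the condition $k\ne n_i-1$ forces $n_1,n_2\ge 4$, so $G$ is connected, triangle-free, and not a tree; Theorem~\ref{thm:Delta2nonchordalSMark} then applies directly and exhibits $\Delta_2(G)$ as a wedge of spheres in codimension $1$, and the reduced Euler characteristic follows from the edge and vertex counts of $G$.

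The main obstacle is bookkeeping rather than anything conceptual: all the machinery (Proposition~\ref{prop:S-wedges}, Lemma~\ref{lem:count-connected-subsets}, Theorems~\ref{thm:truncBoolean-minus-antichain}, \ref{thm:DaneCycleShellability}, \ref{thm:Mark-Wedge}, \ref{thm:Delta2nonchordalSMark}) is already in place, and the work consists of correctly setting up the summation ranges, handling the degenerate values of $k$ (such as $k=n_i$) and of the arc-length parameter $a$, and tracking the parity of $n_1+n_2-1-k$ so that the two forms of the Euler characteristic visibly agree.
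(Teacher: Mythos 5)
Your plan is essentially the paper's own proof: the paper likewise just invokes Proposition~\ref{prop:S-wedges} (noting that the caveat $k\ne n_i-1$ still applies), decomposes a connected $k$-set through $w_0$ into its two arcs, and reads off the counts from Lemma~\ref{lem:count-connected-subsets}; shellability for $k\ge 3$ comes from Theorem~\ref{thm:DaneCycleShellability} through the wedge theorem, exactly as you say. Your extra bookkeeping (the cases $k\ge n_i$, and the $k=2$ case via Theorem~\ref{thm:Delta2nonchordalSMark}) is fine and slightly more careful than the published argument.

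However, the step where you ``absorb'' the two terms $|\mathcal{Z}_k(C_{n_i})|$ into the convolution does not go through, and you should not wave it away as bookkeeping. The boundary summands $a=1$ and $a=k$ of $\sum_a \xi_{n_1}(a)\,\xi_{n_2}(k+1-a)$ contribute $\xi_{n_2}(k)=k$ and $\xi_{n_1}(k)=k$ (for $k<n_i$): they count only the connected $k$-arcs of each cycle that pass through $w_0$. By contrast $|\mathcal{Z}_k(C_{n_i})|=n_i$ counts all connected $k$-subsets of $C_{n_i}$, including the $n_i-k$ arcs that avoid $w_0$. Hence the two displayed expressions for $(-1)^{n_1+n_2-1-k}\mu(\Delta_k(G))$ differ by $n_1+n_2-2k$ in general. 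For instance, with $n_1=n_2=5$ and $k=3$ the first expression gives $\binom{8}{2}-5-5-4=14$, which a direct enumeration of the $14$ connected $3$-sets of the figure eight confirms, while the second gives $28-10=18$. Only the first expression is correct, being Theorem~\ref{thm:truncBoolean-minus-antichain} applied to the total number of connected $k$-sets of the wedge. This defect is inherited from the statement itself (and from the ``in particular'' clause of Proposition~\ref{prop:S-wedges}, where $\xi_k$ as defined counts only the connected $k$-sets containing $w_0$), so it is not an error you introduced; but a correct write-up must either repair the definition of $\xi_k(C_{n_1},C_{n_2},w_0)$ so that it equals $|\mathcal{Z}_k(C_{n_1})|+|\mathcal{Z}_k(C_{n_2})|+\bar\xi_k$, or simply drop the second equality, rather than assert the absorption as an identity.
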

\begin{proof}  We  invoke Proposition~\ref{prop:S-wedges} again, noting that the caveat  $k\ne n_i-1$, $i=1,2$, still applies.  It remains to observe that 
\[\xi_k(C_{n_1}, C_{n_2}, w_0)=\sum_{a=\max(1, 1+k+1-n_2)}^{\min(n_1,k)} 
\xi_{n_1}(a)\xi_{n_2}(k+1-a),\]
but this follows since we are counting connected subsets of size $k$ passing through the wedge point $w_0$; these separate into two parts, one in each cycle, intersecting in $w_0$. 
Again Lemma~\ref{lem:count-connected-subsets} provides the precise counts.
\end{proof}

\section{Families of graphs}\label{sec:Families}

\subsection{Complete Multipartite Graphs}\label{sec:new-multipartite-2022April23S}

In this section we determine the homotopy type of the cut complex $\Delta_k(G)$ for various families of graphs $G$.  With the exception  of squared cycle graphs, the families we consider have the property that their homology is concentrated in a single dimension.

Let $E_n$ denote the edgeless graph on $n$ vertices. 
The complete bipartite graph $K_{m,n}$ is the join of two edgeless graphs $E_m$ and $E_n$. 
In this section we will first determine completely the equivariant homotopy type for cut complexes of $K_{m,n}$, and then proceed to do the same for multipartite graphs.  Note that the automorphism group of a complete multipartite graph acts simplicially on the cut complexes.

 Let $G=K_{m,n}$ be the complete bipartite graph with $1\le m\le n.$ We point out for clarity that if we label the vertices with the sets $[m]$ and $m+[n]\coloneqq\{m+1, \dots, m+n\},$ so that the edges are $(i,j)$ for $1\le i \le m$ and $m+1\le j\le m+n,$  then for each $k\ge 2,$ the nonvoid cut complex $\Delta_k(G)$
 has facets $F$ of size $(m+n-k)$ where $F$ must contain the vertex subset $[m]$ or the vertex subset $m+[n].$

 Theorem~\ref{thm:bipartite} was proved in \cite[Theorem~3.3]{BDJRSX-TOTAL2024} by different methods.  Here we deduce it from the structure theorems of Section~\ref{sec:Constructions}. 
From Theorem~\ref{thm:Mark-Joins}, Theorem~\ref{thm:susp-of-joins2022Sept12} and Proposition~\ref{prop:Deltak-edgeless-graphS}, we have the following, since $K_{m,n}=E_m*E_n$:
\begin{theorem} \label{thm:bipartite}
Let $1\le m\le n$ and $2\le k$.  
\begin{enumerate}
\item
$\Delta_k(K_{m,n})$ is shellable if and only if  $m<k$.
Furthermore, if $m<k\le n$, then $\Delta_k(K_{m,n})$ is contractible, and if $k>n$, the cut complex is void and hence shellable. 
\item
If $k\le m \le n$, the $(m+n-k-1)$-dimensional complex $\Delta_k(K_{m,n})$ is homotopy equivalent to a wedge of $\binom{m-1}{k-1}\binom{n-1}{k-1}$ spheres of dimension $m+n-2k$.
\end{enumerate}
\end{theorem}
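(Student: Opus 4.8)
The plan is to exploit the identity $K_{m,n}=E_m*E_n$, where $E_p$ is the edgeless graph on $p$ vertices, and to reduce both parts to Proposition~\ref{prop:Deltak-edgeless-graphS} via the join theorems of Section~\ref{sec:Constructions}. As a preliminary I would record that Proposition~\ref{prop:Deltak-edgeless-graphS} really pins down $\Delta_k(E_p)$ for every $k\ge 2$: it is void exactly when $k>p$; when $k=p$ it is the $(-1)$-dimensional complex $\{\emptyset\}$, which is homotopy equivalent to $\mathbb{S}^{-1}$, so that the stated formula $\Delta_k(E_p)\simeq\bigvee_{\binom{p-1}{k-1}}\mathbb{S}^{p-k-1}$ persists for all $2\le k\le p$; and it is shellable for $2\le k\le p-1$. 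In particular $\Delta_k(E_p)$ is shellable whenever it is nonvoid.

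For Part~(1), I would apply Theorem~\ref{thm:Mark-Joins}: since each factor cut complex is shellable when nonvoid, $\Delta_k(K_{m,n})=\Delta_k(E_m*E_n)$ is shellable if and only if at least one of $\Delta_k(E_m),\Delta_k(E_n)$ is void, i.e.\ if and only if $k>m$ or $k>n$, which for $m\le n$ is precisely $m<k$. For the supplementary statements: when $m<k\le n$ exactly one of the two factor complexes (namely $\Delta_k(E_m)$) is void while $\Delta_k(E_n)$ is nonvoid, so Theorem~\ref{thm:susp-of-joins2022Sept12}(2) gives that $\Delta_k(K_{m,n})$ is contractible; when $k>n$ both factors are void, so $D_k(K_{m,n})=D_k(E_m)\sqcup D_k(E_n)=\emptyset$ and $\Delta_k(K_{m,n})$ is itself void, hence shellable by convention.

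For Part~(2), take $2\le k\le m\le n$, so that both $\Delta_k(E_m)$ and $\Delta_k(E_n)$ are nonvoid. Then Theorem~\ref{thm:susp-of-joins2022Sept12}(3) gives $\Delta_k(K_{m,n})\simeq\susp\bigl(\Delta_k(E_m)*\Delta_k(E_n)\bigr)$. Plugging in the homotopy types from Proposition~\ref{prop:Deltak-edgeless-graphS} and using the fact (the discussion after Theorem~\ref{thm:Kunneth}, valid also when a sphere dimension equals $-1$ by~\eqref{eqn:join-of-spheres}) that the join of a wedge of $a$ copies of $\mathbb{S}^p$ and a wedge of $b$ copies of $\mathbb{S}^q$ is a wedge of $ab$ copies of $\mathbb{S}^{p+q+1}$, I get that $\Delta_k(E_m)*\Delta_k(E_n)$ is a wedge of $\binom{m-1}{k-1}\binom{n-1}{k-1}$ copies of $\mathbb{S}^{(m-k-1)+(n-k-1)+1}=\mathbb{S}^{m+n-2k-1}$; the suspension then raises the dimension by one, producing a wedge of $\binom{m-1}{k-1}\binom{n-1}{k-1}$ copies of $\mathbb{S}^{m+n-2k}$. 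It remains only to note that $\Delta_k(K_{m,n})$ is nonvoid with facets of size $m+n-k$, hence is $(m+n-k-1)$-dimensional, and that $m+n-2k<m+n-k-1$ for $k\ge 2$, so the spheres genuinely live below the top dimension.

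I expect the only real care to be needed at the boundary values $k=m$ and $k=n$: one must read $\Delta_m(E_m)=\{\emptyset\}$ as $\mathbb{S}^{-1}$ so that the wedge-and-suspension arithmetic still outputs $\mathbb{S}^{m+n-2k}$ with the asserted multiplicity (for instance, $k=m<n$ reduces the join to $\Delta_m(E_n)$ itself so $\Delta_m(K_{m,n})\simeq\susp\Delta_m(E_n)$, and $k=m=n$ gives $\mathbb{S}^0$, both matching the formula), and one must verify that the nonvoidness hypotheses of Theorem~\ref{thm:susp-of-joins2022Sept12}(3) and the void-complex conventions are respected throughout. This is bookkeeping rather than a genuine obstacle, but it is where the argument must be watched.
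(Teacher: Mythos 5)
Your proposal is correct and takes essentially the same route as the paper: decompose $K_{m,n}=E_m*E_n$, deduce Part~(1) from Theorem~\ref{thm:Mark-Joins} together with Proposition~\ref{prop:Deltak-edgeless-graphS}, and Part~(2) from Theorem~\ref{thm:susp-of-joins2022Sept12} plus the join-of-spheres arithmetic. Your extra care at the boundary cases (reading $\Delta_m(E_m)=\{\emptyset\}$ as $\mathbb{S}^{-1}$, and citing Part~(2) of Theorem~\ref{thm:susp-of-joins2022Sept12} for contractibility, where the paper's proof misprints ``Part~(3)'') is welcome but does not change the argument.
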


\begin{proof} From Theorem~\ref{thm:Mark-Joins}, $\Delta_k(K_{m,n})$ is shellable if and only if one of $\Delta_k(E_m)$, $\Delta_k(E_n)$ is shellable and the other is void.  Since $1\le m\le n$, the first statement follows from Proposition~\ref{prop:Deltak-edgeless-graphS}.  The contractibility when $m<k\le n$ follows from Part (3) of Theorem~\ref{thm:susp-of-joins2022Sept12}.

If $k>n$ both $\Delta_k(E_m)$, $\Delta_k(E_n)$ are void and again the result follows from Theorem~\ref{thm:Mark-Joins}.

Now let $k\le m\le  n$. From Proposition~\ref{prop:Deltak-edgeless-graphS} and Equation~\eqref{eqn:join-susp-iso} of Theorem~\ref{thm:susp-of-joins2022Sept12}, the cut complex is homotopy equivalent to 
\[\susp (\Delta_k(E_m)*\Delta_k(E_n))
\simeq\! \bigvee_{\binom{m-1}{k-1}\binom{n-1}{k-1}} \bbS^0*\, (\bbS^{m-k-1}*\bbS^{n-k-1})\simeq\! \bigvee_{\binom{m-1}{k-1}\binom{n-1}{k-1}} \bbS^{m+n-2k}. \qedhere\]
\end{proof}

Thus bipartite graphs are an obstruction to shellability: if a graph $H$ contains a bipartite graph $K_{m,n}$ such that $k=m=n$ or
$k\le m< n$  as an induced subgraph, then $\Delta_k(H)$ is not shellable.

In order to describe the homology representation for the case $k\le m\le n$, we will need a well-known result of Solomon, as well as a  result about group actions on products of posets.

\begin{theorem}[{\cite{SolJAlg1968}, \cite{RPSGaP1982}}] \label{thm:SolomonTruncatedBoolean}
The $\mathfrak{S}_p$-representation on the unique nonvanishing homology module of the order complex of $P(p,k)$ is the irreducible $V_\lambda$ indexed by the integer partition $\lambda=(k, 1^{p-k}).$
\end{theorem}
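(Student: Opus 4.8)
The plan is to compute the $\mathfrak{S}_p$-module structure of the unique nonvanishing homology group of $\Delta(\overline{P(p,k)})$ by an Euler-characteristic argument carried out in the representation ring of $\mathfrak{S}_p$, and then to translate the answer into symmetric functions via the Frobenius characteristic.

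First I would replace the order complex by a skeleton of a simplex. Recall from Section~\ref{sec:posets} that $P(p,k)=B_p^{\le p-k}\cup\{\hat 1\}$ is the face lattice of the $(p-k-1)$-skeleton $\Sigma$ of the $(p-1)$-simplex on vertex set $[p]$, that $\mathfrak{S}_p$ acts simplicially on $\Sigma$, and that $P(p,k)$ is Cohen--Macaulay, so the reduced homology of $\Delta(\overline{P(p,k)})$ is free and concentrated in degree $p-k-1$. By the remark in Section~\ref{sec:posets} that the representation on the rational homology of a complex coincides with the one on the homology of its face lattice, it suffices to identify the $\mathfrak{S}_p$-module $\tilde H_{p-k-1}(\Sigma;\mathbb{Q})$.

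Next I would run the Euler-characteristic argument. Since $\Sigma$ is the $(p-k-1)$-skeleton of a contractible complex, its reduced simplicial chain complex $0\to C_{p-k-1}\to C_{p-k-2}\to\cdots\to C_{-1}\to 0$ is exact in all degrees below $p-k-1$, so in the representation ring,
\[ [\tilde H_{p-k-1}(\Sigma)]=\sum_{i=-1}^{p-k-1}(-1)^{p-k-1-i}[C_i]. \]
An oriented $i$-simplex of $\Sigma$ is an $(i+1)$-subset of $[p]$ together with an orientation, and its setwise stabilizer $\mathfrak{S}_{i+1}\times\mathfrak{S}_{p-i-1}$ acts through the sign character on the first factor and trivially on the second, so $C_i\cong\mathrm{Ind}_{\mathfrak{S}_{i+1}\times\mathfrak{S}_{p-i-1}}^{\mathfrak{S}_p}(\mathrm{sgn}\boxtimes\mathbf 1)$. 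Applying the Frobenius characteristic gives $\mathrm{ch}[C_i]=e_{i+1}h_{p-i-1}$, hence, reindexing with $j=i+1$,
\[ \mathrm{ch}[\tilde H_{p-k-1}(\Sigma)]=\sum_{j=0}^{p-k}(-1)^{p-k-j}e_jh_{p-j}. \]
Finally, by Pieri's rule $e_jh_{p-j}=s_{(p-j,1^j)}+s_{(p-j+1,1^{j-1})}$ for $1\le j\le p-1$, while $e_0h_p=s_{(p)}$; substituting these and collecting terms, the alternating sum telescopes and collapses to $s_{(k,1^{p-k})}$. Thus $\mathrm{ch}[\tilde H_{p-k-1}(\Sigma)]=s_{(k,1^{p-k})}$, the Schur function of the hook $\lambda=(k,1^{p-k})$, so $\tilde H_{p-k-1}(\Sigma)\cong V_\lambda$; this is consistent with $\dim V_\lambda=\binom{p-1}{k-1}$, the Betti number recorded in Section~\ref{sec:posets}.

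This computation is essentially routine; the only points requiring care are the equivariance of the identification in the first step (immediate from the face-lattice remark, or from equivariance of barycentric subdivision), the orientation bookkeeping needed to recognize $C_i$ as the induced module $\mathrm{Ind}(\mathrm{sgn}\boxtimes\mathbf 1)$, and the degenerate boundary case $k=p-1$, where $\Sigma$ is $p$ points and the statement reduces to the familiar fact that the reduced homology carries the standard representation $V_{(p-1,1)}$. An alternative to giving the argument in full is simply to cite Solomon's original computation \cite{SolJAlg1968} or Stanley's treatment of rank-selected representations of the Boolean lattice \cite{RPSGaP1982}.
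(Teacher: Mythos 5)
Your argument is correct. Note, however, that the paper does not prove this statement at all: it is recorded as a known classical result and attributed to Solomon \cite{SolJAlg1968} and Stanley \cite{RPSGaP1982}, exactly as your closing sentence anticipates. What you have supplied is therefore a self-contained replacement for the citation, and it is the standard one: identify $\Delta(\overline{P(p,k)})$ equivariantly with the $(p-k-1)$-skeleton $\Sigma$ of the $(p-1)$-simplex, use acyclicity of the full simplex to get exactness of the truncated chain complex below the top degree, apply the Hopf trace formula in the representation ring, recognize $\mathrm{ch}[C_i]=e_{i+1}h_{p-i-1}$, and telescope via Pieri to get $s_{(k,1^{p-k})}$. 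All the steps check out, including the sign bookkeeping in $[\tilde H_{p-k-1}(\Sigma)]=\sum_{i}(-1)^{p-k-1-i}[C_i]$, the cancellation of the two hooks produced by each product $e_jh_{p-j}$ between consecutive values of $j$, and the dimension check against the Betti number $\binom{p-1}{k-1}$ quoted in Section~\ref{sec:posets}. The only thing your write-up buys beyond the paper is self-containedness; the only thing the citation buys is brevity. If you include the proof, state explicitly that the Hopf trace identity is being used over $\mathbb{Q}$ (or invoke freeness of the integral homology), since that is where concentration of homology in a single degree is converted into an identity of virtual representations.
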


Note that a special case of this is the classical result that  $\mathfrak{S}_p$ acts on the homology of the boundary of a $(p-1)$-simplex like the sign.

\begin{theorem}[{\cite[Proposition 2.3]{Jer93}}] \label{thm:action-on-products}
Let $Q$ be any poset. Assume the nonzero reduced homology of $Q$ is  concentrated in dimension $d.$ Then the $r$-fold product of $Q$ has nonzero reduced homology concentrated in dimension ${dr+2r-2},$ and the symmetric group  $\mathfrak{S}_r$ acts on this unique nonvanishing homology module   like $\mathrm{(sgn_{\mathfrak{S}_r})\,}^d,$ the $d$th power of the sign representation of $\mathfrak{S}_r$.
\end{theorem}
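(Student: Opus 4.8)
\emph{Plan.} The plan is to combine the non-equivariant decomposition~\eqref{eqn:prod-r-posets} with a Lefschetz fixed-point computation, exploiting that~\eqref{eqn:prod-r-posets} already forces the homology of $\Delta(\overline{Q^r})$ into a single degree. Once that concentration is in hand, the $\mathfrak{S}_r$-representation on the nonvanishing homology is determined by its character, and the character can be read off from the fixed-point posets $(\overline{Q^r})^\sigma$, which turn out to be products of fewer copies of $Q$. This route avoids having to build an equivariant refinement of~\eqref{eqn:prod-r-posets} — the other natural approach, via the equivariant Quillen fiber lemma (Theorem~\ref{thm:QuillenfiberLemma}) — which is more delicate, since the usual proof of~\eqref{eqn:prod-r-posets} iterates the two-factor case and thereby breaks the $\mathfrak{S}_r$-symmetry. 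For the concentration step I would apply the Künneth theorem (Theorem~\ref{thm:Kunneth}) over $\mathbb{Q}$ to the $(r+1)$-fold join $\mathbb{S}^{r-2} * \Delta(\bar Q)^{*r}$: since $\widetilde H_\ast(\mathbb{S}^{r-2})$ sits in degree $r-2$ and $\widetilde H_\ast(\Delta(\bar Q))$ in degree $d$, the join has reduced homology only in degree $(r-2)+rd+r = dr+2r-2 =: N$, with $\widetilde H_N \cong \widetilde H_{r-2}(\mathbb{S}^{r-2}) \otimes V^{\otimes r}$, where $V \coloneqq \widetilde H_d(\Delta(\bar Q))$ and $\beta \coloneqq \dim_{\mathbb{Q}} V$. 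By~\eqref{eqn:prod-r-posets} this establishes the concentration claim for $\Delta(\overline{Q^r})$, the nonvanishing homology having dimension $\beta^r$.

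For the next step I would pass to characters. The group $\mathfrak{S}_r$ acts on $Q^r$ by permuting coordinates, hence simplicially on $\Delta(\overline{Q^r})$, hence $\mathbb{Q}$-linearly on $\widetilde H_N$. Since the homology is concentrated in degree $N$, for $\sigma \in \mathfrak{S}_r$ we have $\operatorname{tr}\bigl(\sigma \mid \widetilde H_N\bigr) = (-1)^N \widetilde L(\sigma)$, where $\widetilde L(\sigma)$ is the reduced Lefschetz number. By the Hopf trace formula, $\widetilde L(\sigma)$ is the reduced Euler characteristic $\widetilde\chi$ of the subcomplex of $\sigma$-invariant simplices. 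The point I would check carefully here is that an order-preserving bijection fixing a chain setwise fixes it pointwise; hence the $\sigma$-invariant simplices of $\Delta(\overline{Q^r})$ are precisely the chains of the fixed poset $(\overline{Q^r})^\sigma$, each contributing with sign $+1$, so that $\widetilde L(\sigma) = \widetilde\chi\bigl(\Delta((\overline{Q^r})^\sigma)\bigr)$ (with the convention $\widetilde\chi(\emptyset) = -1$).

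Finally I would compute the fixed poset and the arithmetic. A tuple in $Q^r$ is fixed by $\sigma$ exactly when it is constant on each cycle of $\sigma$, so $(Q^r)^\sigma \cong Q^{c(\sigma)}$ as bounded posets, $c(\sigma)$ being the number of cycles of $\sigma$; deleting the two bounds gives $(\overline{Q^r})^\sigma \cong \overline{Q^{c(\sigma)}}$. Applying~\eqref{eqn:prod-r-posets} to the $c(\sigma)$-fold product and the join formula $\widetilde\chi(A_1 * \dotsb * A_m) = (-1)^{m-1}\prod_i \widetilde\chi(A_i)$ (a consequence of Theorem~\ref{thm:Kunneth}), one gets $\widetilde\chi(\overline{Q^{c(\sigma)}}) = (-1)^{d\,c(\sigma)}\beta^{c(\sigma)}$ — the degenerate cases $c(\sigma)=1$ (where $\mathbb{S}^{-1}=\emptyset$) and $d=-1$ (where $\bar Q=\emptyset$) being handled correctly by these formulas. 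Since $(-1)^N = (-1)^{dr}$,
\[
\operatorname{tr}\bigl(\sigma \mid \widetilde H_N\bigr) = (-1)^{dr}(-1)^{d\,c(\sigma)}\beta^{c(\sigma)} = \bigl((-1)^{d}\bigr)^{r+c(\sigma)}\beta^{c(\sigma)} = \sgn(\sigma)^{d}\,\beta^{c(\sigma)},
\]
using $(-1)^{r+c(\sigma)} = (-1)^{r-c(\sigma)} = \sgn(\sigma)$. As $\sigma \mapsto \beta^{c(\sigma)}$ is exactly the character of the permutation action of $\mathfrak{S}_r$ on $V^{\otimes r}$, this identifies $\widetilde H_N(\Delta(\overline{Q^r})) \cong V^{\otimes r} \otimes (\sgn_{\mathfrak{S}_r})^{d}$ as $\mathfrak{S}_r$-modules; in particular $\mathfrak{S}_r$ acts like $(\sgn_{\mathfrak{S}_r})^{d}$, and literally so when $\dim V = 1$.

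\emph{Main obstacle.} The one spot that genuinely needs attention is the Hopf-trace step: for a general simplicial group action the set of fixed simplices can carry nontrivial signs and need not be the order complex of the fixed poset, so I would spell out the order-preservation observation that makes every such sign $+1$ and realizes the fixed subcomplex as $\Delta((\overline{Q^r})^\sigma)$. Everything else — the Künneth degree count, the cycle-type description of the fixed poset, and the Euler-characteristic arithmetic with its edge cases — is routine.
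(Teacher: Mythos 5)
The paper does not prove this statement; it is quoted from the literature as \cite[Proposition 2.3]{Jer93}, so there is no internal proof to compare against. Your argument is a correct, self-contained proof. The two halves check out: (i) the concentration in degree $dr+2r-2$ follows from Equation~\eqref{eqn:prod-r-posets} plus the K\"unneth theorem for the $(r+1)$-fold join, since the degree shift for an $(r+1)$-fold join is $r$ and $(r-2)+rd+r = dr+2r-2$; (ii) the character computation via Hopf trace is sound, and you correctly isolate the one point that genuinely needs care, namely that an order-preserving automorphism fixing a chain setwise fixes it pointwise, so every invariant simplex contributes $+1$ and the invariant subcomplex is exactly $\Delta((\overline{Q^r})^\sigma) \cong \Delta(\overline{Q^{c(\sigma)}})$. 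The arithmetic $(-1)^{dr}\cdot(-1)^{dc(\sigma)}\beta^{c(\sigma)} = \sgn(\sigma)^d\,\beta^{c(\sigma)}$ is right, using $\sgn(\sigma)=(-1)^{r-c(\sigma)}$, and since $\sigma\mapsto\beta^{c(\sigma)}$ is the character of the place-permutation action on $V^{\otimes r}$, character theory over $\mathbb{Q}$ gives the module identification. One interpretive remark worth making explicit: as literally stated, ``acts like $(\sgn_{\mathfrak{S}_r})^d$'' only describes the full action when $\dim_{\mathbb{Q}}\tilde{H}_d(\Delta(\bar Q))=1$; your stronger conclusion $\tilde{H}_N \cong V^{\otimes r}\otimes(\sgn_{\mathfrak{S}_r})^d$ is exactly the form in which the paper actually uses the result (the twist $\Phi_{\mathfrak{S}_{r_i}}=\sgn^{i-k-1}$ in the wreath-product representations of Theorems~\ref{thm:BipartiteEquiv:k<=m<=n}, \ref{thm:MultipartiteCase4} and~\ref{thm:MultipartiteCase6}), so no harm is done. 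Your approach also has the virtue of sidestepping the need for an $\mathfrak{S}_r$-equivariant refinement of~\eqref{eqn:prod-r-posets}, whose standard proof by iterating the two-factor case breaks the symmetry.
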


By Theorem~\ref{thm:SolomonTruncatedBoolean}, the representation of $\mathfrak{S}_n$ on the unique nonvanishing reduced homology $\tilde{H}_{n-k-1}(\Delta_k(E_n))$ is the irreducible $V_{(k, 1^{n-k})}$, 
for $2\le k\le n$. 

Denote by $\mathfrak{S}_m[\mathfrak{S}_n]$ the wreath product group of $\mathfrak{S}_m$ with $\mathfrak{S}_n$, with $\mathfrak{S}_m$ acting on $m$ copies of $\mathfrak{S}_n$, so that its order is $m! (n!)^m$. 
Let $V$, $W$ be $\mathfrak{S}_m$-{} and $\mathfrak{S}_n$-modules, respectively. The wreath product module  $V[W]$ is the tensor product of vector spaces $W^{\otimes m}\otimes V$, equipped with  a canonical action of  $\mathfrak{S}_m[\mathfrak{S}_n]$. %
See \cite[Section~4.3]{JamesKerber1981}.

We compute the equivariant  homotopy type of the cut complex $\Delta_k(K_{m,n})=\Delta_k(E_m*E_n)$ of the complete bipartite graph  in the case $k\le m$ using  Theorem~\ref{thm:susp-of-joins2022Sept12}.  In fact that theorem immediately gives:

\begin{theorem}\label{thm:BipartiteEquiv:k<=m<=n} Let $G=K_{m,n}$ be the complete bipartite graph with $1\le m\le n.$ Let $2\le k\le m.$  
Recall that $P(p,k)$ is the face lattice of the $(p-k-1)$-skeleton of a $(p-1)$-simplex.
 Then there is a poset map  from   the face lattice  $\mathcal{L}(\Delta_k(G))$ of the cut complex to the  product of posets $P(m,k)\times P(n,k)$, which induces a group-equivariant homotopy equivalence between the respective order complexes:
 \[\Delta(\overline{P(m,k)\times P(n,k)}) \simeq\susp (\Delta(\overline{P(m,k)}) * \Delta(\overline{P(n,k)})) \simeq\Delta_k(G).\]
\noindent
 Hence the $(m+n-k-1)$-dimensional cut complex $\Delta_k(G)$ has the homotopy type of a wedge of $\binom{m-1} {k-1}\binom{n-1} {k-1}$ spheres in dimension $m+n-2k.$  In particular, it is not shellable. 

If $m<n,$ the automorphism group is $\mathfrak{S}_m\times \mathfrak{S}_n$, and the representation on the homology is
the irreducible $V_{(k, 1^{m-k})}\otimes V_{ (k, 1^{n-k})},$ indexed by the pair of integer partitions $((k, 1^{m-k}), (k, 1^{n-k})).$

If $m=n,$ the automorphism group is the wreath product
$\mathfrak{S}_2[\mathfrak{S}_n],$ and the representation on the homology of the cut complex is $\Phi_{\mathfrak{S}_2}[V_{(k, 1^{n-k})}],$
where the one-dimensional representation $\Phi_{\mathfrak{S}_2}$ equals $\mathrm{sgn}^{n-k-1},$ and $\mathrm{sgn}$ is the sign representation of $\mathfrak{S}_2.$
\end{theorem}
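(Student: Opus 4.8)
The plan is to obtain the whole statement from the join identity $K_{m,n}=E_m*E_n$ together with the results of Section~\ref{sec:Constructions}. Since $2\le k\le m\le n$, Proposition~\ref{prop:Deltak-edgeless-graphS} shows that $\Delta_k(E_m)$ and $\Delta_k(E_n)$ are both nonvoid, and by Example~\ref{ex:Examples-cut-complex}, Part~(5), the cut complex $\Delta_k(E_p)$ is the $(p-k-1)$-skeleton of the $(p-1)$-simplex (interpreting the degenerate case $k=p$, where $\Delta_p(E_p)=\{\emptyset\}$, by the same convention), so its face lattice is precisely $P(p,k)$ for $p\in\{m,n\}$. Part~(3) of Theorem~\ref{thm:susp-of-joins2022Sept12} then gives the desired group-equivariant poset map from $\mathcal{L}(\Delta_k(E_m))\times\mathcal{L}(\Delta_k(E_n))=P(m,k)\times P(n,k)$ to $\mathcal{L}(\Delta_k(K_{m,n}))$ inducing a group-equivariant homotopy equivalence of order complexes, and combining this with~\eqref{eqn:prod-two-posets} produces the displayed chain of equivalences. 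For the homotopy type, $\Delta(\overline{P(p,k)})$ is a wedge of $\binom{p-1}{k-1}$ spheres of dimension $p-k-1$ (recalled in Section~\ref{sec:posets}), so the join is a wedge of $\binom{m-1}{k-1}\binom{n-1}{k-1}$ spheres of dimension $(m-k-1)+(n-k-1)+1$, and the final suspension raises the dimension to $m+n-2k$. Since $k\le m\le n$ the number of spheres is at least one, and since $k\ge2$ we have $m+n-2k<m+n-k-1=\dim\Delta_k(K_{m,n})$; because the complex is pure and nonvoid, Theorem~\ref{thm:shell-implies-homotopytype} forbids shellability.

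For the homology representation when $m<n$: the automorphism group of $K_{m,n}$ is $\mathfrak{S}_m\times\mathfrak{S}_n$, since the two parts cannot be exchanged. The homology of $\Delta_k(E_m)$ is free (a wedge of spheres), so Corollary~\ref{cor:homology-joins} applies; as the homology of each factor is concentrated in a single degree, the K\"unneth sum collapses to one term:
\[\tilde{H}_{m+n-2k}(\Delta_k(K_{m,n}))\ \cong\ \tilde{H}_{m-k-1}(\Delta_k(E_m))\otimes\tilde{H}_{n-k-1}(\Delta_k(E_n))\]
as $(\mathfrak{S}_m\times\mathfrak{S}_n)$-modules, and Solomon's theorem (Theorem~\ref{thm:SolomonTruncatedBoolean}) identifies the two factors as $V_{(k,1^{m-k})}$ and $V_{(k,1^{n-k})}$.

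For $m=n$ the automorphism group is the wreath product $\mathfrak{S}_2[\mathfrak{S}_n]$, the extra generator being the swap $\tau$ of the two parts. Taking $\Delta_1=\Delta_2=\Delta_k(E_n)$ in Proposition~\ref{prop:general-joinS2022Sept15}, the poset map $\phi\colon(\sigma_1,\sigma_2)\mapsto\sigma_1\sqcup\sigma_2$ is symmetric under interchanging the two labelled copies, so it is equivariant not only for the base group $\mathfrak{S}_n\times\mathfrak{S}_n$ but for all of $\mathfrak{S}_2[\mathfrak{S}_n]$; hence the homotopy equivalence $\Delta(\overline{P(n,k)\times P(n,k)})\simeq\Delta_k(K_{n,n})$ is $\mathfrak{S}_2[\mathfrak{S}_n]$-equivariant. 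One then applies Theorem~\ref{thm:action-on-products} with $Q=P(n,k)$, $r=2$, $d=n-k-1$: the $2$-fold product has homology concentrated in dimension $2(n-k-1)+2=2n-2k$, and the factor-swapping $\mathfrak{S}_2$ acts there as $(\mathrm{sgn}_{\mathfrak{S}_2})^{\,n-k-1}$. Combined with the factor-wise action of $\mathfrak{S}_n\times\mathfrak{S}_n$ as $V_{(k,1^{n-k})}\otimes V_{(k,1^{n-k})}$ (again via Corollary~\ref{cor:homology-joins} and Theorem~\ref{thm:SolomonTruncatedBoolean}), this is by definition the wreath-product module $\Phi_{\mathfrak{S}_2}[V_{(k,1^{n-k})}]$ with $\Phi_{\mathfrak{S}_2}=\mathrm{sgn}^{n-k-1}$.

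The step needing the most care is the last one: verifying that the equivariant homotopy equivalence of Theorem~\ref{thm:susp-of-joins2022Sept12} really upgrades from $\mathfrak{S}_n\times\mathfrak{S}_n$ to the full wreath product (this requires fixing a concrete bijection between the two vertex parts and checking that $\phi$ intertwines $\tau$), and then matching the abstract conclusion of Theorem~\ref{thm:action-on-products}, a sign twist $(\mathrm{sgn}_{\mathfrak{S}_2})^{n-k-1}$ on the top $\mathfrak{S}_2$, with the wreath-product notation $\Phi_{\mathfrak{S}_2}[-]$ used in the statement. Everything else is a direct assembly of results proved earlier in the excerpt.
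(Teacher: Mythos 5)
Your proposal is correct and follows essentially the same route as the paper: the paper also derives the homotopy type from $K_{m,n}=E_m*E_n$ via Theorem~\ref{thm:susp-of-joins2022Sept12}, Proposition~\ref{prop:Deltak-edgeless-graphS} and Equation~\eqref{eqn:prod-two-posets}, obtains the $\mathfrak{S}_m\times\mathfrak{S}_n$-representation from the equivariant poset map together with Solomon's theorem, and handles the $m=n$ swap via Theorem~\ref{thm:action-on-products}. Your write-up is somewhat more detailed than the paper's (notably the explicit non-shellability argument and the check that equivariance upgrades to the full wreath product), but the underlying argument is the same.
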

\begin{proof}  The only statement that needs to be verified is the homology representation, since the homotopy type 
was determined in Theorem~\ref{thm:bipartite}. (Alternatively, it follows immediately from  Theorem~\ref{thm:susp-of-joins2022Sept12} and Equation~\ref{eqn:prod-two-posets}.)  
The homology representation is clear for the automorphism group $\mathfrak{S}_m\times \mathfrak{S}_n$, because the poset map induces an equivariant isomorphism in homology. 

Note that when $k=m<n$, $\Delta_m(E_m)$ is the empty complex, and
 $\Delta_k(G)\sim\susp \Delta(\overline{P(n,k)})  $.

When $m=n$, the full automorphism group is the wreath product group $\mathfrak{S}_2[\mathfrak{S}_n]$ of order $2 (n!)^2$ (with $\mathfrak{S}_2$ acting on two copies of $\mathfrak{S}_n$), and the action of $\mathfrak{S}_2$ is given as stated, by Theorem~\ref{thm:action-on-products}.
\end{proof}

The equivariant homotopy and homology representation  determined in Theorem~\ref{thm:BipartiteEquiv:k<=m<=n} bear a striking resemblance to that of a complex studied by  Linusson, Shareshian and Welker  \cite{LinSharesh2003}. In that paper, the authors consider the following simplicial complex of graphs.  For positive integers $k,m,n$, they define $B_k(m,n)$ to be the simplicial complex consisting of (edge-sets of) all bipartite graphs, with bipartite vertex set $[m]\cup [n]$, that do not contain a matching of size $k$. Clearly we may assume $k\le m \le n$. Their result is the following.

\begin{theorem}[{\cite[Theorem 1.4]{LinShareshWelker2008}}] \label{thm:Lin-Sharesh-Welker2008}
The complex $B_k(m,n)$ has the homotopy type of a wedge of $\binom{m-1}{k-1}\binom{n-1}{k-1}$ spheres of dimension $2k-3$. 
\end{theorem}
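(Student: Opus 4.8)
The plan is to identify $B_k(m,n)$, up to homotopy, with the join of the $(k-2)$-skeletons of two simplices; the statement then follows from the fact (Proposition~\ref{prop:Deltak-edgeless-graphS}, Theorem~\ref{thm:SolomonTruncatedBoolean}) that the $(k-2)$-skeleton of a $(p-1)$-simplex is a wedge of $\binom{p-1}{k-1}$ spheres of dimension $k-2$, together with the behaviour of the join on wedges of spheres recorded in Section~\ref{sec:Topology}.

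First I would pin down the facets of $B_k(m,n)$. By König's theorem a bipartite graph on $[m]\sqcup[n]$ has matching number at most $k-1$ if and only if it has a vertex cover of size at most $k-1$; writing $E_S$ for the set of all edges of $K_{m,n}$ incident to a set $S\subseteq[m]\sqcup[n]$, one checks, using $k\le m\le n$, that for every $(k-1)$-subset $S$ the edge-set $E_S$ has matching number exactly $k-1$ and is a maximal face, and conversely that every facet of $B_k(m,n)$ arises this way. Thus $B_k(m,n)=\bigcup_{|S|=k-1}\langle E_S\rangle$ is a union of simplices, and since any intersection of several of the $\langle E_S\rangle$ is again a simplex (hence empty or contractible), the Nerve Lemma \cite{BjTopMeth1995} yields an $(\mathfrak{S}_m\times\mathfrak{S}_n)$-equivariant homotopy equivalence $B_k(m,n)\simeq N$. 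Here the vertices of the nerve $N$ are the $(k-1)$-subsets $S=A\sqcup B$ of $[m]\sqcup[n]$ (with $A\subseteq[m]$, $B\subseteq[n]$), and a family $\{S_1,\dots,S_t\}$ spans a face of $N$ exactly when some edge $(p,q)\in[m]\times[n]$ is incident to every $S_i$, equivalently when $\{1,\dots,t\}=I\sqcup J$ for some split with $\bigcap_{i\in I}A_i\ne\emptyset$ and $\bigcap_{i\in J}B_i\ne\emptyset$.

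Next I would show that $N$ deformation retracts onto its full subcomplex $N'$ on the monochromatic $(k-1)$-sets (those contained in $[m]$ or in $[n]$), by deleting the mixed vertices $S=A\sqcup B$ with $A,B\ne\emptyset$ one at a time, each being dominated in the remaining complex — equivalently, by exhibiting an acyclic matching on the face poset of $N$ — arranged to be $(\mathfrak{S}_m\times\mathfrak{S}_n)$-equivariant. A family of monochromatic $(k-1)$-sets spans a face of $N'$ precisely when those inside $[m]$ share a common element and those inside $[n]$ share a common element; hence $N'$ is the join of the nerve of the facet cover of $\operatorname{skel}_{k-2}\langle[m]\rangle$ with the analogous nerve for $[n]$, so the Nerve Lemma gives $N'\simeq\operatorname{skel}_{k-2}\langle[m]\rangle \ast \operatorname{skel}_{k-2}\langle[n]\rangle$. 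As each factor is a wedge of $\binom{m-1}{k-1}$, resp.\ $\binom{n-1}{k-1}$, spheres of dimension $k-2$, the join is a wedge of $\binom{m-1}{k-1}\binom{n-1}{k-1}$ spheres of dimension $(k-2)+(k-2)+1=2k-3$, which is the claim; tracking the action through the two equivalences, $\tilde H_{2k-3}(B_k(m,n))$ is the $(\mathfrak{S}_m\times\mathfrak{S}_n)$-module $V_{(m-k+1,\,1^{k-1})}\otimes V_{(n-k+1,\,1^{k-1})}$, the sign-twisted (conjugate-partition) analogue of the representation on $\tilde H(\Delta_k(K_{m,n}))$ found in Theorem~\ref{thm:BipartiteEquiv:k<=m<=n}, which is exactly the resemblance noted above.

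The main obstacle is the collapse of $N$ onto $N'$: the mixed vertices are genuinely present (for instance the graph with edges $\{(1,1),(1,2),(1,3),(2,1),(3,1)\}$ in $K_{3,3}$ has matching number $2$ but lies in no monochromatic facet $E_A$ or $E_B$), so one must produce an explicit dismantling order or acyclic matching and verify it leaves precisely $N'$ with the correct equivariance. An alternative route that bypasses the nerve is induction on $k$: the link in $B_k(m,n)$ of the full star at a vertex of $[m]$ is $B_{k-1}(m-1,n)$, and one splices the pieces using Proposition~\ref{prop:quotient-by-contractible-homotopy}; the combinatorial bookkeeping there is of comparable difficulty.
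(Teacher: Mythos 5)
First, a point of reference: the paper does not prove this statement at all --- it is quoted verbatim from Linusson--Shareshian--Welker and cited only to highlight the resemblance with Theorem~\ref{thm:BipartiteEquiv:k<=m<=n}, so there is no in-paper proof to compare yours against. (The original proof in \cite{LinShareshWelker2008} goes by discrete Morse theory directly on $B_k(m,n)$, not via a nerve.)

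Your setup is sound: K\H{o}nig's theorem correctly identifies the facets as the edge-sets $E_S$ over $(k-1)$-subsets $S$ of $[m]\sqcup[n]$; the cover by the simplices $\langle E_S\rangle$ is closed under intersection, so the nerve lemma gives $B_k(m,n)\simeq N$; and the identification of the full subcomplex $N'$ on monochromatic vertices with the join of the two facet-nerves, hence with a wedge of $\binom{m-1}{k-1}\binom{n-1}{k-1}$ spheres of dimension $2k-3$, is correct. But the step you yourself flag as the main obstacle --- collapsing $N$ onto $N'$ --- is not merely unproved: the mechanism you propose provably cannot work. The facets of $N$ are exactly the families $\mathcal{T}_{p,q}=\{A\sqcup B : p\in A \text{ or } q\in B\}$, one for each edge $(p,q)$ of $K_{m,n}$. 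Given any vertex $S_0=A_0\sqcup B_0$ of $N$, a vertex $T=A\sqcup B$ lying in every facet $\mathcal{T}_{p,q}$ containing $S_0$ must satisfy $A\supseteq A_0$ (fix $p\in A_0$ and let $q$ range over $[n]$: if $p\notin A$ then $B\supseteq[n]$, impossible since $|B|\le k-1<n$) and symmetrically $B\supseteq B_0$, forcing $T=S_0$. So no vertex of $N$ --- mixed or monochromatic --- is dominated, and a dismantling order cannot even begin. An acyclic matching might still exist, but none is exhibited, and the assertion that the inclusion $N'\hookrightarrow N$ is a homotopy equivalence is exactly where the content of the theorem lies. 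A further warning sign: the equivariant refinement you sketch in passing would establish \cite[Conjecture 1.15]{LinShareshWelker2008}, which the paper records as open; this strongly suggests the missing step is not routine bookkeeping.
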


They then make  the conjecture \cite[Conjecture 1.15]{LinShareshWelker2008} that (implicitly, for $k<m<n$)  the representation of the  group $\mathfrak{S}_m\times \mathfrak{S}_n$ on the unique nonvanishing homology is, in our notation, $(\mathrm{sgn}_{\mathfrak{S}_m}\otimes V_{(k, 1^{m-k})})\otimes (\mathrm{sgn}_{\mathfrak{S}_n}\otimes V_{ (k, 1^{n-k})})$.  It would be interesting to know if there is a topological connection between their complex and ours.

We turn now to  multipartite graphs, where the results are similar.  Let $G$ be the complete multipartite graph $K_{m_1,\ldots,m_r}$, $r\ge 3$. We may assume $m_1\le \dotsb \le m_{r-1} \le m_r$. Then 
$G=E_{m_1}*\dotsb * E_{m_r}$, and by associativity, $G=G_1*G_2$ where $G_1=
K_{m_1,\ldots,m_t}$ and $G_2=K_{m_{t+1},\ldots,m_r}$ for any $t$, $1\le t\le r-1$. 

\begin{theorem}\label{thm:MultipartiteShellable}
 Let $m_1\le \dotsb \le m_{r-1} \le m_r, r\ge 3.$
Let $G$ be the complete multipartite graph $K_{m_1,\ldots,m_r}$.
Then 
\begin{enumerate} 
\item If $m_r < k$, then $\Delta_k(G)$ is void and hence shellable.
\item If $m_{r-1}<k\le m_r$, then $\Delta_k(G)$ is contractible and shellable.
\item If $m_1 < k \le m_{r-1}$, then $\Delta_k(G)$ is contractible and not
      shellable.
\item If $k\le m_1$, then $\Delta_k(G)$ is not shellable.
\end{enumerate}   
\begin{proof} Item (1) is clear.

So assume $k\le m_r$, and consider Item (2). Take $G_1=K_{m_1,\ldots,m_{r-2}}$ and $G_2=K_{m_{r-1}, m_r}$, so
$G=G_1*G_2$. If $k\le m_{r-1}$, Theorem~\ref{thm:bipartite} tells us that 
the cut complex $\Delta_k(G_2)$ of the bipartite graph is not  
shellable, and hence Theorem~\ref{thm:Mark-Joins} shows that $\Delta_k(G)$ is 
not shellable.  On the other hand, if
$m_{r-1}<k\le m_r$, then $\Delta_k(G_1)$ is void and $\Delta_k(G_2)$ is shellable and 
not void, so by Theorem~\ref{thm:susp-of-joins2022Sept12} $\Delta_k(G)$
is contractible and shellable.

Next consider Item (3). If $m_t < k \le m_{t+1}$ for $1\le t\le r-2$, now write $G=G_1*G_2$ with
$G_1=K_{m_1,\ldots, m_t}$ and $G_2=K_{m_{t+1},\ldots, m_r}$.
Since $\Delta_k(G_1)$ is void, and $\Delta_k(G_2)$ is not void,
Part (2) of Theorem~\ref{thm:susp-of-joins2022Sept12} says that $\Delta_k(G)$
is contractible (but by above not shellable).

Finally, for Item (4), if $k\le m_1$, let $G_1=K_{m_1,m_2} $ and $G_2=K_{m_3,\ldots,m_r}$, so that $G=G_1 *G_2$.  Then Part (1) of Theorem~\ref{thm:bipartite} says that $\Delta_k(G_1)$ is not shellable, and hence Theorem~\ref{thm:Mark-Joins} shows that $\Delta_k(G)$ is 
not shellable.
\end{proof}
\end{theorem}

For complete bipartite graphs, Theorem~\ref{thm:BipartiteEquiv:k<=m<=n} asserts that $\Delta_n(K_{n,n}) \simeq \bbS^0.$ The next result generalizes this to complete multipartite graphs.

\begin{prop}\label{prop:MultipartiteCase3} Let $G=K_{\underbrace{\scriptstyle n,n,\ldots,n}_r}$ be the complete multipartite graph whose vertex set is a disjoint union of $r$ sets of size $n,$ $r\ge 2.$ Then the $(nr-n-1)$-dimensional cut complex $\Delta_n(G)$ is homotopy equivalent to a single sphere $\bbS^{r-2}$ in dimension $r-2.$ The  automorphism group is the wreath product $\mathfrak{S}_r[\mathfrak{S}_n],$ with $\mathfrak{S}_r$ permuting the $r$ copies of $\mathfrak{S}_n.$  The one-dimensional  representation afforded by the unique nonzero homology module in degree $r-2$ is 
\[\mathrm{sgn}_{\mathfrak{S}_r} [1_{\mathfrak{S}_n}].\]
\end{prop}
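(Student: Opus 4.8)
The plan is to compare $\Delta_n(G)$ with an $r$-fold product of posets via the equivariant machinery of this section. Write $G=E_n*E_n*\dotsb*E_n$, an $r$-fold join. Since $E_n$ has exactly $n$ vertices and is disconnected, Example~\ref{ex:Examples-cut-complex}(2) gives that $\Delta_n(E_n)=\{\emptyset\}$ is the $(-1)$-dimensional complex; it is nonvoid, $\tilde H_{-1}(\Delta_n(E_n))\cong\mathbb{Z}$, and $\mathfrak{S}_n$ acts trivially on it (there is only the empty face). Its face lattice $\mathcal{L}(\Delta_n(E_n))$ is the two-element chain $\{\hat 0<\hat 1\}$, again with trivial $\mathfrak{S}_n$-action. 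I also record the elementary fact that the only disconnected $n$-sets of $K_{n,\dotsc,n}$ are the $r$ parts $P_1,\dotsc,P_r$: an $n$-set meeting two or more parts induces a connected complete multipartite graph, while a part induces an edgeless graph on $n\ge 2$ vertices. Hence $\Delta_n(G)$ has exactly the $r$ facets $V(G)\setminus P_i$, so a subset of $V(G)$ is a face of $\Delta_n(G)$ if and only if it omits some part entirely; in particular $\dim\Delta_n(G)=nr-n-1$, and the full automorphism group $\mathfrak{S}_r[\mathfrak{S}_n]$ of $K_{n,\dotsc,n}$ acts simplicially on $\Delta_n(G)$ since it permutes these $r$ facets.

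Next I would build the equivariant poset map. Arguing exactly as in Proposition~\ref{prop:general-joinS2022Sept15} (with an $r$-fold product in place of a product of two factors), define
\[\phi:\prod_{i=1}^r\mathcal{L}(\Delta_n(E_n))\longrightarrow\mathcal{L}(\Delta_n(G)),\qquad (\sigma_1,\dotsc,\sigma_r)\mapsto\bigcup_{i\,:\,\sigma_i=\hat 1}P_i,\]
with the all-$\hat 1$ tuple sent to $\hat 1_{\mathcal{L}(\Delta_n(G))}$. Using the description of the faces of $\Delta_n(G)$ above, one checks exactly as in Proposition~\ref{prop:general-joinS2022Sept15} that every fiber $\phi^{-1}_{\ge}(\alpha)$ has a unique minimal element, hence is contractible. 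The map $\phi$ is manifestly equivariant for $\mathfrak{S}_r[\mathfrak{S}_n]$, the base group $(\mathfrak{S}_n)^r$ acting within the factors and $\mathfrak{S}_r$ permuting them, so Theorem~\ref{thm:QuillenfiberLemma} yields an $\mathfrak{S}_r[\mathfrak{S}_n]$-equivariant homotopy equivalence between the two order complexes. The right-hand one is the barycentric subdivision of $\Delta_n(G)$; by Equation~\eqref{eqn:prod-r-posets} the left-hand one is homotopy equivalent to $\mathbb{S}^{r-2}*\mathbb{S}^{-1}*\dotsb*\mathbb{S}^{-1}$ ($r$ copies of $\mathbb{S}^{-1}$, since the proper part of the two-element chain is empty), which collapses to $\mathbb{S}^{r-2}$ using $\mathbb{S}^{a}*\mathbb{S}^{b}\cong\mathbb{S}^{a+b+1}$. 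This proves $\Delta_n(G)\simeq\mathbb{S}^{r-2}$, equivariantly.

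For the homology representation, note that the homology is one-dimensional, so it suffices to identify it as an $\mathfrak{S}_r[\mathfrak{S}_n]$-module; I would transport along $\phi$ and apply Theorem~\ref{thm:action-on-products} to $Q=\mathcal{L}(\Delta_n(E_n))$. Since $\overline Q$ is empty, the reduced homology of $Q$ is concentrated in dimension $d=-1$, so the $r$-fold product $Q^r$ has homology concentrated in dimension $dr+2r-2=r-2$ and $\mathfrak{S}_r$ acts on it by $(\mathrm{sgn}_{\mathfrak{S}_r})^{-1}=\mathrm{sgn}_{\mathfrak{S}_r}$; the base group acts trivially because $\mathfrak{S}_n$ acts trivially on each factor $Q$. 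Thus the $\mathfrak{S}_r[\mathfrak{S}_n]$-representation on $\tilde H_{r-2}(\Delta_n(G))\cong\mathbb{Z}$ is the one-dimensional $\mathrm{sgn}_{\mathfrak{S}_r}[1_{\mathfrak{S}_n}]$.

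The one point that needs care is equivariance for the \emph{whole} wreath product and not merely the base $(\mathfrak{S}_n)^r$ that a naive iteration of Theorem~\ref{thm:susp-of-joins2022Sept12} would hand us; defining $\phi$ directly on the symmetric $r$-fold product (rather than peeling off one copy of $E_n$ at a time) makes the $\mathfrak{S}_r$-symmetry transparent and resolves this. Everything else is routine bookkeeping; as a sanity check on the homotopy type alone, induction on $r$ via Theorem~\ref{thm:susp-of-joins2022Sept12}(3) also works: writing $G'=K_{n,\dotsc,n}$ on $r-1$ parts, the identity $\{\emptyset\}*X=X$ gives $\Delta_n(E_n*G')\simeq\susp(\Delta_n(G'))$, based at $\Delta_n(K_{n,n})\simeq\mathbb{S}^0$ from Theorem~\ref{thm:BipartiteEquiv:k<=m<=n}.
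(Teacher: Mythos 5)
Your proof is correct and follows essentially the same route as the paper's: decompose $G$ as the $r$-fold join $E_n * \dotsb * E_n$ with $\Delta_n(E_n)=\{\emptyset\}$, compare $\mathcal{L}(\Delta_n(G))$ with the $r$-fold product of copies of $\mathcal{L}(\Delta_n(E_n))$, read off the homotopy type $\bbS^{r-2}$ from Equation~\eqref{eqn:prod-r-posets}, and obtain the $\mathfrak{S}_r$-action $\mathrm{sgn}^{-1}=\mathrm{sgn}$ from Theorem~\ref{thm:action-on-products} with $d=-1$. The one point where you are more careful than the paper --- building the symmetric $r$-fold poset map $\phi$ directly and checking its fibers, rather than ``iterating'' Theorem~\ref{thm:susp-of-joins2022Sept12}, so that equivariance under the permuting copy of $\mathfrak{S}_r$ (and not merely the base group $(\mathfrak{S}_n)^r$) is transparent --- is a legitimate refinement of exactly the step the paper leaves implicit.
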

\begin{proof} We have $G=E_n*\dotsb*E_n,$ and $\Delta_n(E_n)=\{\emptyset\}\simeq\bbS^{-1}$.
Iterating Theorem~\ref{thm:susp-of-joins2022Sept12} gives the equivariant isomorphism of face lattices 
\[\mathcal{L}(\Delta_n(G))\cong\underbrace{\mathcal{L}(\Delta_n(E_n))\times\dotsb\times  \mathcal{L}(\Delta_n(E_n))}_r;\] using the fact that $\mathbb{S}^a*\mathbb{S}^b=\mathbb{S}^{a+b+1}$, this (with Equation~\ref{eqn:prod-r-posets}) gives the homotopy equivalence 
\[\Delta_n(G)\simeq \underbrace{\mathbb{S}^0*\dotsb *\mathbb{S}^0}_{r-1}*\underbrace{\Delta_n(E_n)*\dotsb*\Delta_n(E_n)}_r\simeq \mathbb{S}^{r-2}*\mathbb{S}^{-1}\simeq \mathbb{S}^{r-2}.\]

The equivariant isomorphism of face lattices makes the homology of $\Delta_n(G)$ isomorphic to 
the $r$-fold tensor product of $\tilde{H}_{-1}(\Delta_n(E_n))$.  Clearly $\mathfrak{S}_n$ acts trivially on each factor, and $\mathfrak{S}_r$ acts like $(\sgn)^{-1}=\sgn$ by Theorem~\ref{thm:action-on-products}. This finishes the proof. \end{proof}

For the remainder of this section, we record the following notational conventions, which will be helpful in describing the homology representations. Assume $M=\{m_1,\ldots, m_r\}$ is the multiset consisting of $r_i$ indices equal to $i$.  The automorphism group of the complete multipartite graph $G=K_{m_1,\ldots, m_r}$ is the group $ \mathfrak{G}_M$, where $\mathfrak{G}_M$ is the product of wreath products $\mathfrak{G}_M = \bigtimes_{i} \mathfrak{S}_{r_i}[\mathfrak{S}_i]$. 

The next case is a straightforward generalization of the bipartite case,  Theorem~\ref{thm:BipartiteEquiv:k<=m<=n}.  Recall that $P(n,m)$ is the truncated Boolean lattice $B_{n}^{\le n-m}\cup\{\hat 1\},$ and it is the face lattice of the $(n-m-1)$-skeleton of an $(n-1)$-simplex.   
 From Part (1) of Theorem~\ref{thm:susp-of-joins2022Sept12} and Theorem~\ref{thm:action-on-products}, we obtain:
\begin{theorem}\label{thm:MultipartiteCase4} Let $2\le k<m_1\le m_2\le \cdots\le m_r,$ and let $G$ be the complete multipartite graph $G=K_{ m_1,\cdots,  m_r}.$ There is a group-equivariant homotopy equivalence
\[ \Delta_k(G)\simeq \Delta (\overline{P(m_1,k)\times \dotsb \times P(m_r,k)}  ).\] 
Hence the $((\sum_{i=1}^r m_i)-k-1)$-dimensional cut complex $\Delta_k(G)$ has the homotopy type of a wedge of 
$\prod_{i=1}^r \binom{m_i-1}{k-1}$ spheres in dimension $\sum_{i=1}^r (m_i-k)+(r-2).$
 
The homology representation of the automorphism group $\mathfrak{G}_M$ is then the tensor product of wreath product representations 
\[ \bigotimes_{i\ge k+1} \mathrm{\Phi_{\mathfrak{S}_{r_i}}\,}[V_{(k, 1^{i-k})}],\]
where $\Phi_{\mathfrak{S}_{r_i}}$ is the one-dimensional representation of $\mathfrak{S}_{r_i}$ equal to $(\mathrm{sgn\,})^{i-k-1}.$  
\end{theorem}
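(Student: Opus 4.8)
The plan is to realize $G$ as an iterated join of edgeless graphs and transport everything through Theorem~\ref{thm:susp-of-joins2022Sept12}, then read off the homotopy type from the product-of-posets formula and the homology representation from Solomon's theorem together with Theorem~\ref{thm:action-on-products}.

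First I would set up the join decomposition. Since $G=K_{m_1,\dots,m_r}=E_{m_1}*E_{m_2}*\dotsb*E_{m_r}$ and $2\le k< m_1\le\dotsb\le m_r$, Proposition~\ref{prop:Deltak-edgeless-graphS} tells us each $\Delta_k(E_{m_i})$ is nonvoid, equal to the $(m_i-k-1)$-skeleton of an $(m_i-1)$-simplex, hence has face lattice $P(m_i,k)$ and is a wedge of $\binom{m_i-1}{k-1}$ spheres of dimension $m_i-k-1$. Each intermediate graph $K_{m_1,\dots,m_t}$ still has a part of size $m_1\ge k+1\ge k$, so it contains an independent (hence disconnected) $k$-set and $\Delta_k(K_{m_1,\dots,m_t})$ is nonvoid; thus part (3) of Theorem~\ref{thm:susp-of-joins2022Sept12} applies at each stage of the iteration (using associativity of the graph join). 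This produces a poset map
\[\mathcal{L}(\Delta_k(E_{m_1}))\times\dotsb\times\mathcal{L}(\Delta_k(E_{m_r}))=P(m_1,k)\times\dotsb\times P(m_r,k)\longrightarrow\mathcal{L}(\Delta_k(G))\]
inducing a homotopy equivalence of order complexes. Because the map $\phi$ of Proposition~\ref{prop:general-joinS2022Sept15} merely forms disjoint unions of faces, it is symmetric under permutation of isomorphic factors, so the equivalence is equivariant not only for $\prod_i\mathrm{Aut}(E_{m_i})$ but for the full automorphism group $\mathfrak{G}_M$, which additionally permutes the blocks of equal $m_i$. This yields the asserted equivalence $\Delta_k(G)\simeq\Delta(\overline{P(m_1,k)\times\dotsb\times P(m_r,k)})$.

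For the homotopy type I would apply Equation~\eqref{eqn:prod-r-posets}: $\Delta(\overline{P(m_1,k)\times\dotsb\times P(m_r,k)})\simeq\mathbb{S}^{r-2}*(\Delta(\overline{P(m_1,k)})*\dotsb*\Delta(\overline{P(m_r,k)}))$. Since a join of wedges of spheres is again a wedge of spheres (sphere counts multiply, dimensions add with a $+1$ for each join), and each $\Delta(\overline{P(m_i,k)})$ is a wedge of $\binom{m_i-1}{k-1}$ spheres of dimension $m_i-k-1$, I obtain a wedge of $\prod_{i=1}^r\binom{m_i-1}{k-1}$ spheres of dimension $(r-2)+\big(\sum_{i=1}^r(m_i-k-1)\big)+(r-1)+1=\sum_{i=1}^r(m_i-k)+(r-2)$, as claimed; comparing with $\dim\Delta_k(G)=n-k-1$ shows this is strictly below the top when $r\ge2$, so the complex is not shellable. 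For the homology representation, the equivariant equivalence gives a $\mathfrak{G}_M$-module isomorphism $\tilde H_*(\Delta_k(G))\cong\tilde H_*(\Delta(\overline{\prod_i P(m_i,k)}))$. Grouping factors by common value, the product poset is $\prod_{i\ge k+1}P(i,k)^{\times r_i}$, and since each $\Delta(\overline{P(i,k)})$ has reduced homology concentrated in the single degree $i-k-1$, the Künneth theorem (Theorem~\ref{thm:Kunneth}) identifies the total homology with $\bigotimes_{i\ge k+1}\tilde H_*(\Delta(\overline{P(i,k)^{\times r_i}}))$. On each block, $\mathfrak{S}_i$ acts on the homology of a single $P(i,k)$ factor as $V_{(k,1^{i-k})}$ by Theorem~\ref{thm:SolomonTruncatedBoolean}, while Theorem~\ref{thm:action-on-products}, applied with $Q=P(i,k)$ and $d=i-k-1$, shows the homology of the $r_i$-fold product is concentrated in one degree with $\mathfrak{S}_{r_i}$ acting as $(\mathrm{sgn}_{\mathfrak{S}_{r_i}})^{i-k-1}=\Phi_{\mathfrak{S}_{r_i}}$ on top of the factor-permutation action — which is precisely the wreath-product representation $\Phi_{\mathfrak{S}_{r_i}}[V_{(k,1^{i-k})}]$. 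Tensoring over $i\ge k+1$ gives the stated $\mathfrak{G}_M$-representation.

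The main obstacle is the bookkeeping of sign twists. When the iterated suspension of Theorem~\ref{thm:susp-of-joins2022Sept12} is rewritten via the product-of-posets formula \eqref{eqn:prod-r-posets}, the permutation action of each $\mathfrak{S}_{r_i}$ on the auxiliary suspension coordinates (the $\mathbb{S}^0$'s and the $\mathbb{S}^{r-2}$) contributes sign characters that must not be dropped. The key to handling this cleanly is to invoke Theorem~\ref{thm:action-on-products} at the level of one block of equal parts at a time, letting it absorb all of those signs into the single factor $(\mathrm{sgn}_{\mathfrak{S}_{r_i}})^{i-k-1}$, rather than attempting to track the suspension coordinates by hand.
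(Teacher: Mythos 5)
Your proposal is correct and follows essentially the same route as the paper: iterate the join decomposition of Theorem~\ref{thm:susp-of-joins2022Sept12} to identify the face lattice of $\Delta_k(G)$ with the product $P(m_1,k)\times\dotsb\times P(m_r,k)$, read off the homotopy type via Equation~\eqref{eqn:prod-r-posets}, and obtain the representation from Theorem~\ref{thm:SolomonTruncatedBoolean} together with Theorem~\ref{thm:action-on-products}. The paper leaves these steps as an immediate consequence of its cited results, whereas you have spelled out the nonvoidness checks, the dimension bookkeeping, and the block-by-block handling of the sign twists — all correctly.
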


We end this section with the last case of multipartite graphs that needs to be examined.
If $m_1=\ldots =m_t=m$ and $m<m_{t+1}\le \dotsb \le m_{t+r},$ for convenience we write 
$K_{m^t, m_{t+1}, \ldots, m_{t+r}}$ for the complete multipartite graph $K_{\underbrace{\scriptstyle m,\ldots,m}_t,m_{t+1}, \ldots , m_{t+r}}$.   Once again the theorem follows immediately from Part (1) of Theorem~\ref{thm:susp-of-joins2022Sept12} and Theorem~\ref{thm:action-on-products}, invoking in addition Proposition~\ref{prop:MultipartiteCase3} and Theorem~\ref{thm:MultipartiteCase4}.  The case $r=1$ is listed separately only for clarity.

\begin{theorem}\label{thm:MultipartiteCase6} 
 Let $m=m_1=\cdots = m_t< m_{t+1}\le \dotsb \le m_{t+r},$  where  $t\ge 1$.  
\begin{enumerate}
\item Let $ r\ge 2.$
 Then we have the equivariant  homotopy equivalence of cut complexes
\[\Delta_m(K_{m^t,m_{t+1}, \ldots, m_{t+r}})\simeq \mathrm{susp\,} (\,\Delta_m(K_{m^t}) * \Delta_m(K_{m_{t+1}, \ldots, m_{t+r}})\,)\]
which in turn is homotopy equivalent to 
$\bbS^{t-1}*\Delta_m(K_{m_{t+1}, \ldots, m_{t+r}})$. Let $N=\sum_{i=1}^r m_{t+i}$. The $\left( (t-1)m + N - 1 \right)$-dimensional cut complex $\Delta_m(K_{m_{t+1}, \ldots, m_{t+r}})$ is homotopy equivalent to a wedge of $\prod_{i=1}^r \binom{m_i-1}{m-1}$ spheres in dimension $t+N-rm+(r-2)$. The homology representation of the automorphism group $\mathfrak{S}_t[\mathfrak{S}_m] \times \mathfrak{G}_M,$ where $\mathfrak{G}_M$ is the automorphism group of $K_{m_{t+1}, \ldots, m_{t+r}},$ is 
\[  V_{(1^t)} [V_{(m)}] \otimes  \bigotimes_{i\ge m+1} \mathrm{\Phi_{\mathfrak{S}_{r_i}}\,}[V_{(m, 1^{i-m})}]  .\]
\item Let $r=1.$ Then we have the equivariant  homotopy equivalence of  complexes
\[\Delta_m(K_{m^t,m_{t+1}})\simeq \susp (\Delta_m(K_{m^t}) * \Delta(P(m_{t+1},m))),\]
which in turn is homotopy equivalent to 
$\bbS^{t-1}*\Delta(P(m_{t+1},m)\,).$
The order complex  $\Delta(P(m_{t+1},m))$ is the barycentric subdivision of the $(m_{t+1}-m-1)$-skeleton of an $(m_{t+1}-1)$-dimensional simplex.   
The $\left( (t-1)m + m_{t+1} - 1 \right)$-dimensional cut complex $\Delta_m(K_{m^t, m_{t+1}})$ is homotopy equivalent to a wedge of $ \binom{m_{t+1}-1}{m-1}$ spheres in dimension $t+m_{t+1}-m-1  .$   The homology representation of the automorphism group $\mathfrak{S}_t[\mathfrak{S}_m] \times \mathfrak{S}_{m_{t+1}}$  is 
\[  V_{(1^t)} [V_{(m)}]  \otimes V_{(m, 1^{m_{t+1}-m})}  .\]
\end{enumerate} 
\end{theorem}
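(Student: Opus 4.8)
The plan is to split $G=K_{m^t,m_{t+1},\dots,m_{t+r}}$ as a join of complete multipartite graphs and apply Theorem~\ref{thm:susp-of-joins2022Sept12}. Write $G=G_1*G_2$ with $G_1=K_{m^t}$, and $G_2=K_{m_{t+1},\dots,m_{t+r}}$ in case (1) or $G_2=E_{m_{t+1}}$ in case (2). The first step is to check that both cut complexes $\Delta_m(G_1)$ and $\Delta_m(G_2)$ are nonvoid. For $G_1=K_{m^t}$ each of the $t$ parts is an independent set of size $m$, hence a disconnected $m$-set, so $\Delta_m(K_{m^t})$ is nonvoid (and for $t=1$ it degenerates to $\Delta_m(E_m)=\{\emptyset\}\simeq\bbS^{-1}$). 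For $G_2$ the strict inequality $m<m_{t+r}$ leaves room for a disconnected $m$-set, and by Theorem~\ref{thm:MultipartiteCase4} (resp.\ Proposition~\ref{prop:Deltak-edgeless-graphS}) $\Delta_m(G_2)$ is a nonempty wedge of spheres. Since both pieces are nonvoid, Part (3) of Theorem~\ref{thm:susp-of-joins2022Sept12} yields the asserted group-equivariant homotopy equivalence
\[\Delta_m(G)\simeq\susp\big(\Delta_m(G_1)*\Delta_m(G_2)\big),\]
together with an equivariant isomorphism of face lattices, hence of rational homology representations.

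For Part (1), I would substitute the known homotopy type of $G_1$: by Proposition~\ref{prop:MultipartiteCase3} (for $t\ge2$; trivially for $t=1$) we have $\Delta_m(K_{m^t})\simeq\bbS^{t-2}$, so $\susp(\bbS^{t-2}*\Delta_m(G_2))\simeq\bbS^0*\bbS^{t-2}*\Delta_m(G_2)=\bbS^{t-1}*\Delta_m(G_2)$ by~\eqref{eqn:join-of-spheres}, which is the second displayed equivalence. Inserting the description of $\Delta_m(K_{m_{t+1},\dots,m_{t+r}})$ from Theorem~\ref{thm:MultipartiteCase4} (a wedge of $\prod_{i=1}^r\binom{m_{t+i}-1}{m-1}$ spheres in dimension $\sum_{i=1}^r(m_{t+i}-m)+(r-2)$), and using that joining with $\bbS^{t-1}$ preserves the number of spheres while raising the dimension by $t$, gives a wedge of $\prod_{i=1}^r\binom{m_{t+i}-1}{m-1}$ spheres in dimension $t+N-rm+(r-2)$ with $N=\sum_{i=1}^r m_{t+i}$; the ambient dimension $(t-1)m+N-1$ is just $|V(G)|-m-1$. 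For the homology representation, the automorphism group of $G$ is $\mathfrak{S}_t[\mathfrak{S}_m]\times\mathfrak{G}_M$ — the strict inequality $m<m_{t+1}$ preventing any mixing of the $t$ small parts with the larger ones — and the equivariant face-lattice isomorphism together with the K\"unneth theorem (Theorem~\ref{thm:Kunneth}) identifies $\tilde H_*(\Delta_m(G))$ with the external tensor product $\tilde H_*(\Delta_m(K_{m^t}))\otimes\tilde H_*(\Delta_m(K_{m_{t+1},\dots,m_{t+r}}))$, the extra suspension contributing only a degree shift. The first factor carries $V_{(1^t)}[V_{(m)}]$ by Proposition~\ref{prop:MultipartiteCase3}, equivalently by Theorem~\ref{thm:action-on-products} applied to the $t$ identical copies of $\Delta_m(E_m)$ (each with homology in degree $-1$, so $\mathfrak{S}_t$ acts like $\mathrm{sgn}^{-1}=\mathrm{sgn}$); the second carries $\bigotimes_{i\ge m+1}\Phi_{\mathfrak{S}_{r_i}}[V_{(m,1^{i-m})}]$ by Theorem~\ref{thm:MultipartiteCase4}. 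Their tensor product is the claimed representation.

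Part (2) is handled in exactly the same way with $G_2=E_{m_{t+1}}$. Since $2\le m\le m_{t+1}-1$, Proposition~\ref{prop:Deltak-edgeless-graphS} identifies $\Delta_m(E_{m_{t+1}})$ as the $(m_{t+1}-m-1)$-skeleton of an $(m_{t+1}-1)$-simplex, a wedge of $\binom{m_{t+1}-1}{m-1}$ spheres in dimension $m_{t+1}-m-1$, with face lattice $P(m_{t+1},m)$, so $\Delta(\overline{P(m_{t+1},m)})$ is its barycentric subdivision. Theorem~\ref{thm:susp-of-joins2022Sept12} then gives $\Delta_m(K_{m^t,m_{t+1}})\simeq\bbS^{t-1}*\Delta(\overline{P(m_{t+1},m)})$, hence a wedge of $\binom{m_{t+1}-1}{m-1}$ spheres in dimension $t+m_{t+1}-m-1$, inside a complex of dimension $(t-1)m+m_{t+1}-1$. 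The automorphism group is $\mathfrak{S}_t[\mathfrak{S}_m]\times\mathfrak{S}_{m_{t+1}}$, and combining the representation $V_{(1^t)}[V_{(m)}]$ on $\tilde H_*(\Delta_m(K_{m^t}))$ with Solomon's theorem (Theorem~\ref{thm:SolomonTruncatedBoolean}), which gives $V_{(m,1^{m_{t+1}-m})}$ on $\tilde H_{m_{t+1}-m-1}(\Delta(\overline{P(m_{t+1},m)}))$, yields the stated tensor product.

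I expect the only genuine work to be the equivariant bookkeeping: verifying that $\operatorname{Aut}(K_{m^t,m_{t+1},\dots,m_{t+r}})$ splits as $\mathfrak{S}_t[\mathfrak{S}_m]\times\mathfrak{G}_M$ (crucially using $m<m_{t+1}$), and checking that under the equivariant face-lattice equivalence of Theorem~\ref{thm:susp-of-joins2022Sept12} the induced action on the (single-degree) homology is precisely the external tensor product of the two factor representations — i.e., that the suspension and the join introduce only a degree shift and no additional sign twist. Everything else is substitution of already-established facts (Proposition~\ref{prop:MultipartiteCase3}, Theorem~\ref{thm:MultipartiteCase4}, Proposition~\ref{prop:Deltak-edgeless-graphS}, Theorem~\ref{thm:SolomonTruncatedBoolean}, Theorem~\ref{thm:action-on-products}) and routine dimension arithmetic, with the $t=1$ case reading off correctly since $\bbS^{-1}=\{\emptyset\}$ and $V_{(1^1)}[V_{(m)}]=V_{(m)}$.
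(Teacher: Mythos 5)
Your proposal is correct and follows essentially the same route as the paper: decompose $K_{m^t,m_{t+1},\dots,m_{t+r}}$ as the join $K_{m^t}*K_{m_{t+1},\dots,m_{t+r}}$, apply the equivariant suspension-of-joins result (Theorem~\ref{thm:susp-of-joins2022Sept12}), and substitute the already-established homotopy types and homology representations from Proposition~\ref{prop:MultipartiteCase3}, Theorem~\ref{thm:MultipartiteCase4} (resp.\ Proposition~\ref{prop:Deltak-edgeless-graphS} and Theorem~\ref{thm:SolomonTruncatedBoolean} for $r=1$), together with Theorem~\ref{thm:action-on-products}. The dimension arithmetic and the identification of the automorphism group as $\mathfrak{S}_t[\mathfrak{S}_m]\times\mathfrak{G}_M$ are exactly as in the paper, which states the proof in one sentence citing the same ingredients.
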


\subsection{Cycle Graphs}\label{sec:Cycles-Dane-S-MarijaDMT}

For the cycle graph $C_n$,  we may assume $n\ge 4$ and $2\le k\le n-2$. If $n=4$, the cut complex $\Delta_2(C_4)$ is 1-dimensional with two facets $\{1,3\}$ and $\{2,4\}$, and is homotopy equivalent to the 0-sphere $\bbS^0$;  it is thus NOT shellable. In fact $\Delta_2(C_4)=\Delta_2(K_{2,2})$.

Recall from Figure~\ref{fig:Delta2C5} that the 2-dimensional cut complex $\Delta_2(C_5)$ is in fact a M\"obius strip, and hence it is homotopy equivalent to the one-sphere $\bbS^1$. This holds more generally  for the $(n-3)$-dimensional cut complex $\Delta_2(C_n)$: it was shown in  \cite{BDJRSX-TOTAL2024} that:
\begin{prop}[{\cite[Theorem~3.9]{BDJRSX-TOTAL2024}}] \label{prop:MarijaDMT-Delta2cycle}
Let $C_n$ be a cycle graph, $n \ge 5$. 
Then the $(n-3)$-dimensional  cut complex $\Delta_2(C_n)$ is homotopy equivalent to the sphere $\bbS^{n-4}$. 
\end{prop}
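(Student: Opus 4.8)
The plan is to derive this as an immediate consequence of Theorem~\ref{thm:Delta2nonchordalSMark}. First I would check that $C_n$ satisfies the hypotheses of that theorem: the cycle $C_n$ is connected, and for $n\ge 4$ its girth is $n>3$, so $C_n$ is triangle-free; since $C_n$ has $n$ vertices and $n$ edges, it is not a tree. Thus Theorem~\ref{thm:Delta2nonchordalSMark} applies with $e=n$.

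Plugging $e=n$ into the conclusion of Theorem~\ref{thm:Delta2nonchordalSMark}, the $(n-3)$-dimensional complex $\Delta_2(C_n)$ is homotopy equivalent to a wedge of $e-n+1=1$ sphere in dimension $n-4$. A wedge of a single sphere is that sphere, so $\Delta_2(C_n)\simeq\bbS^{n-4}$, as claimed. (The case $n=4$ is consistent with this: $\bbS^{0}$ is a wedge of one $0$-sphere, matching the two-facet complex $\Delta_2(C_4)=\langle\{1,3\},\{2,4\}\rangle$ noted above; it is stated separately only because it was handled in the preceding discussion.)

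There is essentially no obstacle at this point: all the substance already lies in Theorem~\ref{thm:Delta2nonchordalSMark} (and, through it, in Lemma~\ref{lem:MorseMatchingTrees}), and the only things to verify for the cycle are the three hypotheses above and the arithmetic $|E(C_n)|-|V(C_n)|+1=1$. If a more self-contained argument were desired, one could instead unwind the proof of Theorem~\ref{thm:Delta2nonchordalSMark} directly in this case: take the spanning path $T=P_n\subseteq C_n$, use the perfect Morse matching on $\Delta_2(P_n)$ provided by Lemma~\ref{lem:MorseMatchingTrees}, and restrict it to $\Delta_2(C_n)$; since $C_n$ has exactly one edge not in $T$, that edge produces exactly one critical cell, of codimension $1$, i.e.\ of dimension $n-4$, so again $\Delta_2(C_n)\simeq\bbS^{n-4}$.
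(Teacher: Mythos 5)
Your proposal is correct, and there is no circularity: the proof of Theorem~\ref{thm:Delta2nonchordalSMark} rests on Lemma~\ref{lem:MorseMatchingTrees} and Proposition~\ref{prop:class-of-graphs-2022-3-26}, neither of which invokes the cycle case. The hypothesis check is right ($C_n$ is connected, has girth $n>3$ so is triangle-free, and is not a tree), and $e-n+1=1$ gives a single sphere in dimension $n-4$, matching the stated $(n-3)$-dimensional complex in codimension one. The comparison to make is that the paper does not prove this proposition internally at all: it imports it from the companion paper \cite{BDJRSX-TOTAL2022} (Theorem~3.9 there), where it is established by a direct discrete Morse theory argument on $\Delta_2(C_n)$. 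Your primary route instead deduces it as a corollary of the general triangle-free theorem of Section~\ref{sec:Euler-char} --- which is precisely the ``corroboration'' the authors themselves remark on immediately after Theorem~\ref{thm:Delta2nonchordalSMark}, just never written out as a formal deduction. Your fallback argument (restrict the perfect Morse matching on $\Delta_2(P_n)$ for a spanning path to $\Delta_2(C_n)$, obtaining one critical cell from the one non-tree edge) is simply the specialization of that theorem's proof and essentially recovers the companion paper's Morse-theoretic approach. The general route buys uniformity across all connected triangle-free non-trees at no extra cost; the direct route is self-contained but proves less.
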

For $k \geq 3$, we have the following:

\begin{theorem}[Dane Miyata]\label{thm:DaneCycleShellability}
	For all $ n-1 \geq k \geq 3$, 
 the $k$-cut complex of the cycle graph $C_n$  on $n$ vertices,  $ \Delta \coloneqq \Delta_{k}(C_n) $, is shellable. 
\end{theorem}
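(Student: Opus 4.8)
## Proof proposal for Theorem~\ref{thm:DaneCycleShellability}

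The plan is to exhibit an explicit shelling order on the facets of $\Delta \coloneqq \Delta_k(C_n)$ for $3 \le k \le n-1$. Label the vertices of $C_n$ cyclically by $\mathbb{Z}/n\mathbb{Z} = \{0, 1, \dotsc, n-1\}$. A facet of $\Delta$ is the complement of a disconnected $k$-set, equivalently the complement of a set $S$ of size $k$ that decomposes into $j \ge 2$ ``arcs'' (maximal paths) along the cycle. Dually, I prefer to index facets by their complements $S = V \setminus F$, which are precisely the $k$-subsets of $\mathbb{Z}/n\mathbb{Z}$ that are \emph{not} a single arc of length $k$. Since there are exactly $n$ connected $k$-sets (the $n$ arcs) when $k < n$, this matches $|\mathcal{Z}_k(C_n)| = n$ from Lemma~\ref{lem:count-connected-subsets}, and the number of facets is $\binom{n}{k} - n$.

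The key idea is to order facets by a statistic that measures ``how close $S$ is to being connected,'' then break ties lexicographically. Concretely: for a disconnected $k$-set $S$, let $\mathrm{gap}(S)$ be the size of the largest arc in the complement $V \setminus S = F$ — equivalently, $n - k$ minus (number of components of $F$) plus something; it is cleaner to define $w(S)$ to be the length of the longest run of consecutive elements of $S$. Order the facets $F = V \setminus S$ first by \emph{decreasing} $w(S)$ (so nearly-connected sets, which ``look most like'' the missing single arc, come first), and within a fixed value of $w(S)$, fix the leftmost-starting longest run and order lexicographically by the remaining structure. The verification of the shelling condition then goes as follows: given $i < j$ with facets $F_i = V\setminus S_i$, $F_j = V \setminus S_j$, I need $k < j$ with $F_k \cap F_j$ of codimension $1$ inside $F_j$ and containing $F_i \cap F_j$; translating to complements, $F_k = V \setminus S_k$ with $S_k$ obtained from $S_j$ by deleting one element and adding one, where the added element lies in $S_i \setminus S_j$, the deleted element lies in $S_j \setminus S_i$, and $S_k$ comes earlier in the order. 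The combinatorial core is: pick an element of $S_j \setminus S_i$ adjacent (on the cycle) to an element of $F_j$, move it to merge with an existing run, producing an $S_k$ with strictly larger $w$ or equal $w$ but lexicographically earlier — and crucially $S_k$ is still disconnected, which is where the hypothesis $k \le n-1$ (so that not all of $\mathbb{Z}/n\mathbb{Z}$ is used) and $k \ge 3$ (so there is ``room'' to keep two components after the move) enter.

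I expect the main obstacle to be the careful case analysis in the last step: showing that the modified set $S_k$ can always be chosen to (a) remain disconnected, (b) contain $S_i \cap S_j$ in its complement-intersection with $F_j$ appropriately, and (c) strictly precede $S_j$ in the chosen order. The subtlety is that if $S_j$ has exactly two components and one of them is a single vertex, naively merging runs could accidentally connect $S$; one must instead grow a different run or relocate the singleton, and it is here that $k \ge 3$ is used (for $k = 2$ the statement genuinely fails, as $\Delta_2(C_4) = \Delta_2(K_{2,2})$ and $\Delta_2(C_n) \simeq \bbS^{n-4}$ is not shellable). An alternative, possibly cleaner, route would be to use lexicographic shellability of the face lattice via a recursive coatom ordering (cf.\ \cite{BjWachsTAMS1983} and the remark opening Section~\ref{sec:Euler-char}): since $\Delta_k(C_n)$ contains a complete $(n-k-2)$-skeleton by Proposition~\ref{prop:class-of-graphs-2022-3-26} when $k \ge 3$ (as $C_n$ has no cycle of length $\le k+1$ once... wait, $C_n$ itself has length $n$, and for $k+1 < n$ there are no short cycles), one can try to verify directly that the coatoms $\mathcal{Z}_k(C_n)$ removed from $P(n,k)$ leave a poset that still admits a recursive coatom ordering; combined with Theorem~\ref{thm:truncBoolean-minus-antichain} this would even hand us the homotopy type (a wedge of $\binom{n-1}{k-1} - n$ spheres) for free. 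I would attempt the explicit shelling first and fall back on the coatom-ordering argument if the cases become unwieldy.
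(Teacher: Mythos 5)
There is a genuine gap: what you have written is a strategy outline, not a proof. The entire difficulty of this theorem lies in verifying the shelling condition, and you explicitly defer that verification (``I expect the main obstacle to be the careful case analysis in the last step,'' ``I would attempt the explicit shelling first and fall back\dots''). Your ordering is also not fully specified --- ``fix the leftmost-starting longest run and order lexicographically by the remaining structure'' does not define a total order without further work --- so even the object whose shellability you would be checking is not pinned down. The one concrete obstruction you do identify (a single swap $S_k=(S_j\setminus\{y\})\cup\{x\}$ may accidentally produce a connected $k$-set) is precisely the hard case, and you name it without resolving it. One small technical point in your favor: you impose an unnecessary constraint in translating the shelling condition to complements --- the deleted element $y$ need only lie in $S_j$, not in $S_j\setminus S_i$; only the added element $x$ must lie in $S_i\setminus S_j$.

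For comparison, the paper's proof uses the plain lexicographic order on facets $A=a_1<\dots<a_s$ (not your ``decreasing longest run'' statistic), and its bulk is exactly the analysis you skip: given $A<B$, the natural swap $C=(B\cup\{a_i\})\setminus\{b_\ell\}$ either works or is the complement of an interval $[p,q]$; in the latter case one locates a second element $b'\in B\setminus A$ with $b'>q$, and a four-way case analysis on the position of $b_\ell$ relative to three chosen elements $x<y<z$ of $[n]\setminus B$ (this is where $k\ge 3$ enters) produces a valid predecessor $D=(B\cup\{\cdot\})\setminus\{b'\}$. Your fallback suggestion via a recursive coatom ordering of $P(n,k)\setminus\mathcal{Z}_k(C_n)$ is likewise only gestured at. To make your proposal into a proof you would need to (a) give a precise definition of your order and (b) carry out the analogue of the paper's case analysis showing the modified set can always be chosen disconnected and strictly earlier.
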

\begin{proof}
    When $k = n-1$, $\Delta_k(G)$ is void and thus trivially shellable, so assume $k \leq n-2$.

	Fix $ k > 2 $ and $ n $, and let $ s = n-k $.  Order the vertices of $ C_n $ by going clockwise around
	the cycle, starting at some initial vertex and, for simplicity, identify the vertex set of $ C_n $ with
	$ [n] \coloneqq \{1, \dots, n\} $ with the usual order. The facets of $ \Delta $ are exactly the size $ s $ separating sets of $ C_n $.
	We will identify each facet of $ \Delta $ with the unique increasing sequence of its vertices. We claim that
	the lexicographic ordering on the facets of $ \Delta $ gives a shelling order for $ \Delta $.

	Let $ A $ and $ B $ be facets of $ \Delta $ such that $ A < B $ in the lexicographic order. Suppose $A
	$ is given by the sequence $ a_1, \dots , a_{s} $ of vertices of $ C_n $ and $ B $ is given by the sequence
	$ b_1, \dots, b_s $ of vertices of $ C_n $. Since $ A<B $, there is a unique index $ i $, such that $ a_i
	\lneq b_i $, and $ a_j = b_j $ for all $ j<i $. Furthermore, there is a minimum index $ \ell $ such that $b_{\ell} \notin A$.  Let $ C = (B \cup \{a_i\}) \setminus \{b_{\ell}\}$. In other words, $ C $ is given by the sequence 
	\[ b_1, \dots, b_{i-1} , a_i , b_{i} , \dots , \hat{b}_{\ell}, \dots b_{s} .\] 
	(Here the hat means we omit that element from the sequence). Intuitively, $ C $ is obtained from $ B $ by
	taking the smallest element of $ B$ that is not in $ A $ and replacing it with the smallest element of $ A $
	that is not in $ B $. Now there are two cases; either $ C $ gives a separating set or it does not. 

	In the case where $ C $ gives a separating set, $ C $ is a facet of $ \Delta $. Notice $ C<B $ in the lexicographic order, since the $ i $th term in $ C $ is $ a_i $ which is less than $ b_{i} $. Furthermore,
	$ C \cap B = B \setminus \{b_{\ell}\} $ and	$ b_\ell \notin A \cap B $, which means $ A \cap B \subseteq B
	\cap C $ and $ |B \cap C| = s-1 $. Thus, in this case, the shelling order condition is satisfied.

	In the case where $ C $ is not a separating set, it must either be an interval, or the complement of an interval. If $ C $ is an interval, we conclude that $ b_{\ell} = b_{s} $, meaning $ b_{\ell} $ is the largest element of $ B $. Otherwise $ b_{s} > b_{\ell} > a_{i} $, and because $ C $ contains $ a_{i} $ and $ b_{s} $ but not $ b_{\ell} $, it would not be an interval. Since $ b_{\ell} = b_{s} $	is the smallest element of $ B \setminus A$, we conclude $ b_{1}, \dots , b_{s-1} $ are all in $ A $. In our construction of $ C $, we took $ B $ and replaced $ b_{\ell} = b_{s} $ with $ a_{i} $ and so all elements of $ C $ are in $ A $, which means $ C = A $. This is a contradiction because $ A $ is a separating set but $ C $ is not. Therefore, $ C $ is not an interval, so it must be the complement of an interval, namely, 
	\[C = [n] \setminus [p,q] \]
	where $ [p,q] $ is some interval of size $ k $. In this case, we can immediately deduce the following from the construction of $ C $:
	\[ B \cap [p,q] = \{b_{\ell}\} \qquad \text{and} \qquad p > a_{i}.\]
		
	Since $A$ is a facet, it is distinct from $ C $ so there exists some $ a' \in A$ such that  $a' \in [p,q] $. We know that $ a' \notin B $ since it cannot be equal to $ b_{\ell} $ and $ B \cap [p,q] = \{b_{\ell}\} $. Thus, $ B $ is missing at least two elements of $ A $, namely $ a_i $ and $ a' $. We know $ |B|=|A| $ so $ B $ must contain an element $ b' $ distinct from $ b_{\ell} $ such that $ b' \notin A $. We defined $ b_{\ell} $ to be the minimum element of $ B \setminus A $ so $ b' > b_{\ell} $. This implies $ b' > q $ as $ B \cap [p,q] =\{ b_{\ell}\} $. In particular, we have shown that $ b' $ is greater than all elements of $ [n] \setminus B $ since $[n] \setminus B = {a_i} \cup [p,q] \setminus {b_\ell}$. 

	Note that $ B = [1,a_{i}-1] \cup [a_{i}+1,p-1] \cup \{b_\ell\} \cup [q+1,n]$ and so $[n]\setminus B=\{a_i\}\cup [p, b_\ell-1]\cup [b_\ell+1,q]$.
	
	Now recall that $ k \geq 3 $, which means $ [n] \setminus B $ has cardinality at least 3. Let  $ x,y,z $ be three distinct elements of $ [n] \setminus B $ and without loss of generality assume $ x < y < z$. Now we have four cases, $ b_{\ell} < x $, $ x < b_{\ell} < y $, $ y < b_{\ell} < z $, and $ b_{\ell} > z $. 
	
	\begin{enumerate}
		\item[\textbf{Case 1.}] $( b_{\ell} < x )$ In this case, let 
		$D = \big(B \cup \{y\}\big)  \setminus \{b'\}.$\\
		Notice $ D $ must necessarily be a separating set since $  b_{\ell} < x < y < b' $ and $ b_{\ell}, y \in D
		$, but $ x,b' \notin D $. 	
		\vspace{1em}
		\item[\textbf{Case 2.}] $(x< b_{\ell} < y )$ In this case, let 
		$ D = \big(B \cup \{z\}\big)  \setminus \{b'\}.$\\
		Notice $ D $ must necessarily be a separating set since $  b_{\ell} < y < z < b' $ and $ b_{\ell}, z \in D
		$, but $ y,b' \notin D $. 	
		\vspace{1em}
		\item[\textbf{Case 3.}] $( y <b_{\ell} < z )$ In this case, let 
		$ D = \big(B \cup \{x\}\big)  \setminus \{b'\}.$\\
		Notice $ D $ must necessarily be a separating set since $  x < y < b_{\ell} < b' $ and $ b_{\ell},x \in D
		$, but $ y,b' \notin D $. 	
		\vspace{1em}
	\item[\textbf{Case 4.}] $( z < b_{\ell} )$ In this case, let 
		$ D = \big(B \cup \{y\}\big)  \setminus \{b'\}.$\\
		Notice $ D $ must necessarily be a separating set since $ y < z < b_{\ell} < b' $ and $ b_{\ell}, y \in D
		$, but $ z,b' \notin D $. 	
		\vspace{1em}
	\end{enumerate}
	In each case, we see that $ D $ is a facet of $ \Delta $ and since it is constructed by taking $ B $ and
	replacing $ b' $ with a smaller element, we have that $ D < B $ in the lexicographic order. Furthermore, the only element of $ B\setminus D$ is $ b' $
	so $ |B \cap D| = s - 1 $.
	Finally, as $ b' \notin A $, we see that $ B \cap A \subseteq B \cap D $ and so the shelling order condition
	is
	satisfied. 
\end{proof}

The results of Section~\ref{sec:Euler-char} can be applied to compute Betti numbers for cut complexes of trees and cycles. These belong to  families of graphs whose cut complexes satisfy the condition~\eqref{eqn:Condition-truncBoolean} of Theorem~\ref{thm:truncBoolean-minus-antichain}.  Recall from that theorem that for a forest $\F_n$ with $n$ vertices, $|\mathcal{Z}_k(\F_n)|$ is the number of subgraphs of $\F_n$ that are trees.

In this case $|\mathcal{Z}_k(\F_n)|=n-c,$ where $c$ is the number of connected components of the forest $\F_n$.  Also if the tree is a path $P_n$, then $|\mathcal{Z}_k(P_n)|=(n-k+1)$. 

\begin{prop}\label{prop:cut-complex-trees} Let $\mathcal{F}_n$ be a forest on $n$ vertices. 
Then $\Delta_k(\mathcal{F}_n)$ is homotopy equivalent to a wedge of $\left( \binom{n-1}{k-1} -|\mathcal{Z}_k(\F_n)|\right)$ spheres in dimension $n-k-1$ if this number is nonzero, and contractible otherwise. 
 In particular, if $\mathcal{F}_n$ is a tree $T_n$ on $n$ vertices, we have 
\[ \Delta_k(T_n) \simeq \begin{cases} \{\text{a point}\}, & k=2,\\
\bigvee_{\binom{n-1}{k-1}-|\mathcal{Z}_k(T_n)|}\ \bbS^{n-k-1}, &k\ge 3.
\end{cases}   \]
\end{prop}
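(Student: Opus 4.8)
The plan is to derive this proposition directly from the machinery already assembled in Section~\ref{sec:Euler-char} and Section~\ref{sec:Constructions}, with essentially no new topological input. Since a forest is acyclic, it has no cycle of length $\le k+1$ for any $k\ge 2$, so Proposition~\ref{prop:class-of-graphs-2022-3-26} applies: $\Delta_k(\mathcal{F}_n)$ contains a complete $(n-k-2)$-skeleton, which is precisely condition~\eqref{eqn:Condition-truncBoolean} of Theorem~\ref{thm:truncBoolean-minus-antichain} (indeed every subset of size $n-k-1$ is a face). So the only remaining hypothesis of that theorem to verify is shellability.

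Each connected component of $\mathcal{F}_n$ is a tree, whose $k$-cut complex is shellable by Corollary~\ref{cor:Mark-trees} (a component on at most $k$ vertices has void, hence shellable, cut complex). Iterating Theorem~\ref{thm:MargeDisjunion} over the components shows $\Delta_k(\mathcal{F}_n)$ is itself shellable. With both hypotheses verified, Theorem~\ref{thm:truncBoolean-minus-antichain} immediately yields the first assertion: $\Delta_k(\mathcal{F}_n)$ is contractible when $\binom{n-1}{k-1}=|\mathcal{Z}_k(\mathcal{F}_n)|$ (equivalently when the reduced Euler characteristic vanishes) and is a wedge of $\binom{n-1}{k-1}-|\mathcal{Z}_k(\mathcal{F}_n)|$ spheres of dimension $n-k-1$ otherwise.

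For the specialization to a tree $T_n$, recall that $\mathcal{Z}_k(G)$ is indexed by the connected $k$-subsets of $V(G)$. When $k=2$ these are exactly the edges of $T_n$, of which there are $n-1=\binom{n-1}{1}$; hence the difference $\binom{n-1}{k-1}-|\mathcal{Z}_k(T_n)|$ vanishes and $\Delta_2(T_n)$ is contractible, recovering Corollary~\ref{cor:Delta2trees-contractible}. For $k\ge 3$ one simply substitutes into the general formula to obtain a wedge of $\binom{n-1}{k-1}-|\mathcal{Z}_k(T_n)|$ spheres in dimension $n-k-1$ (this number can still be $0$, as happens for a star, in which case the wedge degenerates to a point). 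There is no genuine obstacle here; the only care needed is the bookkeeping for degenerate ranges of $k$ — noting that $\Delta_k(\mathcal{F}_n)$ is void when $k\ge n$ or when no component contains a disconnected induced $k$-set, and that the "contractible" clause is meant for the nonvoid, zero-Euler-characteristic case.
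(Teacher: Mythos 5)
Your proposal is correct and follows essentially the same route as the paper's own proof: verify condition~\eqref{eqn:Condition-truncBoolean} via Proposition~\ref{prop:class-of-graphs-2022-3-26} (forests are acyclic), establish shellability via Corollary~\ref{cor:Mark-trees} and Theorem~\ref{thm:MargeDisjunion}, and conclude with Theorem~\ref{thm:truncBoolean-minus-antichain}. Your added bookkeeping for the $k=2$ tree case and the degenerate/void ranges matches the remarks the paper makes immediately after its (terser) proof.
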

\begin{proof} It is immediate from Proposition~\ref{prop:class-of-graphs-2022-3-26} that condition~\eqref{eqn:Condition-truncBoolean} in Part (3) of Theorem~\ref{thm:truncBoolean-minus-antichain} is satisfied, since forests are acyclic. 
Since forests are shellable by Corollary~\ref{cor:Mark-trees} and Theorem~\ref{thm:MargeDisjunion}, the result follows.  \end{proof}

Observe that when $k=2$, $|\mathcal{Z}_k(\F_n)|$ is just the number of edges in the forest, namely, $n- c$ if the forest has $c$ connected components, and the Betti number is thus $c-1$. 
In the special case when the forest is a single tree, this  makes the reduced Euler characteristic zero, and so the $2$-cut complex of a tree is contractible, recovering the result of Corollary~\ref{cor:Delta2trees-contractible}.

For the cycle graph $C_n,$  we may assume $n\ge 4$ and $2\le k\le n-2.$    If $n=4,$ the one-dimensional cut complex  $\Delta_2(C_4)$ has two facets $\{1,3\}$ and $\{2,4\},$ and is homotopy equivalent to the 0-sphere $\bbS^0;$  it is thus not shellable. 

The cut complex $\Delta_k(C_n)$ has dimension $n-k-1.$
When $k=2$, we already know that  the cut complex $\Delta_2(C_n)$  has the homotopy type of one sphere in codimension 1, i.e., in dimension $n-4$. 
\begin{prop}\label{prop:BettiNumberCycles} Let $C_n$ be the cycle graph on $n\ge 5$ vertices.  
 For $n-2\ge k\ge 3,$ the shellable cut complex $\Delta_k(C_n)$ is homotopy equivalent to a wedge of $\binom{n-1}{k-1}-n$ spheres in  (top) dimension $n-k-1.$  
\end{prop}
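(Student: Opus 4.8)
The plan is to combine the shellability of $\Delta_k(C_n)$, established in Theorem~\ref{thm:DaneCycleShellability}, with the face-lattice machinery of Theorem~\ref{thm:truncBoolean-minus-antichain}. First I would note that throughout the range $3\le k\le n-2$ the cycle $C_n$ has girth $n\ge k+2$, so it contains no cycle of length at most $k+1$. Hence Proposition~\ref{prop:class-of-graphs-2022-3-26} applies, and $\Delta_k(C_n)$ contains a complete $(n-k-2)$-skeleton; equivalently, condition~\eqref{eqn:Condition-truncBoolean} of Theorem~\ref{thm:truncBoolean-minus-antichain} is satisfied.

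Next, since $\Delta_k(C_n)$ is shellable, Theorem~\ref{thm:truncBoolean-minus-antichain} tells us that $\Delta_k(C_n)$ is homotopy equivalent to a point when $\binom{n-1}{k-1}-|\mathcal{Z}_k(C_n)|=0$, and otherwise to a wedge of $\binom{n-1}{k-1}-|\mathcal{Z}_k(C_n)|$ spheres of dimension $n-k-1$. I would then invoke Lemma~\ref{lem:count-connected-subsets}(2): since $k<n$, the connected $k$-subsets of $C_n$ are precisely the $n$ arcs of $k$ consecutive vertices, so $|\mathcal{Z}_k(C_n)|=n$, giving the Betti number $\binom{n-1}{k-1}-n$.

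Finally I would verify that $\binom{n-1}{k-1}-n>0$ throughout the range, so that the ``point'' case never occurs and we genuinely get a wedge of spheres. Since $2\le k-1\le n-3$, the binomial coefficient $\binom{n-1}{k-1}$ is minimized at the two endpoints, where it equals $\binom{n-1}{2}=\tfrac{(n-1)(n-2)}{2}$; the inequality $\tfrac{(n-1)(n-2)}{2}>n$ reduces to $n^2-5n+2>0$, which holds for all $n\ge 5$. This completes the argument. Every ingredient is already in place, so there is no substantive obstacle; the only points needing a little care are confirming that $C_n$ meets the hypothesis of Proposition~\ref{prop:class-of-graphs-2022-3-26} (that its unique cycle, of length $n$, exceeds $k+1$) and that the count $|\mathcal{Z}_k(C_n)|=n$ is exactly the one furnished by Lemma~\ref{lem:count-connected-subsets}.
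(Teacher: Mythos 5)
Your proposal is correct and follows essentially the same route as the paper's proof: shellability from Theorem~\ref{thm:DaneCycleShellability}, the complete codimension-one skeleton from Proposition~\ref{prop:class-of-graphs-2022-3-26}, the count $|\mathcal{Z}_k(C_n)|=n$, and the Betti number from Theorem~\ref{thm:truncBoolean-minus-antichain}. The explicit check that $\binom{n-1}{k-1}-n>0$ throughout the range is a small but welcome addition that the paper leaves implicit.
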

\begin{proof}    Proposition~\ref{prop:class-of-graphs-2022-3-26} shows that condition~\eqref{eqn:Condition-truncBoolean} in Part (3) of Theorem~\ref{thm:truncBoolean-minus-antichain} is satisfied, since $C_n$ has no cycles of length less than $n$. The trees on $k$ vertices may be described as the intervals $[i, i+k-1], 1\le i\le n,$ but now with arithmetic modulo $n$, so that there are $n$ such trees in all, i.e., $|\mathcal{Z}_k(C_n)|=n$.

Since $\Delta_k(C_n)$ is shellable for $k\ge 3$ by Theorem~\ref{thm:DaneCycleShellability}, the statement follows.   For the cut complex $\Delta_2(C_n),$ 
we already know that 
it has the homotopy type of one sphere in dimension $n-4$, by Proposition~\ref{prop:MarijaDMT-Delta2cycle}; this is confirmed by the value $(-1)$ of the reduced Euler characteristic $\mu(\Delta_2(C_n))$.
\end{proof}
\begin{cor}\label{cor:cycleCutComplex-subposet}  The subposet $P(n,k)\backslash\mathcal{Z}_k(C_n),$ where $\mathcal{Z}_k(C_n)$ is the antichain of $n$ complements of trees on $k$ vertices $[i,i+k-1]$ (modulo $n$), $1\le i\le n$, 
as above, is homotopy equivalent to a wedge of $\binom{n-1}{k-1}-n$ spheres in the top dimension $n-k-1$  if $k\ge 3,$ and to a single sphere in dimension one less than the top if $k=2.$
\end{cor}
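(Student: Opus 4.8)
The plan is to recognize the subposet $P(n,k)\setminus\mathcal{Z}_k(C_n)$ as the face lattice of the cut complex $\Delta_k(C_n)$ and then read off the homotopy type from the computations already made for $\Delta_k(C_n)$. I work under the standing hypotheses $n\ge 5$ and $2\le k\le n-2$ used in Proposition~\ref{prop:BettiNumberCycles}.

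First I would apply Proposition~\ref{prop:class-of-graphs-2022-3-26}: since $2\le k\le n-2$, the cycle $C_n$ has no cycle of length $\le k+1$, so $\Delta_k(C_n)$ contains a complete $(n-k-2)$-skeleton, i.e.\ condition~\eqref{eqn:Condition-truncBoolean} of Theorem~\ref{thm:truncBoolean-minus-antichain} is satisfied. By that theorem, $\mathcal{L}(\Delta_k(C_n))=P(n,k)\setminus\mathcal{Z}_k(C_n)$, and $\mathcal{Z}_k(C_n)$ is precisely the antichain of the $n$ complements of the arcs $[i,i+k-1]$ modulo $n$, which are exactly the connected (path) $k$-subsets of $C_n$, matching the description in the corollary. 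Then, as recalled in Section~\ref{sec:posets}, the order complex of the proper part of a face lattice $\mathcal{L}(\Delta)$ is the barycentric subdivision of $\Delta$, hence homeomorphic to $\Delta$ and therefore homotopy equivalent to it; applying this with $\Delta=\Delta_k(C_n)$ shows that the subposet $P(n,k)\setminus\mathcal{Z}_k(C_n)$ is homotopy equivalent to $\Delta_k(C_n)$.

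It then remains to quote the known homotopy type of $\Delta_k(C_n)$. For $k\ge 3$, Proposition~\ref{prop:BettiNumberCycles}, which combines the shellability of $\Delta_k(C_n)$ from Theorem~\ref{thm:DaneCycleShellability} with Theorem~\ref{thm:truncBoolean-minus-antichain}, gives a wedge of $\binom{n-1}{k-1}-n$ spheres in the top dimension $n-k-1$. For $k=2$, $\Delta_2(C_n)$ is not shellable, so one instead invokes Proposition~\ref{prop:MarijaDMT-Delta2cycle}, which gives $\Delta_2(C_n)\simeq\bbS^{n-4}$; since the top dimension of $\Delta_2(C_n)$ is $n-3$, this is a single sphere one below the top. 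This establishes both cases.

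Since this is an assembly of results already proved, there is no real obstacle. The two points needing a little care are that the $k=2$ case cannot be deduced from the shellable-case conclusion of Theorem~\ref{thm:truncBoolean-minus-antichain} (the complex is not shellable there) and must use Proposition~\ref{prop:MarijaDMT-Delta2cycle}, and the dimension bookkeeping: $n-k-1$ is the top dimension, and for $k=2$ the value $n-4=(n-k-1)-1$ is consistent with the reduced Euler characteristic $-1$ recorded in Proposition~\ref{prop:BettiNumberCycles}.
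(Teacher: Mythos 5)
Your proposal is correct and follows essentially the same route the paper intends: identify $P(n,k)\setminus\mathcal{Z}_k(C_n)$ with the face lattice of $\Delta_k(C_n)$ via Theorem~\ref{thm:truncBoolean-minus-antichain} (using Proposition~\ref{prop:class-of-graphs-2022-3-26} to verify the skeleton condition), note that the order complex of the proper part of a face lattice is the barycentric subdivision of the complex, and then quote Proposition~\ref{prop:BettiNumberCycles} for $k\ge 3$ and Proposition~\ref{prop:MarijaDMT-Delta2cycle} for $k=2$. The paper states this corollary without proof as an immediate consequence of the preceding proposition, and your assembly of the ingredients, including the correct handling of the non-shellable $k=2$ case, is exactly the intended argument.
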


The Euler characteristic computation of Proposition~\ref{prop:BettiNumberCycles} can be made equivariant. The cycle graph $C_n$ and its cut complexes are invariant under the action of the cyclic subgroup $\mathfrak{C}_n$ of  the symmetric group $\mathfrak{S}_n$, generated by the $n$-cycle $(1,2,\dots,n)$.  We have the following result. 

\begin{theorem}\label{thm:Cyclicgpaction-k-cut-complex-n-cycle} The cyclic group $\mathfrak{C}_n$ of order $n$ acts on the unique nonvanishing homology of the cut complex $\Delta_k(C_n),\, 2\le k\le n-2,$ as follows:

If $k=2$, the action 
of $\mathfrak{C}_n$ is the 1-dimensional module given by 
\[\begin{cases}  \text{the trivial representation}, & \text{$n$ odd},\\
                \text{the sign representation}, & \text{$n$ even}.
\end{cases}\]

If $k\ge 3$,  the action 
of $\mathfrak{C}_n$ is given by 
$V_{(k,1^{n-k})}\big\downarrow_{\mathfrak{C}_n}^{\mathfrak{S}_n} - \mathrm{Reg}_{\mathfrak{C}_n},$
where $V_{(k,1^{n-k})}$ is the $\mathfrak{S}_n$-irreducible indexed by the partition $(k, 1^{n-k})$, the down arrow indicates restriction to the subgroup $\mathfrak{C}_n$, and $\mathrm{Reg}_{\mathfrak{C}_n}$ 
is the regular representation of $\mathfrak{C}_n$.
\end{theorem}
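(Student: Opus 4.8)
The plan is to upgrade the Möbius-number computation of Proposition~\ref{prop:BettiNumberCycles} to an equivariant statement by means of a single short exact sequence of $\mathbb{Q}\mathfrak{C}_n$-modules that compares $\Delta_k(C_n)$ with the skeleton of a simplex.

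First I record the structure. Let $\Sigma$ be the $(n-1)$-simplex on vertex set $[n]$. By Proposition~\ref{prop:class-of-graphs-2022-3-26} (applicable since $C_n$ has no cycle of length $\le k+1$ when $k\le n-2$), together with Theorem~\ref{thm:truncBoolean-minus-antichain}, the complex $\Delta_k(C_n)$ is exactly the $(n-k-1)$-skeleton $X\coloneqq\mathrm{skel}_{n-k-1}(\Sigma)$ with the $n$ top faces $F_i\coloneqq[n]\setminus\{i,i+1,\dots,i+k-1\}$ removed (indices $1\le i\le n$, read modulo $n$); these are the complements of the $n$ connected (``arc'') $k$-subsets of $C_n$, they are distinct $(n-k-1)$-faces, and the cyclic generator $g=(1,2,\dots,n)$ carries $F_i$ to $F_{i+1}$. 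Over $\mathbb{Q}$, the inclusion of reduced simplicial chain complexes yields an exact sequence of $\mathbb{Q}\mathfrak{C}_n$-chain complexes
\[
0\longrightarrow C_\bullet(\Delta_k(C_n))\longrightarrow C_\bullet(X)\longrightarrow Q_\bullet\longrightarrow 0,
\]
with $Q_\bullet$ concentrated in homological degree $n-k-1$ and $Q_{n-k-1}$ spanned by the images of the oriented faces $\langle F_1\rangle,\dots,\langle F_n\rangle$. Because $g$ permutes $\{F_i\}$ cyclically (possibly introducing signs from reordering vertices within a face), the character of $Q_{n-k-1}$ vanishes off the identity and equals $n$ at the identity, so $Q_{n-k-1}\cong\mathrm{Reg}_{\mathfrak{C}_n}$ as a $\mathbb{Q}\mathfrak{C}_n$-module.

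Next I invoke concentration of homology to collapse the associated long exact sequence. By Theorem~\ref{thm:SolomonTruncatedBoolean}, $\tilde H_\bullet(X)$ lives only in degree $n-k-1$, where it is $V_{(k,1^{n-k})}$ restricted to $\mathfrak{C}_n$; and $\tilde H_\bullet(\Delta_k(C_n))$ lives only in degree $n-k-1$ when $k\ge 3$ (shellable, by Theorem~\ref{thm:DaneCycleShellability} and Theorem~\ref{thm:shell-implies-homotopytype}) and only in degree $n-4=n-k-2$ when $k=2$ (Proposition~\ref{prop:MarijaDMT-Delta2cycle}). For $k\ge3$ the long exact sequence collapses to $0\to\tilde H_{n-k-1}(\Delta_k(C_n))\to V_{(k,1^{n-k})}\downarrow_{\mathfrak{C}_n}\to\mathrm{Reg}_{\mathfrak{C}_n}\to 0$, giving the asserted identity $\tilde H_{n-k-1}(\Delta_k(C_n))=V_{(k,1^{n-k})}\downarrow_{\mathfrak{C}_n}-\mathrm{Reg}_{\mathfrak{C}_n}$ in the representation ring, with the dimension count matching the Betti number $\binom{n-1}{k-1}-n$. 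For $k=2$ it collapses instead to $0\to V_{(2,1^{n-2})}\downarrow_{\mathfrak{C}_n}\to\mathrm{Reg}_{\mathfrak{C}_n}\to\tilde H_{n-4}(\Delta_2(C_n))\to 0$; using the standard facts $V_{(2,1^{n-2})}\cong\mathrm{sgn}_{\mathfrak{S}_n}\otimes V_{(n-1,1)}$, $V_{(n-1,1)}\downarrow_{\mathfrak{C}_n}=\mathrm{Reg}_{\mathfrak{C}_n}-\mathbf 1$, and $\chi\otimes\mathrm{Reg}_{\mathfrak{C}_n}=\mathrm{Reg}_{\mathfrak{C}_n}$ for any linear character $\chi$, one gets $V_{(2,1^{n-2})}\downarrow_{\mathfrak{C}_n}=\mathrm{Reg}_{\mathfrak{C}_n}-\mathrm{sgn}_{\mathfrak{S}_n}\downarrow_{\mathfrak{C}_n}$, whence $\tilde H_{n-4}(\Delta_2(C_n))=\mathrm{sgn}_{\mathfrak{S}_n}\downarrow_{\mathfrak{C}_n}$; since an $n$-cycle has sign $(-1)^{n-1}$, this is the trivial representation for $n$ odd and the order-two (``sign'') representation for $n$ even, as claimed.

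The main obstacle is the bookkeeping in the second paragraph: one must verify that the only faces of $X$ absent from $\Delta_k(C_n)$ are the $n$ top faces $F_i$ — this is exactly the complete-skeleton statement of Proposition~\ref{prop:class-of-graphs-2022-3-26} — and that the signed permutation module $Q_{n-k-1}$ really is the regular representation and not a sign-twisted version. The character computation settles the latter cleanly, since the underlying permutation of $\{F_i\}$ is a single $n$-cycle and contributes no nonzero diagonal entries away from the identity; alternatively one can choose orientations of the $F_i$ compatibly around the cycle, which is possible precisely because $g^n=\mathrm{id}$ forces the total twist around the $n$-cycle to be trivial. Everything else is the formal exact-sequence manipulation above, together with the restriction of symmetric-group irreducibles to the cyclic subgroup generated by an $n$-cycle.
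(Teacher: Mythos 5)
Your argument is correct, and it reaches the theorem by a genuinely different route from the paper. The paper works entirely inside the face lattice: it applies the equivariant antichain-deletion formula for Lefschetz modules (\cite[Theorem~1.10]{SuAIM1994}) to the subposet $P(n,k)\setminus\mathcal{Z}_k(C_n)$ of Corollary~\ref{cor:cycleCutComplex-subposet}, obtaining the virtual-character identities directly and then identifying the antichain contribution with $\mathrm{Reg}_{\mathfrak{C}_n}$ via the transitive free action on the $n$ deleted elements. You instead build an explicit short exact sequence of $\mathbb{Q}\mathfrak{C}_n$-chain complexes $0\to C_\bullet(\Delta_k(C_n))\to C_\bullet(X)\to Q_\bullet\to 0$, where $X$ is the $(n-k-1)$-skeleton of the simplex, and collapse the long exact sequence using concentration of homology on both ends (Solomon for $X$; shellability for $k\ge 3$ and Proposition~\ref{prop:MarijaDMT-Delta2cycle} for $k=2$). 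What your approach buys is self-containedness: it replaces the citation to the equivariant poset-topology machinery by standard homological algebra, and your character computation for the signed permutation module $Q_{n-k-1}$ cleanly disposes of the orientation-sign issue that the poset formulation absorbs into the modules $\tilde{H}(\hat 0,x)$. What the paper's approach buys is generality — the same antichain-deletion formula applies verbatim to other families where the cut complex is a truncated Boolean lattice minus an antichain, without having to re-examine exactness at each degree. Both proofs ultimately rest on the same two inputs (the identification of the deleted facets with arcs of $C_n$, and the restriction $V_{(n-1,1)}\downarrow_{\mathfrak{C}_n}=\mathrm{Reg}_{\mathfrak{C}_n}-\mathbf{1}$ for the $k=2$ endgame), and your degree bookkeeping in the two cases is accurate.
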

\begin{proof} Recall from Theorem\ref{thm:SolomonTruncatedBoolean} that  the homology module of the subposet $P(n,k)$ affords the representation $V_{(k,1^{n-k})}$ of $\mathfrak{S}_n$. In \cite{SuAIM1994}, tools were developed to compute the group action on the Lefschetz  module of the order complex of a poset, in particular when an antichain is deleted. Applying  \cite[Theorem~1.10]{SuAIM1994} to Corollary~\ref{cor:cycleCutComplex-subposet} and the antichain $\mathcal{Z}_k(C_n)$ consisting of complements of trees of size $k$, we obtain the following  $ \mathfrak{C}_n$-equivariant versions of Equation~\eqref{eqn:mu-deleted-antichain}: 
\begin{equation}\label{CycleRepk=2}
\tilde{H}_{n-4}(\Delta_2(C_n))=\bigoplus_{\stackrel{\hat 0<x<\hat 1} {x\in \mathcal{Z}_k(C_n)}} \tilde{H}(\hat 0, x)\otimes  \tilde{H}( x,\hat 1) - V_{(2,1^{n-2})}\big\downarrow_{\mathfrak{C}_n}^{\mathfrak{S}_n},
\end{equation}
and for $k\ge 3$, 
\begin{equation}\label{CycleRepk>=3}
\tilde{H}_{n-k-1}(\Delta_k(C_n))= V_{(k,1^{n-k})}\big\downarrow_{\mathfrak{C}_n}^{\mathfrak{S}_n}-\bigoplus_{\stackrel{\hat 0<x<\hat 1} {x\in \mathcal{Z}_k(C_n)}} \tilde{H}(\hat 0, x)\otimes  \tilde{H}( x,\hat 1) , \ k\ge 3.
\end{equation}
We have  omitted the homology degrees in the right-hand side since the intervals involved are Boolean lattices,
 and it is clear where the unique nonvanishing homology occurs.

To see that these decompositions are indeed group-equivariant, we observe that the $n$ elements of the antichain $\mathcal{Z}_k(C_n)$ are transitively permuted by the cyclic group $\mathfrak{C}_n$. This is also true of the one-dimensional homology modules $\tilde{H}(\hat 0, x), x\in \mathcal{Z}_k(C_n).$ Since the stabilizer of an $x\in \mathcal{Z}_k(C_n)$ is clearly the trivial group, the cyclic group transitively permutes the summands $\tilde{H}(\hat 0, x)\otimes  \tilde{H}( x,\hat 1)$, and hence acts like the regular representation on the direct sum $\oplus_{\stackrel{\hat 0<x<\hat 1} {x\in \mathcal{Z}_k(C_n)}} \tilde{H}(\hat 0, x)\otimes  \tilde{H}( x,\hat 1)$.  Note also that the homology of $(x, \hat 1)$ is the one-dimensional trivial module for all $x\in \mathcal{Z}_k(C_n).$
Hence Equation~\eqref{CycleRepk>=3} becomes 
\[\tilde{H}_{n-k-1}(\Delta_2(C_n))=V_{(k,1^{n-k})}\big\downarrow_{\mathfrak{C}_n}^{\mathfrak{S}_n} -\mathrm{Reg}_{\mathfrak{C}_n},
\]
as claimed.

In the case $k=2$,  Equation~\eqref{CycleRepk=2} becomes 
$\tilde{H}_{n-4}(\Delta_2(C_n))=\mathrm{Reg}_{\mathfrak{C}_n} - V_{(2,1^{n-2})}\big\downarrow_{\mathfrak{C}_n}^{\mathfrak{S}_n}.
$

\vspace{1em}
We can determine the restriction of the $\mathfrak{S}_n$-irreducible $V_{(2,1^{n-2})}$ to $\mathfrak{C}_n$ precisely as follows.

The permutation action of the cyclic group  $\mathfrak{C}_n$ on  the set $[n]$ is the restriction of the natural action of $\mathfrak{S}_n$ on 
$[n]$. It is well known that the latter decomposes into two irreducible components, the trivial representation and the \emph{reflection} representation, see e.g.,\ 
\cite[Example~7.18.8]{RPSEC21999}: 
$V_{(n)}\oplus V_{(n-1,1)}.$

Since $V_{(n)}$ is the one-dimensional trivial representation of $\mathfrak{S}_n$, we obtain 
\[V_{(n-1,1)}\big\downarrow_{\mathfrak{C}_n}^{\mathfrak{S}_n}=\mathrm{Reg}_{\mathfrak{C}_n}- 1_{\mathfrak{C}_n}.\]
Now observe that $V_{(2,1^{n-2})}$ is the sign representation of $\mathfrak{S}_n$ tensored with $V_{(n-1,1)}$. It is easy to see that the sign representation restricted to $\mathfrak{C}_n$ is the trivial representation $1_{\mathfrak{C}_n}$ if $n$ is odd, and the sign representation otherwise.  The regular representation is invariant with respect to tensoring with the sign, and the result follows.
\end{proof}

\subsection{Prism over a clique}\label{sec:RowanNonshellableGraph}
\begin{df}
The \emph{prism over a clique} is the graph $G_n$ with vertex set $\{1^+, \dotsc, n^+, 1^-, \dotsc, n^-\}$ for a given integer $n$, and an edge between $i^+$ and $j^+$, between $i^-$ and $j^-$, and between $i^+$ and $i^-$, for every $i, j \in \{1, \dotsc, n\}$. Note that this is the Cartesian product, $G_n=K_n\times K_2$.
\end{df}
\begin{rem}\label{rem:G_k-for-other-k}
If $n<k$, then $\Delta_k(G_n)$ is the void complex, since there are no separating sets of size $2n-k$, which is less than $n$.  So $\Delta_k(G_n)$ is shellable.   
\end{rem}

We determine the homotopy type of $\Delta_k(G_n)$ when $n\ge k$ precisely in Theorem~\ref{thm:conj-prismClique2022June21S-MarijaDMT} below. The case $k=2$ was also proved in \cite{BDJRSX-TOTAL2024}.

\begin{theorem}\label{thm:conj-prismClique2022June21S-MarijaDMT}
Let $n\ge k\ge 2$. The $(2n-k-1)$-dimensional cut complex $\Delta_k(G_n)$ has homotopy type
\[ \Delta_k(G_n)\simeq\bigvee_{\binom{n-1}{k-1}} \bbS^{2n-k-2},\] one lower than full dimension. 
  Thus for $n\ge k$, $\Delta_k(G_n)$ is not shellable.  
\end{theorem}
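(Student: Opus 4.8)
The plan is to pass to an explicit combinatorial model of $\Delta_k(G_n)$, settle the base case $n=k$ directly, and then determine the homotopy type for $n>k$ by discrete Morse theory (with an inductive mapping-cone argument as an alternative).

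First I would fix notation: for $\sigma\subseteq V(G_n)$ put $P(\sigma)=\{i:i^+\in\sigma\}$ and $Q(\sigma)=\{i:i^-\in\sigma\}$. Since $G_n[S]$ is the union of the two cliques $S\cap\{1^+,\dots,n^+\}$ and $S\cap\{1^-,\dots,n^-\}$ together with whatever rungs $i^+i^-$ lie in $S$, the induced subgraph $G_n[S]$ is disconnected precisely when the index sets of $S$ in the two parts are disjoint and both nonempty. Unwinding Definition~\ref{def:cut-cplx} one obtains the clean description
\[ \sigma\in\Delta_k(G_n)\iff P(\sigma)\neq[n],\ Q(\sigma)\neq[n],\ \text{and}\ \lvert P(\sigma)\cap Q(\sigma)\rvert\le n-k. \]
In particular $\Delta_k(G_n)$ is pure of dimension $2n-k-1$ and is nonvoid iff $n\ge k$. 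In the base case $n=k$ the condition $\lvert P\cap Q\rvert\le 0$ forces $P\cap Q=\emptyset$, and the facets are exactly the transversals $\{i^+:i\in A\}\cup\{i^-:i\notin A\}$ over $\emptyset\neq A\subsetneq[k]$; that is, $\Delta_k(G_k)$ is the boundary complex of the $k$-dimensional cross-polytope on the vertex pairs $\{i^+,i^-\}$ with its two antipodal facets $\{1^+,\dots,k^+\}$ and $\{1^-,\dots,k^-\}$ removed. Deleting two disjoint $(k-1)$-cells from $\bbS^{k-1}$ leaves a space homotopy equivalent to $\bbS^{k-2}$, which is $\bbS^{2n-k-2}$ for $n=k$, and since $\binom{k-1}{k-1}=1$ this is the claim.

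For $n>k$ I would run discrete Morse theory on $\Delta_k(G_n)$, in the spirit of Lemma~\ref{lem:MorseMatchingTrees}: construct an acyclic (iterated element) matching, say processing the vertices $1^-,2^-,\dots$ in turn, and show that the critical cells are a single $0$-cell together with exactly $\binom{n-1}{k-1}$ cells in dimension $2n-k-2$; the fundamental theorem of discrete Morse theory (see \cite{JonssonBook2008}) then yields $\Delta_k(G_n)\simeq\bigvee_{\binom{n-1}{k-1}}\bbS^{2n-k-2}$. A more structural alternative is induction on $n$: decomposing $\Delta_k(G_n)=\st_{\Delta}(n^-)\cup\del_{\Delta}(n^-)$ and using that the star is a cone, Propositions~\ref{prop:TopFact2ndIsoThm} and~\ref{prop:quotient-by-contractible-homotopy} present $\Delta_k(G_n)$ as the mapping cone of $\lk_{\Delta}(n^-)\hookrightarrow\del_{\Delta}(n^-)$; peeling off $n^+$ in the same way and invoking Lemma~\ref{lem:Extendedlinks} brings the link $\lk_{\Delta}(\{n^+,n^-\})=\Delta_k(G_{n-1})$ into the picture, and the Pascal identity $\binom{n-1}{k-1}=\binom{n-2}{k-1}+\binom{n-2}{k-2}$ is exactly what such an induction should reflect --- the first summand coming from a double suspension of $\Delta_k(G_{n-1})$, the second from the deletion term. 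Either way, the non-shellability is then immediate: $\Delta_k(G_n)$ is nonvoid and pure of dimension $2n-k-1$, yet it is not contractible and has reduced homology only in dimension $2n-k-2$, whereas by Theorem~\ref{thm:shell-implies-homotopytype} a shellable pure complex is contractible or a wedge of spheres of its own dimension.

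The main obstacle is the homotopy computation for $n>k$. On the Morse-theory route, essentially all the work is in designing the matching so that it is acyclic and leaves no facet critical while leaving precisely $\binom{n-1}{k-1}$ critical cells in codimension $1$ --- that is, in carefully tracking which faces become unmatched as the matching is iterated (or restricted from a larger, easier complex). On the inductive route, the delicate point is analysing the deletion $\del_{\Delta}(n^-)$ well enough to produce the second Pascal term $\binom{n-2}{k-2}$. The combinatorial description above is what keeps this bookkeeping under control; it also exposes a fibration of $\mathcal{L}(\Delta_k(G_n))$ over the face lattice of $\Delta_k(E_n)$ --- the $(n-k-1)$-skeleton of an $(n-1)$-simplex, whose reduced homology is $\binom{n-1}{k-1}$-dimensional by Proposition~\ref{prop:Deltak-edgeless-graphS} --- which is the source of the sphere count and could instead be exploited via the Quillen fiber lemma (Theorem~\ref{thm:QuillenfiberLemma}).
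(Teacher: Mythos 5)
Your preliminary reductions are sound. The combinatorial model is correct: a vertex set $S$ of $G_n$ induces a disconnected subgraph exactly when its two index sets are nonempty and disjoint, the number of columns met by $V\setminus\sigma$ is $n-|P(\sigma)\cap Q(\sigma)|$, and for $k\ge 2$ one can always select $k$ of those columns meeting both levels once $P(\sigma),Q(\sigma)\ne[n]$, so your characterization of the faces is right. The base case $n=k$ is also correct, and indeed the paper remarks after its proof that $\Delta_k(G_k)$ is the boundary of the $k$-dimensional crosspolytope with two antipodal facets removed, which is homotopy equivalent to $\bbS^{k-2}$.

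The gap is the case $n>k$, which is where essentially all of the content of the theorem lies, and your proposal does not close it. You announce that you would build an acyclic iterated element matching whose critical cells are a single $0$-cell together with $\binom{n-1}{k-1}$ cells in dimension $2n-k-2$, but you never construct the matching or verify anything about it --- and that construction and verification constitute the entire proof in the paper. There the authors run the sequence $\mathcal{M}_{1^+},\mathcal{M}_{1^-},\mathcal{M}_{2^+},\dots,\mathcal{M}_{n^+}$, characterize the faces surviving the first two matchings by four explicit conditions on $X_\sigma=V\setminus(\sigma\cup\{1^+,1^-\})$, identify the critical set $\mathcal{C}$ as the faces with $X_\sigma=\{i_1^+,i_1^-,i_2^-,\dots,i_{k-1}^-\}$, and then prove by induction on $i$ that each $\mathcal{M}_{i^+}$ pairs exactly the intended faces, pairs nothing in $\mathcal{C}$, and pairs nothing else. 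None of that bookkeeping (which is genuinely delicate; for instance one must rule out that a face of $\mathcal{C}$ gets matched by some later $\mathcal{M}_{i^+}$) appears in your write-up, so the claim about the critical cells is unsupported. The same criticism applies to your alternative routes: noting that Pascal's identity is ``what such an induction should reflect'' is not a proof that $\del_{\Delta}(n^-)$ contributes the $\binom{n-2}{k-2}$ summand, and the proposed Quillen-fiber map onto $\mathcal{L}(\Delta_k(E_n))$ comes with no verification that its fibers are contractible. In short, you have correctly identified the paper's strategy and correctly located the hard step, but the hard step is missing.
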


\begin{proof}
We prove this theorem by using discrete Morse theory, precisely by constructing a sequence of element matchings \cite[Appendix, Theorems~7.2, 7.5]{BDJRSX-TOTAL2024}. Let $V= \{1^+,\ldots, n^+, 1^-, \ldots, n^-\}$ be the vertex set, $V^+=\{1^+,\ldots, n^+\}$  and $V^-= \{1^-, \ldots, n^-\}.$ For an arbitrary $X \subset V$ let $X^+ = X \cap V^+$, and $X^- = X \cap V^-$.
We observe that if $X \subset V^+$, $X \subset V^-$, or if there is an $i$ such that $\{i^+, i^-\} \subset X$, then $X$ is connected.  
Pairs $\{i^+, i^-\} $ will be called {\em columns}.  Thus the complement of a disconnected set (in particular, a facet of $\Delta_k(G_n)$) must contain at least one element of each column. A set of vertices  $X$ contains a disconnected set of size $m$ if and only if there are $m$ columns indexed by $i_1 < i_2 < \cdots < i_m$ such that for each $j \in \{1, \ldots, m\}$, $X \cap \{i_j^+,i_j^- \} \neq \emptyset,$  $X \cap \{i_1^+, \ldots, i_m^+\} \neq \emptyset$, and $X \cap \{i_1^-, \ldots, i_m^-\} \neq \emptyset$.  We will denote by $d(X)$ the maximal cardinality of a disconnected set contained in $X$.

First, we perform an element matching $\mathcal{M}_{1^+}$ using vertex $1^+$. Faces $\sigma \in \Delta_k(G_n)$ that remain unmatched satisfy  $1^+ \notin \sigma$ and $\sigma \cup \{1^+\} \notin \Delta_k(G_n).$ Then we perform an element matching $\mathcal{M}_{1^-}$ using vertex $1^-$. 
There are two possible types of unmatched faces after the sequence $\mathcal{M}_{1^+}$ followed by $\mathcal{M}_{1^-}$:
\begin{enumerate}
\item Faces $\sigma \in \Delta_k(G_n)$ that satisfy:
$$1^+, 1^- \notin \sigma, \ \sigma \cup \{1^+\}\notin \Delta_k(G_n), \text{ and }  \sigma \cup \{1^-\} \notin \Delta_k(G_n).$$

\item Faces $\sigma \cup \{1^-\} \in \Delta_k(G_n)$ that satisfy: 
$$1^+, 1^- \notin \sigma, \ \sigma \cup \{1^+\} \in \Delta_k(G_n), \text{ and }  \sigma \cup \{1^+, 1^-\} \notin \Delta_k(G_n).$$
\end{enumerate}

To see that there are actually no faces of type (1), note that every face $\sigma$ is in a facet, and every facet has to contain either $1^+$ or $1^-$. So either $\sigma\cup\{1^+\}$ or $\sigma\cup\{1^-\}$ is in $\Delta_k(G_n)$. Therefore, there are only unmatched faces of type (2).
Denote by $K$ the set of all unmatched faces after the sequence of matchings $\mathcal{M}_{1^+}$ followed by  $\mathcal{M}_{1^-}$. 

 Let $\sigma \cup \{1^-\}$ be an arbitrary face in $K$. Conditions from (2) imply that $d(V\setminus (\sigma \cup \{1^+\})) \ge k$, $d(V\setminus (\sigma \cup \{1^-\})) \ge k$, while $d(V\setminus (\sigma \cup \{1^+, 1^-\})) \le k-1$.  Consequently,
$$d(V\setminus (\sigma \cup \{1^+\})) = k= d(V\setminus (\sigma \cup \{1^-\})), \text{ and }  d(V\setminus (\sigma \cup \{1^+, 1^-\})) = k-1.$$
Because of its repeated use, we write $$X_{\sigma}= V\setminus (\sigma \cup \{1^+, 1^-\}).$$ We conclude that the unmatched faces are all those $\sigma \cup \{1^-\}$  which satisfy the condition that  there are $k-1$ indices $i_1 < i_2 < \cdots < i_{k-1}$ ($i_1 \ge 2$) such that:
\begin{enumerate}[label=(\alph*)]
    \item $X_{\sigma} \subset \{i_1^+, i_1^-\} \cup \cdots \cup \{i_{k-1}^+, i_{k-1}^-\}$;
    \item For each $j \in \{1,\ldots, k-1\}$, $X_{\sigma} \cap \{i_{j}^+, i_{j}^-\} \neq \emptyset$;
    \item $X_{\sigma} \cap \{i_{1}^+, \ldots, i_{k-1}^+\} \neq \emptyset$;
    \item $X_{\sigma} \cap \{i_{1}^-, \ldots, i_{k-1}^-\} \neq \emptyset$.
\end{enumerate}

The set $\{i_1, i_2, \ldots, i_{k-1}\}$ of indices will be 
called the {\em support} of $X_{\sigma}$, and the individual $i_j$ the {\em supporting indices}.

Further,  we perform a sequence of element matchings $\mathcal{M}_{2^+}, \ldots, \mathcal{M}_{n^+}$, where each $\mathcal{M}_{i^+}$ denotes the element matching  using vertex $i^+$. We claim that the set of unmatched faces after all these element matchings will be 
$$\mathcal{C} = \{ V \setminus \{1^+,i_1^+, i_1^-, i_2^-, \ldots, i_{k-1}^-\} : \{i_1,\ldots,i_{k-1}\} \subset \{2,3,\ldots, n\} \text{ and  } i_1< i_2 < \ldots < i_{k-1}\},$$
i.e., the set of all $\sigma \cup \{1^-\}$ satisfying $X_{\sigma} = \{i_1^+, i_1^-, i_2^-, \ldots, i_{k-1}^-\}$ for an arbitrary subset $\{i_1,\ldots,i_{k-1}\} \subset \{2,3,\ldots, n\}$ and order $i_1< i_2 < \ldots < i_{k-1}$. See Figure~\ref{matched} for an example of such a matched face, and Figure~\ref{unmatched} for an example of an unmatched face.

Let us explain which pairs are made by these element matchings. For each face $\sigma \cup \{1^-\} \in K$, consider the  set of supporting indices  for $X_{\sigma}$: $i_1 < i_2 < \cdots < i_{k-1}$ ($i_1 \ge 2$). Consider the smallest $j\in \{1, \ldots, k-1\}$ such that $i_j^- \in X_{\sigma}$ (it exists because of condition (d)). We have two possibilities:
\begin{itemize}
    \item[(P1)] If $i_j^+ \in \sigma$, we claim that $\sigma \cup \{1^-\}$ will be matched with $(\sigma \setminus \{i_j^+\}) \cup \{1^-\}$ in the element matching $\mathcal{M}_{i_j^+}$. One can easily check that if $\sigma \cup \{1^-\} \in K$, then $(\sigma \setminus \{i_j^+\}) \cup \{1^-\} \in K$ as well. Also, $j$ is the smallest index for $(\sigma \setminus \{i_j^+\}) \cup \{1^-\} \in K$ such that $i_j^- \in X_{\sigma \setminus \{i_j^+\}}$. 
    \item[(P2)] If $i_j^+ \notin \sigma$, and if $(\sigma \cup \{i_j^+\}) \cup \{1^-\} \in K$,  we claim that $\sigma \cup \{1^-\}$ will be matched with $(\sigma \cup \{i_j^+\}) \cup \{1^-\}$ in the element matching $\mathcal{M}_{i_j^+}$. Again, the smallest negative entry $j$ is the same for $\sigma$ and $\sigma \cup \{i_j^+\}.$
\end{itemize}
    Consider an arbitrary  $\sigma$ for which $(\sigma \cup \{i_j^+\}) \cup \{1^-\} \notin K.$ Then  $X_{\sigma}$ satisfies conditions (a)--(d), while $X_{\sigma \cup \{i_j^+\}}$ does not satisfy all of them. The only condition which can be violated is condition (c), i.e., $X_{\sigma \cup \{i_j^+\}} \cap \{i_{1}^+, \cdots, i_{k-1}^+\} = \emptyset$. From this relation, and from (a)--(d) for $X_{\sigma}$, we conclude  that $X_{\sigma} = \{i_j^+\} \cup  \{i_{1}^-,  i_2^-, \cdots, i_{k-1}^-\}$. Further, since $j$ is the smallest negative entry in $X_{\sigma}$, then $j=1$, and $X_{\sigma} = \{i_1^+\} \cup  \{i_{1}^-,  i_2^-, \cdots, i_{k-1}^-\}.$ This means that $\sigma \cup \{1^-\}$ belongs precisely  to $\mathcal{C}.$ Therefore we have proved that all faces in $K \setminus \mathcal{C}$ are divided into ``pairs" of type 
    $$\sigma \cup \{1^-\} \longleftrightarrow (\sigma \cup \{i_j^+\}) \cup \{1^-\}$$
    by considering that smallest negative entry  $j$.

\begin{figure}
\begin{center}

\begin{tikzpicture}
[
        box/.style={rectangle,draw=black,thick, minimum size=1cm},
    ]

\foreach \x in {0,1,...,8}{
    \foreach \y in {0,1}
        \node[box] at (\x,\y){};
}

\draw (-0.5,-0.5) to  (0.5,0.5);
\draw (-0.5,0.5) to  (0.5,1.5);

\draw[draw=red]  (1.5,0.5) to  (2.5,1.5) (2.5,0.5) to  (1.5,1.5);
\draw[draw=red]  (3.5,-0.5) to  (4.5,0.5) (3.5,0.5) to  (4.5,-0.5);
\draw[draw=red]  (4.5,-0.5) to  (5.5,0.5) (4.5,0.5) to  (5.5,-0.5);
\draw[draw=red]  (4.5,0.5) to  (5.5,1.5) (4.5,1.5) to  (5.5,0.5);
\draw[draw=red]  (6.5,-0.5) to  (7.5,0.5) (6.5,0.5) to  (7.5,-0.5);

\node[box,fill=cyan ] at (1,1){};  
\node[box,fill=cyan ] at (1,0){};   
\node[box,fill=cyan ] at (2,0){}; 
\node[box,fill=cyan ] at (3,0){}; 
\node[box,fill=cyan ] at (3,1){}; 
\node[box,fill=cyan ] at (4,1){}; 
\node[box,fill=cyan ] at (6,1){}; 
\node[box,fill=cyan ] at (6,0){}; 
\node[box,fill=cyan ] at (7,1){}; 
\node[box,fill=cyan ] at (8,1){};
\node[box,fill=cyan ] at (8,0){}; 

\draw[draw=black, line width=3pt, mark=none]  (3.5,-0.5) rectangle (4.5,1.5); 

\draw[dotted]
    (-0.2,0.2) node {$1^-$}
    (-0.2,1.2) node {$1^+$}
    (0.8,0.2) node {$2^-$}
    (0.8,1.2) node {$2^+$}
    (1.8,0.2) node {$3^-$}
    (1.8,1.2) node {$3^+$}
    (2.8,0.2) node {$4^-$}
    (2.8,1.2) node {$4^+$}
    (3.8,0.2) node {\color{orange}{$\boldsymbol{5^-}$}}
    (3.8,1.2) node {$5^+$}
    (4.8,0.2) node {$6^-$}
    (4.8,1.2) node {$6^+$}
    (5.8,0.2) node {$7^-$}
    (5.8,1.2) node {$7^+$}
    (6.8,0.2) node {$8^-$}
    (6.8,1.2) node {$8^+$}
    (7.8,0.2) node {$9^-$}
    (7.8,1.2) node {$9^+$};
\draw[color = black]
    (4,2) node {$\sigma = \{2^+, 2^-,3^-, 4^+, 4^-,5^+, 7^+, 7^-,8^+,9^+,9^-\}\not\in \mathcal{C}.$};

\draw [color = black] (-1.8,1.3) node {$\qquad\qquad\qquad\qquad$};

\draw[color=black, fill = cyan] (-2.7,0.5) rectangle ++(0.5,0.5);
\draw[color = black] (-1.8,0.8) node { $\in \sigma$};

\draw[color = black] (-2,0) node { $\boxed{\color{red}{\times}} \in X_{\sigma}$ };   
\draw[color = black] (-2,-0.5) node { $1^+, 1^- \notin \sigma$ };

\draw[color = red] 
(2,-1) node {$i_1 = 3$}
(4,-1) node {$i_2 = 5$}
(5,-1) node {$i_3 = 6$}
(7,-1) node {$i_4 = 8$};

\draw [decorate,decoration={brace,amplitude=5pt,mirror,raise=4ex}]
  (2,-0.6) -- (7,-0.6) node[midway,yshift=-3em]{ $k-1 =4$ supporting indices $= \#$ of columns with $\boxed{\color{red}{\times}}$  boxes};

\draw (4,-2.5) node{ $X_{\sigma} = \{3^+,{\color{orange}{\boldsymbol{5^-}}},6^+,6^-,8^- \}$ };

\draw (4,-3) node{ $j=2$, $i_2^- = \color{orange}{\boldsymbol{5^-}}$ -- smallest negative in $X_{\sigma}$};

\end{tikzpicture}

\begin{tikzpicture}
[
        box/.style={rectangle,draw=black,thick, minimum size=1cm},
    ]
\foreach \x in {0,1,...,8}{
    \foreach \y in {0,1}
        \node[box] at (\x,\y){};
}

\draw (-0.5,-0.5) to  (0.5,0.5);
\draw (-0.5,0.5) to  (0.5,1.5);

\draw[draw=red]  (1.5,0.5) to  (2.5,1.5) (2.5,0.5) to  (1.5,1.5);
\draw[draw=red]  (3.5,-0.5) to  (4.5,0.5) (3.5,0.5) to  (4.5,-0.5);
\draw[draw=red]  (3.5,0.5) to  (4.5,1.5) (3.5,1.5) to  (4.5,0.5);
\draw[draw=red]  (4.5,-0.5) to  (5.5,0.5) (4.5,0.5) to  (5.5,-0.5);
\draw[draw=red]  (4.5,0.5) to  (5.5,1.5) (4.5,1.5) to  (5.5,0.5);
\draw[draw=red]  (6.5,-0.5) to  (7.5,0.5) (6.5,0.5) to  (7.5,-0.5);

\node[box,fill=cyan ] at (1,1){};  
\node[box,fill=cyan ] at (1,0){};   
\node[box,fill=cyan ] at (2,0){}; 
\node[box,fill=cyan ] at (3,0){}; 
\node[box,fill=cyan ] at (3,1){}; 
\node[box,fill=cyan ] at (6,1){}; 
\node[box,fill=cyan ] at (6,0){}; 
\node[box,fill=cyan ] at (7,1){}; 
\node[box,fill=cyan ] at (8,1){};
\node[box,fill=cyan ] at (8,0){};

\draw[draw=black, line width=3pt, mark=none]  (3.5,-0.5) rectangle (4.5,1.5); 

\draw[dotted]
    (-0.2,0.2) node {$1^-$}
    (-0.2,1.2) node {$1^+$}
    (0.8,0.2) node {$2^-$}
    (0.8,1.2) node {$2^+$}
    (1.8,0.2) node {$3^-$}
    (1.8,1.2) node {$3^+$}
    (2.8,0.2) node {$4^-$}
    (2.8,1.2) node {$4^+$}
    (3.8,0.2) node {\color{orange}{$\boldsymbol{5^-}$}}
    (3.8,1.2) node {\color{orange}{$\boldsymbol{5^+}$}}
    (4.8,0.2) node {$6^-$}
    (4.8,1.2) node {$6^+$}
    (5.8,0.2) node {$7^-$}
    (5.8,1.2) node {$7^+$}
    (6.8,0.2) node {$8^-$}
    (6.8,1.2) node {$8^+$}
    (7.8,0.2) node {$9^-$}
    (7.8,1.2) node {$9^+$};

\draw[color = black]
    (4,2) node {$5^+ \in \sigma$, hence $\sigma \cup \{ 1^-\}$ will be matched with $\big(\sigma\backslash\{ 5^+\} \big) \cup \{ 1^-\}$ in $\mathcal{M}_{5^+}$};
    
\draw[color = black] (-2,0.5) node { $\quad\qquad \sigma \backslash \{ 5^+\}:\quad$ };
\end{tikzpicture}

\end{center}
\caption{An example of a matched pair $(\sigma \cup \{1^-\}, \ \sigma \setminus \{5^+\}\cup \{1^-\} )$}
\label{matched}
\end{figure}


\begin{figure}
\begin{center}

\begin{tikzpicture}
[
        box/.style={rectangle,draw=black,thick, minimum size=1cm},
    ]
    
\foreach \x in {0,1,...,8}{
    \foreach \y in {0,1}
        \node[box] at (\x,\y){};
}

\draw (-0.5,-0.5) to  (0.5,0.5); 
\draw (-0.5,0.5) to  (0.5,1.5);

\draw[draw=red]  (1.5,0.5) to  (2.5,1.5) (2.5,0.5) to  (1.5,1.5);
\draw[draw=red]  (1.5,-0.5) to  (2.5,0.5) (1.5,0.5) to  (2.5,-0.5);
\draw[draw=red]  (3.5,-0.5) to  (4.5,0.5) (3.5,0.5) to  (4.5,-0.5);
\draw[draw=red]  (4.5,-0.5) to  (5.5,0.5) (4.5,0.5) to  (5.5,-0.5);
\draw[draw=red]  (6.5,-0.5) to  (7.5,0.5) (6.5,0.5) to  (7.5,-0.5);

\node[box,fill=cyan ] at (1,1){};
\node[box,fill=cyan ] at (1,0){};
\node[box,fill=cyan ] at (3,1){}; 
\node[box,fill=cyan ] at (3,0){}; 
\node[box,fill=cyan ] at (4,1){};
\node[box,fill=cyan ] at (5,1){};
\node[box,fill=cyan ] at (6,1){}; 
\node[box,fill=cyan ] at (6,0){}; 
\node[box,fill=cyan ] at (7,1){};
\node[box,fill=cyan ] at (8,1){};
\node[box,fill=cyan ] at (8,0){};

\draw[draw=black, line width=3pt, mark=none]  (3.5,0.5) rectangle  (4.5,1.5);

\draw[dotted]
    (-0.2,0.2) node {$1^-$}
    (-0.2,1.2) node {$1^+$}
    (0.8,0.2) node {$2^-$}
    (0.8,1.2) node {$2^+$}
    (1.8,0.2) node {$3^-$}
    (1.8,1.2) node {$3^+$}
    (2.8,0.2) node {$4^-$}
    (2.8,1.2) node {$4^+$}
    (3.8,0.2) node {$5^-$}
    (3.8,1.2) node {$5^+$}
    (4.8,0.2) node {$6^-$}
    (4.8,1.2) node {$6^+$}
    (5.8,0.2) node {$7^-$}
    (5.8,1.2) node {$7^+$}
    (6.8,0.2) node {$8^-$}
    (6.8,1.2) node {$8^+$}
    (7.8,0.2) node {$9^-$}
    (7.8,1.2) node {$9^+$};
    
\draw[color = black]
    (4,2) node {$\sigma = \{2^+, 2^-, 4^+, 4^-,5^+, 6^+, 7^+, 7^-,8^+,9^+,9^-\}\in \mathcal{C}.$};
    
\draw[color = black] (-2.2,1.5) node {$n=9$, $k=5$};
\draw[color=black, fill = cyan] (-2.7,0.5) rectangle ++(0.5,0.5);
\draw[color = black] (-1.8,0.8) node { $\in \sigma$};

\draw[color = black] (-2,0) node { $\boxed{\color{red}{\times}} \in X_{\sigma}$};   
\draw[color = black] (-2,-0.5) node { $1^+, 1^- \notin \sigma$ };



\end{tikzpicture}

\begin{tikzpicture}
\draw (0,-1) node{ $\sigma \cup \{1^-\}$ cannot be matched in $\mathcal{M}_{3^+}$, because $\sigma \cup \{ 1^+, 1^- \} \cup \{3^+\}$ is not a face by condition (c). };

\draw (0,-1.5) node{$\sigma\cup \{ 1^-\}$ is not matched in $\mathcal{M}_{5^+}$ with the following face $\tau$, because $\tau$ is matched with $\tau\cup \{ 3^+\}$. };
\end{tikzpicture}
\hspace{1cm}
\begin{tikzpicture}
[
        box/.style={rectangle,draw=black,thick, minimum size=1cm},
    ]

\foreach \x in {0,1,...,8}{
    \foreach \y in {0,1}
        \node[box] at (\x,\y){};
}

\draw (-0.5,-0.5) to  (0.5,0.5);
\draw (-0.5,0.5) to  (0.5,1.5);

\draw[draw=red]  (1.5,0.5) to  (2.5,1.5) (2.5,0.5) to  (1.5,1.5);
\draw[draw=red]  (1.5,-0.5) to  (2.5,0.5) (1.5,0.5) to  (2.5,-0.5);
\draw[draw=red]  (3.5,-0.5) to  (4.5,0.5) (3.5,0.5) to  (4.5,-0.5);
\draw[draw=red]  (3.5,0.5) to  (4.5,1.5) (3.5,1.5) to  (4.5,0.5);
\draw[draw=red]  (4.5,-0.5) to  (5.5,0.5) (4.5,0.5) to  (5.5,-0.5);
\draw[draw=red]  (6.5,-0.5) to  (7.5,0.5) (6.5,0.5) to  (7.5,-0.5);

\node[box,fill=cyan ] at (0,0){};  
\node[box,fill=cyan ] at (1,1){};  
\node[box,fill=cyan ] at (1,0){};   
\node[box,fill=cyan ] at (3,0){}; 
\node[box,fill=cyan ] at (3,1){}; 
\node[box,fill=cyan ] at (5,1){};
\node[box,fill=cyan ] at (6,1){}; 
\node[box,fill=cyan ] at (6,0){}; 
\node[box,fill=cyan ] at (7,1){}; 
\node[box,fill=cyan ] at (8,1){};
\node[box,fill=cyan ] at (8,0){}; 

\draw[draw=black, line width=3pt, mark=none]  (1.5,0.5) rectangle (2.5,1.5);

\draw[dotted]
    (-0.2,0.2) node {$1^-$}
    (-0.2,1.2) node {$1^+$}
    (0.8,0.2) node {$2^-$}
    (0.8,1.2) node {$2^+$}
    (1.8,0.2) node {$3^-$}
    (1.8,1.2) node {$3^+$}
    (2.8,0.2) node {$4^-$}
    (2.8,1.2) node {$4^+$}
    (3.8,0.2) node {$5^-$}
    (3.8,1.2) node {$5^+$}
    (4.8,0.2) node {$6^-$}
    (4.8,1.2) node {$6^+$}
    (5.8,0.2) node {$7^-$}
    (5.8,1.2) node {$7^+$}
    (6.8,0.2) node {$8^-$}
    (6.8,1.2) node {$8^+$}
    (7.8,0.2) node {$9^-$}
    (7.8,1.2) node {$9^+$};
\draw[color = black]
    (4,2) node {$\tau = \{1^-, 2^+, 2^-, 4^+, 4^-, 6^+, 7^+, 7^-,8^+,9^+,9^-\}.$};
    
\draw [color = black] (-1.8,1.3) node {$\qquad\qquad\qquad\qquad$};
\draw[color=black, fill = cyan] (-2.7,0.5) rectangle ++(0.5,0.5);
\draw[color = black] (-1.8,0.8) node {$\in \tau$};

\end{tikzpicture}
\end{center}
\caption{An unmatched face $\sigma \cup \{1^- \}$ and an example showing why it is unmatched.}
\label{unmatched}
\end{figure}
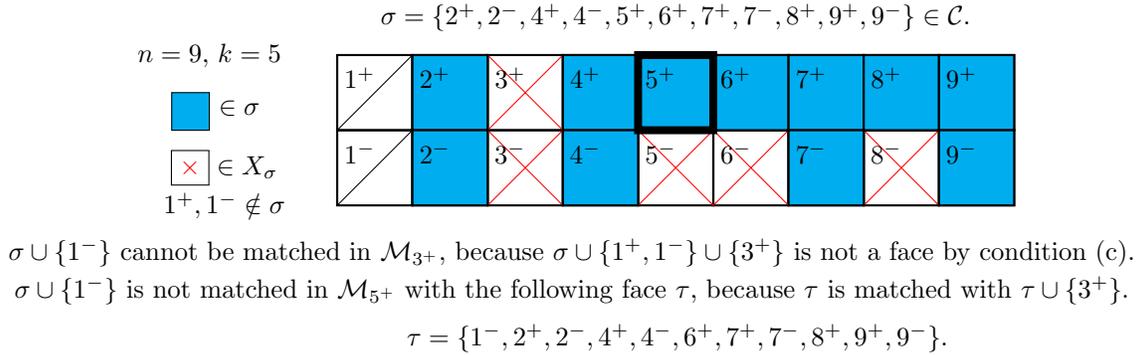


It remains to prove that for each $i \ge 2$, the element matching $\mathcal{M}_{i^+}$ makes exactly the pairs in $K\setminus \mathcal{C}$ (based on the minimal negative entry), as stated in (P1) and (P2), while it does not match any other face, and it does not match any of the faces from $\mathcal{C}$.  We prove this  by induction on $i$.  
  
First we prove the base case $i=2$ (one can observe that the base case is just a simplified version of the induction step, but we include it for completeness). If $\sigma \cup \{1^-\}$ is an arbitrary face in $K\setminus \mathcal{C}$ such that $2^-\in X_{\sigma}$, then $\sigma \cup \{1^-\}$ is paired in $\mathcal{M}_{2^+}$ as explained in (P1) and (P2). Next, let $\sigma \cup \{1^-\} \in \mathcal{C},$ and let $X_{\sigma} = \{i_1^+\} \cup  \{i_{1}^-,  i_2^-, \cdots, i_{k-1}^-\}$. If $2^+ \notin \sigma$ (i.e.,$i_1=2$), then $2^+$ cannot be added to $\sigma \cup \{1^-\}$ because $X_{\sigma \cup \{2^+\}}$ would not satisfy condition (c). If $2^+ \in \sigma$ (i.e.,$i_1 \ge 3$), then $\sigma \cup \{1^-\}$ cannot be paired by using $2^+$ because $d(X_{\sigma \setminus \{2^+\}}) =k,$ contradicting the condition of exactly $k-1$ supporting indices. So we have proved that $\mathcal{M}_{2^+}$ does not match any of the faces in $\mathcal{C}$. It remains to be proved that $\mathcal{M}_{2^+}$ does not make any other pairs. Consider an arbitrary $\sigma \cup \{1^-\} \in K \setminus \mathcal{C}$, with supporting indices $i_{1}<  i_2< \cdots <i_{k-1}$ for $X_{\sigma}$, and let $j$ be the smallest index such that $i_j^- \in X_{\sigma}.$ Now $i_j > 2$, so $2^- \in \sigma$ ($2^- \notin X_{\sigma}$). We claim that the corresponding addition/removal of $2^+$ would change the cardinality of the maximal disconnected set in the  complement (which has to be $k-1$). Indeed, if $2^+ \notin \sigma$, then $(\sigma \cup \{2^+\}) \cup \{1^-\} \notin K$ because $d(X_{\sigma \cup \{2^+\}}) < k-1$. Otherwise, if  $2^+ \in \sigma$, $(\sigma \setminus\{2^+\}) \cup \{1^-\} \notin K$ because $d(X_{\sigma \setminus \{2^+\}}) =k.$ This confirms that $\mathcal{M}_{2^+}$ does not make any other pairs, and finishes the proof for the base case.
  
Now assume that the statement holds for all $2, \ldots, i-1$ $(i \ge 3)$, and we prove it for the element matching $\mathcal{M}_{i^+}$ using $i^+.$

First, consider an arbitrary $\sigma \cup \{1^-\} \in K$, with the supporting indices  for $X_{\sigma}$: $i_1 < i_2 < \cdots < i_{k-1}$ ($i_1 \ge 2$), for which  the smallest $j\in \{1, \ldots, k-1\}$ such that $i_j^- \in X_{\sigma}$  satisfies $i_j = i.$ By the induction hypothesis,  $\sigma \cup \{1^-\}$ has  not been matched yet, and its pair defined in (P1)/(P2) has not been matched yet. Therefore this pair is  formed precisely  in $\mathcal{M}_{i^+}$. 
  
The next fact we need to prove is that $\mathcal{M}_{i^+}$ does not match any of the faces from $\mathcal{C}.$ Consider an arbitrary  $\sigma \cup \{1^-\} \in \mathcal{C},$ $X_{\sigma} = \{i_1^+, i_1^-, i_2^-, \ldots, i_{k-1}^-\}$, $2 \le i_1< i_2 < \ldots < i_{k-1}.$ Obviously, no vertex $i^+$ can be added to  $\sigma \cup \{1^-\}$, because then $X_{\sigma \cup \{i^+\}}$ would not satisfy condition (c). The only possibility is that $\sigma \cup \{1^-\}$ is matched with $(\sigma \setminus \{i^+\}) \cup \{1^-\}$, where $i_1 < i \le i_{k-1}.$ However, by induction hypothesis, the face $(\sigma \setminus \{i^+\}) \cup \{1^-\}$ was  matched with $((\sigma \setminus \{i^+\}) \cup \{i_1^+\}) \cup \{1^-\}$ when we performed the element matching $\mathcal{M}_{i_1^+}$. Therefore, $\mathcal{M}_{i^+}$ does not match any face from $\mathcal{C}.$

Finally, we claim that $\mathcal{M}_{i^+}$ does not match any other face besides  the faces described in (P1) and (P2). Again, consider a face $\sigma \cup \{1^-\} \in K \setminus \mathcal{C}$, where $X_{\sigma}$ has supporting indices $i_1 < i_2 < \cdots < i_{k-1}$, and the smallest $j\in \{1, \ldots, k-1\}$ such that $i_j^- \in X_{\sigma}$. If $i_j < i$ then this face was already matched by the induction hypothesis, so we can assume that $i_j > i$ (for $i_j =i$ we already know how this face is paired). There are two possibilities: either $i$ is a supporting index for $X_{\sigma}$,  or not.  If $i \in \{i_1, \ldots, i_{j-1}\}$, then   $i^-  \notin X_{\sigma}$, i.e., $i^-  \in \sigma$, so  $i^+  \notin \sigma$ from condition (b). The only option would be to match $\sigma \cup \{1^-\}$ with $(\sigma \cup \{i^+\}) \cup \{1^-\}$, but this is not possible because then $X_{\sigma \cup \{i^+\}}$ would not contain a disconnected $(k-1)$-set (because $X_{\sigma \cup \{i^+\}} \cap \{i^+, i^-\} = \emptyset$). The second possibility for $i$ is that $i \notin \{i_1, \ldots, i_{j-1}\}$, i.e.,  $i$ is not a supporting index for $X_{\sigma}$. Then $i^+, i^-  \in \sigma$, so the only option would be to match $\sigma \cup \{1^-\}$ with $(\sigma \setminus \{i^+\}) \cup \{1^-\}$. But the addition of $i^+$ to the complement would increase the size of a maximal disconnected set, i.e., it would imply $d(X_{\sigma \setminus \{i^+\}}) =k$, which is not possible. 

By induction, we have proved that after the sequence $\mathcal{M}_{1^+}, \mathcal{M}_{1^-}$, and then $\mathcal{M}_{2^+}, \mathcal{M}_{3^+}, \ldots \mathcal{M}_{n^+},$ the unmatched faces are exactly the faces in $\mathcal{C}.$ There are exactly $\binom{n-1}{k-1}$ faces in $\mathcal{C}$, and each of them contains exactly $2n-k-1$ vertices.  By \cite[Appendix, Theorems~7.5]{BDJRSX-TOTAL2024}, a sequence of element matchings is an acyclic matching of  the face poset,  so we conclude that the complex $\Delta_k(G_n)$ is homotopy equivalent to a CW-complex with $\binom{n-1}{k-1}$ cells of dimension $2n-k-2$ and one additional $0$-cell \cite[Appendix, Theorems~7.2]{BDJRSX-TOTAL2024}. Consequently, $\Delta_k(G_n)\simeq\bigvee_{\binom{n-1}{k-1}} \bbS^{2n-k-2}$.
\end{proof}

The description in the proof shows that $\Delta_k(G_k)$ is isomorphic to the boundary of a $k$-dimensional crosspolytope with two opposite facets removed. 

\begin{lemma}\label{lem:Prism-over-clique-min-nonshellable}
The prism over a clique, $G_k$, is a minimal forbidden subgraph for $k$-cut complex shellability.
\end{lemma}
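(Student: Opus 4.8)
The plan is to establish the two conditions defining a minimal forbidden subgraph: that $\Delta_k(G_k)$ is not shellable, and that $\Delta_k(H)$ is shellable for every proper induced subgraph $H\subsetneq G_k$. The first is immediate from Theorem~\ref{thm:conj-prismClique2022June21S-MarijaDMT} with $n=k$: the nonvoid cut complex $\Delta_k(G_k)$ is pure of dimension $2k-k-1=k-1$ but is homotopy equivalent to $\bigvee_{\binom{k-1}{k-1}}\bbS^{2k-k-2}=\bbS^{k-2}$, a single sphere of dimension $k-2<k-1$. Since $k\ge 2$ we have $\tilde H_{k-2}(\bbS^{k-2})\ne 0$, whereas a pure shellable $(k-1)$-dimensional complex is, by Theorem~\ref{thm:shell-implies-homotopytype}, either contractible or a wedge of $(k-1)$-spheres, and in both cases $\tilde H_{k-2}$ vanishes; hence $\Delta_k(G_k)$ is not shellable.

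For the second condition, note that $\mathrm{Aut}(G_k)$ contains the layer involution $i^+\leftrightarrow i^-$ and all permutations of the $k$ columns, so it acts transitively on the $2k$ vertices; consequently all one-vertex deletions $G_k\setminus v$ are isomorphic, and — since every proper induced subgraph of $G_k$ is an induced subgraph of some $G_k\setminus v$ — Proposition~\ref{prop:induced subgraphs} reduces the task to showing that $\Delta_k(H)$ is shellable for $H=G_k\setminus k^-$. Arguing exactly as in the proof of Theorem~\ref{thm:conj-prismClique2022June21S-MarijaDMT} (now with ``columns'' $\{1^+,1^-\},\dots,\{(k-1)^+,(k-1)^-\},\{k^+\}$), a $k$-subset of $V(H)$ induces a disconnected subgraph precisely when it is a transversal $\{1^{\epsilon_1},\dots,(k-1)^{\epsilon_{k-1}},k^+\}$ with at least one $\epsilon_i=-$; taking complements in $V(H)$, the facets of $\Delta_k(H)$ are exactly the $2^{k-1}-1$ transversals of $\{1^+,1^-\},\dots,\{(k-1)^+,(k-1)^-\}$ other than the all-``$-$'' one. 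Equivalently — and consistently with the remark after Theorem~\ref{thm:conj-prismClique2022June21S-MarijaDMT}, using $\Delta_k(H)\cong\lk_{\Delta_k(G_k)}\{v\}$ from Lemma~\ref{lem:Extendedlinks} — $\Delta_k(H)$ is the boundary of the $(k-1)$-dimensional cross-polytope with one facet removed.

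Finally I would shell this complex. The self-contained argument orders the facets $F_{\vec\epsilon}$ by increasing number of ``$-$'' entries of $\vec\epsilon$ (ties broken arbitrarily) and checks the criterion of Definition~\ref{def:shelling}: if $F_{\vec\epsilon}$ precedes $F_{\vec\delta}$ then some column $j$ satisfies $\delta_j=-$ and $\epsilon_j=+$ (otherwise $\vec\delta$ would precede $\vec\epsilon$), and flipping that entry of $\vec\delta$ to $+$ produces an earlier facet $F_{\vec\gamma}$ with $F_{\vec\epsilon}\cap F_{\vec\delta}\subseteq F_{\vec\gamma}\cap F_{\vec\delta}$ and $|F_{\vec\gamma}\cap F_{\vec\delta}|=k-2$; alternatively one may invoke that a simplicial polytope boundary admits a shelling ending at any prescribed facet and then delete that last facet. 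With both conditions verified, $G_k$ is a minimal forbidden subgraph for $k$-cut complex shellability. The one genuinely non-routine step is pinning down the facets of $\Delta_k(G_k\setminus v)$ from the column description; once the cross-polytope picture is visible the shelling is elementary, and the remainder is bookkeeping over Theorem~\ref{thm:conj-prismClique2022June21S-MarijaDMT}, Lemma~\ref{lem:Extendedlinks}, and Proposition~\ref{prop:induced subgraphs}.
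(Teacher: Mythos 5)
Your proposal is correct and follows essentially the same route as the paper: non-shellability of $\Delta_k(G_k)$ comes from Theorem~\ref{thm:conj-prismClique2022June21S-MarijaDMT}, symmetry plus Proposition~\ref{prop:induced subgraphs} reduces minimality to a single vertex deletion, and the deleted complex is identified (via Lemma~\ref{lem:Extendedlinks} or your direct transversal computation) as the boundary of a $(k-1)$-dimensional crosspolytope with one facet removed, which is then shelled. The only cosmetic difference is that you supply an explicit shelling order where the paper invokes polytope shellability with a prescribed last facet; both are valid.
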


\begin{proof}
We must examine what happens to the cut complex when a vertex of $G_k$ is deleted. By symmetry, we may assume this vertex is $k^+$.

According to  Lemma~\ref{lem:Extendedlinks}, we have $\Delta_k(G_k \setminus \{k^+\}) = \lk_{\Delta_k(G_k)}(k^+)$. The link of $k^+$ in the $k$-dimensional crosspolytope is the $(k-1)$-dimensional crosspolytope on vertex set $\{1^+, \dotsc, (k-1)^+, 1^-, \dotsc, (k-1)^-\}$. Next, consider what happens when the facets $\{1^+, \dotsc, k^+\}$ and $\{1^-, \dotsc, k^-\}$ are deleted from the $k$-dimensional crosspolytope: the vertex $k^+$ does not appear in the second of these facets, and deleting the first of them results in the facet $\{1^+, \dotsc, (k-1)^+\}$ being removed from the link of $k^+$. Therefore, $\Delta_k(G_k \setminus \{k^+\})$ is a $(k-1)$-dimensional crosspolytope with a single facet removed.

Every polytope is shellable, so in particular the $(k-1)$-dimensional crosspolytope is shellable. By symmetry, there is a shelling order in which the facet $\{1^+, \dotsc, (k-1)^+\}$ appears last, so removing this facet from the shelling order gives us a shelling order for the crosspolytope without this facet. Thus $\Delta_k(G_k \setminus \{k^+\})$ is shellable, so $G_k$ is a minimal forbidden subgraph for $k$-cut complex shellability.
\end{proof}

\subsection{Squared Cycle Graphs}\label{sec:Wreath-MarijaDMT-Rowan-Dane-Mark}

\begin{df}  The  \emph{squared cycle graph} $W_n$ is the graph with vertex set $[n],$ and edge-set $\{ (i, i+1 \mod n), (i, i+2 \mod n)\}$,  $i=1, \dots, n$.
\end{df}
Clearly, $W_n$ contains the cycle graph $C_n$. If $n\le 5$, $W_n$ is the complete graph $K_n$. For $n\ge 6$ and $n>k+3$, the cut complex $\Delta_k(W_n)$ has dimension $n-k-1$.

\begin{prop} For $n\le k+3$,  $\Delta_k(W_n)$ is void (there are no faces) and therefore shellable. 
\end{prop}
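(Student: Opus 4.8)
The plan is to show that for $n \le k+3$ the squared cycle $W_n$ has no disconnected $k$-subsets, so that $D_k(W_n) = \emptyset$ and hence $\Delta_k(W_n)$ is the void complex, which is shellable by convention (Remark~\ref{rem:empty-complex-0-dim-complex}). Since $\Delta_k(W_n)$ being void means every $k$-subset of $[n]$ induces a connected subgraph of $W_n$, and the complement of such a subset has size $n-k \le 3$, it is equivalent to show that the induced subgraph on any subset of $[n]$ of size $\ge k$ (equivalently, obtained by deleting at most $n-k \le 3$ vertices) is connected. So the whole statement reduces to the claim: \emph{deleting any set of at most $3$ vertices from $W_n$ leaves a connected graph, i.e., $W_n$ is $4$-connected} (for $n \ge 5$; for $n \le 4$ there is nothing to delete).

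First I would handle the degenerate cases: if $n \le 5$ then $W_n = K_n$, whose cut complex is void for all $k \ge 1$ by Example~\ref{ex:Examples-cut-complex}(4), so assume $n \ge 6$. The key step is then a direct connectivity argument. Label the vertices $1, \dots, n$ cyclically, with $i$ adjacent to $i\pm 1$ and $i \pm 2$ modulo $n$. Suppose $S \subseteq [n]$ with $|S| \le 3$; I want to show $W_n \setminus S$ is connected. The cyclic order on $[n] \setminus S$ breaks the remaining vertices into at most $3$ circular "arcs" of consecutive (in the original cyclic order) surviving vertices. Within a single arc, all vertices are connected: consecutive survivors $a, a+1$ are adjacent, and more importantly, since we remove at most $3$ vertices, between the end of one arc and the start of the next there is a "gap" of at most $3$ deleted vertices — but I only need connectivity \emph{within} an arc here, which is immediate from the $i \sim i+1$ edges (all of which survive inside an arc). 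Then I connect consecutive arcs: if the gap of deleted vertices between two arcs has size $1$, the last survivor before the gap and the first survivor after it differ by $2$ in cyclic position, hence are adjacent via an $i \sim i+2$ edge; if the gap has size $2$, they differ by $3$ — this is the case that needs a slightly more careful look, because $i \sim i+3$ is not an edge of $W_n$. But with at most $3$ total deletions there can be at most one gap of size $2$ (and then the other gaps have size $0$, meaning $W_n \setminus S$ is actually a single arc plus possibly one jump), or all gaps have size $1$; I would enumerate these few cases. When a gap has size $3$ (all deletions concentrated in one spot), $W_n \setminus S$ is a single path/arc and is trivially connected.

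The main obstacle — really the only subtlety — is carefully verifying the "gap of size $2$" crossing, i.e., that removing exactly two consecutive vertices $j, j+1$ still leaves the two flanking survivors $j-1$ and $j+2$ in the same component. Here $j-1$ and $j+2$ are at cyclic distance $3$, not directly adjacent; but $j-1 \sim j-2$ (if $j-2$ survives) and we route around, or more simply: since only two vertices are deleted and $n \ge 6$, the rest of the cycle $C_n$ minus these two vertices is still a path connecting $j-1$ to $j+2$ the long way around, and that path lies in $W_n \setminus S$. Thus I would phrase the argument as: $W_n \setminus S$ contains $C_n \setminus S$ as a spanning subgraph on the vertex set $[n] \setminus S$, and deleting $\le 3$ vertices from an $n$-cycle with $n \ge 6$ can disconnect it into at most $3$ pieces; I then use the extra chords $i \sim i+2$ to reconnect those pieces, checking that adjacent pieces are always joined by a surviving chord. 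I would write this out as a short lemma-style paragraph and conclude that $D_k(W_n) = \emptyset$, hence $\Delta_k(W_n)$ is void and shellable.

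\begin{proof}
If $n \le 5$, then $W_n = K_n$ is complete, so $\Delta_k(W_n)$ is void by Example~\ref{ex:Examples-cut-complex}(4). Assume therefore $6 \le n \le k+3$, so that $n - k \le 3$. The facets of $\Delta_k(W_n)$ are the complements of the disconnected $k$-subsets of $[n]$, so it suffices to show that every subset of $[n]$ of size $k$ induces a connected subgraph of $W_n$; equivalently, deleting any set $S \subseteq [n]$ with $|S| = n - k \le 3$ leaves $W_n \setminus S$ connected.

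Fix such an $S$, and consider the spanning subgraph $C_n \setminus S$ of $W_n \setminus S$ obtained by keeping only the cycle edges $i \sim i+1 \pmod n$. Deleting $|S| \le 3$ vertices from the $n$-cycle ($n \ge 6$) leaves at most $3$ pairwise vertex-disjoint arcs $A_1, \dots, A_m$ ($m \le 3$) of consecutive surviving vertices, each of which is connected (indeed a path) in $C_n \setminus S$; between consecutive arcs (cyclically) lies a maximal block of deleted vertices of some size $g_t \ge 1$, with $\sum_t g_t = |S| \le 3$. If $m \le 1$, then $W_n \setminus S$ is connected and we are done, so assume $m \in \{2, 3\}$.

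It remains to connect consecutive arcs using the chord edges $i \sim i+2 \pmod n$ of $W_n$. Let $A_t$ and $A_{t+1}$ be consecutive arcs separated by a deleted block of size $g_t$. If $g_t = 1$, say the block is $\{j\}$ with $j-1 \in A_t$ and $j+1 \in A_{t+1}$: then $j - 1$ and $j + 1$ are at cyclic distance $2$, hence joined by a chord of $W_n$ surviving in $W_n \setminus S$. If $g_t = 2$, then since $\sum_t g_t \le 3$ and $m \ge 2$, all other deleted blocks have size $1$; thus $m = 2$ and the other block has size exactly $1$, already handled above, so $A_1$ and $A_2$ are joined through that size-$1$ block, and $W_n \setminus S$ is connected. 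If $g_t = 3$, then $m = 1$, contrary to assumption. In every remaining case each pair of consecutive arcs is joined by an edge of $W_n \setminus S$, so $W_n \setminus S$ is connected.

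Hence no $k$-subset of $[n]$ is disconnected, $D_k(W_n) = \emptyset$, and $\Delta_k(W_n)$ is the void complex, which is shellable by Remark~\ref{rem:empty-complex-0-dim-complex}.
\end{proof}
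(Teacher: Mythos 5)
Your proof is correct and follows essentially the same approach as the paper: both arguments verify directly that deleting at most three vertices cannot disconnect $W_n$, using the cycle edges to connect each arc of surviving vertices and the distance-$2$ chords to bridge the gaps left by single deleted vertices. Your arc-and-gap organization is a cleaner, more uniform packaging of the paper's explicit case-by-case paths for $n=k+2$ and $n=k+3$, but the underlying idea is the same.
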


\begin{proof} 
Clearly $\Delta_k(W_n)=\emptyset$ if $n\le k+1$.  If $n=k+2$, a separating 2-set must be  of the form $\{1,j\}$, $j\notin \{2,3,k+1, k+2\}$, so $4\le j\le k$.  Since there is an edge between $j-1$ and $j+1$, and  paths from $2$ to $j-1,$ and from $j+1$ to $k$, this is impossible.  

Let $n=k+3$.  Consider the set $S=\{1<i<j\}$.  If $j<k+3$ and $i<j-1$, then $i-1, i-2, \ldots, 2, k+3, \ldots, j+1, j-1, \ldots, i+1$ is a path in $W_n\setminus S$.  If $j=k+3$ and $2<i<k+2$, then $2,3,\ldots {i-1},{i+1},\ldots, {k+2}$ is a path in $W_n\setminus S$.  So $W_n$ does not have a separating set of size 3.
\end{proof}

\begin{prop}[{\cite[Theorem~3.11]{BDJRSX-TOTAL2024}}] \label{prop:MarijaDMT-Delta2-wreath-graph}
The $(n-3)$-dimensional cut complex $\Delta_2(W_n)$ has the homotopy type of $\bbS^{n-4}$, one sphere in dimension one lower than the top, for all $n\ge 7$. If $n=6$, $\Delta_2(W_6)$ is homotopy equivalent to $\mathbb{S}^1$.  Hence for $n\ge 6$, the cut complex $\Delta_2(W_n)$ is not shellable. 
\end{prop}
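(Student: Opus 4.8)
The plan is to pass to the combinatorial Alexander dual. By definition $\Delta_2(W_n)=\Delta(W_n)^{\vee}$ is the Alexander dual of the clique complex $\Delta(W_n)$, so Alexander duality in $\bbS^{n-2}$ gives $\tilde H^{j}(\Delta_2(W_n))\cong\tilde H_{n-j-3}(\Delta(W_n))$ for every $j$. Thus it suffices to (i) determine the clique complex $\Delta(W_n)$ up to homotopy, (ii) transport this across duality to compute the homology of $\Delta_2(W_n)$, and (iii) upgrade the homology statement to a homotopy-type statement. I expect $n=6$ to behave differently from $n\ge 7$ and would treat it separately.

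For $n\ge 7$ the first observation is that the maximal cliques of $W_n$ are exactly the $n$ ``windows'' $T_i=\{i,i+1,i+2\}$ (indices mod $n$): three vertices pairwise within cyclic distance $2$ must, once $n\ge 7$, sit inside a width-$3$ window, and there is no clique of size $4$. Hence $\Delta(W_n)$ is a $2$-complex with $f$-vector $(n,2n,n)$: vertices $[n]$, the $n$ ``short'' edges $\{i,i+1\}$, the $n$ ``long'' edges $\{i,i+2\}$, and the triangles $T_i$. Since $\{i,i+2\}$ lies in no clique other than $T_i$, it is a free face of $T_i$, and the elementary collapses $(\{i,i+2\},T_i)$ can be performed for $i=1,\dots,n$ in turn, each leaving the remaining free pairs intact; what is left is the cycle graph $C_n$ (the short edges on $[n]$). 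Therefore $\Delta(W_n)\searrow C_n\simeq\bbS^1$, so $\tilde H_1(\Delta(W_n))\cong\bbZ$ with all other reduced homology zero, and Alexander duality gives $\tilde H^{n-4}(\Delta_2(W_n))\cong\bbZ$ with $\tilde H^{j}(\Delta_2(W_n))=0$ for $j\ne n-4$; being torsion-free, the same holds for homology by universal coefficients. To get the homotopy type I would note that every $4$-subset of $[n]$ contains a non-edge of $W_n$, so every subset of $[n]$ of size at most $n-4$ is a face of $\Delta_2(W_n)$; in particular $\Delta_2(W_n)$ contains the full $2$-skeleton of the simplex on $[n]$ (as $n-4\ge 3$), whence $\pi_1(\Delta_2(W_n))=1$. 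Then Hurewicz gives $\pi_{n-4}(\Delta_2(W_n))\cong\tilde H_{n-4}(\Delta_2(W_n))\cong\bbZ$, a representing map $\bbS^{n-4}\to\Delta_2(W_n)$ induces an isomorphism on all reduced homology groups between simply connected CW complexes, and Whitehead's theorem makes it a homotopy equivalence: $\Delta_2(W_n)\simeq\bbS^{n-4}$.

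For $n=6$ the non-edges of $W_6$ are precisely the three antipodal pairs $\{1,4\},\{2,5\},\{3,6\}$, so $W_6=K_{2,2,2}$; the extra triangles $\{1,3,5\},\{2,4,6\}$ make $\Delta(W_6)$ the octahedral $2$-sphere rather than a circle. Here I would invoke Proposition~\ref{prop:MultipartiteCase3} (part size $2$, $r=3$ parts) to get $\Delta_2(W_6)=\Delta_2(K_{2,2,2})\simeq\bbS^1$; alternatively the Nerve Lemma applied to the three tetrahedral facets, which meet pairwise in three disjoint edges with empty triple intersection, identifies $\Delta_2(W_6)$ with $\partial\Delta^2\cong\bbS^1$. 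Non-shellability then follows uniformly: $\Delta_2(W_n)$ is pure of dimension $n-3$ (its facets are the complements of the $2$-element non-edges), yet it is homotopy equivalent to a sphere of dimension $n-4$ (for $n\ge 7$) or $1$ (for $n=6$), i.e.\ a sphere of dimension strictly between $0$ and $n-3$. By Theorem~\ref{thm:shell-implies-homotopytype} a pure shellable complex of dimension $d$ is contractible or a wedge of $d$-spheres, so its reduced homology is concentrated in degree $d$; a positive-dimensional sphere of smaller dimension has neither property, so $\Delta_2(W_n)$ is not shellable for $n\ge 6$.

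The main obstacle is the step from homology to homotopy type. Alexander duality only records (co)homology, so to conclude $\Delta_2(W_n)\simeq\bbS^{n-4}$ one must establish simple connectivity---this is where the observation that $\Delta_2(W_n)$ carries a full $2$-skeleton is needed---and then invoke Hurewicz and Whitehead. The secondary difficulty is the genuinely exceptional case $n=6$: because $W_6$ picks up the two extra alternating triangles, the clique complex is $\bbS^2$ rather than $\bbS^1$, so the sphere dimension $n-4$ predicted by the general argument is not the right answer there and that case must be handled on its own.
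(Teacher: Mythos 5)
Your argument is correct, and it is a genuinely different route from the one the paper relies on: this paper gives no proof at all for this proposition, importing it from the companion paper \cite{BDJRSX-TOTAL2022}, where (as for the prism and the cycle) the result is obtained by discrete Morse theory on the cut complex itself. You instead exploit the fact that $\Delta_2(G)$ is by definition the Alexander dual of the clique complex. All the key steps check out: for $n\ge 7$ the maximal cliques of $W_n$ are exactly the windows $\{i,i+1,i+2\}$, each long edge $\{i,i+2\}$ lies in only the window $T_i$, so the $n$ elementary collapses are legitimate and leave $C_n\simeq \bbS^1$; Alexander duality then concentrates the (torsion-free) homology of $\Delta_2(W_n)$ in degree $n-4$; since $W_n$ has no $K_4$, every set of size at most $n-4\ge 3$ is a face, so $\Delta_2(W_n)$ contains the full $2$-skeleton of the simplex on $[n]$ and is simply connected, and Hurewicz--Whitehead upgrades the homology computation to $\Delta_2(W_n)\simeq\bbS^{n-4}$; the identification $W_6=K_{2,2,2}$ and the appeal to Proposition~\ref{prop:MultipartiteCase3} (or the nerve of the three tetrahedral facets) handles $n=6$; and non-shellability follows from Theorem~\ref{thm:shell-implies-homotopytype} since the complex is pure of dimension $n-3$ but carries a sphere of strictly smaller positive dimension. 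What your approach buys is conceptual transparency and brevity --- the whole computation reduces to seeing that the clique complex of a squared cycle collapses onto the underlying cycle --- at the cost of importing algebraic-topology machinery (Alexander duality, Hurewicz, the homology Whitehead theorem) that the Morse-matching proof avoids; the Morse-theoretic route, by contrast, works directly with the faces of $\Delta_2(W_n)$ and generalizes to the $k\ge 3$ cases treated elsewhere in Section~\ref{sec:Wreath-MarijaDMT-Rowan-Dane-Mark}, where no clean Alexander-dual description is available. You correctly isolated the two genuine pressure points (the homology-to-homotopy upgrade and the exceptional case $n=6$) and resolved both.
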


Sage computations suggest the following conjectures:

\begin{conj}  For $k\ge 3$, the 4-dimensional cut complex $\Delta_k(W_{k+5})$ has the homotopy type of $\mathbb{S}^3 \vee \bigvee_{\beta(k)} \mathbb{S}^4$ (a wedge of spheres in dimensions 3 and 4) for positive integers $\beta(k)$, and is therefore not shellable.  Its nonzero homology is $\tilde{H}_3=\mathbb{Z}, \tilde{H}_4=  \mathbb{Z}^{\beta(k)}$.
\end{conj}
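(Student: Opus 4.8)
The plan is to combine an explicit description of the face poset of $\Delta_k(W_{k+5})$ with a discrete Morse matching, in the spirit of the proofs of Theorem~\ref{thm:conj-prismClique2022June21S-MarijaDMT} and Proposition~\ref{prop:wreath-graph-k+4-homotopy}. Write $n=k+5$ and identify $V(W_n)$ with the circularly arranged set $\bbZ_n$. The first step is to determine all faces: one checks that $\Delta_k(W_n)$ has a complete $2$-skeleton (for $k\ge 3$ every $3$-subset is a face, since its $(k+2)$-element complement contains a disconnected $k$-set), that its $3$-faces are all $4$-subsets \emph{except} the $n$ ``intervals'' of four cyclically consecutive vertices and the $4$-element independent sets of the cycle $C_n\subseteq W_n$ (four vertices, no two cyclically consecutive), of which there are $\tfrac{n(n-5)(n-6)(n-7)}{24}$, and that the facets are the complements of the $5$-subsets $F$ whose arc decomposition around the circle has ``run type'' $\{3,2\}$ or $\{2,2,1\}$ (one run of $3$ and one run of $2$, or two runs of $2$ and one run of $1$, pairwise non-adjacent), whence $f_4=n(k-1)+n\binom{k-1}{2}=\tfrac{nk(k-1)}{2}$. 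The second step reads off the reduced Euler characteristic from the resulting $f$-vector:
\[
\mu(\Delta_k(W_n)) \;=\; -\binom{n-1}{4}+n+\frac{n(n-5)(n-6)(n-7)}{24}+\frac{nk(k-1)}{2},
\]
so that, if the homotopy type is as asserted, one must have $\beta(k)=\mu(\Delta_k(W_n))+1$; this is a polynomial in $k$ that equals $0$ at $k=3$ (so the statement should really read $\beta(k)\ge 0$, with $\Delta_3(W_8)\simeq\bbS^3$) and is positive for $k\ge 4$ (e.g.\ $\beta(4)=3$, $\beta(5)=10$).

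The third and decisive step is to pin down the homotopy type. I would construct a sequence of element matchings on the face poset, following \cite[Appendix]{BDJRSX-TOTAL2022} and matching on the vertices $1,2,\dots,n$ in cyclic order, with the aim of reducing to a Morse complex whose critical cells are one $0$-cell, one $3$-cell, and $\beta(k)$ cells in dimension $4$. By the second step the cell counts are forced as soon as one shows that exactly one critical cell survives in dimension $3$ and none in dimensions $1$ and $2$, so the substance is to identify that lone critical $3$-cell (the ``$\bbS^3$'' class, which by analogy with $\Delta_2(W_n)$ in Proposition~\ref{prop:MarijaDMT-Delta2-wreath-graph} should be a codimension-one cell) and to check that it cannot be matched away, while everything else is matched off much as in the prism-over-clique argument. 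One then needs to verify that the Morse differential $C_4\to C_3$ vanishes — for instance by showing that no gradient path runs from a critical $4$-cell to the critical $3$-cell, or that such paths cancel in pairs — which makes the integral homology free and concentrated in degrees $3$ and $4$ and yields $\Delta_k(W_{k+5})\simeq\bbS^3\vee\bigvee_{\beta(k)}\bbS^4$.

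An equivalent, more algebraic formulation: write $\Delta_k(W_n)=\Gamma\cup(\text{$4$-cells})$ where $\Gamma$ is the $3$-skeleton, show $\Gamma\simeq\bigvee_N\bbS^3$ with $N=\binom{n-1}{4}-n-\tfrac{n(n-5)(n-6)(n-7)}{24}$ — this requires that deleting the $n$ interval and the $4$-element-independent-set top $3$-faces from the full $3$-skeleton of the $(n-1)$-simplex keeps $H_2=0$, which I would prove by exhibiting, for each deleted $3$-face $\tau$, the boundary of a $4$-simplex through $\tau$ all of whose facets survive in $\Gamma$ — and then analyze the integer matrix of attaching maps $[\partial F]\in H_3(\Gamma)\cong\bbZ^N$ of the $5$-element facets $F$; the claim is precisely that its image is a rank-$(N-1)$ direct summand, giving $\Delta_k(W_n)\simeq\bbS^3\vee\bigvee_{f_4-N+1}\bbS^4$. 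Either way the main obstacle is the same and is isolated: controlling the single surviving $3$-dimensional class — equivalently, showing the facet attaching maps span a corank-one direct summand of $H_3(\Gamma)$, or that the Morse differential into the unique critical $3$-cell is zero. I expect this to demand a careful case analysis organized around the $\bbZ_n$-symmetry of $W_n$ and the two run-types of facets; by contrast the face enumeration, the Euler-characteristic computation, and the reduction to this one point are routine.
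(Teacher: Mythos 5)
First, a point of calibration: the paper does not prove this statement --- it appears there only as a conjecture, supported by Sage computations (together with a companion conjecture giving $\beta(k)=\tfrac{(k-3)(k-2)(k+5)}{6}$). So there is no proof of record to compare yours against, and the question is only whether your outline closes the conjecture. It does not, though the groundwork is solid. Your face enumeration and Euler-characteristic computation check out: with $n=k+5$ the count $f_4=\tfrac{nk(k-1)}{2}$ of facets, the $n$ deleted interval $3$-faces and the $\tfrac{n(n-5)(n-6)(n-7)}{24}$ deleted independent-set $3$-faces give $\mu=-1+\beta(k)$ with $\beta(k)$ exactly the paper's conjectured polynomial (I verified $\beta(3)=0$, $\beta(4)=3$, $\beta(5)=10$), and you are right that the conjecture as stated should allow $\beta(3)=0$. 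But the decisive step --- that the homology is free and concentrated in degrees $3$ and $4$ with a single $\bbZ$ in degree $3$ --- is exactly the content of the conjecture, and your proposal defers it entirely ("I would construct\dots", "I expect this to demand a careful case analysis"). The Euler characteristic alone is consistent with, say, $\bigvee_{\beta(k)-1}\bbS^4$ with no $3$-class, or with torsion, so nothing is proved until the Morse matching (or the rank/summand analysis of the attaching matrix) is actually carried out. This is a genuine, and genuinely hard, gap --- it is why the statement is a conjecture.

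There is also a concrete flaw in one of the steps you label routine. To show $H_2(\Gamma)=0$ for the $3$-skeleton $\Gamma$, you propose exhibiting, for each deleted $3$-face $\tau$, a $4$-simplex $\tau\cup\{v\}$ all of whose other facets survive in $\Gamma$. This fails for the independent-set type once $k\ge 7$: if $\tau$ is a $4$-subset of $C_n$ all of whose cyclic gaps have size at least $2$ (these exist exactly when $n-4\ge 8$), then every vertex $v\notin\tau$ is adjacent in $C_n$ to at most one element $x$ of $\tau$, and $(\tau\cup\{v\})\setminus\{x\}$ (or, if $v$ has no neighbour in $\tau$, any $(\tau\cup\{v\})\setminus\{x\}$) is again an independent $4$-set, hence again a deleted face. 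So no single $4$-simplex witnesses that $[\partial\tau]=0$; one needs either a longer chain of such substitutions or a different argument (e.g., deducing $H_2=0$ from the complete $2$-skeleton together with a connectivity bound on the deleted antichain). This is fixable, but as written the step is wrong for large $k$. In short: the enumerative scaffolding is correct and usefully pins down what $\beta(k)$ must be, but the proposal remains a plan of attack rather than a proof, with its hardest point --- the lone surviving $3$-class and the vanishing Morse differential into it --- still open.
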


\begin{conj}  For $15\ge k\ge 3$, the number of 4-spheres $\beta(k)$ in $\Delta_k(W_{k+5})$ is given by the formula 
\[\beta(k)= \dfrac{(k-3)(k-2)(k+5)}{6}.\] 
These numbers match OEIS sequence A006503.
\end{conj}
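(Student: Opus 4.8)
\emph{Strategy and reduction.} The plan is to turn the computation of $\beta(k)$ into a reduced Euler characteristic computation, and then to carry that out combinatorially; the genuinely hard input — where the homology is supported — is discussed last. Granting the preceding conjecture, that the integral homology of $\Delta_k(W_{k+5})$ is torsion-free, vanishes outside degrees $3$ and $4$, and has $\tilde H_3\cong\mathbb Z$, $\tilde H_4\cong\mathbb Z^{\beta(k)}$, the reduced Euler characteristic is
\[
\mu\bigl(\Delta_k(W_{k+5})\bigr)=(-1)^3\cdot 1+(-1)^4\cdot\beta(k)=\beta(k)-1 .
\]
Thus the conjecture becomes the purely combinatorial identity $\mu(\Delta_k(W_{k+5}))=\dfrac{(k-3)(k-2)(k+5)}{6}-1$; this is already consistent with Proposition~\ref{prop:MarijaDMT-Delta2-wreath-graph} at $k=2$ (both sides equal $-1$) and with $\beta(3)=0$.

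\emph{Computing $\mu$.} Here I would exploit the face structure of $\Delta_k(W_n)$ for $n=k+5$. Viewing $[n]$ cyclically, the key observation is that a $k$-subset $T\subseteq[n]$ induces a disconnected subgraph of the squared cycle $W_n$ exactly when the complement $T^c$, regarded as a subset of the $n$-cycle, has at least two maximal cyclic runs of length $\ge 2$: a missing run of length $1$ keeps its two adjacent blocks of present vertices joined in $W_n$, while a run of length $\ge 2$ separates them. Since $|T^c|=5$, this happens precisely for the cyclic gap-compositions $(2,3)$ and $(1,2,2)$ of $5$, and $\sigma\subseteq[n]$ is a face of $\Delta_k(W_n)$ if and only if $[n]\setminus\sigma$ contains such a $T$. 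Using this, I would count, for each $d\in\{-1,0,1,2,3,4\}$, the number $f_d$ of $d$-faces by inclusion--exclusion over the disconnecting $k$-sets (equivalently, by a transfer-matrix count of the admissible cyclic block patterns); since $\dim\Delta_k(W_{k+5})=4$ is constant while only $n=k+5$ grows, each $f_d$ comes out as an explicit polynomial in $k$, hence so does $\mu=\sum_{d\ge -1}(-1)^d f_d$, of degree at most $3$. Matching this polynomial against $\dfrac{(k-3)(k-2)(k+5)}{6}-1$ (the numerical agreement for $k\le 15$ far exceeds what is needed to identify two polynomials of degree $\le 3$) finishes this step. An alternative is to compute $\mu$ as the Möbius number of $\mathcal L(\Delta_k(W_n))$ via Theorem~\ref{thm:Bac-mu} for the inclusion $\mathcal L(\Delta_k(W_n))\subseteq P(n,k)$; here the simplified formula~\eqref{eqn:mu-deleted-antichain} is unavailable, because the non-faces of $P(n,k)$ do not form an antichain — for instance $[n]\setminus\{i,i+1,i+2,i+3\}$ is a non-face in codimension $1$ — so one must use the full chain sum of Theorem~\ref{thm:Bac-mu}, or the refined face-lattice description announced in the forthcoming paper.

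\emph{The main obstacle.} The principal difficulty is establishing the homological input itself: that $\tilde H_\ast(\Delta_k(W_{k+5}))$ is torsion-free, concentrated in degrees $3$ and $4$, with $\operatorname{rank}\tilde H_3=1$. I would attack this with discrete Morse theory in the spirit of the proof of Theorem~\ref{thm:conj-prismClique2022June21S-MarijaDMT}, building a sequence of element matchings on the face poset of $\Delta_k(W_{k+5})$, using the rotation group $\mathfrak C_{k+5}$ to organize the surviving cells, and showing that the critical cells consist of a single cell of dimension $3$ together with exactly $\beta(k)$ cells of dimension $4$. The delicate point — and the reason this is harder than the crosspolytope/prism computation — is the non-uniform gap behaviour noted above: length-$1$ and length-$\ge 2$ missing runs must be handled differently, so the matching cannot be produced by one clean column-by-column sweep, and pinning down precisely which $4$-dimensional cells survive (and that there are $\dfrac{(k-3)(k-2)(k+5)}{6}$ of them) is the crux. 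Carrying this through would prove the preceding conjecture and at the same time make the Euler-characteristic bookkeeping of the previous paragraph redundant; conversely, if one is content to assume the preceding conjecture, only that routine polynomial count remains.
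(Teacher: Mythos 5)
This statement is a \emph{conjecture} in the paper, supported only by Sage computations for $k\le 15$; the authors give no proof, so there is no argument of theirs to compare yours against. Judged on its own terms, your proposal is a reasonable plan of attack but not a proof, and it has a genuine gap that you yourself identify: everything rests on the homological input that $\tilde H_*(\Delta_k(W_{k+5}))$ is torsion-free, concentrated in degrees $3$ and $4$, with $\operatorname{rank}\tilde H_3=1$. That input is precisely the preceding conjecture, which is equally open, and your treatment of it is a strategy sketch (a $\mathfrak{C}_{k+5}$-organized sequence of element matchings in the style of Theorem~\ref{thm:conj-prismClique2022June21S-MarijaDMT}) rather than an argument. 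As you correctly observe, the obstruction is that length-$1$ and length-$\ge 2$ gaps behave differently, so the clean column-by-column sweep used for the prism does not transfer; nothing in the proposal overcomes this, and identifying the critical $4$-cells is exactly the unsolved crux.

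The Euler-characteristic half is sound in outline but not executed. Your characterization of the facets is correct: a $5$-element complement separates $W_{k+5}$ iff it has at least two maximal cyclic runs of length $\ge 2$, i.e.\ cyclic run type $(2,3)$ or $(2,2,1)$. Each $f_d$ is indeed eventually a polynomial in $k$ of degree at most $3$ (all subsets of size $\le 3$ are faces for $k$ large, while a $4$-subset is a face iff its run type is $(1,3)$, $(2,2)$ or $(1,1,2)$, so only $f_3$ and $f_4$ require real counting), but these counts are not carried out, and before invoking ``four data points determine a cubic'' one must verify that the polynomiality threshold falls inside the range $k\le 15$ actually checked by Sage. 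One small correction: your example of a removed element of $P(n,k)$ destroying the antichain structure should be the set $\{i,i+1,i+2,i+3\}$ itself (a non-face $4$-run, contained in the non-facet $5$-run $\{i,\dotsc,i+4\}\in\mathcal{Z}_k(W_n)$), not its complement $[n]\setminus\{i,i+1,i+2,i+3\}$, which has $k+1>5$ elements and does not even lie in $P(n,k)$; your conclusion that Equation~\eqref{eqn:mu-deleted-antichain} is unavailable here is nonetheless right.
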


\begin{conj} For $k\ge 3$  the cut complex $\Delta_k(W_n)$ is shellable for $n\ge k+6$ (supported by Sage for $n\le 13$ and $k\le 5$).
For $k=3$ and $n\ge 9,$ the Betti numbers are 
$\binom{n-4}{2}-9 = \{1, 6, 12, 19, 27, \dots\}$.
This is OEIS A051936.
\end{conj}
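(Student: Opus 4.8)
The plan is to treat the two claims separately: first shellability of $\Delta_k(W_n)$ for $n\ge k+6$, and then, granting shellability, the Betti number formula in the case $k=3$. Everything hinges on a clean description of the facets. Identifying $V(W_n)$ with $\mathbb{Z}/n$, a $k$-subset $T$ induces a disconnected subgraph of $W_n$ precisely when, among its $k$ cyclic gaps, at least two have size $\ge 3$; equivalently, a set $S=[n]\setminus T$ of size $n-k$ is a facet of $\Delta_k(W_n)$ if and only if, read around the cycle, $S$ has at least two maximal runs of consecutive vertices of length $\ge 2$. I would prove this first: it is the $W_n$-analogue of the fact (used in the proof of Theorem~\ref{thm:DaneCycleShellability}) that the separating sets of $C_n$ are exactly the non-intervals.

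For shellability I would follow Dane Miyata's strategy for $C_n$. Identify each facet with the increasing sequence of its vertices and order the facets lexicographically. Given facets $A<B$, let $a_i$ be the first index where they differ, $b_\ell$ the least element of $B\setminus A$, and $a'$ the least element of $A\setminus B$; set $C=(B\cup\{a_i\})\setminus\{b_\ell\}$ (or a small variant). Then $C$ precedes $B$, $|B\cap C|=|B|-1$, and $A\cap B\subseteq B\cap C$, so if $C$ is a facet we are done. If $C$ fails to be separating, it has at most one maximal run of length $\ge 2$, and one must repair it by instead sliding some vertex of $B$ to manufacture a second long run while staying lexicographically below $B$ --- the analogue of the four-case split near the end of the proof of Theorem~\ref{thm:DaneCycleShellability}. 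The main obstacle is exactly here: the non-facets of size $n-k$ form a much richer family for $W_n$ (all sets with $\le 1$ run of length $\ge 2$) than the intervals of $C_n$, so the case analysis is longer, and one must see precisely why the argument needs $|S|=n-k\ge 6$ of room and breaks down at $n=k+4$ and $n=k+5$ (where extra homology appears in codimension~1). An alternative, perhaps more uniform, route is to build a sequence of element matchings in the style of Lemma~\ref{lem:MorseMatchingTrees} and the proof of Theorem~\ref{thm:conj-prismClique2022June21S-MarijaDMT}, but designing such a matching is itself delicate.

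For the Betti numbers when $k=3$, assume shellability, so that $\Delta_3(W_n)$ is a wedge of $(n-4)$-spheres, and compute the reduced Euler characteristic by hand --- this is a concrete instance of the ``modified conditions'' version of Theorem~\ref{thm:truncBoolean-minus-antichain} referred to in Section~\ref{sec:Euler-char}. For $n\ge 9$ the connected $3$-subsets of $W_n$ are exactly the four rotation families $\{x,x+1,x+2\}$, $\{x,x+1,x+3\}$, $\{x,x+2,x+3\}$, $\{x,x+2,x+4\}$, and these are pairwise disjoint, so $|\mathcal{Z}_3(W_n)|=4n$; moreover the $4$-subsets of $W_n$ all of whose $3$-subsets are connected are precisely the $n$ windows $\{x,x+1,x+2,x+3\}$, and every $5$-subset contains a disconnected $3$-subset. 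Hence $\Delta_3(W_n)$ is obtained from the $(n-4)$-skeleton of the $(n-1)$-simplex by deleting $4n$ facets (dimension $n-4$) and $n$ ridges (dimension $n-5$), keeping the complete $(n-6)$-skeleton. Since that skeleton is a wedge of $\binom{n-1}{2}$ spheres of dimension $n-4$, and deleting a facet lowers $(-1)^{n-4}\mu$ by $1$ while deleting a ridge raises it by $1$, we obtain
\[(-1)^{n-4}\mu(\Delta_3(W_n)) \;=\; \binom{n-1}{2}-4n+n \;=\; \binom{n-1}{2}-3n \;=\; \binom{n-4}{2}-9,\]
and by shellability this is the unique nonzero Betti number. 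The only thing to check carefully is that the four families and the windows really are the complete list of deleted faces, which is elementary cyclic gap-counting but genuinely uses $n\ge 9$ so that nothing collides.
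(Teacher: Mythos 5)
This statement is a \emph{conjecture} in the paper: no proof is given there, so there is nothing to compare your argument against, and the standard you must meet is a complete proof from scratch. Your proposal does not meet that standard, but parts of it are correct and worth separating out. The facet characterization is right: for $n\ge k+6$ a $k$-set $T\subseteq\mathbb{Z}/n$ is disconnected in $W_n$ exactly when at least two of its cyclic gaps have size $\ge 3$ (elements in different ``blocks'' are at cyclic distance $\ge 3$ in both directions, so the blocks really are the components), equivalently the facet $S=[n]\setminus T$ has at least two maximal runs of length $\ge 2$. Your Euler characteristic computation for $k=3$ is also correct: for $n\ge 9$ the connected $3$-sets are the four rotation orbits you list, giving $|\mathcal{Z}_3(W_n)|=4n$; the only $4$-sets all of whose $3$-subsets are connected are the $n$ windows $\{x,\dotsc,x+3\}$; every $5$-set contains a disconnected $3$-set; and a direct face count (or the full chain version of Theorem~\ref{thm:Bac-mu} --- note the deleted elements do \emph{not} form an antichain, since each window's complement lies below the complements of its four connected triples, so Equation~\eqref{eqn:mu-deleted-antichain} does not apply as stated) yields
\[(-1)^{n-4}\mu(\Delta_3(W_n))=\binom{n-1}{2}-4n+n=\binom{n-4}{2}-9,\]
matching the conjectured sequence $1,6,12,19,27,\dotsc$ for $n=9,\dotsc,13$.

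The genuine gap is shellability, which is the heart of the conjecture and on which the Betti number claim depends. Your plan is to imitate the lexicographic shelling of $\Delta_k(C_n)$ from Theorem~\ref{thm:DaneCycleShellability}, but you yourself identify that the repair step --- what to do when the swapped set $C$ fails to be separating --- is where all the work lies, and you do not carry out the case analysis. For $C_n$ the non-separating $(n-k)$-sets are just complements of intervals, a single rigid family; for $W_n$ they are all sets with at most one long run, and it is not at all clear that a lexicographically earlier facet $D$ with $|B\cap D|=|B|-1$ and $A\cap B\subseteq D$ can always be manufactured, nor where the hypothesis $n\ge k+6$ enters (it must, since $\Delta_k(W_{k+4})$ and conjecturally $\Delta_k(W_{k+5})$ are not shellable). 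Until that is done, you have not proved shellability, and without shellability (or at least some argument that homology is concentrated in the top dimension --- the antichain-deletion result quoted in Theorem~\ref{thm:truncBoolean-minus-antichain} is unavailable here for the same reason as above) the reduced Euler characteristic does not determine the Betti numbers. So the conjecture remains open; what you have is a correct reduction of the $k=3$ Betti number claim to the shellability claim, plus an unexecuted strategy for the latter.
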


Sage computations also suggest that  for $k\ge 3$, the 3-dimensional cut complex $\Delta_k(W_{k+4})$ has the homotopy type of $\bbS^1$ and is therefore not shellable.
The following results enable us to prove this.  Let $n = k+4$, and label the vertices of $W_{k+4} = W_n$ with the indices $1, \dotsc, n$, with arithmetic done modulo $n$.

\begin{lemma} \label{lemma:wreath-graph-k+4-facets}
The facets of the $3$-dimensional cut complex $\Delta_k(W_{k+4})$, for $k\ge 2$, are the sets of the form
\begin{equation*}
S = \{i, i+1, j, j+1\}
\end{equation*}
with $i$ and $j$ chosen so that $i \neq j$ and so that $i+2$ and $j+2$ are not elements of this set. In other words, $S$ consists of two pairs of consecutive vertices, $\{i, i+1\}$ and $\{j, j+1\}$, with a gap of at least one vertex between the pairs in both directions.
\end{lemma}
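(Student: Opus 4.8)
The plan is to translate the statement into a purely combinatorial question about the squared cycle and then resolve it by analysing the ``block structure'' of a vertex subset. Set $n=k+4$. By the remarks preceding the lemma, $\Delta_k(W_n)$ has dimension $n-k-1=3$, so its facets are exactly the $4$-subsets $S$ of the vertex set $[n]$ (indices read modulo $n$) whose complement $[n]\setminus S$ is a disconnected $k$-set (Definition~\ref{def:cut-cplx}); equivalently, the facets are the \emph{separating} $4$-sets of $W_n$. Thus it suffices to characterize which $4$-subsets $S$ have the property that $W_n\setminus S$ is disconnected, and to check that this characterization is precisely ``$S$ is a union of two disjoint maximal runs of two consecutive vertices.''

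First I would prove a general criterion valid for an arbitrary proper nonempty $S\subseteq[n]$. Arrange $[n]$ cyclically, and write $S=B_1\sqcup\dotsb\sqcup B_m$ as its decomposition into maximal runs of consecutive vertices, and likewise $[n]\setminus S=A_1\sqcup\dotsb\sqcup A_m$, with the $A_i$ and $B_i$ alternating around the cycle and all nonempty. Each $A_i$ is a run of consecutive vertices, hence induces a connected subgraph of $W_n$ via difference-$1$ edges. The key observation is that there is an edge of $W_n$ between $A_i$ and $A_{i+1}$ if and only if the separating block $B_i$ has size $1$ (then the last vertex of $A_i$ and the first of $A_{i+1}$ differ by $2$), and that there is no edge of $W_n$ between two non-consecutive $A$-blocks, since their closest vertices differ by at least $3$. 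Consequently $W_n\setminus S$ is connected exactly when the graph on $\{A_1,\dots,A_m\}$ whose edges are the pairs $A_iA_{i+1}$ with $|B_i|=1$ is connected; this graph is a spanning subgraph of an $m$-cycle (or of a double edge when $m=2$, or a single vertex when $m=1$), so it is connected if and only if at most one of the $B_i$ has size $\ge 2$. Hence $W_n\setminus S$ is disconnected if and only if at least two of the runs $B_i$ have size $\ge 2$ (in particular $m\ge 2$).

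To finish, specialize to $|S|=\sum_i|B_i|=4$: since the $|B_i|$ are positive integers summing to $4$ and at least two of them are $\ge 2$, the only possibility is $m=2$ with $|B_1|=|B_2|=2$. So $S=\{i,i+1\}\cup\{j,j+1\}$ for some $i\ne j$, with the two runs disjoint and each maximal, and I would then check the routine equivalence that, for four distinct elements $\{i,i+1,j,j+1\}$, maximality of both runs is equivalent to the two conditions $i+2\notin S$ and $j+2\notin S$ (one verifies $i+2\notin S\iff j-1\notin S\iff j\notin\{i+1,i+2\}$ and $j+2\notin S\iff i-1\notin S\iff j\notin\{i-1,i-2\}$), and conversely that $i\ne j$ together with $i+2\notin S$ and $j+2\notin S$ already forces the four listed elements to be distinct; nonvoidness (hence that the complex is genuinely $3$-dimensional) is witnessed by $S=\{1,2,4,5\}$. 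The main obstacle is none of these steps individually but the careful bookkeeping in the connectivity criterion, especially the cyclic wrap-around and the degenerate cases $m=1$ and $m=2$, where a single $A$-block, or a pair of $A$-blocks bordered by $B$-blocks on both sides, must be handled separately so that no adjacencies are over- or under-counted.
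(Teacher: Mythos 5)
Your proof is correct, but it takes a genuinely different route from the paper's. The paper argues directly and only about $4$-element sets: for the forward direction it exhibits the two components $\{i+2,\dots,j-1\}$ and $\{j+2,\dots,i-1\}$ explicitly, and for the converse it takes a disconnection of $W_n\setminus S$ into $U$ and $V$, picks $u\in U$ and $v\in V$, lets $x$ be the first vertex of $V$ in the cyclic list $u,u+1,\dots,v$ and $y$ the first vertex of $U$ in the list $v,v+1,\dots,u$, and observes that $x-2,x-1,y-2,y-1$ must all lie in $S$ (each is barred from $U$ and from $V$ by the difference-$1$ and difference-$2$ edges into $x$, respectively $y$); since $|S|=4$ this pins down $S=\{x-2,x-1,y-2,y-1\}$, which has the required form. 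You instead prove a general structural criterion: writing any proper nonempty $S$ and its complement as alternating maximal cyclic runs $B_1,\dots,B_m$ and $A_1,\dots,A_m$, the graph $W_n\setminus S$ is disconnected if and only if at least two of the $B_i$ have size at least $2$, and you then specialize to $|B_1|+\dots+|B_m|=4$, which forces $m=2$ and $|B_1|=|B_2|=2$. Your criterion is strictly more general --- it classifies the separating sets of $W_n$ of every size, and would be reusable for other cut complexes of squared cycles --- at the cost of extra bookkeeping around the degenerate cases $m=1$ and $m=2$ and the cyclic wrap-around, all of which you correctly flag and which do check out. Your closing reduction from ``two disjoint maximal runs of length two'' to the lemma's stated conditions ($i\ne j$, $i+2\notin S$, $j+2\notin S$) is also sound. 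Both arguments are valid; the paper's is shorter but single-purpose, yours buys generality.
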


\begin{proof} 
First, suppose $S$ is a set of this form. Then $W_n \setminus S$ has two components, specifically $\{i+2, i+3, \dotsc, j-1\}$ and $\{j+2, \dotsc, i-1\}$: there are no edges between these components, since the gaps between these two sets are size $2$. So $S$ is a facet of $\Delta_k(W_n)$.

Conversely, suppose $S$ is any facet, so $S$ has size $4$ and $W_n \setminus S$ is disconnected into two non-empty subgraphs $U$ and $V$ with no edges between them. Let $u$ and $v$ be vertices of $U$ and $V$, respectively. Consider the list of vertices $u, u+1, \dotsc, v-1, v$, read cyclically. Suppose $x$ is the first vertex in this list that is in $V$. Then both $x-2$ and $x-1$ cannot be in $V$, and they cannot be in $U$ because $(x-2, x)$ and $(x-1,x)$ are edges in $W_n$ but by construction there are no edges between $U$ and $V$; therefore, both $x-2$ and $x-1$ must be elements of $S$. Similarly, if we consider the list $v, v+1, \dotsc, u-1, u$ and take $y$ to be the first element of $U$ in this list, then $y-2$ and $y-1$ must be elements of $S$. But since $S$ has size $4$, $S$ must be exactly $\{x-2, x-1, y-2, y-1\}$. Since $(x-2)+2 = x$ and $(y-2)+2 = y$ are not elements of $S$, as they are elements of $V$ and $U$ respectively by construction, we conclude that $S$ is a set of the desired form.
\end{proof}

A routine count shows that the number of facets of $\Delta_k(W_{k+4})$ is $(k+4) (k-1) / 2$.

\begin{prop} \label{prop:wreath-graph-k+4-homotopy} 
The complex $\Delta_k(W_{k+4})$ is homotopy equivalent to the circle $\bbS^1$.  Consequently the 3-dimensional cut complex  $\Delta_k(W_{k+4})$ is not shellable.
\end{prop}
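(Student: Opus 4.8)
The plan is to analyze the combinatorial structure of $\Delta_k(W_{k+4})$ directly using the facet description from Lemma~\ref{lemma:wreath-graph-k+4-facets}, and to exhibit an explicit deformation of the complex onto a subcomplex homeomorphic to a triangulated circle (or, equivalently, to produce a discrete Morse matching with exactly two critical cells, one in dimension $0$ and one in dimension $1$). Write $n = k+4$ and identify the vertex set with $\mathbb{Z}/n$. By Lemma~\ref{lemma:wreath-graph-k+4-facets}, the facets of this $3$-dimensional complex are exactly the sets $\{i, i+1, j, j+1\}$ where the two ``dominoes'' $\{i,i+1\}$ and $\{j,j+1\}$ are separated by a gap of at least one vertex on both sides; since $n = k+4$ and the four vertices of a facet together with the two mandatory gap-vertices $i+2$, $j+2$ account for six positions, the gaps have total size $n - 4 = k$, so in fact each facet is determined by an \emph{ordered} choice of gap lengths $(g_1, g_2)$ with $g_1, g_2 \ge 1$ and $g_1 + g_2 = k$, up to rotation. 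This already suggests the answer: the facets are naturally indexed so that $\Delta_k(W_{k+4})$ looks like a ``necklace'' of $3$-simplices glued along ridges, closing up into a cycle — hence $\bbS^1$ up to homotopy.

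First I would make the gluing precise: two facets $F = \{i,i+1,j,j+1\}$ and $F' = \{i',i'+1,j',j'+1\}$ share a ridge (a common triangle) if and only if one can be obtained from the other by sliding one domino by one step into its adjacent gap, e.g.\ $\{i,i+1,j,j+1\}$ and $\{i,i+1,j+1,j+2\}$ (valid when the gap on that side has length $\ge 2$). I would set up the ``facet–ridge graph'' whose vertices are the facets and whose edges record shared ridges, and argue that this graph is a single cycle: starting from any facet and repeatedly sliding dominoes around the squared cycle returns to the start after a full loop, and there are no other ridge-adjacencies (a ridge is a triangle, i.e.\ a $3$-set contained in a facet, and one checks each such triangle lies in at most two facets, with exactly the adjacencies just described being realized). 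Because every ridge of a pure $3$-complex lies in at most two facets here, and the facet–ridge graph is connected and $2$-regular (a cycle), the complex is a ``pseudomanifold-like'' necklace. The homotopy type then follows by collapsing: each facet (a $3$-simplex) deformation retracts onto the union of the two ridges through which it is glued to its neighbours, and iterating this around the necklace collapses the whole complex onto a circle. Concretely this is cleanest as a discrete Morse matching: match each facet with one of its two gluing ridges, propagate the matching consistently around the necklace, and match remaining lower-dimensional faces to obtain exactly two critical cells (dimensions $0$ and $1$), whence $\Delta_k(W_{k+4}) \simeq \bbS^1$ by the standard discrete Morse theorem, as used elsewhere in the paper.

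The main obstacle I anticipate is the bookkeeping needed to rule out \emph{extra} faces and \emph{extra} adjacencies — that is, verifying that every nonface outside the advertised pattern really is a nonface, and that the collapsing/Morse matching I propose is genuinely acyclic and has no unexpected critical cells coming from sub-dominoes or from the ``gap-vertices.'' In particular one must check that faces of dimension $\le 2$ not contained in the necklace's gluing ridges all get matched, which requires understanding which small vertex sets have a disconnected $k$-set in their complement; this is where the constraint $n = k+4$ is doing all the work and must be used carefully. An alternative, possibly shorter, route to finish once the necklace structure is established: use Proposition~\ref{prop:TopFact2ndIsoThm} and Proposition~\ref{prop:quotient-by-contractible-homotopy} inductively to peel off one $3$-simplex at a time (each is contractible, and its intersection with the rest of the necklace is a contractible union of two triangles sharing an edge, or two disjoint triangles at the last step), reducing to the homotopy type of the cyclic ``boundary'' which is $\bbS^1$. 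I would present whichever of the two write-ups turns out to need the least case analysis, but the discrete Morse formulation is likely the most robust given the machinery already invoked in the paper.
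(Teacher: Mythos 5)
There is a genuine gap: your central structural claim, that the facets of $\Delta_k(W_{k+4})$ form a ``necklace'' whose facet--ridge graph is a single cycle, is false. A facet $F=\{i,i+1,j,j+1\}$ has four ridges, namely $\{i,i+1,j\}$, $\{i,i+1,j+1\}$, $\{i,j,j+1\}$, $\{i+1,j,j+1\}$, and each of these is shared with the facet obtained by sliding one domino by one step into the adjacent gap. Whenever both gaps of $F$ have size at least $2$ (which happens for most facets once $k\ge 4$), all four of these slides produce valid facets, so $F$ has degree $4$ in the facet--ridge graph, not degree $2$. For instance with $k=4$, $n=8$, the facet $\{1,2,5,6\}$ is ridge-adjacent to $\{1,2,4,5\}$, $\{1,2,6,7\}$, $\{8,1,5,6\}$ and $\{2,3,5,6\}$. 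The fact that every ridge lies in at most two facets does not make the graph $2$-regular; the facets are really parametrized by configurations of two non-overlapping dominoes on the $n$-cycle, a two-parameter family (slide either domino), so the adjacency structure is two-dimensional rather than a single cycle. Consequently both of your proposed finishing moves fail as stated: the ``propagate a matching around the necklace'' Morse matching has no necklace to propagate around, and the ``peel off one $3$-simplex at a time'' argument breaks because a generic facet meets the rest of the complex in four ridges (and more lower-dimensional faces), not in a contractible union of two triangles.

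The cyclic structure you are after does exist, but only for the subfamily of facets whose two gaps are as equal as possible, and reaching that subfamily is the real content of the proof. The paper's argument filters the facets by their smallest gap size $g$ and shows that each facet $F_i^g=\{i,i+1,i+g+2,i+g+3\}$ with smallest gap exactly $g<\lfloor k/2\rfloor$ has a \emph{free face} $\{i+1,i+g+2\}$ (the other three facets containing that edge have strictly smaller gap and have already been removed), so these facets can be removed by elementary collapses one layer at a time. Only the surviving complex $X_{\lfloor k/2\rfloor}$ is a genuine necklace, and there the nerve theorem (together with a few more collapses in the odd case) gives $\bbS^1$. If you want to salvage your write-up, you need to insert this filtration-and-collapse step before invoking any cyclic nerve or Morse matching; without it, the claimed adjacency structure of the full complex is simply wrong.
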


\begin{proof} 
In view of Proposition~\ref{prop:MarijaDMT-Delta2-wreath-graph},  we need only consider the case $k\ge 3$.

Given a facet $F = \{i, i+1, j, j+1\}$ in $\Delta_k(W_{k+4})$, define its \emph{smallest gap size} to be the number
\begin{align*}
\min \left( \# \{i+2, i+3, \dotsc, j-1\}, \# \{j+2, \dotsc, i-1\} \right) & = \min \left( j - i - 2 \bmod n,\, i - j - 2 \bmod n \right)
\end{align*}
with arithmetic modulo $n$. Now, for $g = 1, \dotsc, \lfloor k/2 \rfloor$, define $X_g$ to be the complex generated by all facets of $\Delta_k(W_{k+4})$ with smallest gap size at least $g$. Note that all facets of $\Delta_k(W_{k+4})$ have smallest gap size at least $1$, so $X_1$ is $\Delta_k(W_{k+4})$ itself.

Now we claim that $X_g$ is always homotopy equivalent to $X_{g+1}$, for $g < \lfloor k/2 \rfloor$. For this  we use the method of collapsible complexes, 
see \cite[Section 11]{BjTopMeth1995} and \cite{Koz2020}. The difference between the two complexes $X_g$ and $X_{g+1}$ is the set of facets with smallest gap size exactly $g$, i.e., the facets $F_i^g = \{i, i+1, i+g+2, i+g+3\}$. Recall \cite[Chapter 9]{Koz2020} that a free face in a simplicial complex is one that is contained in a unique facet. The face $\{i+1, i+g+2\}$ is a free face in $X_g$: the only facets of $\Delta_k(W_{k+4})$ that contain it are
\begin{gather*}
    \{i, i+1, i+g+2, i+g+3\} = F_i^g, \\
    \{i+1, i+2, i+g+2, i+g+3\}, \\
    \{i, i+1, i+g+1, i+g+2\}, \\
    \{i+1, i+2, i+g+1, i+g+2\}
\end{gather*}
and all of these facets but $F_i^g$ have a smallest gap size less than $g$, so are not present in $X_g$. Therefore we can remove $\{i+1, i+g+2\}$ from $X_g$ by a sequence of elementary collapses without changing the homotopy type. The remaining faces of $F_i^g$ are $\{i, i+g+1, i+g+3\}$ and $\{i, i+1, i+g+3\}$ and their subfaces, but these two faces are contained respectively in the facets $\{i-1, i, i+g+2, i+g+3\}$ and $\{i, i+1, i+g+3, i+g+4\}$, which each have gap size $g+1$. Therefore we can remove all facets $F_i^g$ with smallest gap size $g$ from $X_g$, in any order, without changing the homotopy type, producing $X_{g+1}$.

Thus $\Delta_k(W_{k+4}) = X_1$ is homotopy equivalent to $X_{\lfloor k/2 \rfloor}$. We consider the cases where $k$ is even or odd separately.

If $k = 2r$ is even, then $X_{\lfloor k/2 \rfloor} = X_r$ consists of the facets $F_i^r = \{i, i+1, i+r+2, i+r+3\}$ for $i = 1, \dotsc, r+2$, where the gaps between the pairs of vertices are size $r$ in both directions. (Note that $F_{r+2+i}^r = \{i+r+2, i+r+3, 2r+4+i, 2r+5+i\} = F_i^r$, since $2r+4 = k+4 = n$ and arithmetic takes place modulo $n$.) Two facets $F_i^r$ and $F_j^r$ intersect each other if and only if $i$ and $j$ are consecutive modulo $r+2$; therefore, the nerve of this set of facets is the cycle graph $C_{r+2}$. By the nerve theorem \cite[Corollary~4G.3]{Hatcher2002}, $X_r$ is homotopy equivalent to the nerve of its facets; thus $\Delta_k(W_{k+4})$ is homotopy equivalent to a circle when $k$ is even.

When $k = 2r+1$ is odd, then $X_{\lfloor k/2 \rfloor} = X_r$ consists of the facets $F_i^r = \{i, i+1, i+r+2, i+r+3\}$ for $i = 1, \dotsc, n$, where the two gaps between the pairs of vertices are size $r$ and $r+1$.

We claim we can continue using free faces and elementary collapses to remove the facets $F_i^r$ where $i = 1, \dotsc, r+1$. The face $\{i+1, i+r+2\}$ is a free face in $F_i^r$, since all other facets of $\Delta_k(W_{k+4})$ that contain it have a smallest gap size less than $r$. Its elementary collapse leaves behind the faces $\{i, i+1, i+r+3\}$ and $\{i, i+r+2, i+r+3\}$, which are shared respectively with the facets $\{i, i+1, i+r+3, i+r+4\} = F_{i+r+3}^r$ and $\{i-1, i, i+r+2, i+r+3\} = F_{i+r+2}^r$, which are not in the list of facets we are removing.

This leaves the facets $F_i^r$ for $i = r+2, \dotsc, n$. This time, the nerve of these facets is the complex shown in Figure~\ref{fig:wreath-graph-k+4-homotopy-odd-case}, 
which is homotopy equivalent to a circle. Thus $\Delta_k(W_{k+4})$ is homotopy equivalent to a circle for all $k$.
\end{proof}

\begin{figure}
    \centering
    \begin{tikzpicture}[scale=1.2]
            \coordinate (fr2) at (90:1);
            \coordinate (fr3) at (45:2);
            \coordinate (fr4) at (0:2);
            \coordinate (fr5) at (315:2);
            \coordinate (dots) at (270:2);
            \coordinate (fn3) at (225:2);
            \coordinate (fn2) at (180:2);
            \coordinate (fn1) at (135:2);
            \coordinate (fn) at (90:2);
            
            \draw (fr3)--(fr4)--(fr5)--(dots)--(fn3)--(fn2)--(fn1);
            \filldraw [fill=black!20] (fn1)--(fn)--(fr2)--cycle;
            \filldraw [fill=black!20] (fr3)--(fr2)--(fn)--cycle;
            
            \foreach \i/\l in {fr2/$F_{r+2}^r$, fr3/$F_{r+3}^r$, fr4/$F_{r+4}^r$, fr5/$F_{r+3}^r$, dots/$\cdots$, fn3/$F_{n-3}^r$, fn2/$F_{n-2}^r$, fn1/$F_{n-1}^r$, fn/$F_n^r$}
                \node [fill=white, fill opacity=1, inner sep=2pt] at (\i) {\l};
    \end{tikzpicture}
    \caption{The nerve of the facets $F_{r+2}^r, \dotsc, F_n^r$ in $X_r$ in the odd case of the proof of Proposition~\ref{prop:wreath-graph-k+4-homotopy}}
    \label{fig:wreath-graph-k+4-homotopy-odd-case}
\end{figure}
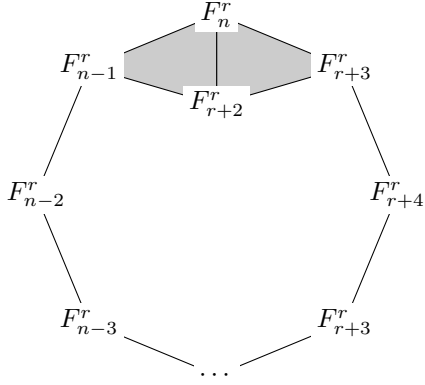

\begin{prop}\label{prop:Dane-Wreath-k+4-min-nonshellable}
Fix $ k \geq 3 $ and let $ G $ be a proper induced subgraph of $W_{k+4}$. Then $ \Delta_{k}(G) $ is
shellable.
Hence $ W_{k+4}$ is a minimal nonshellable graph for $k$-cut complex shellability. 
\end{prop}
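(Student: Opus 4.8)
The plan is to fix $k \geq 3$ and a proper induced subgraph $G$ of $W_{k+4}$, and show $\Delta_k(G)$ is shellable; combined with Proposition~\ref{prop:wreath-graph-k+4-homotopy} (which shows $\Delta_k(W_{k+4}) \simeq \bbS^1$ is not shellable), this gives that $W_{k+4}$ is a minimal forbidden subgraph. Since $G$ is a proper induced subgraph, it has at most $k+3$ vertices, so $\Delta_k(G)$ is a complex on at most $k+3$ vertices whose facets have size (number of vertices of $G$) minus $k$, i.e., dimension at most $2$. The idea is to dispose of the small-dimensional cases directly and handle the genuinely interesting case, a subgraph on exactly $k+3$ vertices, by an explicit argument.

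First I would reduce to the case $|V(G)| = k+3$: if $|V(G)| \le k+1$ then $\Delta_k(G)$ is void, hence shellable; if $|V(G)| = k+2$ then $\Delta_k(G)$ has dimension at most $0$, and any complex of dimension at most $0$ is shellable (list the vertices in any order). So assume $G$ has $k+3$ vertices, obtained from $W_{k+4}$ by deleting one vertex; by the rotational symmetry of $W_{k+4}$ we may assume the deleted vertex is $n = k+4$, so $V(G) = \{1, \dotsc, k+3\}$ and $G = W_{k+4} \setminus \{k+4\}$. Then $\Delta_k(G)$ has dimension $\le 2$, and its facets are the $3$-element separating sets of $G$. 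The crucial observation is that $G$ is now a graph in which vertex $1$ and vertex $k+3$ each have low degree, and more importantly $G$ is chordal-like in a way that makes the analysis tractable: in fact, since deleting $k+4$ removes the edges $\{k+3, k+4\}$, $\{k+2, k+4\}$, $\{k+4,1\}$, $\{k+4,2\}$, the graph $G$ is the "squared path" on $1, \dotsc, k+3$, i.e. $i \sim j$ iff $|i-j| \le 2$. This graph is chordal (it is an interval graph — each vertex $i$ corresponds to the interval $[i-1, i+1]$), so by Corollary~\ref{cor:chordal-Delta3-shell}, $\Delta_3(G)$ is shellable; but we want $\Delta_k(G)$ for the specific $k$ with $|V(G)| = k+3$, not $\Delta_3$.

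So instead I would argue directly. A $3$-subset $S \subseteq \{1,\dots,k+3\}$ is a facet iff $G \setminus S$ is disconnected. Since $G$ is the squared path, $G\setminus S$ is disconnected iff $S$ contains a "block" of two consecutive integers $\{i, i+1\}$ together with a third element in a way that separates the squared path — concretely $G \setminus S$ disconnects precisely when $S \supseteq \{i,i+1\}$ for some $i$ with $2 \le i \le k$ (so that both sides of the gap are nonempty) and the third vertex of $S$ is chosen from one of the two sides; equivalently, using Lemma~\ref{lemma:wreath-graph-k+4-facets} style reasoning adapted to the path, the facets are the sets $\{i, i+1\} \cup \{x\}$ where $\{i,i+1\}$ is a separating pair and $x$ is anything not adjacent-across. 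I would enumerate these facets explicitly and exhibit a shelling order — a natural candidate is to order facets lexicographically by their sorted tuples, and verify the shelling condition: given facets $A < B$, produce $C < B$ with $A \cap B \subseteq C \cap B$ and $|C \cap B| = 2$ by the same swap-the-smallest-bad-element technique used in the proof of Theorem~\ref{thm:DaneCycleShellability}. Because the squared path is "less wrapped around" than the squared cycle, this swap never fails (the obstruction in the cycle case — $C$ being the complement of an interval — cannot arise on a path), so the verification is cleaner than the cycle case.

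The main obstacle I anticipate is getting the facet description of $\Delta_k(W_{k+4}\setminus\{v\})$ exactly right and then checking the shelling condition without hidden edge cases when $k$ is small (say $k = 3$, where $G$ has $6$ vertices and the complex is genuinely $2$-dimensional with few facets) versus large $k$. An alternative, cleaner route that sidesteps the explicit shelling: observe that $G = W_{k+4} \setminus \{v\}$ is chordal (the squared path is an interval graph), and build $\Delta_k(G)$ up using the structural theorems — $G$ has a simplicial vertex (vertex $1$, whose neighborhood $\{2,3\}$ is a clique), and one can try to induct via Lemma~\ref{lem:Extendedlinks} and the generalized-wedge/separating-set machinery of Section~\ref{sec:Mark:k=3-cut-complex}; but those results are stated for $k=3$ only, so for general $k$ the explicit shelling is probably the most reliable path, and I expect the bulk of the work to be the careful case analysis verifying the lexicographic order is a shelling.
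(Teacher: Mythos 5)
There is a genuine error at the heart of your argument: $W_{k+4}\setminus\{k+4\}$ is \emph{not} the squared path on $\{1,\dotsc,k+3\}$. The edge $\{k+3,1\}$ of $W_{k+4}$ joins two vertices at cyclic distance $2$ (through the deleted vertex $k+4$) and does not itself involve $k+4$, so it survives the deletion; in the paper's notation this is the edge $\{a,b\}$ between the two neighbours of the deleted vertex. This extra edge changes everything downstream. First, $G$ is not chordal (for $k=3$ the set $\{1,2,4,6\}$ induces a $4$-cycle, and in general $a,1,3,5,\dotsc,b,a$ is an induced cycle of length greater than $3$), so your ``alternative cleaner route'' via interval graphs is unavailable --- quite apart from the fact, which you do note, that chordality would only control $\Delta_2$ and $\Delta_3$. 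Second, and more importantly, your facet enumeration is wrong: the squared path on $k+3$ vertices has many more separating $3$-sets than $G$ does (for $k=3$, eight versus four), because a $3$-set that cuts the path into a piece containing $1$ and a piece containing $k+3$ fails to separate $G$ once $1$ and $k+3$ are joined. The correct facets are exactly the sets consisting of one neighbour of the deleted vertex together with a suitable consecutive pair $\{i,i+1\}$, which the paper reads off from Lemma~\ref{lemma:wreath-graph-k+4-facets} via the observation that a $3$-set $S$ separates $G$ if and only if $S\cup\{k+4\}$ is a separating $4$-set of $W_{k+4}$. Your proposed lexicographic shelling would therefore be carried out on the wrong complex.

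A secondary gap: your reduction for proper induced subgraphs on fewer than $k+3$ vertices miscounts dimensions. If $|V(G)|=k+2$, the facets of $\Delta_k(G)$ have size $(k+2)-k=2$, so the complex is $1$-dimensional, not $0$-dimensional, and $1$-dimensional complexes need not be shellable --- indeed the graphs $G_k$ of Proposition~\ref{prop:existence-min-nonshellablegraph-all-kS-kayaks} have exactly $k+2$ vertices and a non-shellable $k$-cut complex consisting of two disjoint edges, so this case cannot be waved away on dimension grounds. The clean way to dispose of all smaller subgraphs is Proposition~\ref{prop:induced subgraphs}: shellability of $\Delta_k$ is inherited by induced subgraphs, so it suffices to treat the one-vertex deletions of $W_{k+4}$; this is exactly how the paper opens its proof, after which it exhibits an explicit ordering of the $2k-2$ facets and verifies the shelling condition directly.
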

\begin{proof}
It suffices to prove the case where $ G $ is $ W_{k+4} $ minus a single vertex.  In this case, $ G $ may be
described as the graph with vertex set $ \{a, b, 1,2, \dots, k+1\} $, and edge set \[
	\big\{\{a,b\} \{a,1\}, \{a,2\}, \{b,k+1\} , \{b,k\}\big\} \cup \big\{\{i,i+1\}\big\}_{i=1}^{k} \cup
	\big\{\{j, j+2\}\big\}_{j=1} ^{k-1}.
\] Figure \ref{fig:basket_kequals4} shows $ G $ when $ k=4 $.  Since $ G $ has $ k+3 $ vertices, the facets of
$ \Delta_{k}(G) $ are the separating sets of size 3.  Thus, from Lemma \ref{lemma:wreath-graph-k+4-facets}, we 
conclude that the facets of $ \Delta_{k}(G) $ are exactly the sets \[
		\{a,2,3\}, \dots ,\{a,k,k+1\} ,  \{b,2,3\}, \dots, \{b,k-1,k\}, \{b,1,2\} .
\]
We claim that the order of the facets as listed above gives a shelling order for $ \Delta_{k}(G) $. To see
this,
let $ F_1, \dots, F_{2k-2} $ be the facets of $ \Delta_{k+3}(G) $ listed in the order as above. Then we need
to show that for each $ i=2, \dots, 2k-2 $, the set \[
	S_i = \{F: F \subseteq F_i, \,  F \not\subseteq F_j \text{ for all } j<i\}
\] has a unique minimal element with respect to inclusion.
Indeed, for $ i \in \{2, \dots, k-1\} $ (when $ a \in F_i $), we see that the unique minimal element of $
S_i $ is $ \{i+1\}$. 
When $ i = k $, $ F_i = \{b,2,3\} $ and we see that the unique minimal element of $ S_i $ is $ \{b\} $. 
For $ i \in \{k+1, 2k-1\} $ (when $ b \in F_i $ but $ 2 \not\in F_i $), we see that the unique minimal
element of $ S_i $ is $ \{b,i+1\}$. 
Finally, when $ i = 2k-2 $, we have that the unique minimal element of $ S_i $ is $ \{1\} $.
\end{proof}
\begin{figure}[htb] 
\caption{The graph $W_8$ minus a vertex.}
		\begin{tikzpicture}[scale=0.7]%
				\node[circle,fill=black, inner sep=1.5pt] (a) at (45:2) [label={above right: $a$}] {};
				\node[circle,fill=black, inner sep=1.5pt] (b) at (135:2) [label={above left: $b$}] {};
				\node[circle,fill=black, inner sep=1.5pt] (1) at (0:2) [label={right: $1$}] {};
				\node[circle,fill=black, inner sep=1.5pt] (2) at (315:2) [label={below right: $2$}] {};
				\node[circle,fill=black, inner sep=1.5pt] (3) at (270:2) [label={below: $3$}] {};
				\node[circle,fill=black, inner sep=1.5pt] (4) at (225:2) [label={below left: $4$}] {};
				\node[circle,fill=black, inner sep=1.5pt] (5) at (180:2) [label={left: $5$}] {};
				\draw (a) -- (1) -- (2) -- (3) -- (4) -- (5) -- (b) -- (a) -- (2) -- (4) -- (b);
				\draw (1) -- (3) -- (5);
		\end{tikzpicture}
		\label{fig:basket_kequals4}
\end{figure}
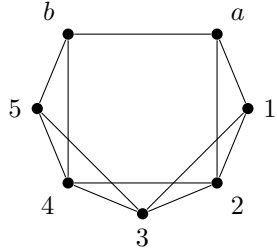
The cyclic group $\mathfrak{C}_n$  acts as a group of automorphisms of the squared cycle  $W_n$, and hence on the homology of its cut complex $\Delta_k(W_n)$.  We have the following:

\begin{prop}\label{prop:cyclic-gp-actionS-Delta_k(W_{k+4})} Let $k\ge 2$. The  one-dimensional homology module $\tilde{H}_1(\Delta_k(W_{k+4})$ affords the trivial representation of the cyclic group $\mathfrak{C}_{k+4}$.

If $n\ge 7$, the cyclic group $\mathfrak{C}_n$ acts on  the one-dimensional homology module $\tilde{H}_{n-4}(\Delta_2(W_{n}))$ like 
\[\begin{cases} \text{the  trivial representation}, &\text{if $n$ is odd},\\
\text{the  sign representation}, &\text{if $n$ is even}.
\end{cases}\]
\end{prop}

\begin{proof} We apply the Hopf trace formula \cite{Hatcher2002}   to the face lattice of the cut complex. In the present context, the precise fact that we need is as follows  \cite{Jer93}. Suppose the nonzero homology of a bounded poset $P$ is concentrated in a single degree $r-2$, and suppose $g$ is an automorphism of $P$.  Let  $P^g$ denote the subposet of $P$ consisting of all elements fixed by $g$. Then one has the formula \cite[p. 282, Eqn. (1.2)]{Jer93}
\[\mu(P^g)=(-1)^r \mathrm{tr} (g, \tilde{H}_{r-2}(P)).\]

Now take $P$ to be the face lattice of $\Delta_k({{W}_{k+4}})$. From Proposition~\ref{prop:wreath-graph-k+4-homotopy} the homology is concentrated in degree $1$, and has vector space dimension one. In order to determine this one-dimensional representation of ${\mathfrak{C}_{k+4}}$, it suffices to compute the trace of the $(k+4)$-cycle $g=(1,2,\ldots, k+4)$ which generates ${\mathfrak{C}_{k+4}}$.  But the fixed point subposet $P^g$ is clearly the trivial poset consisting of $\{\hat 0, \hat 1\}$, and thus $\mu(P^g)=-1=(-1)^3\mathrm{tr} (g, \tilde{H}_{1}(P)) $. 

Hence $\mathrm{tr} (g, \tilde{H}_{1}(P))=1$, 
confirming that the action of the cyclic group $\mathfrak{C}_{k+4}$ on the homology is trivial. 

Now let $n\ge 7.$ Proposition~\ref{prop:MarijaDMT-Delta2-wreath-graph} tells us that the nonzero homology of $\Delta_2(W_{n})$ occurs only in degree $n-4$, and has dimension 1 as a vector space. By the Hopf trace formula above applied to the face lattice $P$ of $\Delta_2(W_{n})$, for the $n$-cycle $g=(1,2,\ldots,n)\in\mathfrak{C}_n$, we have 
\[\mu(P^g)=(-1)^{n-2} \mathrm{tr} (g, \tilde{H}_{n-4}(P)).\]
Again it is easy to see that $P^g=\{\hat 0, \hat 1\}$. This time we obtain 
$ \mathrm{tr} (g, \tilde{H}_{n-4}(P))=(-1)^{n-1}$.  The claim follows.
\end{proof}

\section{Conclusion and Further Directions}

In this paper we introduced a new graph complex, the $k$-cut complex for $k > 2$, which generalizes the (2-cut) complex in the Eagon--Reiner proof of Fr\"oberg's Theorem.
We investigated how shellability and homotopy type of the $k$-cut complex are affected by the following graph operations: induced subgraphs, disjoint union, joins and wedges. We were able to extend one direction of Fr\"oberg's result for chordal graphs and the 2-cut complex  to the 3-cut complex. Our results for $k=3$ are best possible: we showed that   for any $k\ge 4$, there are examples of chordal graphs for which $\Delta_k(G)$ is not shellable. We also studied the face lattice of the cut complex, giving a formula for the reduced Euler characteristic for a broad family of graphs.  We completely determined the homotopy type of the 2-cut complex in the case of connected triangle-free graphs.

The families of graphs we considered include trees, complete multipartite graphs, cycles, prisms over cliques, and squared cycles.  In all cases except the latter, we determined completely the homotopy type of the $k$-cut complex.  Our tools encompassed  a broad range: shellability, poset topology and discrete Morse theory.

We continue the investigation of $k$-cut complexes in a subsequent paper, where we apply these methods to the families of grid graphs, and squared paths.   We also undertake a more detailed study of how   the $k$-cut complex behaves under the disjoint union operation  of graphs, including an analysis of  the face vectors and $h$-vectors.


\bibliographystyle{plain}
\bibliography{2024Jan29ArXivSIDMAResultsFINAL}

\end{document}